\documentclass[12pt,notitlepage]{report}
\usepackage{amsmath}
\usepackage{amssymb}
\usepackage{amsthm}
\usepackage{geometry}
\usepackage[backend=biber,style=alphabetic,sorting=nyt]{biblatex}
\bibliography{citations}
\theoremstyle{plain}
\newtheorem{thm}{Theorem}[section] 
\newtheorem{prop}[thm]{Proposition}
\newtheorem{lemma}[thm]{Lemma} 
\newtheorem{cor}[thm]{Corollary}

\theoremstyle{definition}
\newtheorem{defn}[thm]{Definition} 
\newtheorem{eg}{Example}

\newtheorem{claim}[thm]{Claim}
\newtheorem{conj}[thm]{Conjecture}
\newtheorem{notations}[thm]{Notations}
\newtheorem{conventions}[thm]{Conventions}
\theoremstyle{remark} 
\newtheorem{rmk}[thm]{Remark}

\newtheorem{mycom}[thm]{Comment}

\usepackage{soul} 
\usepackage{quiver} 
\usepackage[hidelinks]{hyperref} 
\usepackage{physics}
\usepackage{bbm} 
\usepackage{import}
\usepackage{pdfpages}
\usepackage{transparent}
\usepackage{xcolor}

\usepackage{graphicx} 
\graphicspath{ {./figures/} }

\begin{document}
\title{Topological Field Theories and the Algebraic Structures of the Two-Sphere}
\author{Chris Li}
\maketitle
\abstract{We give two presentations for bordisms of $S^2$ in the 3-dimensional oriented bordism category $\operatorname{Cob}(3) $, encoding the algebraic structures on $S^2$. After passing through topological field theories, we define two kinds of monoids which we call P-monoids and L-monoids. In addition to both being commutative Frobenius monoids, P-monoids are equipped with a class of endomorphisms while L-monoids are equipped with a class of unit morphisms, all of which are labelled by closed oriented irreducible prime 3-manifolds. They turn out to be equivalent. The new prime structures satisfy some countable relations with the commutative Frobenius structure, the most notable of which we call ``legs relations.'' We then restrict to the setting of algebras and show that the legs relations place strong constraints on the new prime endomorphisms which forces them to act by multiplications by prime units, rendering the additional prime structures remarkably simple. We also propose an $\infty$-operad which encodes these prime structures and contains the $\infty$-little 3-cube operad as a sub-operad.
\newpage
\tableofcontents

\chapter{Introduction}
\subsubsection{Motivations}
One of the greatest scientific achievements in the 20th century is the development of quantum field theories, which model the interaction of elementary particles with great precision through \textit{quantum fields}. Quantum fields are , roughly speaking, classes of functions $\phi : M \to \mathbb{C}$ where $M$ is a manifold representing the underlying space-time. Unfortunately, developing a mathematically rigorous definition of quantum field theories has proven extremely challenging, leaving one of the most foundational subjects of modern physics on unstable soil. 

\textit{Topological field theories} are toy models of quantum field theories. In some sense, they are the most constrained version of quantum field theories. Generally speaking, the more constrained an object is, the easier it is to study-- for example, it is much easier to study finite dimensional inner product spaces over $\mathbb{R}$ than to study modules over a not necessarily commutative ring $R$. However, just because it is easier doesn't mean it is useless. A student likely learns about bases for a vector spaces first, and then carries that rather helpful intuition to the study of modules. So the hope is that by understanding topological field theories, the highly constrained version of quantum field theories, one may gain a glimpse at the full mathematical complexity of quantum field theory.

It turns out that topological field theories can be rigorously formulated in mathematics, and the structures one discovers are already extremely interesting, intricate, and ingenious. This paper explores one aspect of the structures of topological field theories of dimension $\geq 3$.

Mathematically, the vanilla definition of a topological field theory is
\begin{defn} \label{defn:TFT}
An $n$-dimensional topological field theory is a symmetric monoidal functor: \[
Z: \operatorname{Cob}( n) \to \operatorname{Vect}_k
.\]
where $\operatorname{Cob}( n)$ is the category of closed oriented $n$-manifolds and cobordisms modulo orientation and boundary preserving diffeomorphisms, $\operatorname{Vect}_k$ is the category of (insert your favorite adjectives here such as topological) $k$-vector spaces.
\end{defn}
Historically, this definition was given by \cite{atiyah1988topological}, possibly inspired by \cite{segal1988definition}'s attempt to define a conformal field theory and \cite{witten1982supersymmetry} on the relation between Morse theory and supersymmetry.

Unpacking the definition, a topological field theory is a nice assignment $Z$ which assigns to a closed oriented  $(n-1)$-manifold $X$ a vector space $Z(X)$, and to a bordism $M$ from $X$ to $Y$ a linear map $Z(M):Z(X) \to Z(Y)$. In particular, $Z(\emptyset)=k$ where $\emptyset$ is the empty $(n-1)$-dimensional closed oriented manifold. Therefore $Z$ sends a closed oriented $n$ manifold $M$, which is thought of as a cobordism between the empty set and itself, to a linear map $Z(M): k\to k$, which can be identified with its image of $1$. So $Z(M)$ is just a number.

This definition closely aligns with physics: $Z(X)$ is supposed to be the (Hilbert) space of states $\mathcal{H}_X$ on $X$, and the linear map $Z(M): Z(X) \to Z(X)$ is supposed to be time evolution on $\mathcal{H}_X$. In the case $X$ is empty, the number $Z(M)$ is supposed to be the partition function $Z$ of the theory on spacetimes $M$. Some excellent expositions by \cite{Carqueville_Runkel_2018} and \cite{Freed_2009} goes into the physics side in greater detail.

The vanilla definition has since then been enhanced. The modern mathematical perspective is
\begin{defn} \label{defn:fullyextendedTFT}
A fully-extended $n$-dimensional topological field theory is a symmetric monoidal functor of $(\infty,n)$-categories \[
Z: \operatorname{Bord}_n \to \mathcal{C}
.\]
\end{defn}
The $(\infty,n)$ category $\operatorname{Bord}_n$ has objects $0$-dimensional manifolds, $1$-morphisms are cobordisms between $0$-dimensional manifolds, $2$-morphisms are cobordisms between cobordisms, etc. The $n$-morphisms can be interpreted as an $n$-dimensional manifold with corners. The $(n+1)$-morphisms are diffeomorphisms between two $n$-manifold with corners, and the $(n+2)$-morphisms are isotopies between diffeomorphisms, etc. This construction was proposed by \cite{lurie_classification_2008} and later corrected by \cite{Calaque_2019} and appeared as the thesis of \cite{scheimbauer2014factorization}.

Notice two features about this modern definition: one does not mod out the diffeomorphisms in the cobordisms and instead store these information in $(n+1)$-morphisms and higher; the vanilla 1-categorical definition sits inside the $(n-1)$-morphisms and the $n$-morphisms, and can be recovered once one passes to the homotopy category.

Physically, an extended topological field theory allows one to encode more information than just the space of states and the partition function. For example, a $2$-dimensional fully extended topological field theory also associates to a zero-dimensional manifold (aka a point) some additional data, usually a category. For the topological $B$-model on a complex manifold $Y$, this is the category $D^b \operatorname{Coh}(Y)$. For the topological $A$ model with symplectic K\"ahler target $X$, this is the category $\operatorname{Fuk}(X)$. Homological mirror symmetry is equivalent to the equivalence between the topological $A$ model as a $2$-dimensional fully extended TFT $Z_A$ and the topological $B$ model $Z_B$. For physical interpretations of these higher structures in general, see \cite{kapustin2010topological}, \cite{Carqueville_Runkel_2018} and \cite{Freed_2009}.


Topological field theories have also proven to be more than just highly constrained toy models, especially in the field of mathematics. Much of the attention came from the Geometric Langlands correspondence, which can be reinterpreted as a conjectural equivalence of two topological field theories by \cite{kapustin2007electric, witten2010geometric}. There is a conjectured equivalence for relative Langlands as well \cite{ben2024relative}. Therefore much of the modern formal interests in topological field theories have come from the mathematical side. 

Some of the most interesting results and insights have come from attempts to classify topological field theories. Historically the most famous theorem is perhaps the equivalence between $2$-dimensional topological field theories (up to equivalence) and commutative Frobenius algebras (up to isomorphism):
\begin{thm} \label{thm:2dtftCFA}
The 2-dimensional oriented bordism category $\operatorname{Cob}( 2) $  is the free symmetric monoidal category with a single commutative Frobenius algebra object. In particular the category of oriented 2-dimensional topological field theories is equivalent to the category of commutative Frobenius algebras.
\end{thm}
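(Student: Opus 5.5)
The plan is to give a generators-and-relations presentation of $\operatorname{Cob}(2)$ and then read off the universal property from it. This is the classical argument of Abrams and Kock.

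\emph{Step 1 (objects and the algebra structure).} Every closed oriented $1$-manifold is diffeomorphic to a disjoint union $(S^1)^{\sqcup n}$. The symmetric monoidal structure is $\sqcup$, so the objects are generated by the single object $S^1$. On $S^1$ I single out the following bordisms:
\begin{itemize}
\item the pair of pants $\mu: S^1\sqcup S^1\to S^1$;
\item the disc $\eta:\emptyset\to S^1$;
\item the reversed pants $\delta: S^1\to S^1\sqcup S^1$;
\item the reversed disc $\epsilon: S^1\to\emptyset$.
\end{itemize}
Next I check directly by diffeomorphisms rel boundary that these satisfy the relations of a commutative Frobenius algebra: associativity, unit, commutativity via the twist $\tau$, the dual coalgebra relations, and the Frobenius relation $(\mu\otimes 1)(1\otimes\delta)=\delta\mu=(1\otimes\mu)(\delta\otimes 1)$. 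Each is a picture of two surfaces with the same genus and the same boundary components. They are therefore diffeomorphic, using the classification of surfaces with boundary. This shows that $S^1$ is a commutative Frobenius object, so any symmetric monoidal functor $Z$ yields one, namely $Z(S^1)$.

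\emph{Step 2 (generation).} Any connected bordism $\Sigma:(S^1)^{\sqcup m}\to (S^1)^{\sqcup n}$ is determined up to diffeomorphism rel boundary by its genus $g$ together with $m$ and $n$. Using a Morse function compatible with the boundary, $\Sigma$ decomposes into elementary pieces: caps, cups, pants, reversed pants and cylinders. The same holds for a general bordism, up to permutations of boundary circles. So the four generators together with $\tau$ generate all morphisms under composition and $\sqcup$.

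\emph{Step 3 (normal form and completeness of relations), the main obstacle.} I have to show that any two words in the generators representing diffeomorphic bordisms are equal modulo the Frobenius relations. My plan is to define a normal form for connected bordisms, namely
\[\delta^{(n)}\circ (\mu\delta)^{g}\circ \mu^{(m)},\]
where $\mu^{(m)}$ is an iterated multiplication (with $\mu^{(0)}=\eta$) and $\delta^{(n)}$ is iterated comultiplication. I then show algebraically, using associativity, coassociativity, commutativity and the Frobenius relation, that every connected word reduces to this form. The genus is read off as the number of ``handles'' $\mu\delta$ created. The graph-theoretic bookkeeping needed to move all $\mu$'s to the left past $\delta$'s is the delicate part. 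For disconnected words, permutations are handled by the symmetric group relations together with naturality of $\tau$.

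\emph{Step 4 (equivalence).} Given a commutative Frobenius algebra $A$, define $Z$ on generators by $S^1\mapsto A$, $\mu\mapsto m_A$, and so on. Step 3 shows this is well defined, giving a symmetric monoidal functor. Monoidal natural transformations $Z\Rightarrow Z'$ are determined by their component at $S^1$, and naturality with respect to the generators says exactly that this component is a Frobenius algebra map. This yields the equivalence of categories.
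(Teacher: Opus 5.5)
The paper does not prove this theorem itself; it cites Dijkgraaf, Abrams and Kock. Your proposal is correct and is essentially the normal-form argument from Kock's book. It differs from the route the paper sketches in the introduction and in Section~\ref{sec:outlineofstrat}, where sufficiency of the relations is argued Cerf-theoretically. There, two decompositions of one fixed surface come from admissible Morse functions $f_0,f_1$, and a generic path $f_t$ between them fails to be excellent only finitely often. Each failure is a birth/death (unit, counit), a non-trivial crossing of critical values (associativity, coassociativity, Frobenius), or a trivial crossing like the tilted torus (no relation).

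\emph{Cerf route.} It is local and needs no classification of surfaces. It does rest on Cerf's theorem and on matching each singularity type with a relation. That matching is exactly what breaks down for the prime relations in three dimensions (Remark~\ref{rmk:Cerftheoryofprimecomm}).

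\emph{Your route.} It replaces Cerf theory by a complete invariant of connected morphisms (genus) together with algebraic rewriting, so it is elementary. It is also the template the paper itself follows for the prime generators in Theorems~\ref{mainthmG2} and~\ref{mainthmG1}: the prime factors are first moved to a standard position, and only the residual CF part is handled separately.

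The price is that all the content sits in your Step~3, which you leave as a plan. Two remarks make it routine.

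\emph{The key lemma.} The handle operator $h=\mu\circ\delta$ is central: $h\circ\mu=\mu\circ(h\otimes 1)$ and $\delta\circ h=(h\otimes 1)\circ\delta$. Each is a one-line consequence of the Frobenius relation and (co)associativity; they are the two-dimensional shadows of the paper's waist and cowaist relations.

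\emph{Induct on layers.} A global ``push every $\mu$ past every $\delta$'' rewriting has a termination argument that is awkward to certify. Instead, suppose $W'=\sigma\circ(N_1\otimes\cdots\otimes N_k)\circ\sigma'$ with each $N_i$ in normal form, and post-compose with one more generator:
\begin{itemize}
\item $\tau$ is absorbed into $\sigma$.
\item $\delta$ is handled by coassociativity.
\item $\eta$ and $\epsilon$ need the unit and counit relations, which your Step~3 list omits. For example, $\epsilon$ may cap an output of a comultiplication tree, or $\mu$ may absorb an $\eta$-component.
\item For $\mu$ with both inputs from the same $N_i$, you form a new copy of $h$ and slide it to the middle.
\item For $\mu$ with inputs from different $N_i,N_j$, you push $\mu$ through both comultiplication trees with the Frobenius relation, then merge by associativity and centrality of $h$.
\end{itemize}
This also handles disconnected words at once, so you do not need a separate treatment of permutations beyond absorbing them by (co)commutativity at the end.
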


This theorem was widely known and formal proofs appeared in numerous papers such as \cite{dijkgraaf1989geometrical}, \cite{abrams1996two}, and J. \cite{kock_frobenius_2003}. The category $\operatorname{Cob}( 2)$ has objects closed oriented $1$-dimensional manifolds (namely, a disjoint union of oriented circles), and morphisms are cobordisms between the disjoint unions of circles modulo diffeomorphisms. The single commutative Frobenius algebra object refers to the circle $S^1$. By a commutative Frobenius algebra we mean a $k$-vector space $A$ together with the following data
\begin{align*}
	\text{Multiplication } m&: A \otimes A \to A\\
	\text{Unit } 1&: k\to A\\
	\text{Co-multiplication } m^\vee &: A \to A \otimes A\\
	\text{Trace/co-unit } tr&: A \to k
\end{align*}
satisfying:
\begin{enumerate}
\item The multiplication $m$ is associative and commutative.
\item The unit $1$ is the unit map with respect to the multiplication $m$.
\item The co-multiplication $m^\vee$ is co-associative and co-commutative.
\item The co-unit $tr$ is the co-unit map with respect to the co-multiplication $m^\vee$.
\item The Frobenius condition: $(m \otimes id) \circ (id \otimes m^\vee) = m^\vee \circ m$.
\end{enumerate}
In $\operatorname{Cob}( 2) $ , these maps are given by the morphisms in figure \ref{fig:cob2cf}. 

\begin{figure}[ht]
    \centering
\def\svgwidth{1\columnwidth} 
\begingroup%
  \makeatletter%
  \providecommand\color[2][]{%
    \errmessage{(Inkscape) Color is used for the text in Inkscape, but the package 'color.sty' is not loaded}%
    \renewcommand\color[2][]{}%
  }%
  \providecommand\transparent[1]{%
    \errmessage{(Inkscape) Transparency is used (non-zero) for the text in Inkscape, but the package 'transparent.sty' is not loaded}%
    \renewcommand\transparent[1]{}%
  }%
  \providecommand\rotatebox[2]{#2}%
  \newcommand*\fsize{\dimexpr\f@size pt\relax}%
  \newcommand*\lineheight[1]{\fontsize{\fsize}{#1\fsize}\selectfont}%
  \ifx\svgwidth\undefined%
    \setlength{\unitlength}{680.31496063bp}%
    \ifx\svgscale\undefined%
      \relax%
    \else%
      \setlength{\unitlength}{\unitlength * \real{\svgscale}}%
    \fi%
  \else%
    \setlength{\unitlength}{\svgwidth}%
  \fi%
  \global\let\svgwidth\undefined%
  \global\let\svgscale\undefined%
  \makeatother%
  \begin{picture}(1,0.33333333)%
    \lineheight{1}%
    \setlength\tabcolsep{0pt}%
    \put(0,0){\includegraphics[width=\unitlength,page=1]{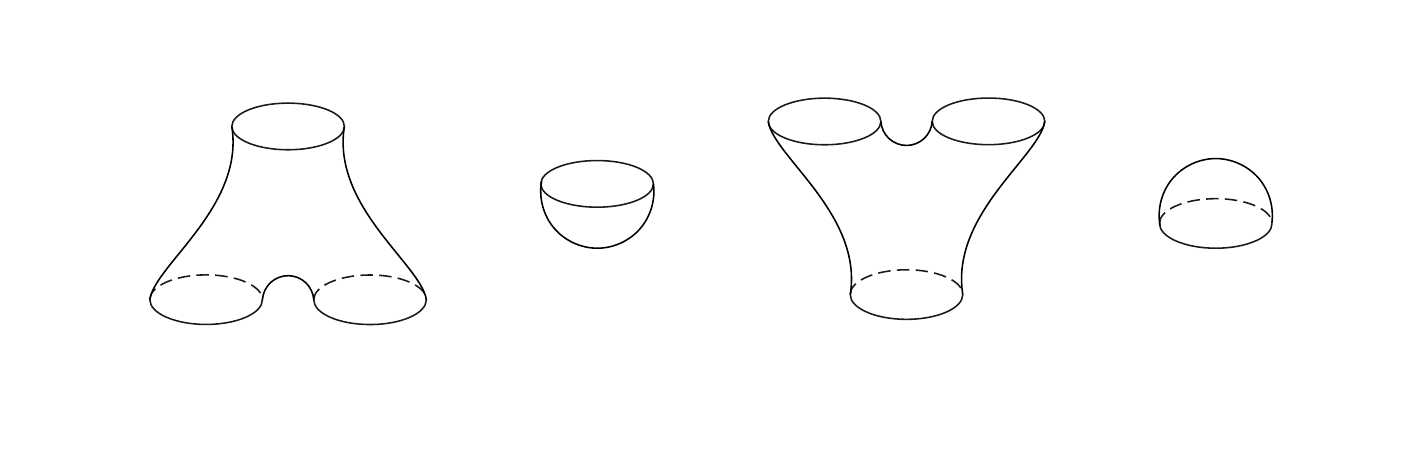}}%
    \put(0.19465906,0.05483659){\makebox(0,0)[lt]{\lineheight{1.25}\smash{\begin{tabular}[t]{l}$m$\end{tabular}}}}%
    \put(0.4160841,0.06727912){\makebox(0,0)[lt]{\lineheight{1.25}\smash{\begin{tabular}[t]{l}$1$\end{tabular}}}}%
    \put(0.62335847,0.06442903){\makebox(0,0)[lt]{\lineheight{1.25}\smash{\begin{tabular}[t]{l}$m^\vee$\end{tabular}}}}%
    \put(0.85015456,0.06442903){\makebox(0,0)[lt]{\lineheight{1.25}\smash{\begin{tabular}[t]{l}$tr$\end{tabular}}}}%
  \end{picture}%
\endgroup%

    \caption[Commutative Frobenius generators of $\operatorname{Cob}(2) $]{The commutative Frobenius generators of $\operatorname{Cob}( 2) $.}
    \label{fig:cob2cf}
\end{figure}

Physically, this means that the space of states associated to a circle of any 2-dimensional topological field theory must be a commutative Frobenius algebra. Unsurprisingly, being a commutative Frobenius algebra is a strong condition which forces many desirable properties, the most notable of which is finite dimensionality.

Other classification theorems exist in dimensions $>2$ as well, putting TFTs in very rigid mathematical structures. In dimension $3$, \cite{bartlett2014extended, bartlett2015modular} showed that there is an equivalence between $(1+1+1)$-dimensional TFT and modular tensor categories. \cite{juhasz2018defining} gave generator and relations for the cobordism category $\operatorname{Cob}( n)$ for any $n$, in particular for $n=3$ it is shown that there is an equivalence between the category of $3$-dimensional oriented TFTs and the category of \textit{J-algebras}. \cite{lurie_classification_2008} sketched a proof of the \textit{cobordism hypothesis} of \cite{Baez_Dolan_1995} that an $n$-dimensional fully extended framed TFT is determined by its value on the framed point. 

However none of these classification theorems provides the answer to the most naive question one might ask upon learning theorem \ref{thm:2dtftCFA}:
\begin{center}
\begin{minipage}{0.7\textwidth}
\centering
\textit{If the circle is a commutative Frobenius object in} $\operatorname{Cob}( 2) $, \textit{what kind of algebraic structure does the sphere have in }$\operatorname{Cob}( 3) $ \textit{?} 
\end{minipage}
\end{center}


In this thesis, we provide the answer 
\begin{center}
\begin{minipage}{0.7\textwidth}
\centering
\textit{The sphere has the commutative Frobenius structure as well as new ``prime'' structures.}
\end{minipage}
\end{center}

By prime structures here we mean structures labelled by diffeomorphism classes of oriented irreducible prime 3-manifolds. Physically, this means that the space of states for a $3$-dimensional topological field theory is not only a commutative Frobenius algebra but it also has additional structures labelled by these prime 3-manifolds.

\subsubsection{Overview of Contents}
The result of this thesis can be summarized as the \textbf{slogan}: in $\operatorname{Cob}( 3)$, the category of oriented closed $2$-manifolds and their bordisms, the object $S^2$ has the usual structures of a commutative Frobenius object, as well as \textbf{additional new maps} $p^{\times \times} : S^2 \to S^2$ and $p^\times : \emptyset \to S^2$, where $p$ is a connected closed irreducible oriented $3$-manifold. Besides the commutative Frobenius relations, the only other necessary relations are the ``legs relations'' $m\circ (p^{\times \times} \otimes id_{S^2} )= m\circ (id_{S^2} \otimes p^{\times \times} )$. In particular, there are \textbf{no more structures} on $S^2$-- every other algebraic structure is generated by commutative Frobenius structures and  $\left\{ p^{\times \times}  \right\} $ which satisfy the legs relations, or commutative Frobenius structures and $\left\{ p^\times \right\} $. 

We shall explain what this means in the remainder of the Introduction.

The classification of $2$-dimensional topological field theories $Z: \operatorname{Cob}( 2) \to \operatorname{Vect}_k$ requires that one show $S^1$ has the structures of commutative Frobenius objects in $\operatorname{Cob}( 2) $ . This means that one needs to show that the maps $(m,1,m^\vee, tr)$ exist, and that they satisfy the usual commutative Frobenius conditions. This part is easy. The difficult part is showing that these maps and the commutative Frobenius conditions are \textit{sufficient and necessary}. In the language of category theory, one needs to construct a \textit{presentation} of the bordism category $\operatorname{Cob}( 2)$. By a \textit{presentation} we mean generators and relations. The generators being $(m,1,m^\vee, tr)$, and the relations being the commutative Frobenius relations. One inevitably has to dig into the Morse theory of $2$-dimensional manifolds to show, among other things, that two different ways of gluing the generators into the same cobordism $[M]$ must be related by only commutative Frobenius relations. This is most naturally proven using Cerf theory: the two different gluings correspond to Morse functions $f_0 , f_1:M \to \mathbb{R}$, and \cite{cerf_stratification_1970} showed that there is a path of functions $f_t$ from $f_0 $ to $f_1 $ such that along this path the function $f_t$ only fails to be ``excellent Morse'' at finitely many times. In particular, before and after those singularities, the Morse functions induce a commutative Frobenius type of relations.

We will essentially take a similar approach to understand the algebraic structures of $S^2$. In order to do so, we need to consider all possible bordisms between disjoint unions of  $S^2$. These span a subcategory $\operatorname{Cob}( 3)_{S^2}$ of $\operatorname{Cob}( 3) $, defined as
\begin{defn} 
Consider the symmetric monoidal category $\operatorname{Cob}(3) $. It has a subcategory whose objects are disjoint unions of $S^2$. Call this category $\operatorname{Cob}(3)_{S^2}$ and note that it inherits the symmetric monoidal structure.
\end{defn}
In particular, the morphisms in $\operatorname{Cob}( 3)_{S^2} $ all have spherical boundaries. For example, here's a list of morphisms that will play a central role in defining the presentations of $\operatorname{Cob}( 3)_{S^2} $:
\begin{align*}
id_{S^2} &= \left[ S^2 \times I \right]  \\
m &= \left[ S^3 \setminus \operatorname{int}(\sqcup_{}^3 D^3) \right]  : S^2 \sqcup S^2 \to S^2 \\
1 &= \left[ D^3  \right] : \emptyset \to S^2\\
m^\vee &= \left[ S^3 \setminus \operatorname{int}(\sqcup_{}^3 D^3)  \right] : S^2  \to S^2\sqcup S^2 \\
\tr &= \left[ D^3 \right]  :S^2 \to \emptyset\\
p^{\times \times} &= [p \setminus \operatorname{int}(D^3 \sqcup D^3)]: S^2 \to S^2\\
p^\times &= [p \setminus \operatorname{int}(D^3)] : \emptyset \to S^2
\end{align*}
where $p$ is any diffeomorphism class of connected closed oriented irreducible 3-manifold. Note that the only $3$-manifold which is prime but reducible is $S^2\times S^1$. So,equivalently, $p$ is any diffeomorphism class of connected closed oriented prime $3$-manifold which is not $S^2\times S^1$. See figure \ref{fig:generators} for a sketch of these morphisms. 

\begin{figure}[ht]
    \centering
\def\svgwidth{1\columnwidth} 
\begingroup%
  \makeatletter%
  \providecommand\color[2][]{%
    \errmessage{(Inkscape) Color is used for the text in Inkscape, but the package 'color.sty' is not loaded}%
    \renewcommand\color[2][]{}%
  }%
  \providecommand\transparent[1]{%
    \errmessage{(Inkscape) Transparency is used (non-zero) for the text in Inkscape, but the package 'transparent.sty' is not loaded}%
    \renewcommand\transparent[1]{}%
  }%
  \providecommand\rotatebox[2]{#2}%
  \newcommand*\fsize{\dimexpr\f@size pt\relax}%
  \newcommand*\lineheight[1]{\fontsize{\fsize}{#1\fsize}\selectfont}%
  \ifx\svgwidth\undefined%
    \setlength{\unitlength}{680.31496063bp}%
    \ifx\svgscale\undefined%
      \relax%
    \else%
      \setlength{\unitlength}{\unitlength * \real{\svgscale}}%
    \fi%
  \else%
    \setlength{\unitlength}{\svgwidth}%
  \fi%
  \global\let\svgwidth\undefined%
  \global\let\svgscale\undefined%
  \makeatother%
  \begin{picture}(1,0.16666667)%
    \lineheight{1}%
    \setlength\tabcolsep{0pt}%
    \put(0,0){\includegraphics[width=\unitlength,page=1]{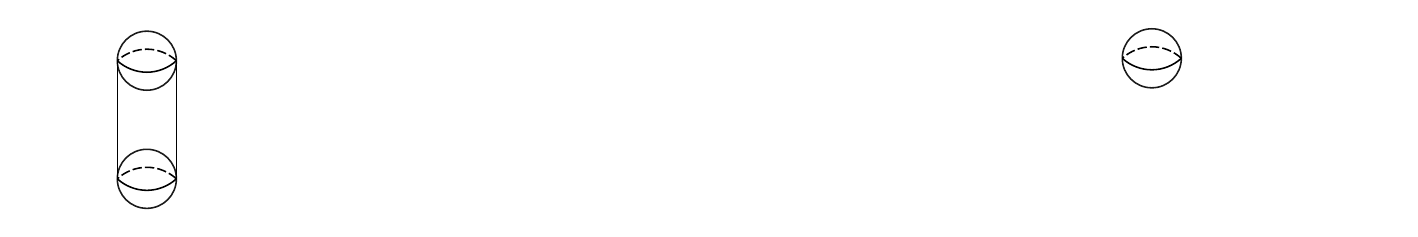}}%
    \put(0.79252224,0.07641748){\makebox(0,0)[lt]{\lineheight{1.25}\smash{\begin{tabular}[t]{l}$p^{\times \times}$\end{tabular}}}}%
    \put(0,0){\includegraphics[width=\unitlength,page=2]{generators.pdf}}%
    \put(0.93194472,0.07641748){\makebox(0,0)[lt]{\lineheight{1.25}\smash{\begin{tabular}[t]{l}$p^{\times}$\end{tabular}}}}%
    \put(0,0){\includegraphics[width=\unitlength,page=3]{generators.pdf}}%
  \end{picture}%
\endgroup%

    \caption[A list of relevant generating morphisms]{A list of generating morphisms. From left to right: $id_{S^2}$, multiplication $m$, unit $1$, comultiplication $m^\vee$, trace $tr$, prime endomorphism $p^{\times \times} $, prime unit $p^\times$, one for each connected oriented irreducible prime $3$-manifold $p$. All but the last kind make up the generating set $G_2 $. All but the second to last kind make up the generating set $G_1 $.}
    \label{fig:generators}
\end{figure}

Our aim is to construct presentations of $\operatorname{Cob}( 3)_{S^2} $. It turns out that $\operatorname{Cob}( 3)_{S^2}$ has two sets of generators:
\begin{align*}
G_1& = C \cup F \cup L \cup \text{orientation reversal}\\
G_2 &= C \cup F \cup P \cup \text{orientation reversal}
\end{align*}
where $C = \left\{ 1,m \right\} , F=\left\{ tr, m^\vee \right\} , P = \left\{ p^{\times \times}  \right\}_{p \in \mathcal{I}} , L=\left\{ p^\times \right\}_{p \in \mathcal{I}} $. Here $\mathcal{I}$ denotes the set $ \left\{ \text{connected closed irreducible 3-manifold} \right\} / \text{orientation-preserving diffeomorphisms} $ .

The presentation using $G_1 $ generators is usually associated with the letter ``L'' because of the distinct L-shaped prime factor attachments in figure \ref{fig:g1generate}. The presentation using $G_2 $ generators is usually associated with the letter ``P'' for ``prime'' due to a lack of imagination.

The $G_2 $ presentation is given by the usual commutative Frobenius relations, as well as the ``legs relations'' suggestively sketched in figure \ref{fig:plegs}  \[
m \circ (p^{\times \times}  \otimes id_{S^2}) = m\circ (id_{S^2} \otimes p^{\times \times} )
.\]

\begin{figure}[ht]
    \centering
\def\svgwidth{1\columnwidth} 
\begingroup%
  \makeatletter%
  \providecommand\color[2][]{%
    \errmessage{(Inkscape) Color is used for the text in Inkscape, but the package 'color.sty' is not loaded}%
    \renewcommand\color[2][]{}%
  }%
  \providecommand\transparent[1]{%
    \errmessage{(Inkscape) Transparency is used (non-zero) for the text in Inkscape, but the package 'transparent.sty' is not loaded}%
    \renewcommand\transparent[1]{}%
  }%
  \providecommand\rotatebox[2]{#2}%
  \newcommand*\fsize{\dimexpr\f@size pt\relax}%
  \newcommand*\lineheight[1]{\fontsize{\fsize}{#1\fsize}\selectfont}%
  \ifx\svgwidth\undefined%
    \setlength{\unitlength}{680.31496063bp}%
    \ifx\svgscale\undefined%
      \relax%
    \else%
      \setlength{\unitlength}{\unitlength * \real{\svgscale}}%
    \fi%
  \else%
    \setlength{\unitlength}{\svgwidth}%
  \fi%
  \global\let\svgwidth\undefined%
  \global\let\svgscale\undefined%
  \makeatother%
  \begin{picture}(1,0.33333333)%
    \lineheight{1}%
    \setlength\tabcolsep{0pt}%
    \put(0,0){\includegraphics[width=\unitlength,page=1]{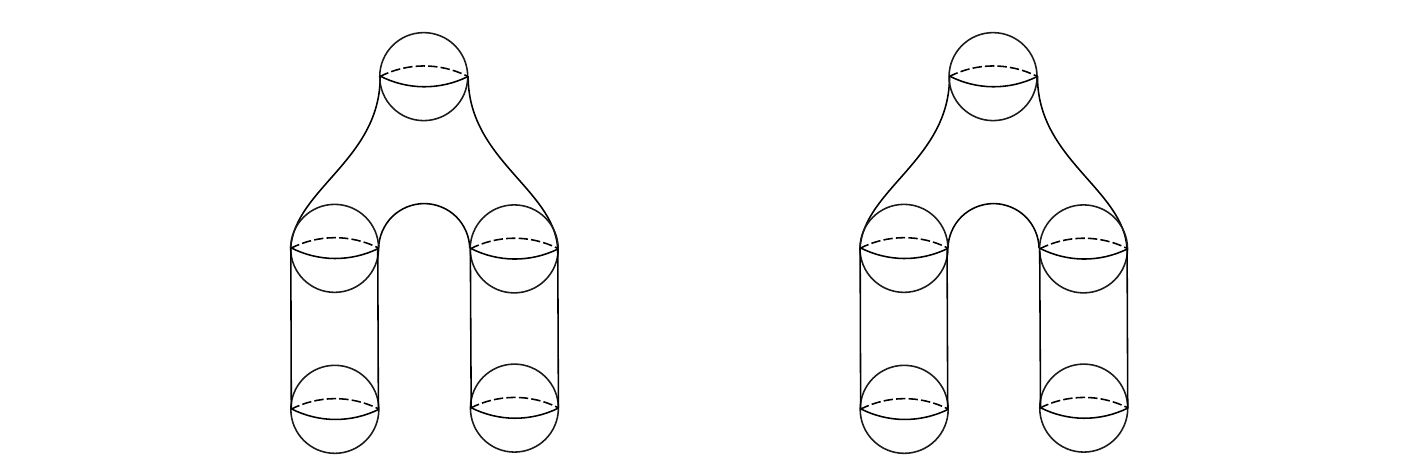}}%
    \put(0.21181461,0.0918958){\makebox(0,0)[lt]{\lineheight{1.25}\smash{\begin{tabular}[t]{l}$P^{\times \times}$\\\end{tabular}}}}%
    \put(0.7432347,0.09258073){\makebox(0,0)[lt]{\lineheight{1.25}\smash{\begin{tabular}[t]{l}$P^{\times \times}$\end{tabular}}}}%
    \put(0.49037625,0.15883929){\makebox(0,0)[lt]{\lineheight{1.25}\smash{\begin{tabular}[t]{l}$\cong $\end{tabular}}}}%
  \end{picture}%
\endgroup%

    \caption[Legs relations for the $G2$ presentation]{The ``legs relations'' for the $G_2 $ presentation of $\operatorname{Cob}( 3)_{S^2} $.}
    \label{fig:plegs}
\end{figure}

The legs relations are not trivially implied by commutativity because our boundaries are ordered. To see this, consider passing through a TFT $Z$, then both sides of the legs relations are maps from $Z(S^2) \otimes Z(S^2) $ to $Z(S^2)$. The left hand side sends the element $ a \otimes b$ to $Z(p^{\times \times} )(a) \cdot b$, while the right hand side sends it to $a \cdot Z(p^{\times \times} )(b)$.

We prove in theorem \ref{thm:trueG2thm} that
\begin{thm}
$\operatorname{Cob}( 3)_{S^2} $ has a presentation with $G_2$ generators under commutative Frobenius relations and legs relations. These relations are necessary.
\end{thm}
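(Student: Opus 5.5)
We split the statement into generation, sufficiency of the relations, and necessity, and use the Kneser--Milnor prime decomposition throughout.

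\emph{Generation.} Let $M$ be a connected bordism in $\operatorname{Cob}(3)_{S^2}$ with $a$ incoming and $b$ outgoing spheres. Capping every boundary sphere with a ball gives a closed oriented $3$-manifold $\hat M$. Milnor's uniqueness theorem decomposes it as
\[
\hat M \cong p_1\# \cdots \# p_r \# (S^2\times S^1)^{\# g},
\]
with each $p_i\in\mathcal{I}$; the $p_i$ are unique up to order and $g$ is unique. Each $S^2\times S^1$ summand is realized by composing $m^\vee$ with $m$, producing a ``handle'' exactly as a genus handle is produced in $\operatorname{Cob}(2)$. Each $p_i$ summand is realized by inserting $p_i^{\times\times}$ on a sphere-strand of a genus-$g$ Frobenius bordism. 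Removing the balls recovers $M$. Here we use that the diffeomorphism type of $M$ rel boundary is determined by $\hat M$ and the labelling of the boundary spheres, since the order is fixed; this holds because ball embeddings in a connected oriented manifold are unique up to isotopy (disc theorem). Disconnected bordisms are handled componentwise, and orientation reversal supplies $\overline{p}$ when $p$ is not amphichiral.

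\emph{Sufficiency.} We define a normal form for a word in the generators: a standard Frobenius word $N_{a,b,g}$ (the Cob(2)-type normal form built from $m,1,m^\vee,tr$), with all prime endomorphisms collected onto one fixed strand in a fixed order. The plan is to show that any word reduces to this form using only the commutative Frobenius relations and the legs relations. The Frobenius relations together with the legs relation let a $p^{\times\times}$ slide across any multiplication node. Comultiplication is handled by the Frobenius identity $m^\vee\circ m=(m\otimes id)(id\otimes m^\vee)$, which lets us derive $m^\vee\circ p^{\times\times}=(p^{\times\times}\otimes id)\circ m^\vee$ and its mirror. Units and traces are handled by writing $p^{\times\times}\circ 1$ and $tr\circ p^{\times\times}$ as ends of a strand. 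With these moves every prime endomorphism can migrate to the chosen strand, and the prime endomorphisms commute with each other there, again by the legs relation. The remaining Frobenius word is connected, so by Theorem~\ref{thm:2dtftCFA}'s combinatorics (the Kock/Abrams normal form) it is determined by $a$, $b$ and the genus $g$. Two words giving the same bordism have the same $(a,b,g,\{p_i\})$ by Milnor uniqueness, so they have the same normal form and are therefore equal modulo the relations.

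\emph{Main obstacle.} I expect the hard step to be that, via Kneser--Milnor, ``genus'' is only the count of $S^2\times S^1$ summands, together with the justification that the diffeomorphism type with ordered boundary is determined by the data $(a,b,g,\{p_i\})$. The second point also needs the fact that self-diffeomorphisms of a punctured connected sum can permute the boundary spheres arbitrarily, so that no extra mapping class data appears. I would cite the uniqueness of ball embeddings together with the ``sliding'' of prime summands along arcs for this.

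\emph{Necessity.} For each relation we exhibit a TFT, or a $\operatorname{Vect}$-valued symmetric monoidal functor out of the free presented category, that satisfies all the other relations and violates it. Dropping a legs relation for a fixed $p$ is witnessed by the free model with $Z(S^2)=A$, taking $Z(p^{\times\times})$ to be a non-multiplication endomorphism. This must be realized on the free category, where it suffices to check that the two sides of the legs relation define distinct morphisms there. Necessity of the Frobenius relations follows from the classical minimality in dimension 2.
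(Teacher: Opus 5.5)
Your proposal is correct and is essentially the paper's argument: the sliding moves you derive from the legs and Frobenius relations are Propositions \ref{waistfromCF}--\ref{colegsfromCF}, collecting all primes on one strand is the normal form of Theorem \ref{mainthmG2}, and your non-multiplication endomorphism is the counterexample of Proposition \ref{Plegsnecessary}. The only real difference is how you handle the leftover Frobenius words: you match them directly through the Kock normal form plus Milnor uniqueness of $(a,b,g,\{p_i\})$, whereas the paper goes through the isomorphism $S$ of Theorem \ref{thm:Cob2Cob3CFIso} and the Cerf-path lift of Proposition \ref{prop:keypropCF}, so your route avoids that Cerf-theoretic detour.
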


The $G_1 $ presentation is given by the usual commutative Frobenius relations only. We prove in theorem \ref{mainthmG1} that

\begin{thm} 
$\operatorname{Cob}( 3)_{S^2} $ has a presentation with $G_1$ generators and commutative Frobenius relations.
\end{thm}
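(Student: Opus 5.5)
The plan is to deduce the $G_1$ presentation from the $G_2$ presentation of theorem \ref{thm:trueG2thm}, by a change of generators between the two presentations. First, generation. The prime endomorphism is a composite of $G_1$ generators:
\[
p^{\times\times} = m\circ (p^\times \otimes id_{S^2}).
\]
This holds because gluing $p\setminus \operatorname{int}(D^3)$ onto one input of a pair of pants $S^3\setminus\operatorname{int}(\sqcup^3 D^3)$ gives $p\setminus\operatorname{int}(D^3\sqcup D^3)$. Conversely, $p^\times = p^{\times\times}\circ 1$. Hence $G_1$ and $G_2$ generate the same morphisms, and by theorem \ref{thm:trueG2thm} both generate $\operatorname{Cob}(3)_{S^2}$.

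Next, the relations. Let $\mathcal{F}_1$ be the free symmetric monoidal category on $G_1$ modulo the commutative Frobenius relations, and let $\mathcal{F}_2$ be the free symmetric monoidal category on $G_2$ modulo the commutative Frobenius and legs relations. I would define mutually inverse functors between them.

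The functor $\Phi:\mathcal{F}_2\to\mathcal{F}_1$ sends $p^{\times\times}\mapsto m\circ(p^\times\otimes id)$ and fixes the Frobenius generators. It is well defined only if the legs relation holds in $\mathcal{F}_1$. This follows from associativity and commutativity alone:
\[
m\circ(m(p^\times\otimes id)\otimes id)=m\circ(p^\times\otimes m)=m\circ(p^\times\otimes m\circ\tau)=m\circ(id\otimes m(p^\times\otimes id)),
\]
where the last step again uses associativity and commutativity.

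The functor $\Psi:\mathcal{F}_1\to\mathcal{F}_2$ sends $p^\times\mapsto p^{\times\times}\circ 1$. It is automatically well defined, since $\mathcal{F}_1$ has only Frobenius relations and $\Psi$ preserves them.

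Checking that $\Phi$ and $\Psi$ are inverse:
\[
\Phi\Psi(p^\times)=m\circ(p^\times\otimes 1)=p^\times
\]
by the unit axiom. In the other direction,
\[
\Psi\Phi(p^{\times\times})=m\circ(p^{\times\times}1\otimes id)=m\circ(1\otimes p^{\times\times}) = p^{\times\times}.
\]
The middle step is the legs relation and the last is the unit axiom. This is exactly where the legs relation is needed in $\mathcal{F}_2$ and why it becomes automatic in $\mathcal{F}_1$.

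Orientation reversal should be handled the same way. The reversed generators $\overline{p^\times}:S^2\to\emptyset$ correspond to $tr\circ\overline{p^{\times\times}}$, and the reversed analogues of the identities above follow by duality.

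Finally, both $\mathcal{F}_1$ and $\mathcal{F}_2$ map compatibly to $\operatorname{Cob}(3)_{S^2}$, so $\Phi$ and $\Psi$ commute with the realization functors. Since $\mathcal{F}_2\to\operatorname{Cob}(3)_{S^2}$ is an equivalence by theorem \ref{thm:trueG2thm}, so is $\mathcal{F}_1\to\operatorname{Cob}(3)_{S^2}$.

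The main obstacle is the bookkeeping around orientation reversal and the identification $\mathcal{I}$. One must check that $\overline{p}$ is again in $\mathcal{I}$, so that reversing a generator gives a generator, and that no extra relation in the $G_2$ presentation couples $p^{\times\times}$ with $\overline{p}^{\times\times}$ beyond what passes through the substitution. If theorem \ref{thm:trueG2thm} contains such relations, each must be translated under $\Phi$ and verified to follow from the Frobenius relations. I expect each one to reduce, like the legs relation, to associativity and commutativity once $p^{\times\times}$ is rewritten as $m\circ(p^\times\otimes id)$.
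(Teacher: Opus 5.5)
Your argument is correct, but it takes a different route from the paper.

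\textbf{How the paper argues.} It proves Theorem~\ref{mainthmG1} directly. Lemma~\ref{lemma:pullaside} moves every prime unit, using only coherence isomorphisms and no relations, into the normal form $\overline{H}^m\circ(id\otimes p_m^\times)\circ\rho^{-1}\circ\cdots\circ(id\otimes p_1^\times)\circ\rho^{-1}$, where $\overline{H}^m$ is generated by $C\cup F$. Two such normal forms are then matched by commutative Frobenius moves, using Propositions~\ref{prop:keyprop} and \ref{prop:keypropCF} and the isomorphism with $\operatorname{Cob}(2)$.

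\textbf{How you argue.} You instead perform a Tietze transformation from Theorem~\ref{thm:trueG2thm}. The only nontrivial inputs are:
\begin{enumerate}
\item the legs relation for $m\circ(p^\times\otimes id_{S^2})$ holds modulo associativity and commutativity;
\item $\Psi\Phi=id$, which is exactly the computation in Lemma~\ref{lem:consistencyislegs}.
\end{enumerate}
This is the ``fix'' the paper sketches and sets aside just before Lemma~\ref{lemma:pullaside}. It becomes clean because the $G_2$ relations have already been thinned to CF plus legs, so only the legs relation must be translated, not prime commutativity and the (co)waist relations. There is no circularity, since Theorem~\ref{thm:trueG2thm} does not use Theorem~\ref{mainthmG1}.

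\textbf{What each route buys.} The paper's proof relies only on CF-sufficiency. It is independent of the Cerf-theoretic ``carrying'' argument in Theorem~\ref{mainthmG2} and of Propositions~\ref{waistfromCF}--\ref{colegsfromCF}. Yours is short and purely algebraic, but inherits all of that machinery. Conversely, your $\Phi,\Psi$ are mutually inverse and compatible with realization, so your argument run backwards, combined with the paper's direct proof of Theorem~\ref{mainthmG1}, gives Theorem~\ref{thm:trueG2thm} without Theorem~\ref{mainthmG2}.

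\textbf{Your last paragraph should be dropped, not repaired.} In the paper, ``orientation reversal'' of $p^\times$ is $\overline{p}^\times:\emptyset\to S^2$. This is already in $L$, because $\mathcal{I}$ is closed under $p\mapsto\overline{p}$. Theorem~\ref{thm:trueG2thm} has no relations beyond CF and legs, so nothing further needs checking.

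Suppose instead you adjoined time-reversed prime counits $S^2\to\emptyset$ as independent generators, as you suggest. Then your argument would fail, and so would the statement. $\Phi\Psi$ would send such a generator to $tr\circ m\circ(p^\times\otimes id_{S^2})\circ\lambda^{-1}_{S^2}$, which is not CF-equivalent to it. This is why the paper discards $L^{\vee\prime}$ in Remark~\ref{rmk:Lv'redundant}.
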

What is perhaps surprising about the theorems is that the relations are extremely simple, even though the topology of 3-manifolds are significantly more complicated than the topology of $2$-manifolds. In $\operatorname{Cob}( 2) $ , a morphism is a 2-manifold with boundaries. The classification of $2$-manifolds shows that up to diffeomorphisms the manifold is classified by its genus. The classification of $3$-manifolds is more complicated but standard-- the theorem of Kneser-Milnor prime decomposition will play a repeated central role in our proof:

\begin{thm}
(Kneser-Milnor Prime Decomposition Theorem) 

Let $M$ be a compact, connected and orientable 3-manifold. Then there is a decomposition \[
M = P_1 \# P_2  \# \dots  \# P_m
.\]
where each $P_i$ is prime. This decomposition is unique up to insertion or deletion of $S^3$'s.
\end{thm}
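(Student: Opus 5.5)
The plan is the classical Kneser--Milnor argument, in two parts: existence by induction on a complexity measure, and uniqueness via normal-form position of essential spheres with respect to a system of splitting spheres.

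\emph{Existence.} We proceed in four steps.
\begin{enumerate}
\item We may assume $\partial M$ contains no spheres, since sphere boundary components can be capped off with balls. This changes $M$ only by removing $D^3$'s, which is compatible with how connected sum is defined in the statement.
\item Fix a triangulation $T$ of $M$ and use Kneser's finiteness lemma. There is a constant $c(T)$ (roughly the number of tetrahedra, plus $\dim H_1(M;\mathbb{Z}/2)$) such that any family of disjoint embedded spheres $S_1,\dots,S_k$ with $k>c(T)$ has a complementary region homeomorphic to a punctured $S^3$.
\item The proof of the lemma goes as follows. Put the spheres in normal position with respect to $T$ by the standard compression and isotopy moves. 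These moves never increase complexity and never destroy essentiality, because $S^3$ is irreducible by Alexander's theorem. Then count normal pieces in each tetrahedron: all but boundedly many complementary regions are products $I\times(\text{normal disc})$ glued together. Such a region is an $I$-bundle over a sphere, so its two bounding spheres are parallel and it is a punctured $S^3$.
\item Now induct. If $M$ is not prime, split it along an essential separating sphere. If $M$ contains a non-separating sphere, split off an $S^2\times S^1$ summand. Repeat. By the lemma the process terminates after at most $c(T)$ splittings, and every resulting piece is prime.
\end{enumerate}

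\emph{Uniqueness.} We proceed in five steps.
\begin{enumerate}
\item Following Milnor, first handle $S^2\times S^1$ summands. Using that $M\#(S^2\times S^1)$ is independent of how the summand is attached, and that $(S^2\times S^1)\# P \cong (S^2\times \tilde{S}^1)\#P$ when $P$ is non-orientable (not needed here since $M$ is orientable), reduce to comparing the irreducible factors.
\item Suppose $M=P_1\#\cdots\#P_m=Q_1\#\cdots\#Q_n$, with the $P_i$ and $Q_j$ irreducible, after accounting for the $S^2\times S^1$ summands. Let $\Sigma$ be the disjoint system of spheres realizing the first decomposition. Take an essential sphere $S$ realizing a splitting $Q_1 \# (\text{rest})$.
\item Make $S$ transverse to $\Sigma$ and minimize $|S\cap\Sigma|$ by innermost-disc surgery. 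An innermost disc of $S\cap\Sigma$ on $\Sigma$ cuts $S$ into two spheres, at least one of which is still essential and meets $\Sigma$ in fewer circles. Irreducibility of the punctured $P_i$ is what makes this replacement legitimate.
\item In the end $S$ is disjoint from $\Sigma$, so it lies in some punctured $P_i$. Irreducibility of $P_i$ forces $S$ to bound, on one side, a ball punctured by some spheres of $\Sigma$. It follows that $Q_1\cong P_i$.
\item Cancel this factor and induct on $m$. The cancellation needs the lemma that $M\#P\cong M'\#P$ with $P$ prime implies $M\cong M'$, which is itself proved by the same sphere-surgery argument.
\end{enumerate}

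I expect the main obstacle to be the uniqueness bookkeeping. Surgery may replace the chosen essential sphere by a different one, so one must track which factor gets split off. One must also handle the $S^2\times S^1$ factors carefully, because for non-separating spheres the surgery argument only identifies the number of such summands, via the rank of $H_1$ or $\pi_1$ free-product considerations, rather than the spheres themselves. For that step I would first try Milnor's approach of counting non-separating spheres through a maximal system of them. The normal-surface finiteness lemma is technical but standard, and I would treat it as the second hardest step.
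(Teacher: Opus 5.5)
The paper gives no proof of this theorem; it quotes it as a standard result and uses it as a black box, so your proposal has to stand on its own.

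\textbf{Existence.} This is the standard Kneser argument and is fine in outline, modulo two slips in step 3. First, the normalizing compressions must preserve ``no complementary region is a punctured $S^3$''. Essentiality of individual spheres is not the right invariant: two parallel essential spheres already violate the hypothesis. Second, a region built from parallel normal slabs can also be the twisted $I$-bundle over $\mathbb{RP}^2$, which is a punctured $\mathbb{RP}^3$ rather than a punctured $S^3$. Counting those regions is exactly what the $\dim H_1(M;\mathbb{Z}/2)$ term in your constant is for.

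\textbf{Uniqueness: the gap.} The genuine gap is in uniqueness, and it is the bookkeeping problem you flag but leave open. You surger the single sphere $S$ that splits off $Q_1$, along innermost discs of $S\cap\Sigma$ on $\Sigma$. Such a disc $D$ can lie on the side $B$ of $S$ that is the punctured $Q_2\#\cdots\#Q_n$, and sometimes every innermost disc does. Then $D$ cuts $B$ into $B'$ and $B''$, and the two surgered spheres realize $M\cong\widehat{B'}\#(Q_1\#\widehat{B''})$ and $M\cong\widehat{B''}\#(Q_1\#\widehat{B'})$, where $\widehat{B'}\#\widehat{B''}\cong Q_2\#\cdots\#Q_n$. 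Unless $B'$ or $B''$ is a punctured ball, neither new sphere splits off $Q_1$, so step 4 no longer gives $Q_1\cong P_i$.

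Your surgery preserves only essentiality, and that comes from Alexander's theorem. Irreducibility of the $P_i$ is the right tool, but it applies to surgery on $\Sigma$, not on $S$. Step 5 cannot absorb the problem. The cancellation law $M\#P\cong M'\#P\Rightarrow M\cong M'$ carries essentially the full content of uniqueness, so deferring it to ``the same sphere-surgery argument'' only relocates the gap.

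\textbf{The fix.} The missing idea is to reverse the roles. Fix a system $T$ realizing the $Q$-decomposition, and surger $\Sigma$ along a disc $D\subset T$ that is innermost on $T$.
\begin{enumerate}
\item Such a $D$ lies in a single complementary region of $\Sigma$, which is a punctured copy of an irreducible $X$ (some $P_i$, or $S^3$). Irreducibility of $X$ shows that $D$ cuts this region into a punctured $X$ and a punctured $S^3$.
\item Replace the sphere of $\Sigma$ through $\partial D$ by \emph{both} surgered spheres. The complementary regions are then still one punctured copy of each $P_i$ plus punctured $S^3$'s, while $|\Sigma\cap T|$ drops.
\item Once $\Sigma\cap T=\emptyset$, every sphere of $T$ bounds a ball in the irreducible manifold whose punctured copy contains it, and symmetrically for $\Sigma$.
\item Hence $M$ cut along $\Sigma\cup T$ is at once one punctured copy of each $P_i$ plus punctured $S^3$'s, and one punctured copy of each $Q_j$ plus punctured $S^3$'s. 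This matches the factors without any induction or cancellation lemma.
\end{enumerate}
Put nonseparating spheres into both systems from the start. The $S^2\times S^1$ summands then contribute only punctured $S^3$ pieces, which are irreducible, so they are handled uniformly. Their numbers agree by comparing ranks in $H_1(M)\cong\bigoplus_i H_1(P_i)\oplus\mathbb{Z}^{\ell}$.
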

A connected $3$-manifold $X$ is \textit{prime} if $X=P \# Q $ implies either $P = S^3$ or $Q=S^3$. The $\#$ denotes the \textit{connect sum} operation-- one deletes a 3-ball from both  $P$ and $Q$ and then glues them together along the sphere boundary. Meaning that one can always view $M = M_1 \# M_2 $ as $M\cong (M_1 \setminus \operatorname{int}(D^3)) \cup_{S^2} (M_2 \setminus \operatorname{int}(D^3))$. 

Notice that we can use the prime decomposition theorem to ``chop up'' a bordism $M: \sqcup^{n_{inc}}S^2 \to \sqcup^{n_{out}}S^2$ into prime factors. This is the idea behind the generators $G_1 $ and $G_2 $, which correspond to two different choices of chopping.

To show that $G_1 $ and $G_2 $ with appropriate relations are presentations of the category $\operatorname{Cob}( 3)_{S^2}$, we first consider the non-prime part--if one forgets about the prime factors in $P \subset G_2 $ and $L \subset G_1 $ for a moment and only considers the CF generators, then we are working with the category $\operatorname{Cob}( 3)_{S^2,CF}$ whose objects are disjoint unions of spheres like before, but whose morphisms are bordisms that are generated by CF generators. We prove in theorem \ref{thm:Cob2Cob3CFIso} that 
\begin{thm}
There is a canonical isomorphism $S:\operatorname{Cob}( 2) \overset{\sim}{\to} \operatorname{Cob}( 3)_{S^2, CF}$
\end{thm}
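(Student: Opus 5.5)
Our plan is to build $S$ on objects and morphisms, show it is a well-defined symmetric monoidal functor, and then prove it is bijective on objects and on hom-sets. On objects, $S$ sends $\sqcup^n S^1$ to $\sqcup^n S^2$. This is a bijection on isomorphism classes, and in fact on objects once both categories are skeletalized by the number of components. On morphisms, Theorem \ref{thm:2dtftCFA} says $\operatorname{Cob}(2)$ is the free symmetric monoidal category on a commutative Frobenius object. So it suffices to check that $S^2$ with $(m,1,m^\vee,\tr)$ is a commutative Frobenius object in $\operatorname{Cob}(3)_{S^2,CF}$. Each relation is a diffeomorphism rel boundary between two gluings of punctured $S^3$'s; both sides are copies of $S^3$ with the same number of balls removed and the same boundary labelling. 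By the disc theorem (Palais--Cerf), any two embeddings of $k$ ordered oriented balls in $S^3$ are isotopic. Hence both sides are diffeomorphic to the same standard punctured sphere, with the boundary parametrizations matched. The universal property then produces a unique symmetric monoidal $S$ with $S(\text{pair of pants})=m$, and so on. Fullness onto $\operatorname{Cob}(3)_{S^2,CF}$ is immediate, since by definition its morphisms are generated by the images of the generators.

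The real content is faithfulness. We first identify the image concretely. A composite of CF generators of $S^2$ is a $3$-manifold obtained by gluing punctured $3$-spheres along spheres. Gluing two such pieces along one sphere is a boundary connected sum of punctured spheres, so it is again a punctured $S^3$. Gluing along a sphere joining a component to itself instead adds an $S^2\times S^1$ summand. Thus every morphism $\sqcup^a S^2\to\sqcup^b S^2$ in the image is a disjoint union of components, each diffeomorphic to $(\#^g S^2\times S^1)\setminus \operatorname{int}(\sqcup^k D^3)$. Each component comes with an assignment of incoming and outgoing boundary spheres.

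Next we show these numerical data determine the morphism. This parallels the 2D classification, where a morphism of $\operatorname{Cob}(2)$ is determined by the partition of boundary circles into components together with the genus of each component. The map $S$ sends a genus-$g$ surface component with $k$ boundary circles to the $g$-fold $S^2\times S^1$ sum with $k$ punctures; one checks this on generators, and the loop-closing argument above shows it is preserved under composition. So $S$ induces a map on data, and injectivity of $S$ amounts to two things. First, $g$ is recoverable: for instance $g=\operatorname{rank} H_1$ of the closed-up component, or equivalently the number of $S^2\times S^1$ summands, which is well defined by the uniqueness in Kneser--Milnor. Second, any two diffeomorphisms with prescribed boundary parametrizations differ by one preserving the data. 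The latter needs the fact that for $M=(\#^g S^2\times S^1)\setminus \operatorname{int}(\sqcup^k D^3)$, the orientation-preserving diffeomorphisms of $M$ act on the boundary spheres realizing every permutation and, up to isotopy, every orientation-preserving parametrization of each sphere. The parametrization part holds because $\pi_0\operatorname{Diff}^+(S^2)$ is trivial (Smale), so a parametrization can be adjusted in a collar.

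We expect the main obstacle to be this last step: upgrading ``abstractly diffeomorphic as manifolds with the same boundary partition'' to ``equal as morphisms in $\operatorname{Cob}(3)$'', which requires diffeomorphisms compatible with the given boundary identifications. Our first approach is to permute punctures by isotoping balls around inside the connected interior; ball embeddings are unique up to isotopy in a connected manifold. We then use Smale's theorem to fix each boundary sphere's parametrization by a collar twist. Together these give surjectivity onto boundary data, and hence injectivity of $S$ on hom-sets, completing the isomorphism.
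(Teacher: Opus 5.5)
Your proposal is correct, but it reaches the theorem by a different route from the paper.

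\textbf{The paper's route.} The main proof passes through labelled cospan categories. It quotes Steinebrunner's equivalence $\pi_0\colon\operatorname{Cob}(2)\to Csp(\mathbb{Z}_{\geq 0},1)$ and builds the analogue $\pi_0\colon\operatorname{Cob}(3)_{S^2,CF}\to Csp(\mathcal{M},[S^2\times S^1])$. It then shows the latter is a bijection on connected hom-sets. The shape of connected CF bordisms comes from Corollary~\ref{cor:hominCF}, which rests on spherical Morse functions, Reeb graphs, van Kampen, Kneser--Milnor and the Poincar\'e conjecture. Uniqueness rel boundary comes from the disc theorem. The alternative proof defines $S$ on hom-sets by matching genus with the number of $S^2\times S^1$ summands, without separately checking compatibility with composition.

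\textbf{Your route.} You get an honest symmetric monoidal functor from the universal property of $\operatorname{Cob}(2)$ (Theorem~\ref{thm:2dtftCFA}), after checking the relations via the disc theorem. Fullness comes straight from the definition of $\operatorname{Cob}(3)_{S^2,CF}$. The shape of the image comes from induction over sphere gluings, which needs no Morse theory and no Poincar\'e conjecture. The same induction would also recover Proposition~\ref{prop:sphericalnoirreducible}, since a spherical Morse function cuts a manifold into balls, pants and copants. The paper's route gives a structural labelled-cospan description of both categories; yours is more elementary and self-contained.

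\textbf{The step you flag is unnecessary.} The ``main obstacle'' you single out (equal data implies equal as morphisms in $\operatorname{Cob}(3)$) is not needed. Because $S$ is full, it suffices that the invariant ``boundary partition plus $\operatorname{rank}H_1$ of each capped-off component'' of $S(\Sigma)$ equals the partition-plus-genus data of $\Sigma$. That data determines $\Sigma$ in $\operatorname{Cob}(2)$, so $S(\Sigma)=S(\Sigma')$ forces $\Sigma=\Sigma'$. Uniqueness of a CF bordism with given data then follows as a corollary rather than serving as an input. The gluing rule for $\operatorname{rank}H_1$ even follows from Mayer--Vietoris alone, since spheres have $H_1=0$.

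A terminology point: gluing along an entire boundary sphere gives a connected sum of the capped-off pieces, not a boundary connected sum.
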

The isomorphism $S$ sends $n$-copies of $S^1$ to $n$-copies of $S^2$, and sends the multiplication map of $S^1$ to the multiplication map of $S^2$. Same goes for unit, trace, and co-multiplication. On more complicated bordisms, $S$ sends a genus $g$ bordism to a bordism with $g$-many $S^2 \times S^1$ connect sum factors. This theorem essentially shows that the commutative Frobenius part of $\operatorname{Cob}( S^2)$ is completely determined by $\operatorname{Cob}( 2)$. In particular, the commutative Frobenius relations of $G_1 $ and $G_2 $ generators are necessary.

The Cerf theory we build through geometry can be roughly summarized as follows: every composition in $\operatorname{Cob}( 3)_{S^2}$ is induced by some decomposition, which is induced by some Morse data. Two different compositions of the same morphism are induced by two different Morse data on the same underlying manifold. Let's denote the two Morse functions as $f_0 , f_1 :M \to \mathbb{R}$. We show that there exists a sequence of relations which can put a composition into a ``normal form'' using only \textit{prime relations}. 

A prime relation is a pair of Morse functions which can and will be connected by a path of functions with more than 1 singularities where the function fails to be excellent Morse, unlike commutative Frobenius relations, each of which is a single specific type of singularity. 

Call the Morse functions inducing these normal-form compositions $f_0', f_1'$. We then show the resulting normal-form compositions are related by commutative Frobenius relations only. This means that one can construct a path from $f_0 $ to $f_1 $ consisting of 3 distinct segments:
\begin{enumerate}
\item Segment 1: from $f_0 $ to $f_0' $, which induces the change from the starting composition to the normal-form composition. Since we will use only prime relations, the Morse functions inducing the individual intermediate compositions are separated by more than one singularity.
\item Segment 2: from $f_0 '$ to $f_1 '$, which has only commutative Frobenius type singularities.
\item Segment 3: from  $f_1 '$ to $f_1 $. This is the reverse of segment 1 but for the other composition.
\end{enumerate}
A generic figure depicting this process can be found in \ref{fig:cerftheorybehindthescenes}.

\begin{figure}[ht]
    \centering
\def\svgwidth{1\columnwidth} 
\begingroup%
  \makeatletter%
  \providecommand\color[2][]{%
    \errmessage{(Inkscape) Color is used for the text in Inkscape, but the package 'color.sty' is not loaded}%
    \renewcommand\color[2][]{}%
  }%
  \providecommand\transparent[1]{%
    \errmessage{(Inkscape) Transparency is used (non-zero) for the text in Inkscape, but the package 'transparent.sty' is not loaded}%
    \renewcommand\transparent[1]{}%
  }%
  \providecommand\rotatebox[2]{#2}%
  \newcommand*\fsize{\dimexpr\f@size pt\relax}%
  \newcommand*\lineheight[1]{\fontsize{\fsize}{#1\fsize}\selectfont}%
  \ifx\svgwidth\undefined%
    \setlength{\unitlength}{680.31496063bp}%
    \ifx\svgscale\undefined%
      \relax%
    \else%
      \setlength{\unitlength}{\unitlength * \real{\svgscale}}%
    \fi%
  \else%
    \setlength{\unitlength}{\svgwidth}%
  \fi%
  \global\let\svgwidth\undefined%
  \global\let\svgscale\undefined%
  \makeatother%
  \begin{picture}(1,0.16666667)%
    \lineheight{1}%
    \setlength\tabcolsep{0pt}%
    \put(0,0){\includegraphics[width=\unitlength,page=1]{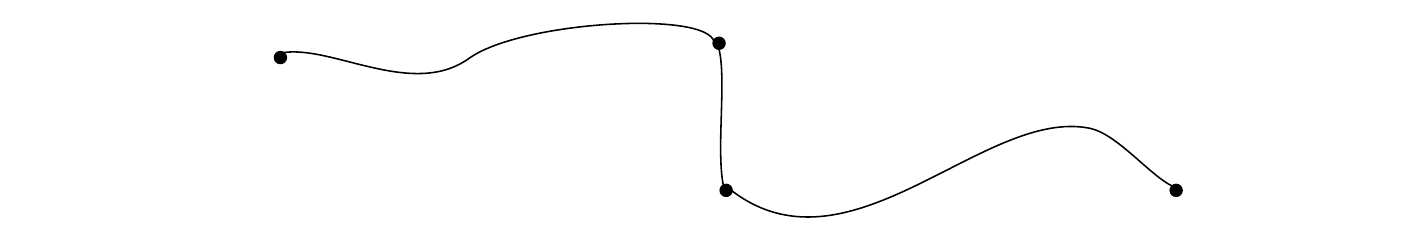}}%
    \put(0.16570476,0.09945908){\makebox(0,0)[lt]{\lineheight{1.25}\smash{\begin{tabular}[t]{l}$f_0$\end{tabular}}}}%
    \put(0.51604382,0.14269708){\makebox(0,0)[lt]{\lineheight{1.25}\smash{\begin{tabular}[t]{l}$f_0'$\end{tabular}}}}%
    \put(0.47391443,0.02517834){\makebox(0,0)[lt]{\lineheight{1.25}\smash{\begin{tabular}[t]{l}$f_1'$\end{tabular}}}}%
    \put(0.82757963,0.04624303){\makebox(0,0)[lt]{\lineheight{1.25}\smash{\begin{tabular}[t]{l}$f_1$\end{tabular}}}}%
    \put(0,0){\includegraphics[width=\unitlength,page=2]{cerftheorybehindthescenes.pdf}}%
  \end{picture}%
\endgroup%

    \caption[A Cerf theory summary]{A path from $f_0$ to $f_1$ using 3 prime relations, then 2 commutative Frobenius relations, then 2 prime relations. Note that each black dot is a Morse data which induces some composition. The prime relations is a pair of Morse data separated by more than 1 singularities, while a commutative Frobenius relation is always given by a single specific kind of singularity.}
    \label{fig:cerftheorybehindthescenes}
\end{figure}

Passing through a TFT, the two presentations of $\operatorname{Cob}( 3)_{S^2}$ give rise to two kinds of structures called P-algebras and L-algebras. Since they are both induced by $\operatorname{Cob}( 3)_{S^2} $, it is unsurprising when we show in lemma \ref{lemma:PLbijection} that there is a bijection between P-algebras and L-algebras. So it makes sense to refer to them as a single kind of structure:
\begin{defn}
Let $A$ be a commutative Frobenius algebra over $k$. Then $A$ is an $L$-algebra (equiv. $P$-algebra) if there are elements $1_p \in A$ (equiv. endomorphisms $e_p:A \to A$ that acts by multiplication by $1_p$) for every diffeomorphism class of connected oriented irreducible 3-manifold $p$.
\end{defn}
The remarkable feature of $L$ and $P$ algebras is their simplicity. We are supposed to think of such an algebra as $Z(S^2)$ for some 3d TFT $Z: \operatorname{Cob}( 3)\to \operatorname{Vect}_k$. Even though the bordisms of $S^2$ are very complicated, geometry forces $G_1 $ and $G_2 $ presentations to be extremely simple, which in turn forces the prime structures of $L$ and $P$ algebras to be also extremely simple.

Relative to existing literature on 3-dimensional TFTs, our results sit somewhat ``diagonally.'' \cite{juhasz2018defining} showed that there is a bijection between the category of $3$-dimensional TFTs and the category of \textit{J-algebras}. The $J$-algebra associated to a TFT $Z$ is a $g\geq 0$ graded algebra $\oplus_g A_g$, with maps that encode geometric operations such as the attachment of 1 and 2 handles. $Z(S^2)$ is the genus $0$ part $A_0$ of the $J$-algebra assigned to $Z$. However J-algebras do not answer the question we are addressing in this paper. For example, a J-algebra $A = \oplus _{g}A_g$ carries with it a ``1-handle map'' $A_0 \to A_1 $, which corresponds to the Morse bordism of attaching a 1-handle to a two-sphere, increasing the genus by 1. What we need to classify is all possible maps from $A_0 $ to $A_0 $, or to $A_0 \otimes A_0 $, or to $A_0 \otimes A_0 \otimes A_0 $, etc, which factors through products of $A_0 $'s exclusively. The domain of these maps can also have multiple factors of $A_0 $. From a physics point of view, while it is useful to understand maps from the space of local operators/states $A_0 $ to whatever $A_{g>0}$ means in the particular contexts,  it is more surprising to learn that the space of local operators/states of a 3-dimensional TFT is a finite dimensional commutative Frobenius algebra. Our work on the presentations of $\operatorname{Cob}( 3)_{S^2}$ and on $L$/$P$ algebras says that the degree 0 part of a $J$-algebra carries additional simple structures labelled by connected oriented irreducible 3-manifolds.

In the context of modern $(\infty,n)$-perspectives of fully-extended topological field theories, it is less clear what kind of structure must replace our $L$/$P$ algebras besides tautology. For example, while $S^1$ is a commutative Frobenius object in $\operatorname{Cob}( 2)$, $S^1$ is an $\mathbb{E}_2$-algebra in $\operatorname{Bord}_2^{fr}$ the $(\infty,2)$-category of framed bordisms. This only captures the operadic structure of $S^1$, rather than the whole PROP (products and permutations). The full PROP is, tautologically, the subcategory of $\operatorname{Bord}_2^{fr}$ spanned by disjoint unions of $S^1$. It is widely known that $S^{n-1}$ is at least an $\mathbb{E}_{n}$-algebra in $\operatorname{Bord}_n^{fr}$, but obviously there is more operadic structure in dimension $n\geq 3$. 

We propose an $\infty$-operad $\mathbb{L}^\otimes $ in definition \ref{defn:tL} and \ref{defn:L} which we conjecture to be isomorphic to the connected part of the full $\infty$-endomorphism operad of $S^2$. An $n$-ary operation is, roughly speaking, a bordism from $n$-many $S^2$ to a single $S^2$, where the bordism is itself a connected $3$-manifold. This is achieved by first specifying a collection of irreducible manifolds $P_i$, then specifying how to glue them together into a single bordism using some connect-sum configuration data. Adding to the plausibility, the $\mathbb{E}_3$ operad sits nicely inside $\mathbb{L}^\otimes $ as a suboperad. The construction is inspired by the unfinished draft of \cite{hatcher2008diffeomorphism}, which just recently was completed by \cite{boyd2026primedecompositionfibresequence} in almost all cases except, unfortunately, for bordisms with spherical boundaries.

\subsubsection{Organization Roadmap}
In Chapter \ref{ch:generators}, we introduce the generators $G_1 $ and $G_2 $ of $\operatorname{Cob}( 3)_{S^2} $ and show in proposition \ref{generate} that they generate all the morphisms.

Chapter \ref{ch:CFrelations} and chapter \ref{ch:prime relations} are dedicated to showing that commutative Frobenius relations are necessary and sufficient for the  $G_1 $ presentation, and additionally the legs relations are necessary for the $G_2 $ presentation. To achieve this, we first outline the basic strategy in section \ref{sec:outlineofstrat}. 

We then first address the commutative Frobenius relations between the shared CF generators of $G_1 $ and $G_2 $ in chapter \ref{ch:CFrelations}. This is done in stages: we first show existence in proposition \ref{prop:CFcommFrob} in section \ref{sec:existenceandsphericalMorse} by analyzing the generic Cerf theory arguments. We then show in theorem \ref{thm:Cob2Cob3CFIso} in section \ref{sec:Cob2Cob3Iso} that there is an isomorphism between $\operatorname{Cob}( 2) $  and $\operatorname{Cob}( 3)_{S^2,CF}$. We then setup some machinery in section \ref{sec:localmodels} so we can prove in section \ref{sec:sufficient} that \ref{cor:CFonlycommFrob} CF generators satisfy no more relations.

We then address the new relations that arise from the prime-labelled generators in chapter \ref{ch:prime relations}. This is also done in stages: first we show that CF generators exists exclusively outside prime factors, a phenomena we call ``no internal relations.'' This is the topic of section \ref{sec:no internal relations}. We then immediately introduce the prime relations in section \ref{sec: external prime relations}: prime commutativity \ref{primecommutativity}, legs and waist relations for $G_2 $ \ref{prop:legsandwaistrelations}, and legs relations for $G_1 $ \ref{prop:Llegs}. In section \ref{sec:external sufficient} we prove in theorem \ref{mainthmG2} and \ref{mainthmG1} that these relations are sufficient to give presentations of $\operatorname{Cob}( 3)_{S^2} $. However, some relations are not necessary. We show in section \ref{sec:legsnecessary} that, besides the commutative Frobenius relations, only the legs relations for $G_2 $ are necessary-- all other relations are implied by commutative Frobenius relations and the legs relations. This is explained in propositions \ref{waistfromCF} to \ref{colegsfromCF}. Therefore we prove in theorem \ref{thm:trueG2thm} that commutative Frobenius relations and legs relations are necessary and sufficient to form a presentation of $\operatorname{Cob}( 3) $.

Chapter \ref{ch:throughZ} is dedicated to the consequences of realizing $\operatorname{Cob}( 3)_{S^2} $ as presentations after passing through $3$-dimensional topological field theories. In section \ref{sec:PLmonoids} we define the notion of an $L$ algebra, or equivalently a $P$ algebra, in definition \ref{defn:P/L algebras}. We also explain some immediate consequences and properties afforded by $L$ algebras. We examine in section \ref{sec:Jalg} the relations between $L$ algebras and $J$-algebras which classify $2+1$-dimensional TFTs. We show that the degree 0 part of every $J$- algebra is an $L$-algebra in theorem \ref{thm:JgivesPL}, and provide a specific recipe in proposition \ref{prop:JtoLmaps} to recover all the prime endomorphisms and prime units using Heegaard splitting. Lastly in section \ref{sec:infoperad}, we define an $\infty$-operad $\mathbb{L}^\otimes $ in definition \ref{defn:tL} and \ref{defn:L}, which has $\mathbb{E}_3^{\otimes }$ as a suboperad. We conjecture in \ref{conj:LisEndfr} and \ref{conj:LisEndSO} that $\mathbb{L}^\otimes $ is isomorphic to the connected part of the $\infty$-endomorphism operad of $S^2$ in $\operatorname{Bord}_3^{fr}$ or $\operatorname{Bord}_3^{SO}$.

\newpage
\chapter{Generators} \label{ch:generators}
There are two approaches to presenting the content. One could either employ the full power of abstraction, in terms of presentations of categories outlined in Appendix \ref{appendix:presentation}, or appeal to intuition carried over from presentations of groups. We will try to accomplish both. The readers are encouraged to refresh their memories on the rigorous setup of quotient categories, generators and relations, and presentations of categories, found in the form of an expository note in the appendix.

The category we wish to find a presentation for is the subcategory of $\operatorname{Cob}(3)$ spanned by spherical objects.
\begin{defn} \label{defn:Cob(3)_S^2}
($\operatorname{Cob}(3)_{S^2} $) Consider the symmetric monoidal category $\operatorname{Cob}(3) $. It has a subcategory whose objects are disjoint unions of $S^2$. Call this category $\operatorname{Cob}(3)_{S^2}$ and note that it inherits the symmetric monoidal structure.
\end{defn}

Note that some authors include all possible codimension $1$ closed oriented manifolds when defining $\operatorname{Cob}( n) $. Even though we have demanded that our categories $\operatorname{Cob}(3)_{S^2}$ to have objects strictly the unit sphere, we did not lose any generality, because in the other convention, we have
\begin{prop}
Let $X, Y$ be diffeomorphic objects of $\operatorname{Cob}(n)$ and $f:X \to Y$ a diffeomorphism. Then $Z(X) \cong Z(Y)$. 
\end{prop}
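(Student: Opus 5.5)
The plan is to realize the diffeomorphism $f$ as an invertible morphism in $\operatorname{Cob}(n)$ via its mapping cylinder, and then use functoriality of $Z$. Concretely, set
\[
C_f = [\,X \times I\,]: X \to Y,
\]
where the incoming boundary $X\times\{0\}$ is identified with $X$ by the identity. The outgoing boundary $X\times\{1\}$ is identified with $Y$ through $f^{-1}$, that is, by the boundary parametrization $Y \to X\times\{1\}$, $y\mapsto (f^{-1}(y),1)$. Symmetrically, define $C_{f^{-1}} = [\,Y\times I\,]: Y \to X$, with the outgoing end identified with $X$ via $f$. Both are legitimate bordisms since $f$ is orientation preserving; this is implicit in $X,Y$ being diffeomorphic as objects of $\operatorname{Cob}(n)$, and I will state it as an assumption.

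The key step is to show that
\[
C_{f^{-1}} \circ C_f = id_X \quad\text{and}\quad C_f\circ C_{f^{-1}} = id_Y
\]
in $\operatorname{Cob}(n)$. The composite $C_{f^{-1}}\circ C_f$ is obtained by gluing $X\times I$ to $Y\times I$ along $X\times\{1\}\cong Y\times\{0\}$ via $f$. I will write down an explicit diffeomorphism to $X\times[0,2]$: the identity on the first half, and $(y,t)\mapsto (f^{-1}(y),t+1)$ on the second half. This map is compatible with the gluing. It is also compatible with the boundary parametrizations: on the incoming end it is the identity, and on the outgoing end $X \to Y\times\{1\}$, $x\mapsto (f(x),1)$, is sent back to $(x,2)$. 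Rescaling $[0,2]$ to $[0,1]$ then gives a boundary-preserving diffeomorphism to the cylinder $X\times I$, which is $id_X$. The other composite is handled identically with the roles of $f$ and $f^{-1}$ exchanged.

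Applying the functor $Z$ then gives
\[
Z(C_{f^{-1}})\circ Z(C_f)=Z(id_X)=id_{Z(X)},
\]
and likewise in the other order, so $Z(C_f): Z(X)\to Z(Y)$ is an isomorphism. Only functoriality is needed here, not monoidality.

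The main obstacle is the bookkeeping of boundary parametrizations. Morphisms in $\operatorname{Cob}(n)$ are bordisms together with identifications of their ends with the source and target, taken modulo diffeomorphisms rel these identifications. The gluing diffeomorphism must therefore be checked carefully to commute with those identifications. Smoothness at the gluing seam is handled by the usual collar conventions, and I will not dwell on it.
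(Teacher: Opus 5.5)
Your proposal is correct and follows the paper's argument: the paper also takes the mapping cylinder $M_f : X \to Y$ and its reverse, notes that both composites are the identity cylinder $X\times I$ (respectively $Y \times I$), and concludes that $M_f$ is invertible, so $Z(M_f)$ is an isomorphism. Your explicit gluing diffeomorphism, the boundary-parametrization check, and your remark that $f$ must be orientation preserving fill in the details the paper leaves as ``unwrap the definition.''
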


\begin{proof}
Consider the mapping cylinder $M_f : X \to Y$ and the reverse $M'_f : Y \to X$. Unwrap the definition and it's easy to see that $M'_f \circ M_f = X \times I $, and so is the other direction. Hence $M_f$ is an isomorphism.
\end{proof}

We say a set $G$ generates morphisms of $(\mathcal{C},\otimes )$ if every morphism in $\mathcal{C}$ is a composition of elements of $G$ and all monoidal products of $G$. Note that we will also include, implicitly, the associated identities, left and right unitors, associators, and braiding. 

We say a set of equalities $R$ among (compositions of monoidal products of) elements of $G$ is sufficient if every equality of morphisms in $\mathcal{C}$ is induced by equalities in $R$. If $G$ generates morphisms and $R$ is sufficient, then we say $(G \mid R)$ is a presentation of $\mathcal{C}$. 

We want a presentation of $\operatorname{Cob}(3)_{S^2}$, both generators and relations. As we will see, there are two sets of generators, named $G_1 $ and $G_2 $, giving rise to two different presentations. The $G_1 $ presentation is associated with the letter ``L'' due to the distinct upside-down ``L''-shaped branches in figure \ref{fig:g1generate}. After passing through TFTs it would give rise to $L$-monoids and $L$-algebras. The $G_2 $ presentation is associated with the letter ``P'' for ``Prime'' due to a lack of imagination. Passing through TFTs gives rise to $P$-monoids and $P$-algebras. As $G_1 $ and $G_2 $ both generate $\operatorname{Cob}(3)_{S^2}$, we shall foreshadow that there is an equivalence between $P$-monoids and $L$-monoids, as well as between $P$-algebras and $L$-algebras.

The following notations will be used throughout the paper:
\begin{notations}
Let $M$ be a manifold. Denote by $M^\circ$ and $\operatorname{int}(M)$ the interior of $M$. Let $n\geq 1$ be a positive integer, then we frequently write $\sqcup^n M$ for short  to denote the disjoint union of $n$-many copies of $M$, instead of $\sqcup_{i=1}^n M$.
\end{notations}

Consider the following diffeomorphism classes of oriented bordisms of two-spheres:  

\begin{notations}
Define the following morphisms of $\operatorname{Cob}( 3)_{S^2} $ :
\begin{align*}
id_{S^2} &= \left[ S^2 \times I \right]  \\
m &= \left[ S^3 \setminus \operatorname{int}(\sqcup_{}^3 D^3) \right]  : S^2 \sqcup S^2 \to S^2 \\
1 &= \left[ D^3  \right] : \emptyset \to S^2\\
m^\vee &= \left[ S^3 \setminus \operatorname{int}(\sqcup_{}^3 D^3)  \right] : S^2  \to S^2\sqcup S^2 \\
\tr &= \left[ D^3 \right]  :S^2 \to \emptyset
\end{align*}
\end{notations}

\begin{mycom} \label{S3minusballs}
This set of bordisms can be neatly organized as follows: recall that the three-sphere is homeomorphic to the gluing of two three-balls along their common boundary (the north and south hemisphere glued along the equator). If we elect incoming/outgoing time direction to run from south to north, then the 1 (namely unit) map can be viewed as the southern hemisphere, and tr (namely trace) as the northern. Either one could be viewed as $S^3 \setminus \operatorname{int}( D^3)$ and distinguished by incoming/outgoing time reversal.  The identity map could be viewed as $S^3 \setminus \operatorname{int}(D^3 \sqcup D^3)$. So we have identified $S^3$ minus one, two, and three $D^3$. In addition, $S^3$ itself can be obtained by the hemisphere decomposition $\tr\circ 1$, and since composing two multiplications together increases the incoming boundary components by 1, $S^3$ minus $n\geq 3$-many $D^3$ interiors can be obtained by composing $n-2$ multiplication maps. \footnotemark

\footnotetext{One can also think of this as the operadic composition law of the (framed) little $3$-disk operad.}
\end{mycom}

\begin{notations} \label{notations:CF}
For suggestive reasons, we let \[
C = \left\{ m, 1 \right\} ,\quad F=\left\{ m^\vee, tr \right\} 
.\]

Furthermore, consider the following three sets indexed by oriented prime 3-manifolds:
\begin{align*}
P' &= \left\{ \left[ p\setminus \operatorname{int}(D^3 \sqcup D^3)\right] :S^2 \to S^2 \mid p \text{ prime} \right\} \\
L' &= \left\{ \left[ p \setminus \operatorname{int} (D^3) \right] : \emptyset \to S^2 \mid p \text{ prime} \right\}  \\
L^{\vee'} &= \left\{ \left[ p\setminus \operatorname{int}(D^3) \right] : S^2 \to \emptyset \mid p \text{ prime} \right\} 
\end{align*}
whose elements we simply refer to as $p^{\times \times}$ and $p^\times, p^{\times \vee}$. Note that $L^{\vee '}$ is the incoming/outgoing reversal of $L'$, but there is no $P^{\vee '}$ as the time reversal of any such bordism is itself.
\end{notations}

\begin{prop} \label{generate}
The homs of $\operatorname{Cob}(3)_{S^2} $ have two generating sets: 
\begin{align*}
G_1'& = C \cup F \cup L' \cup L^{\vee'} \cup \text{orientation reversal}\\
G_2' &= C \cup F \cup P' \cup \text{orientation reversal}
\end{align*}
More precisely, let $F(\text{Gra}_1 )$ and $F(\text{Gra}_2)$ be the free categories of the graphs $\text{Gra}_1$ and $\text{Gra}_2 $, which have $\left\{ \sqcup^{i} S^2 \mid i \in \mathbb{Z}_{\geq 0}\right\} \cong \mathbb{Z}_{\geq 0}$ as vertices and directed edges generated by $G_1' $ and $G_2 '$ and all tensor products. Then the tautological functors $F(\text{Gra}_1) \to \operatorname{Cob}( 3)_{S^2}$ and $F(\text{Gra}_2) \to \operatorname{Cob}( 3)_{S^2}$ are full and bijective on objects.
\end{prop}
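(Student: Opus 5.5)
Since both tautological functors are the identity on objects (vertices are the disjoint unions $\sqcup^i S^2$), bijectivity on objects is immediate, and the content is fullness. So let $M:\sqcup^{a}S^2\to\sqcup^{b}S^2$ be an arbitrary morphism of $\operatorname{Cob}(3)_{S^2}$. The plan is to write $M$ as a composite of monoidal products of generators.

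First reduce to the connected case. $M$ is a disjoint union of connected components $M_1,\dots,M_r$, each a compact oriented 3-manifold with boundary a union of spheres, some incoming and some outgoing. Up to composing with braidings (permutations of boundary spheres), $M$ is the monoidal product of the $M_j$. Hence it suffices to treat a connected $M$ with $a$ incoming and $b$ outgoing spheres.

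Next, cap off the boundary. Let $\hat M$ be the closed connected oriented 3-manifold obtained by gluing a $D^3$ to each boundary sphere. Then $M\cong \hat M\setminus\operatorname{int}(\sqcup^{a+b}D^3)$, and since the balls can be isotoped anywhere inside $\hat M$, the diffeomorphism class of $M$ rel boundary labelling is determined by $\hat M$ together with $(a,b)$. This step uses that any two orientation-preserving embeddings of $\sqcup^k D^3$ into a connected oriented manifold are isotopic, which lets us freely permute and relocate the balls.

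Now apply Kneser--Milnor to get $\hat M\cong P_1\#\cdots\#P_m\#(\#^g S^2\times S^1)$ with each $P_i$ irreducible. Realize $M$ as a gluing of pieces. Start from a spherical piece $S^3\setminus\operatorname{int}(\sqcup^{k}D^3)$, which by Comment~\ref{S3minusballs} is a composite of $m$'s, $1$'s, $m^\vee$'s and $tr$'s, with suitable numbers of incoming and outgoing spheres. Then glue in each prime summand along a sphere.

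For $G_1'$, attach $P_i$ as $p_i^\times$ precomposed into an incoming slot of a multiplication, so each factor appears as $m\circ(p_i^\times\otimes id)$. Each $S^2\times S^1$ summand is realized as the handle $m\circ m^\vee$, whose ``genus'' is exactly one $S^2\times S^1$ connect summand. This is the 3d analogue of the torus in $\operatorname{Cob}(2)$; one checks that $S^3\setminus\operatorname{int}(D^3\sqcup D^3)$ glued to itself along two spheres gives $(S^2\times S^1)$ minus two balls. Orientation reversal is needed to produce $\overline{p}$ from $p$ when the class list is taken up to orientation-preserving diffeomorphism, or to produce $L^{\vee'}$-type pieces. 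For $G_2'$, replace each $m\circ(p_i^\times\otimes id)$ by $p_i^{\times\times}$ inserted on a strand. The result is $p\setminus\operatorname{int}(D^3\sqcup D^3)$ composed along a sphere with a punctured $S^3$, which realizes the connect sum with $p$. If $a=b=0$, use $tr\circ p^{\times\times}\circ 1$.

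Finally, verify that the assembled composite is diffeomorphic to $M$ rel boundary. This follows because gluing along a separating sphere a punctured $P$ to a punctured $Q$ is exactly the connect sum, together with the ball-isotopy uniqueness above. Uniqueness of connect sum in the oriented setting also needs care: orientation-preserving gluing, and disconnected sums being well defined by Palais' disk theorem.

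The main obstacle is the bookkeeping in the final verification. One must check that iterated connect sums performed at arbitrary positions of the chain agree with $\hat M$ with the correct boundary labelling, and that the handle composite $m\circ m^\vee$ indeed contributes an $S^2\times S^1$ summand rather than something else. I would handle this with an induction on $m+g$, adding one summand at a time, using that $X\#Y$ is independent of the chosen balls.
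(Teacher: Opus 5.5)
Your proposal is correct and follows essentially the paper's argument. You cap off the boundary spheres, apply Kneser--Milnor, and reassemble $M$ from punctured copies of $S^3$ (generated by $C\cup F$), attaching the $p^\times$'s in a star for $G_1'$ or placing the $p^{\times\times}$'s on a strand for $G_2'$; your explicit reduction to connected components and your appeal to the disk theorem fill in points the paper leaves implicit.

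There are two cosmetic slips. First, the handle $m\circ m^\vee$ glues two copies of $S^3\setminus\operatorname{int}(\sqcup^3 D^3)$, not of $S^3\setminus\operatorname{int}(D^3\sqcup D^3)$ (the latter glued to itself gives closed $S^2\times S^1$). Second, orientation reversal is not needed to obtain $\bar p$, since $P'$ and $L'$ are already indexed by oriented primes up to orientation-preserving diffeomorphism.
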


\begin{proof}

Let $\left[ M \right] : \sqcup^{n_{inc}} S^2 \to \sqcup^{n_{out}} S^2$ be a morphism from $n_{inc}$-many two-spheres to $n_{out}$-many two-spheres. Since the braiding $\beta _{S^2, S^2}$ permutes the boundary spheres, we can, without loss of generality, assume that within incoming or outgoing boundary components the spheres are not labelled. \cite{kock_frobenius_2003}

Therefore we need to show that given an oriented bordism of unlabelled spheres $[M]$, we can find a composition in terms of $G_1 '$ and $G_2 '$ generators and their tensor products. Let $M$ be a representative of $[M]$, we will show that $M$ is diffeomorphic to gluings of manifolds which are representatives of $G_1 '$ and $G_2 '$ generators.

Consider the closed oriented manifold 
\[
\widehat{M} \cong \Big(\bigsqcup_{i=1}^{n_{inc}+n_{out}}D^3\Big) \bigcup_{\partial M} M
\]
obtained from filling in the bounding spheres. One can show that the gluing is well defined up to diffeomorphism \cite{hirsch_differential_1976}. By the Kneser-Milnor theorem, $\widehat{M}$ is diffeomorphic to a finite connected sum of oriented prime 3-manifolds $P_1, \dots , P_m$, \[
\widehat{M} \cong P_1 \# P_2 \# \dots \# P_m
.\]
Therefore $M$ itself is diffeomorphic to the same finite connected sum minus the interior of the boundary balls \[
M \cong \left(  P_1 \# P_2 \# \dots \# P_m \right) \setminus \operatorname{int}\Big(\bigsqcup_{i=1}^{n_{inc}+n_{out}}D^3\Big)
.\]
Unlike a generic Morse decomposition whose gluing surfaces are constrained by their values, the attaching balls for each connect sum operation can be freely chosen. Depending on the choices of the attaching balls, we will have two different decompositions of $M$ as gluings. To make the expression more symmetric, consider adding two trivial connect sum factors \[
M \cong S^3 \# \left(  P_1 \# P_2 \# \dots \# P_m \right) \# S^3 \setminus \operatorname{int}\Big(\bigsqcup_{i=1}^{n_{inc}+n_{out}}D^3\Big)
\]
and choose\footnote{This is not a constraint but a choice made to make apparent how subsequent choices end up being generated by $G_1'$ and $G_2 '$, much like the use of the ``standard form'' of surfaces commonly used in the classification of 2d TFT, see for eg [Kock].} that the incoming and outgoing balls be cut from these two factors \[
M \cong \Big(S^3 \setminus \operatorname{int}\Big( \bigsqcup_{i=1}^{n_{out}}D^3\Big)\Big) \# (  P_1 \# P_2 \# \dots \# P_m ) \# \Big( S^3 \setminus \operatorname{int}\Big(\bigsqcup_{j=1}^{n_{out}} D^3\Big) \Big) 
.\]

\begin{enumerate}
\item (The choice leading to $G_2 '$) 

For each prime factor $P_j$ there are two chosen embedding of three-balls \[
e^j_{1,2}: D^3 \to P_j ,\quad j=1, \dots , m, 
\]
where $e^j_2$ forms the connect sum with $e^{j+1}_1$ of $P_{j+1}$ unless $j=m$, when $e^m_2$ forms the connect sum with an embedded three ball in $S^3 \setminus \operatorname{int}(\sqcup^{ n_{inc} } D^3)$. We choose $e^1_1$ to form the connect sum with an embedded three-ball in  $S^3 \setminus \operatorname{int}(\sqcup^{n_{out}} D^3)$. Noting that \[
M \# N \cong (M\setminus \operatorname{int}(D^3)) \cup_{\partial D^3} (N \setminus \operatorname{int}(D^3)),
\]
where the boundary of the embedded three balls being identified can be thought of as the boundary being glued over, we may rewrite the connect sum as 
\begin{align*}
M &\cong \Big(S^3 \setminus \operatorname{int}\Big(\bigsqcup_{i=1}^{n_{out}}D^3\Big)\Big) \# \left(  P_1 \# P_2 \# \dots \# P_m \right) \# \Big( S^3 \setminus \operatorname{int}\Big(\bigsqcup_{j=1}^{n_{inc}} D^3\Big) \Big) \\
&\cong \Big(S^3 \setminus \operatorname{int}\Big(\bigsqcup_{i=1}^{n_{out}+1}D^3\Big)\Big) \cup_{S^2}  \Big(P_1 \setminus \operatorname{int}(D^3 \sqcup D^3)\Big) \cup_{S^2} \dots\\
& \qquad \qquad \dots \cup_{S^2} \Big(P_m \setminus \operatorname{int}(D^3 \sqcup D^3)\Big) \cup_{S^2}  \Big( S^3 \setminus \operatorname{int}\Big(\bigsqcup^{n_{int}+1} D^3\Big) \Big).
\end{align*}

Under the natural choice of $e^j_1$ bounding the outgoing sphere and $e^j_2$ bounding the incoming sphere, this composition induces 
\begin{align*}
\Big[M\Big] =& \Big[S^3 \setminus \operatorname{int}(\sqcup^{n_{out}} D^3)\Big] \circ \Big[P_1 \setminus \operatorname{int}(D^3 \sqcup D^3) \Big] \circ \dots \\
& \qquad \qquad \dots \circ \Big[P_m \setminus \operatorname{int}(D^3 \sqcup D^3)\Big] \circ \Big[S^3 \setminus \operatorname{int}(\sqcup^{n_{inc}} D^3)\Big]\\
=& \Big[S^3 \setminus \operatorname{int}(\sqcup^{n_{out}} D^3)\Big] \circ P_1^{\times \times} \circ \dots \circ P_m^{\times \times} \circ \Big[S^3 \setminus \operatorname{int}(\sqcup^{n_{inc}} D^3)\Big]
\end{align*}
and by comment \ref{S3minusballs} the morphisms $[S^3 \setminus \operatorname{int}(\sqcup^{n} D^3)]$ can be obtained as a composition of commutative Frobenius generators. See figure \ref{fig:g2generate} for a sketch of this composition.

\begin{figure}[ht]
\centering
\def\svgwidth{1\columnwidth} 
\begingroup%
  \makeatletter%
  \providecommand\color[2][]{%
    \errmessage{(Inkscape) Color is used for the text in Inkscape, but the package 'color.sty' is not loaded}%
    \renewcommand\color[2][]{}%
  }%
  \providecommand\transparent[1]{%
    \errmessage{(Inkscape) Transparency is used (non-zero) for the text in Inkscape, but the package 'transparent.sty' is not loaded}%
    \renewcommand\transparent[1]{}%
  }%
  \providecommand\rotatebox[2]{#2}%
  \newcommand*\fsize{\dimexpr\f@size pt\relax}%
  \newcommand*\lineheight[1]{\fontsize{\fsize}{#1\fsize}\selectfont}%
  \ifx\svgwidth\undefined%
    \setlength{\unitlength}{680.31496063bp}%
    \ifx\svgscale\undefined%
      \relax%
    \else%
      \setlength{\unitlength}{\unitlength * \real{\svgscale}}%
    \fi%
  \else%
    \setlength{\unitlength}{\svgwidth}%
  \fi%
  \global\let\svgwidth\undefined%
  \global\let\svgscale\undefined%
  \makeatother%
  \begin{picture}(1,0.5)%
    \lineheight{1}%
    \setlength\tabcolsep{0pt}%
    \put(0,0){\includegraphics[width=\unitlength,page=1]{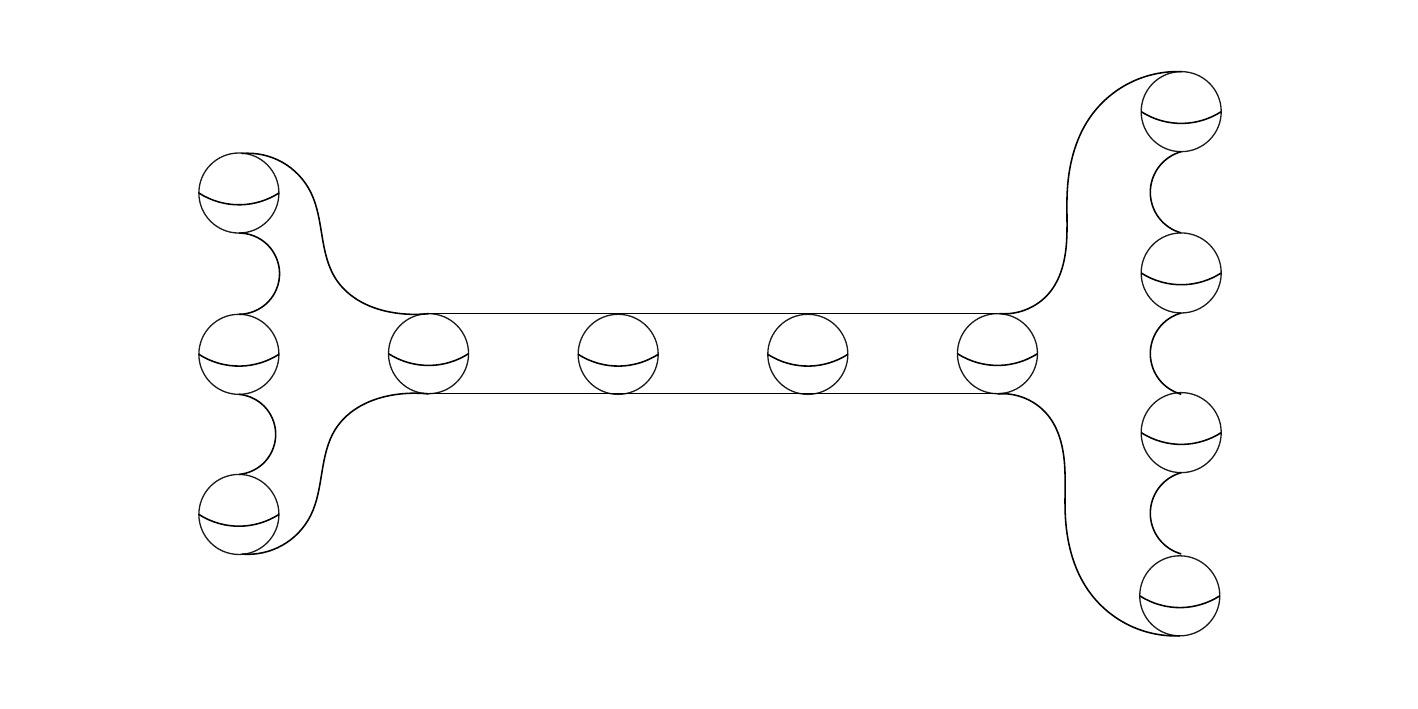}}%
    \put(0.35281871,0.2456672){\makebox(0,0)[lt]{\lineheight{1.25}\smash{\begin{tabular}[t]{l}$P_m$\end{tabular}}}}%
    \put(0.47858588,0.2456672){\makebox(0,0)[lt]{\lineheight{1.25}\smash{\begin{tabular}[t]{l}$\cdots$\end{tabular}}}}%
    \put(0.61819286,0.2456672){\makebox(0,0)[lt]{\lineheight{1.25}\smash{\begin{tabular}[t]{l}$P_1$\end{tabular}}}}%
  \end{picture}%
\endgroup%

\caption[How $G_2 $ generates morphisms]{A sketch of how $G_2 $ generates a generic bordism $[M]$ where $M$ 's prime factors are $P_1 , \dots , P_m$. Note that this picture is ``sideways'' as we usually associate the ``time'' direction as going up. In this case, time goes from left to right. }
\label{fig:g2generate}
\end{figure}


\item (The choice leading to $G_1 '$) 

For each $P_j$ there is one chosen embedding of three balls \[
e^j:D^3 \to P_j
\]
for $j=1, \dots ,m$. Take an additional $S^3$ factor and choose $m+2$ embeddings $e_j, e_0, e_{m+1}$, $j=1, \dots , m$. Let $e_j$ form the connect sum with $e^j$, let $e_0$ form the connect sum with an embedded three ball in $S^3 \setminus \sqcup^{n_{out}}D^3$ and let $e_{m+1}$ form the connect sum with an embedded three ball in $S^3 \setminus \sqcup^{n_{inc}}D^3$. Noting that \[
M\#N \cong (M\setminus \operatorname{int}(D^3)) \cup_{\partial D^3} (N\setminus \operatorname{int}(D^3))
\]
where the boundary of the embedded three balls being identified can be thought of as the boundary being glued over, we may rewrite the connect sum as 
\begin{align*}
M &\cong \Big(S^3 \setminus \operatorname{int}\Big(\bigsqcup_{i=1}^{n_{out}}D^3\Big)\Big) \# \left(  P_1 \# P_2 \# \dots \# P_m \right) \# \Big( S^3 \setminus \operatorname{int}\Big(\bigsqcup_{j=1}^{n_{inc}} D^3\Big) \Big) \\
&\cong \left( \Big(S^3 \setminus \operatorname{int}\Big(\bigsqcup_{i=1}^{n_{out}+1}D^3\Big)\Big) \sqcup \Big(S^3 \setminus \operatorname{int}\Big(\bigsqcup_{i'=1}^{n_{inc}+1}D^3\Big)\Big) \right) \bigcup_{S^2 \sqcup S^2} \left(S^3 \setminus \operatorname{int}\Big(\bigsqcup_{i''=1} ^{m+2}D^3\Big)\right) \\
& \hskip0.2\textwidth \bigcup_{\sqcup^{m} S^2} \left( \bigsqcup_{j=1}^m \left(P_j \setminus \operatorname{int}( D^3)\right) \right) 
\end{align*}
Besides the obvious choice for $e_0$ to bound an outgoing sphere and $e_{m+1}$ to bound an incoming sphere, there is no canonical choice of incoming vs outgoing for the boundaries of $e_j$ and $e^j$ for $j=1, \dots , m$. One choice is for all $e^j$ to bound outgoing boundaries and $e_j$ incoming. Then the $S^3$ which all the prime factors are connected with becomes $S^3 \setminus \operatorname{int}(\sqcup^{m+2} D^3)$ with $m+1$ many incoming boundaries and $1$ outgoing. This choice induces the following composition of $[M]$:  
\begin{align*}
\Big[M\Big] &= \Big[S^3 \setminus \operatorname{int}(\sqcup^{n_{out}+1}D^3)\Big] \circ \Big[S^3 \setminus \operatorname{int}(\sqcup^{m+2}D^3) \Big] 
\circ \Big(\bigotimes_{j=1}^m \Big[P_j \setminus \operatorname{int}(D^3)\Big] \otimes \Big[S^3 \setminus \operatorname{int}(\sqcup^{n_{inc}+1}D^3)\Big]\Big) \\
    &= \Big[S^3 \setminus \operatorname{int}(\sqcup^{n_{out}+1}D^3)\Big] \circ \Big[S^3 \setminus \operatorname{int}(\sqcup^{m+2}D^3) \Big] \circ \Big(\bigotimes_{j=1}^m P_j^\times \otimes \Big[S^3 \setminus \operatorname{int}(\sqcup^{n_{inc}+1}D^3)\Big]\Big) 
\end{align*}
where $P_j^\times = [P_j\setminus \operatorname{int}(D^3)] : \emptyset \to S^2$.
By comment \ref{S3minusballs}, the corresponding morphism $[S^3 \setminus \operatorname{int}(\sqcup^n D^3)]$ can be obtained as a composition of commutative Frobenius generators.

If we had chosen $e^j$ to bound incoming and $e_j$ to bound outgoing, then we would instead induce the following composition of $[M]$: 
\begin{align*}
\Big[M\Big] &= \Big( \bigotimes_{j=1}^m \Big[P_j \setminus \operatorname{int}(D^3)\Big] \otimes \Big[S^3 \setminus \operatorname{int}(\sqcup^{n_{out}+1}D^3)\Big] \Big)  \circ \Big[S^3 \setminus \operatorname{int}(\sqcup^{m+2}D^3)\Big] \circ \Big[S^3 \setminus \operatorname{int}(\sqcup^{n_{inc}+1}D^3)\Big]\\
&= \Big( \bigotimes_{j=1}^m P_j^{\times \vee} \otimes \Big[S^3 \setminus \operatorname{int}(\sqcup^{n_{out}+1}D^3)\Big] \Big)  \circ \Big[S^3 \setminus \operatorname{int}(\sqcup^{m+2}D^3)\Big] \circ \Big[S^3 \setminus \operatorname{int}(\sqcup^{n_{inc}+1}D^3)\Big]
\end{align*}
where now $P_j^{\times ^\vee}=[P_j\setminus D^3]:S^2 \to \emptyset$.

If we had chosen $e^j$ to bound a mixture of incoming and outgoing boundaries, the induced composition of $[M]$ would be a mixture
\begin{align*}
\Big[M\Big] &= \Big( \underset{\text{ incoming}}{\bigotimes_{e^j }} P_j^{\times \vee} \otimes \Big[S^3 \setminus \operatorname{int}(\sqcup^{n_{out}+1}D^3)\Big] \Big)  \circ \Big[S^3 \setminus \operatorname{int}(\sqcup^{m+2}D^3)\Big] \\
& \qquad \qquad \circ \Big( \underset{\text{ outgoing}}{\bigotimes_{e^j}} P_j^\times \otimes \Big[S^3 \setminus \operatorname{int}(\sqcup^{n_{inc}+1}D^3)\Big] \Big) 
\end{align*}
See figure \ref{fig:g1generate} for an illustration of a composition in the case all $e_j$ are incoming. 

\begin{figure}[ht]
    \centering
\def\svgwidth{1\columnwidth} 
\begingroup%
  \makeatletter%
  \providecommand\color[2][]{%
    \errmessage{(Inkscape) Color is used for the text in Inkscape, but the package 'color.sty' is not loaded}%
    \renewcommand\color[2][]{}%
  }%
  \providecommand\transparent[1]{%
    \errmessage{(Inkscape) Transparency is used (non-zero) for the text in Inkscape, but the package 'transparent.sty' is not loaded}%
    \renewcommand\transparent[1]{}%
  }%
  \providecommand\rotatebox[2]{#2}%
  \newcommand*\fsize{\dimexpr\f@size pt\relax}%
  \newcommand*\lineheight[1]{\fontsize{\fsize}{#1\fsize}\selectfont}%
  \ifx\svgwidth\undefined%
    \setlength{\unitlength}{680.31496063bp}%
    \ifx\svgscale\undefined%
      \relax%
    \else%
      \setlength{\unitlength}{\unitlength * \real{\svgscale}}%
    \fi%
  \else%
    \setlength{\unitlength}{\svgwidth}%
  \fi%
  \global\let\svgwidth\undefined%
  \global\let\svgscale\undefined%
  \makeatother%
  \begin{picture}(1,0.5)%
    \lineheight{1}%
    \setlength\tabcolsep{0pt}%
    \put(0,0){\includegraphics[width=\unitlength,page=1]{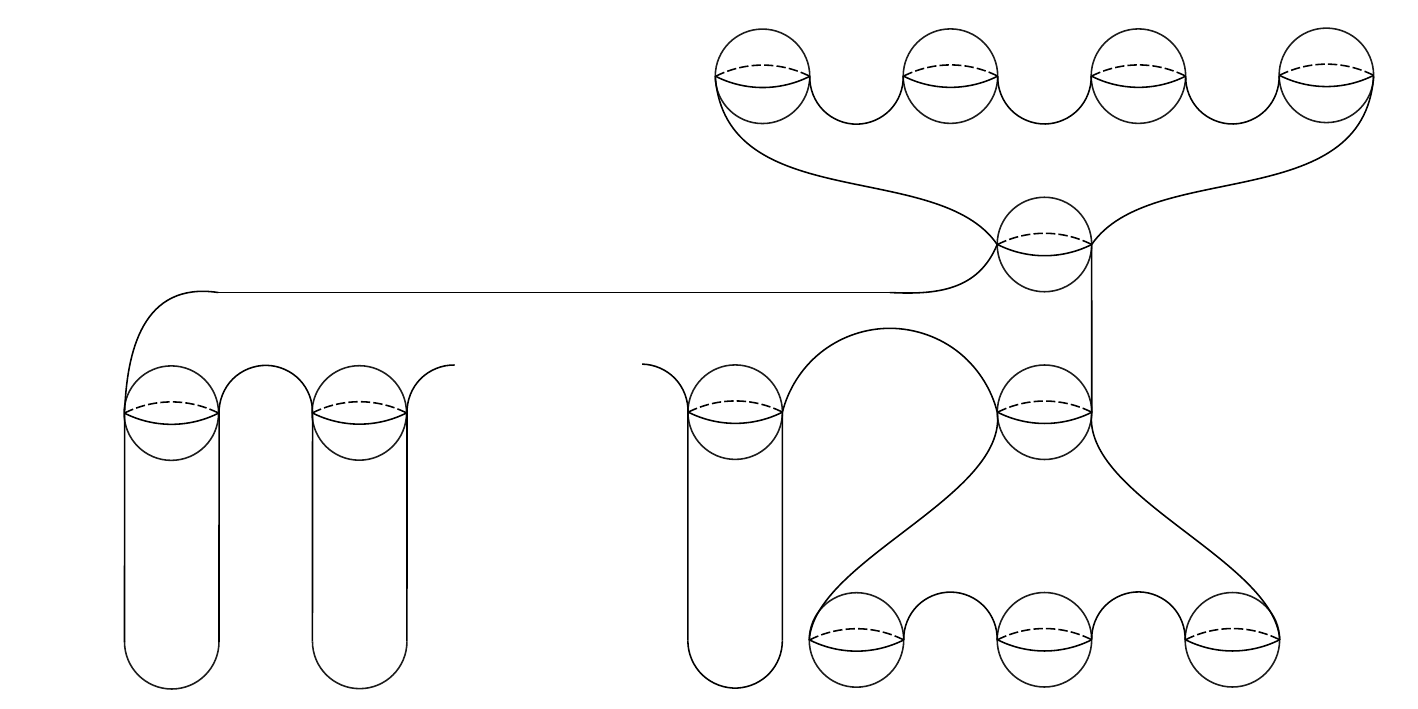}}%
    \put(0.36975396,0.12377459){\makebox(0,0)[lt]{\lineheight{1.25}\smash{\begin{tabular}[t]{l}$\dots$\end{tabular}}}}%
    \put(0.10465227,0.11862513){\makebox(0,0)[lt]{\lineheight{1.25}\smash{\begin{tabular}[t]{l}$P_1$\end{tabular}}}}%
    \put(0.23714024,0.11982129){\makebox(0,0)[lt]{\lineheight{1.25}\smash{\begin{tabular}[t]{l}$P_2$\end{tabular}}}}%
    \put(0.49996532,0.12092883){\makebox(0,0)[lt]{\lineheight{1.25}\smash{\begin{tabular}[t]{l}$P_m$\end{tabular}}}}%
  \end{picture}%
\endgroup%

    \caption[How $G_1$ generates morphisms]{A sketch of how $G_1 $ generates a generic bordism $[M]$ where $M$ 's prime factors are $P_1, \dots P_m $. This particular $[M]$ has $3$ incoming $S^2$ components and $4$ outgoing components. This figure depicts the case where all the bounding balls $e_{j}$ are incoming, namely all the prime factors are in the form of $P_i^\times : \emptyset \to S^2$.}
    \label{fig:g1generate}
\end{figure}

\end{enumerate}
\end{proof}


\begin{rmk} \label{rmk:S2xS1redundant}
Neither $G_1' $ nor $G_2' $ is \textit{minimal}-- $S^2 \times S^1$ is prime, but it has a standard Morse function which factors through the projection onto the $S^1$ factor: \[
S^2 \times S^1 \overset{proj}{\longrightarrow} S^1 \overset{\text{height}}{\longrightarrow} \mathbb{R}
\]
whose critical points are of index $0, 2, 1, 3$ in increasing critical values, and whose regular surfaces are either empty or a disjoint union of $S^2$. This is depicted (in a slightly wonky manner) in figure \ref{fig:s2s2pantsvsheegaard}-(a). Therefore there is a Morse gluing resembling the toric pants decomposition one dimension down, inducing the equality\[
\Big[S^2 \times S^1\Big] \cong tr \circ m \circ m^\vee \circ 1
.\]
In particular, the element $[ S^2 \times S^1 \setminus \operatorname{int}(D^3 \sqcup D^3) ] \in P \subset  G_2' $ is equal to the composition $m \circ m^\vee$, and the elements $\Big[ S^2 \times S^1 \setminus \operatorname{int}(D^3) \Big] \in L, L^\vee \subset  G_1' $ is equal to the compositions $m \circ m^\vee \circ 1$ and $tr \circ m \circ m^\vee$ respectively. 
\end{rmk}

\begin{rmk} \label{rmk:Lv'redundant}
The $L^{\vee '}$ generators are redundant -- let $P^{\times \vee} = [P\setminus \operatorname{int}( D^3)] \in L^{\vee'}$ be a morphism $S^2 \to \emptyset$. Consider $P^\times = [P\setminus D^3] \in L'$ a morphism $\emptyset \to S^2$. Since \[
P \setminus \operatorname{int}(D^3) \cong \left( P \# S^3 \right)  \setminus \operatorname{int}(D^3) 
\cong \Big(P\setminus \operatorname{int}(D^3)\Big) \cup_{S^2} \Big(S^3 \setminus \operatorname{int}(\bigsqcup_{i=1}^3 D^3)\Big) \cup_{S^2} D^3
\]
we learn that \[
P^{\times \vee} = tr \circ m \circ (P^\times \otimes id_{S^2}) \circ \lambda^{-1}_{S^2}
\]
where $\lambda_{S^2}: \emptyset \sqcup S^2 \to S^2$ is the left unitor. This relation is reflected in the sketch if figure \ref{fig:generatorrelations}.
\begin{figure}[ht]
    \centering
\def\svgwidth{1\columnwidth} 
\begingroup%
  \makeatletter%
  \providecommand\color[2][]{%
    \errmessage{(Inkscape) Color is used for the text in Inkscape, but the package 'color.sty' is not loaded}%
    \renewcommand\color[2][]{}%
  }%
  \providecommand\transparent[1]{%
    \errmessage{(Inkscape) Transparency is used (non-zero) for the text in Inkscape, but the package 'transparent.sty' is not loaded}%
    \renewcommand\transparent[1]{}%
  }%
  \providecommand\rotatebox[2]{#2}%
  \newcommand*\fsize{\dimexpr\f@size pt\relax}%
  \newcommand*\lineheight[1]{\fontsize{\fsize}{#1\fsize}\selectfont}%
  \ifx\svgwidth\undefined%
    \setlength{\unitlength}{680.31496063bp}%
    \ifx\svgscale\undefined%
      \relax%
    \else%
      \setlength{\unitlength}{\unitlength * \real{\svgscale}}%
    \fi%
  \else%
    \setlength{\unitlength}{\svgwidth}%
  \fi%
  \global\let\svgwidth\undefined%
  \global\let\svgscale\undefined%
  \makeatother%
  \begin{picture}(1,0.33333333)%
    \lineheight{1}%
    \setlength\tabcolsep{0pt}%
    \put(0,0){\includegraphics[width=\unitlength,page=1]{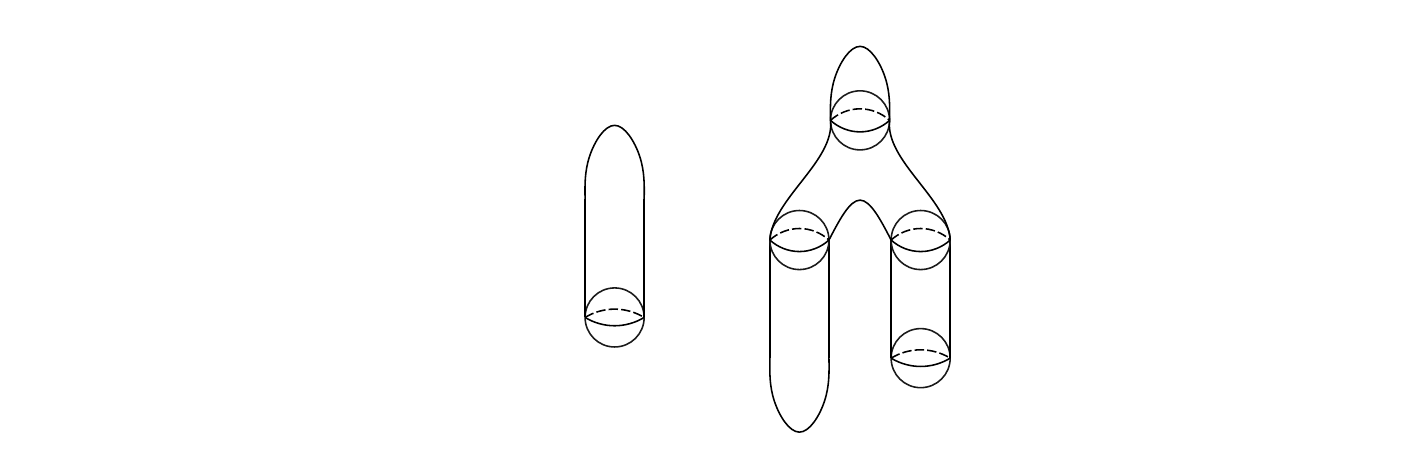}}%
    \put(0.54961943,0.10286317){\makebox(0,0)[lt]{\lineheight{1.25}\smash{\begin{tabular}[t]{l}$P^\times$\end{tabular}}}}%
    \put(0.41205652,0.15455174){\makebox(0,0)[lt]{\lineheight{1.25}\smash{\begin{tabular}[t]{l}$P^{\times \vee}$\end{tabular}}}}%
    \put(0.49300054,0.14900772){\makebox(0,0)[lt]{\lineheight{1.25}\smash{\begin{tabular}[t]{l}$\cong$\end{tabular}}}}%
  \end{picture}%
\endgroup%

    \caption[Redundancy of $L^{\vee \prime}$ generators]{Redundancy of $L^{\vee \prime}$ generators.}
    \label{fig:generatorrelations}
\end{figure}
\end{rmk}

\begin{notations}
Given the two remarks \ref{rmk:S2xS1redundant} and \ref{rmk:Lv'redundant}, we shall remove $S^2 \times S^1$ from the prime generators and delete the redundant set $L^{\vee'}$. Recalling that $S^2 \times S^1$ is the only reducible oriented prime 3-manifold, we define
\begin{align*}
P &= \left\{ p^{\times \times} = \Big[ p\setminus \operatorname{int}(D^3 \sqcup D^3) \Big] :S^2 \to S^2 \mid p\text{ oriented irreducible} \right\} \\
L &= \left\{ p^\times = \Big[ p \setminus \operatorname{int}(D^3) \Big] : \emptyset \to S^2 \mid p \text{ oriented irreducible} \right\}  
\end{align*}
\end{notations}

\begin{prop}
The following sets generate morphisms of $\operatorname{Cob}(3)_{S^2} $ and are both minimal:
\begin{align*}
G_1& = C \cup F \cup L \cup \text{orientation reversal}\\
G_2 &= C \cup F \cup P \cup \text{orientation reversal}
\end{align*}
\end{prop}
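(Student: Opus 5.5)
The proof has two parts: generation, and minimality.

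\emph{Generation.} By Proposition \ref{generate}, $G_1'$ and $G_2'$ generate all morphisms, so it suffices to express every element of $G_i'\setminus G_i$ through $G_i$. Remark \ref{rmk:S2xS1redundant} does this for the $S^2\times S^1$-labelled generators: $(S^2\times S^1)^{\times\times}=m\circ m^\vee$ and $(S^2\times S^1)^{\times}=m\circ m^\vee\circ 1$, and the reverse-direction versions come from $tr$. Remark \ref{rmk:Lv'redundant} writes every $P^{\times\vee}$ as $tr\circ m\circ(P^\times\otimes id_{S^2})\circ\lambda^{-1}_{S^2}$. Substituting these expressions into the compositions of Proposition \ref{generate} shows that $G_1$ and $G_2$ generate.

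\emph{Minimality.} I read minimality as: no element of $G_i$ lies in the subcategory generated by the remaining elements. The main tool is a monoid-valued invariant. Send each morphism $[M]$ to the multiset of irreducible prime factors of $\widehat M$, the manifold obtained by capping the boundary spheres with balls. Kneser--Milnor uniqueness makes this well defined, once we discard $S^3$ and count $S^2\times S^1$ factors separately. I will then check that this invariant is additive under composition and tensor product. For $\otimes$ this is immediate. For gluing along spheres, gluing along one sphere component is a connected sum of the capped manifolds. Gluing additional sphere components then joins the same or different components, and each such join only adds $S^2\times S^1$ summands or merges two components by connected sum. In particular the irreducible multiset is additive. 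Orientation reversal replaces $p$ by $\bar p$.

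With this invariant, every composite of CF generators has empty irreducible multiset. A generator $p^{\times\times}$ or $p^\times$ has multiset $\{p\}$. So any word producing $p^{\times\times}$ must contain a generator labelled $p$ (or $\bar p$ under orientation reversal), and $p$ cannot be omitted. The CF generators are handled with separate invariants. The unit $1$ and the trace $tr$ are needed because, for example, $1:\emptyset\to S^2$ has source $\emptyset$. A generator of type $\emptyset\to S^2$ whose total space has no irreducible factor cannot come from $m$, $m^\vee$, $tr$ and prime-labelled generators without adding irreducible factors. The argument uses the Euler characteristic or counts of boundary components, for instance the quantity $n_{out}-n_{inc}$ summed over components, or the number of connected components created. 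For $m$ and $m^\vee$, the argument is a count of boundary components per connected component: without $m$ one cannot produce a connected bordism with two incoming spheres. To show this I will track the invariant ``number of incoming boundary spheres in each connected component.'' The generators $1$, $tr$, $m^\vee$ and the primes each have at most one incoming sphere per component. I will check that gluing preserves this property, which holds because gluing along spheres cannot merge two incoming-bearing components unless some piece already has two inputs. The dual count of outgoing spheres handles $m^\vee$.

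The main obstacle is the additivity and well-definedness of the prime-factor invariant under arbitrary gluings. I will handle it by capping: gluing along a collection of spheres equals a sequence of connected sums and self-connected-sums, where a self-connected-sum adds one $S^2\times S^1$ summand, and then I apply Kneser--Milnor uniqueness. A second subtlety is the orientation-reversal clause. It is harmless because the invariant is equivariant under $p\mapsto\bar p$, so if $p\neq\bar p$ one must decide whether $\bar p^{\times\times}$ counts as a separate generator. I will treat the orientation reversal of a generator as part of the same generator.
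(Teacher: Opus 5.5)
Your proof is correct. For the prime generators it uses the same input as the paper; for the CF generators it takes a genuinely different route.

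\textbf{Prime generators.} The paper combines Corollary \ref{cor:hominCF} (CF composites cap off to $S^3$ or $\#^r(S^2\times S^1)$) with a brief appeal to uniqueness of prime decomposition. You package the same Kneser--Milnor input as an invariant that is additive under $\otimes$ and gluing: gluing two components is a connected sum, and a self-gluing adds an $S^2\times S^1$ summand. This makes the mixed case (CF generators together with other primes) explicit, where the paper leaves it implicit. Oriented uniqueness of the decomposition also means you need not merge $p$ and $\bar p$: they are already independent generators.

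\textbf{CF generators.} The paper only cites the isomorphism $S:\operatorname{Cob}(2)\to\operatorname{Cob}(3)_{S^2,CF}$. That tacitly imports minimality of $\{1,m,m^\vee,tr\}$ in $\operatorname{Cob}(2)$. It also tacitly uses the prime-factor invariant to keep prime generators out of any word equal to a CF generator. Your invariants avoid both. Any composite has at most one incoming sphere per component: checking layer by layer, each new piece attaches to at most one earlier component. The dual statement holds for outgoing spheres. These invariants work directly in the presence of the prime generators and need no comparison with $\operatorname{Cob}(2)$.

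\textbf{$1$ and $tr$.} Drop the hedged mention of Euler characteristic and $n_{out}-n_{inc}$. The latter is $+1$ for both $1$ and $m^\vee$, so it distinguishes nothing. The clean reason is the one you state first. A word with source $\emptyset$ must begin with a tensor product of generators with empty source. In $G_2\setminus\{1\}$ there are none. In $G_1\setminus\{1\}$ they are prime units, each contributing an irreducible factor that $[D^3]$ lacks. The argument for $tr$ is dual.
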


This proposition will be revisited as \ref{prop:G1G2minimal} and the proof will be presented there.

\begin{rmk}
Another distinction between $S^2 \times S^1$ and other primes, in the setting of our restricted bordism category $\operatorname{Cob}(3)_{S^2} $, is the special role it plays when compositing bordisms together: let $[M] : \sqcup^a S^2 \to \sqcup ^b S^2$ and $[N]: \sqcup^b S^2 \to \sqcup^c S^2$ be two bordisms, and let \[
\widehat{M} \cong (\sqcup^a D^3) \cup_{\sqcup^a S^2} M \cup_{\sqcup^b S^2} (\sqcup^b D^3)
\]
be the closed manifold obtained by filling the incoming and outgoing boundaries of $M$. One can show that the choice of gluing result in a well defined manifold up to diffeomorphism. Define $\widehat{N}$ analogously. Then there are prime decompositions of $\widehat{M}\cong M_1 \# M_2 \# \dots \# M_{m}$ and $\widehat{N} \cong N_1 \# \dots  \# N_n$, where each factor $M_i, N_j$ is prime. Taking the composition along the common boundary $\sqcup^b S^2$, and filling in the remaining boundaries, the resulting manifold gains connected sum factors of $S^2 \times S^1$. To see this, refer to the figure \ref{fig:g2generate} where $M$ has been put into a normal form. Consider doing the same for $N$. The pairing of the outgoing component of $M$ and the incoming components of $N$ gives rise to the new connect sum factors of $S^2\times S^1$, much like increasing genus in one dimension down: \[
\widehat{N\circ M} \cong (N_1 \# \dots \# N_n) \# \Big(\#^{b-1} ( S^2 \times S^1 ) \Big) \# (M_1 \# \dots \# M_m)
.\]
In fact, as we shall see by theorem \ref{thm:Cob2Cob3CFIso}, the morphisms in a subcategory of  $\operatorname{Cob}(3)_{S^2} $ are all $\#^g(S^2\times S^1)$ minus some (interiors of) disks, and $g$ the number of $S^2\times S^1$-factors is equivalent to the genus of some surfaces as a morphism in $\operatorname{Cob}(2)$.
\end{rmk}

\begin{mycom} \label{criticalrelations}
For now, the choice of using $G_1 $ or $G_2 $ is a matter of taste, as there is a clear bijection between the generators and that one can switch between $p^{\times \times} =[p\setminus \operatorname{int}(D^3 \sqcup D^3)] \in P$ and $p^\times = [p \setminus \operatorname{int}(D^3)] \in  L$ by relations 
\begin{align*}
\Big[p\setminus \operatorname{int}(D^3)\Big] &= \Big[p\setminus \operatorname{int}(D^3 \sqcup D^3)\Big] \circ 1\\
\Big[p\setminus \operatorname{int}(D^3 \sqcup D^3)\Big] &= m \circ (\Big[p \setminus \operatorname{int}(D^3)\Big] \otimes id_{S^2})\circ \lambda^{-1}_{S^2}
\end{align*}
where $\lambda_{S^2}: \emptyset \sqcup S^2 \to S^2$ is the left unitor. The first relation geometrically fills in a ball. The latter creates a new boundary component through $m$. In fact, there is an alternative construction \[
\Big[p\setminus \operatorname{int}(D^3 \sqcup D^3)\Big] = m \circ (id_{S^2} \otimes \Big[p \setminus \operatorname{int}(D^3)\Big])\circ \rho^{-1}_{S^2}
\]
where $\rho_{S^2}: S^2 \sqcup \emptyset \to S^2$ is the right unitor. The equivalence of the two constructions is the first example of a \textit{prime relation}. This particular one is explored in proposition \ref{prop:Llegs} and shown to be implied by the more fundamental commutative Frobenius relations in proposition \ref{prop:LlegsfromCF}.

The bijection between $G_1 $ and $G_2 $ generators is depicted in \ref{fig:g1g2bijection}. 
\begin{figure}[ht]
    \centering
\def\svgwidth{1\columnwidth} 
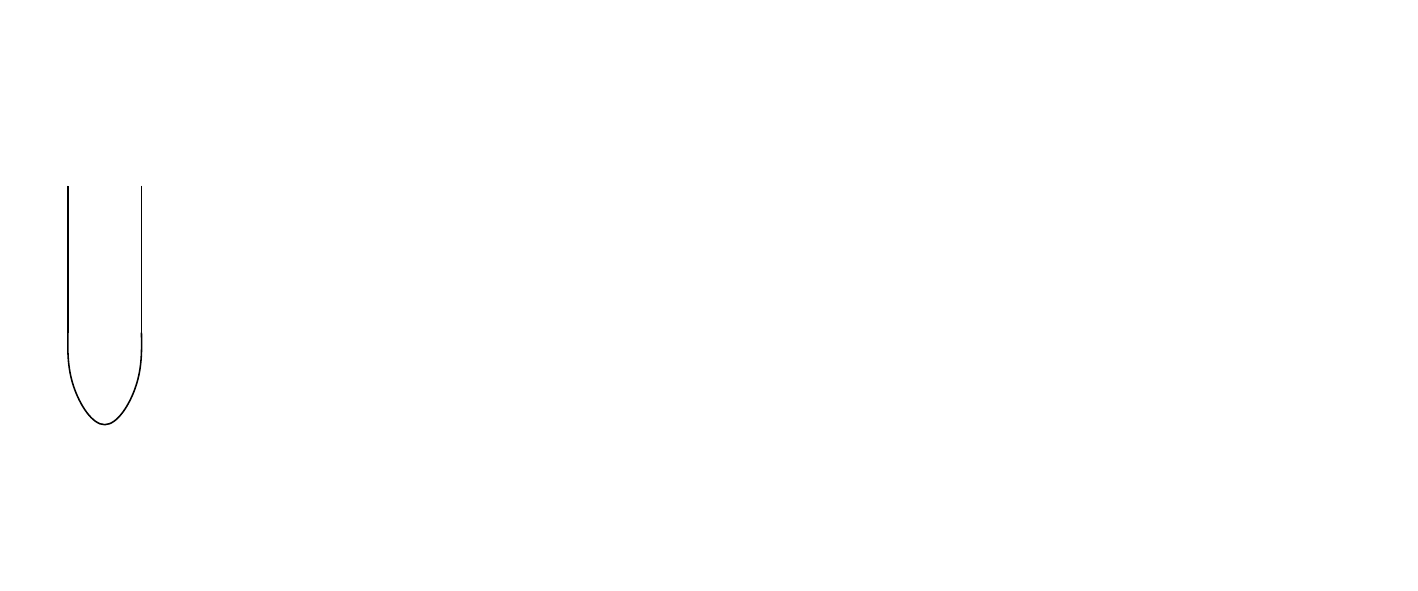

    \caption[Bijection of $G_1$ and $G_2$ generators]{Bijection between $G_1 $ and $G_2 $ generators. Figure (a) shows that all prime units $P^\times$ can be recovered by filling in the incoming ball of the prime endomorphism $P^{\times \times}$. Figure (b) shows that all prime endomorphisms can be recovered from the prime units by post-composing with the 3-dimensional pair of pants. Note that there are two possible orderings of the legs. This is known as the ``legs relations'' for $L\subset G_1 $ generators.}
    \label{fig:g1g2bijection}
\end{figure}

\end{mycom}

\newpage
\chapter{Commutative Frobenius Relations} \label{ch:CFrelations}
%
%
\section{Outline of Strategy} \label{sec:outlineofstrat}
\begin{defn} \label{defn:3d Morse Datum}
Let $M$ be a compact oriented connected 3 manifold with boundary $X_-$ and $X_+$ of codimension $1$. A \textit{prime Morse datum} is a pair $f:M\to \mathbb{R}$ and an ordered tuple $b=(b_0<b_1<\dots <b_m) \in \mathbb{R}^{m+1}$ such that 
\begin{enumerate}
\item $X_- = f^{-1}(b_0)$ and $X_+ = f^{-1}(b_m)$ are the sets of minimum resp maximum of $f$.
\item $b_i$ are regular values of $f$ such that any connected component $C \subset f^{-1}(b_{i-1}, b_i)$ is diffeomorphic to $D^3$, $S^2 \times I$, $S^3 \setminus \operatorname{int}(\sqcup^3 D^3)$, or $p\setminus \operatorname{int}(D^3 \sqcup D^3)$ where $p$ is an oriented irreducible 3-manifold.
\end{enumerate}
$C$ is called an \textit{elementary bordism} if $C$ is diffeomorphic to $D^3$ or $S^3 \setminus \operatorname{int}(\sqcup^3 D^3)$. $C$ is called a \textit{prime bordism} if $C$ is diffeomorphic to $p\setminus \operatorname{int}(D^3 \sqcup D^3)$ for $p\not \cong S^2\times S^1$ oriented irreducible. $C$ is \textit{cylindrical} if $C$ is diffeomorphic to $S^2\times I$. Note elementary bordisms admit Morse datum with only one critical point (the standard Morse function), while a cylindrical one admits datum with no critical point. Any Morse datum on a prime bordism must necessarily have more than one critical point.

We frequently suppress $b$ when discussing the prime Morse datum $f,b$ if it is clear from context which prime decomposition it induces.
\end{defn}

\begin{defn} \label{defn:Prime decomposition}
The \textit{prime decomposition} of a bordism $M$ compact oriented connected 3 manifold with boundary $X_-, X_+$ is a decomposition  \[
M = M_1 \cup_{X_1} M_2 \cup_{X_2} \dots \cup_{X_{m-1}} M_m
.\]
such that $M_i$ is diffeomorphic to a disjoint union of $D^3$, $S^2 \times I$, $S^3 \setminus \operatorname{int}(\sqcup^3 D^3)$, or $p\setminus \operatorname{int}(D^3 \sqcup D^3)$ for $p\not \cong S^2 \times S^1$ oriented irreducible. Additionally we require that $M_1 \cap \partial M = X_+$ and $M_m \cap \partial M = X_-$. 
\end{defn}

\begin{defn} \label{defn:Prime composition}
(Prime composition) of a morphism $[M]$ in $\operatorname{Cob}(3)_{S^2} $ is a collection of morphisms $[M_1], [M_2], \dots , [M_m]$ such that 
\begin{enumerate}
\item each$[M_i]$ is a tensor product of $1$, $tr$, $id$, $m$, $m^\vee$, and $[p\setminus \operatorname{int}(D^3 \sqcup D^3)]$ for $p$ oriented irreducible $3$-manifold.
\item the collection composes to $[M] = [M_1]\circ [M_2] \dots \circ [M_m]$. 
\end{enumerate}
\end{defn}

\begin{claim}
\begin{enumerate}
\item Any prime Morse datum $f, b$ on $M$ induces a prime decomposition of $M$. each factor is $M_{m-i+1}= f^{-1}(b_{i-1},b_i)$.
\item Any prime decomposition of a bordism $M$ induces a prime composition of $[M]$ such that $M_i \in [M_i]$.
\item Any prime decomposition of $M$ comes from some prime Morse datum -- once fixed some ``standard'' Morse functions for each $M_i$, and glue them together into a Morse function on $M$.
\item Any prime composition of a morphism comes from a prime decomposition of a representative.
\end{enumerate}
\end{claim}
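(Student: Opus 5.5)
I will prove the four items in order, since each is a fairly direct unwinding of the definitions, with the only real content in items (3) and (4).

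For (1), I take a prime Morse datum $(f,b)$ and set $M_{m-i+1}=f^{-1}([b_{i-1},b_i])$. Since each $b_i$ is a regular value, the level sets $X_i=f^{-1}(b_i)$ are closed embedded surfaces, and each slab is a compact 3-manifold with boundary $f^{-1}(b_{i-1})\sqcup f^{-1}(b_i)$. By condition (2) of the definition, every connected component of the interior slab is one of the allowed pieces. The closed slab is diffeomorphic to the closure of the open slab, using collar neighbourhoods from the gradient flow near regular levels. The boundary conditions $M_1\cap\partial M=X_+$ and $M_m\cap \partial M=X_-$ follow from $X_\pm$ being the max and min sets. I must also check that every $X_i$ is a disjoint union of $S^2$'s. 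This holds because each level is a boundary component of some allowed piece, and all allowed pieces have only spherical boundary.

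For (2), each $M_i$ is a disjoint union of allowed pieces with incoming boundary $X_i$ and outgoing boundary $X_{i-1}$. After choosing diffeomorphisms of each boundary sphere with the standard $S^2$, each component represents one of $1,tr,id,m,m^\vee,p^{\times\times}$, and the disjoint union represents their tensor product. I will ignore orientation/time conventions and use the braiding to order components. The pieces $D^3$ and $S^3\setminus \operatorname{int}(\sqcup^3D^3)$ may play either role, unit or trace, multiplication or comultiplication, depending on which boundaries are incoming. Composition in $\operatorname{Cob}(3)$ is gluing, so $[M]=[M_1]\circ\cdots\circ[M_m]$. The identification choices do not matter, since $\operatorname{Cob}(3)$ takes diffeomorphism classes rel boundary. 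Orientation-preserving diffeomorphisms of $S^2$ are isotopic to the identity (Smale), so regluing changes nothing up to diffeomorphism.

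For (3), I fix standard functions on each piece:
\begin{itemize}
\item the radial function on $D^3$;
\item the projection on $S^2\times I$;
\item the index-1/2 standard function on $S^3\setminus\operatorname{int}(\sqcup^3D^3)$;
\item for each prime $p$, any Morse function on $p\setminus\operatorname{int}(D^3\sqcup D^3)$ taking its boundary spheres as min and max.
\end{itemize}
Such a function exists by standard Morse theory on bordisms. On $M_i$ I rescale these functions to take values in $[b_{m-i},b_{m-i+1}]$. I then glue along the $X_i$ after first modifying each function in a collar to agree with the collar coordinate, so that the glued function is smooth. The preimages of the open intervals recover the pieces, so $(f,b)$ is a prime Morse datum inducing the decomposition.

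For (4), given a prime composition $[M]=[M_1]\circ\cdots\circ[M_m]$, I choose representatives $M_i$ that are disjoint unions of standard pieces and glue them. The result $N$ is a representative of $[M]$ carrying a prime decomposition by construction. Any other representative $M$ is diffeomorphic to $N$ rel boundary, and transporting the decomposition along this diffeomorphism gives a prime decomposition of $M$.

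The main obstacle I expect is the smoothing and collar bookkeeping in (3), together with the well-definedness of gluing in (2) and (4). These I will handle by citing the collar uniqueness results of Hirsch, as the paper already does for filling spheres.
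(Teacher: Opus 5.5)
\textbf{Gap in step (2).} Your unwinding is what the paper intends; it gives no proof beyond the hint in (3) about gluing ``standard'' Morse functions. Your care with collars, and with Smale's theorem that $\operatorname{Diff}^+(S^2)$ is connected, fills in details the paper omits. But one step fails as the definitions are written. In (2) you assert that every component of a slab represents one of $1, tr, id, m, m^\vee, p^{\times\times}$. The definitions of prime Morse datum and prime decomposition fix only the diffeomorphism type of each component. They do not fix how its boundary spheres are split between the lower and upper level of its slab, so a cylinder, a pants piece or a prime piece may have all of its boundary on one side.

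For example, write $S^3\setminus\operatorname{int}(\sqcup^3D^3)\colon S^2\to S^2\sqcup S^2$ in zig-zag form:
\begin{itemize}
\item $M_2=(S^2\times I)\sqcup(S^2\times I)$, where the first cylinder runs from $X_-$ to $X_1$ and the second has both ends on $X_1$;
\item $M_1=(S^3\setminus\operatorname{int}(\sqcup^3D^3))\sqcup(S^2\times I)$, realizing $m\sqcup id$.
\end{itemize}
This satisfies the definition of a prime decomposition. It is even induced by a prime Morse datum: put an index-$0$ and an index-$2$ critical point in the bent cylinder. Yet $[M_2]=id_{S^2}\otimes\mathrm{coev}$, where $\mathrm{coev}=[S^2\times I]\colon\emptyset\to S^2\sqcup S^2$ is connected. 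No listed generator is a connected bordism $\emptyset\to S^2\sqcup S^2$, so $[M_2]$ is not a tensor product of generators. Hence (2) is false as literally stated, and your argument silently assumes every piece sits in standard position.

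\textbf{The same assumption in (3), and how to repair (2).} Your choices for (3) rely on the same assumption. The projection on $S^2\times I$, the index-$1$/index-$2$ function on the pants, and a function on $p\setminus\operatorname{int}(D^3\sqcup D^3)$ with one boundary sphere as minimum and the other as maximum do not exist on one-sided pieces. Item (3) itself survives, e.g.\ with an index-$0$ plus index-$2$ function on a bent cylinder. Also, interior critical values must lie strictly inside $(b_{m-i},b_{m-i+1})$. Otherwise a unit or trace ball puts a critical point on a level $b_j$, which must be regular, and at $b_0$ this would violate $f^{-1}(b_0)=X_-$.

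To repair (2) you have two options.
\begin{itemize}
\item Build standard position into both definitions: each cylinder or prime piece has one boundary sphere on each side of its slab, and each pants piece has spheres on both sides. This is evidently what the paper means by ``standard''.
\item Refine every slab containing a one-sided piece by inserting extra levels, and pad the other components with identity collars. Use $\mathrm{coev}=m^\vee\circ 1$; $(m^\vee\otimes id)\circ m^\vee\circ 1$ for an all-outgoing pants piece; $(p^{\times\times}\otimes id)\circ m^\vee\circ 1$ for a prime piece with both ends outgoing; and their duals.
\end{itemize}
The second option yields a prime composition of $[M]$, but for a refined decomposition, so the conclusion $M_i\in[M_i]$ must then be weakened.
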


\textbf{General strategies on showing a certain relation is true} 
Suppose we have two composition sequences, say one sequence composes to $\left[ M \right] $ and the other to $\left[ M' \right] $, and we want to show that they are equal. There are two strategies: one at the level of prime decompositions, one at the level of Morse data. 

At the level of prime decompositions, one can show that two chosen representatives $M, M'$ of $[M]$ and $[M']$ have prime decompositions inducing the corresponding compositions, and that $M$ and $M'$ are diffeomorphic. This is convenient when it is easy to argue that $M\cong M'$. One can alternatively show that that the same manifold has two prime decompositions, but it is harder-- this amounts to explicitly identifying the gluing surfaces on a fixed manifold $M$.

At the level of Morse data, one can start with two Morse functions (or rather, prime Morse data) $f_0 ,f_1 :M\to \mathbb{R}$ on the same manifold, and show that $f_0, f_1$ indeed induce the appropriate Morse decompositions. This process necessarily factors through the alternative method at the level of prime decomposition, but is strictly easier, as it gives explicitly the gluing surfaces which are regular surfaces of the Morse data. Note that if we want a non-trivial relation, $f_0, f_1$ must induce non-isomorphic /diffeomorphic prime decompositions, thus inducing different prime compositions. 

\textbf{The classification of relations take the opposite approach.} To classify all possible relations between generating bordisms is to show that for any two prime compositions of the same bordism class $\left[ M \right] $, one can always turn one composition into the other using a set of known relations. This means that there exists a sequence of prime compositions $(C_i) $ such that $C_{i-1}$ and $C_i$ are the same everywhere except for one instance of a known relation. Since the two compositions result in the same bordism class, we may choose the same representative $M$. Hence at the level of prime decompositions, one needs to show that there exists a sequence $(D_i)$ of prime decompositions of $M$ inducing the composition sequence $(C_i)$ such that $D_i$, $D_{i-1}$ are the same except for one application of a known decomposition diffeomorphism. At the level of Morse data, we need a sequence of Morse datum $(f_i, b_i)$ which induce the prime decompositions $(D_i)$

Let's review what this means in $\operatorname{Cob}(2) $. Let $M$ be a compact oriented bordism and $f_0 ,f_1:M \to \mathbb{R}$ prime data which induce compositions $C_0, C_1$. To show that commutative Frobenius relations are sufficient, one must show that there exists a sequence $(f_0, \dots , f_i, \dots , f_1)$ such that $f_{i-1}$ and  $f_i$ induces decompositions that are identical except related by a diffeomorphism of a certain kind, which in turn induces compositions that are identical except related by a commutative Frobenius relation. By noticing that Cerf theory provides a path from $f_0 $ to $f_1 $ which is excellent Morse for all but finitely many singularities, and that the types of singularities can be shown to induce commutative Frobenius relations, one chooses a path and let the sequence of Morse functions be chosen, one for each excellent Morse segment, along which with suitable choices of $b_i$ the decomposition is unique up to diffeomorphism. One summarizes by saying ``the singularities along the Cerf path exhibit the relations necessary to equate composition $C_0 $ to $C_1 $.''

The situation in $\operatorname{Cob}(3)_{S^2} $ is markedly different due to the additional prime morphisms, especially at the level of Morse data. First, external prime relations, such as legs and waist relations, are induced by Morse functions connected by a Cerf path with multiple singularities. Second, there is no longer a bijection between non-trivial singularities and commutative Frobenius relations. For example, the 0-1 birth death does not induce any relations among commutative Frobenius generators.

To show that commutative Frobenius relations between the $C=\left\{ 1, m \right\} $ and $F= \left\{ tr, m^\vee \right\} $ generators are true, we use the local form of a birth-death singularity to deduce the straight forward decompositions, hence compositions; we analyse the topological constraints on the exchanges of critical values and deduce the corresponding decompositions, which induce the compositions. To show that these relations are sufficient among CF generators, we prove in proposition \ref{prop:keyprop} and proposition \ref{prop:keypropCF} that any two Morse data inducing CF-generated compositions are connected by a path whose singularities split the path into segments, each of which is related to its neighbours by inducing a specific relation among the decompositions. To do so, we invoke the equivalence between the types of singularities and the induced decomposition relations, and show that only specific types of singularities are present on this path. This culminates in corollary \ref{cor:CFonlycommFrob}, which states that there are no more relations between CF generators. 

Then we have to deal with the prime bordisms, which have no counterparts in $\operatorname{Cob}( 2)$. There are some obvious composition relations coming from prime decompositions, which can be induced by Morse data closely related to each other. One such example is \textit{prime commutativity} introduced in \ref{primecommutativity}, which says $p^{\times \times}_1 \circ p^{\times \times}_2 = p^{\times \times}_2 \circ p^{\times \times}_1  $. If $(f,b=0)$ is a prime Morse datum inducing a prime decomposition inducing the composition on the left hand side, then ``flipping the picture upside down,'' post-composing with the map $\mathbb{R}\to \mathbb{R}$ sending $x \mapsto -x$ and keeping $b=0$ fixed will induce the composition on the right hand side. See figure \ref{fig:primecommutativity}.

Even though this relation has very simple geometric interpretations, the singularity types of a path between their Morse functions may be very complicated. Therefore we conclude that prime relations such as prime commutativity are \textit{not} classified by singularity types-- rather, they are collections of singularities along Cerf paths. This makes the classification at the Cerf-theoretic level challenging, but we are able to circumvent it through geometric arguments.

After a brief digression regarding (the lack of) \textit{internal relations}, we prove that \textit{legs and waist relations} \ref{prop:legsandwaistrelations} exist by proving that the prime decompositions, which induces the compositions under consideration, are decompositions of diffeomorphic manifolds. At the level of Morse data, the legs relations are trivial, but the story for the waist relations is similar to that of prime commutativity-- paths between the pairs of Morse functions, which induce the waist relations, will have multiple singularities where the function fails to be excellent Morse. Therefore two Morse data inducing a waist relation is connected by a path with more than one singularity.

The proof that commutative Frobenius relations, prime relations, and legs and waist relations together are sufficient can also be viewed at both the level of prime decompositions and at the level of Morse data. At the level of prime decompositions, one first show that there are ``standard'' form prime decompositions which allows one to isolate the prime morphisms Then we show that the parts generated by CF generators can be brought from one decomposition to the other using proposition \ref{prop:keypropCF}. We may hold the Morse functions over the prime part constant, and glue with the corresponding path to get the total path.



We first outline the strategy for this section. Since both $G_1 $ and $G_2 $ share $C$ and $F$ generators, we first prove that these generators satisfy the usual commutative Frobenius relations like their counterparts in $\operatorname{Cob}(2) $, and most importantly that these relations are necessary and sufficient among morphisms generated by $C$ and $F$. 

More precisely, this means that the graph with $\left\{ \sqcup^n S^2 \right\}\cong \mathbb{Z}_{\geq 0}$ vertices and $C\cup F$ (and tensor products) as directed edges, together with commutative Frobenius relations form a presentation \ref{defn:Generators and Relations} of a subcategory of $\operatorname{Cob}(3)_{S^2}$. This category is denoted $\operatorname{Cob}(3)_{S^2, CF}$ and defined as

\begin{defn} \label{defn:Cob(3)_S^2,CF}
($\operatorname{Cob}(3)_{S^2,CF} $) Let $C=\left\{ m,1 \right\} , F=\left\{ m^\vee, tr \right\} $ be defined as in \ref{notations:CF}. Consider the subcategory of $\operatorname{Cob}(3)_{S^2}$ whose morphisms are those which admit a composition in terms of $C\cup F$ generators and their tensor products. Call this category $\operatorname{Cob}(3)_{S^2,CF} $.
\end{defn}

Next, we show that $P$ generators in $G_2 $ satisfy \textit{prime commutativity} in . The more interesting question is whether the commutative Frobenius generators interact with the new prime generators in some non-trivial way. We first address that there are no ``internal'' commutative Frobenius relations inside a prime generator, and then we show that there are only a handful of ``external'' relations each for $P$ and just one for $L$. These external relations are called ``co/legs and co/waist relations'' for $P$ and ``legs relations'' for $L$. Theorem \ref{mainthmG2} shows that $G_2 $, with commutative Frobenius relations, prime commutativity, co/legs and co/waist relations, form a presentation of $\operatorname{Cob}(3)_{S^2}$. In particular, these relations are sufficient. On the other hand, theorem \ref{mainthmG1} shows that $G_1 $, with commutative Frobenius relations alone, form a presentation of $\operatorname{Cob}(3)_{S^2} $. 

The fact that commutative Frobenius relations alone are sufficient for the $G_1 $ presentation raises the question of whether the same could happen to $G_2 $. This turns out to be mostly the case, except for the co/legs relations. We present a series of propositions which shows that prime commutativity and co/waist relations are implied by the co/legs relations and commutative Frobenius relations, culminating in theorem \ref{thm:trueG2thm}, which shows that $G_2 $, with commutative Frobenius relations and legs relations, form a presentation of $\operatorname{Cob}(3)_{S^2}$. In particular, these relations are necessary.

From a higher perspective, what we aim to achieve in this section is to develop the Cerf theory of 3 manifolds equipped with a notion of ``good'' Morse function, but instead of achieving this goal through the more common Jet-stratification analysis/algebraic approach, we exploit the isomorphism \ref{thm:Cob2Cob3CFIso} between $\operatorname{Cob}(2) $ and $\operatorname{Cob}(3)_{S^2} $ and prove purely geometrically that the relations outlined in the previous paragraphs \textit{are} the Cerf theory we promised.

\section{Existence and Spherical Morse Functions} \label{sec:existenceandsphericalMorse}
\begin{defn} \label{defn:good}
Let $M$ be a compact oriented manifold possibly with boundary $\partial M$ and $f:M\to \mathbb{R}$ a Morse function. We say that $f$ is 
\begin{itemize}
\item ``admissible'' if $f : M \to \left[ a,b \right]  $ such that $a$ and $b$ are regular values and $\partial M = f^{-1}(a) \cup f^{-1}(b)$;
\item ``excellent'' if all critical values are distinct;
\item ``spherical'' if all regular level sets are disjoint unions of spheres.
\end{itemize}
\end{defn}

Standard Cerf theory, introduced in \cite{cerf_stratification_1970}, shows that two such admissible Morse functions $f_0 ,f_1 \in C^\infty(M)$ are connected by a path $f_t$ which fails to be ``excellent'' Morse only at finitely many points. For a short working exposition in the context of topological field theories, see the notes by \cite{FreedLecture23}. Passing through these bad points, the Morse function goes through either a birth-death singularity or a critical values exchange. We are interested in classifying those singularities whose regular level sets are disjoint unions of $S^2$, the defining feature of CF generators.

\begin{prop} \label{prop:CFcommFrob}
$C$ and $F$ generators satisfy commutative Frobenius relations.
\end{prop}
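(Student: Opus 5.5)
The plan is to verify each commutative Frobenius relation at the level of prime decompositions, in the sense of the general strategy of Section~\ref{sec:outlineofstrat}. For each relation we take the two sides as prime compositions and glue standard representatives into manifolds $M$ and $M'$. We then show $M\cong M'$ by an orientation- and boundary-preserving diffeomorphism that respects the ordering of the boundary spheres. The key observation is that every composite of $C\cup F$ generators that appears in a commutative Frobenius relation has an underlying connected bordism of genus zero type. Its filled-in closed manifold $\widehat{M}$ has trivial prime decomposition, namely $S^3$ (no $S^2\times S^1$ factors arise, since no composite in the relations closes a loop). Hence $M\cong S^3\setminus \operatorname{int}(\sqcup^{k} D^3)$, where $k$ is the number of boundary spheres. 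As Comment~\ref{S3minusballs} indicates, such a manifold is determined up to diffeomorphism by $k$, together with an assignment of its boundary spheres to incoming and outgoing positions.

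Concretely, we go through the relations one at a time.

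\textbf{Associativity and coassociativity.} Both $m\circ(m\otimes id)$ and $m\circ(id\otimes m)$ are $S^3$ minus four balls, with three incoming spheres and one outgoing sphere.

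\textbf{Unit and counit laws.} The composite $m\circ(1\otimes id)$ is $S^3$ minus three balls with one of them filled by $D^3$. This gives $S^3$ minus two balls, which is $S^2\times I$, so it equals $id_{S^2}$. The counit law is the time reversal of this.

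\textbf{Frobenius relation.} All three composites $(m\otimes id)\circ(id\otimes m^\vee)$, $(id\otimes m)\circ(m^\vee\otimes id)$ and $m^\vee\circ m$ are $S^3$ minus four balls, with two incoming spheres and two outgoing spheres.

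In each case, the gluing of two punctured spheres along a boundary sphere is a connected sum. The Euler characteristic, or equivalently a count of balls removed from $S^3\#S^3=S^3$, confirms the number of boundary components.

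\textbf{Commutativity.} We must show $m\circ\beta_{S^2,S^2}=m$, and this is the step where the labelling of boundary components genuinely matters. We exhibit a diffeomorphism of $S^3\setminus\operatorname{int}(\sqcup^3D^3)$ that swaps the two incoming spheres, fixes the outgoing one, preserves orientation, and is compatible with the boundary parametrizations. We place the three balls along an axis and rotate by $\pi$ about a perpendicular axis through the center of the outgoing ball. Restricted to each boundary sphere this is a rotation, hence isotopic to the identity, because $\operatorname{Diff}^+(S^2)$ is connected. This lets us correct the map near the boundary by collar isotopies so that it matches the parametrizations exactly. Cocommutativity follows by time reversal.

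We expect the only real obstacle to be this boundary bookkeeping. Equality in $\operatorname{Cob}(3)_{S^2}$ requires diffeomorphisms that commute with the fixed boundary parametrizations, not merely an abstract diffeomorphism. The uniform tool is that any orientation-preserving diffeomorphism of $S^2$ is isotopic to the identity (Smale, or Cerf for $S^2$). We will use it once, as a lemma: two gluings of punctured $S^3$'s that agree up to orientation-preserving diffeomorphisms of the boundary spheres are equivalent as bordisms. With this lemma in hand, every relation reduces to counting balls and matching incoming and outgoing labels.

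Finally, as a cross-check with the Cerf-theoretic viewpoint, we would note that the 1--2 birth--death singularity and the exchange of critical values on spherical Morse functions realize exactly these identifications.
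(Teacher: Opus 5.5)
Your proof is correct, but it takes a different route from the paper. The paper argues Cerf-theoretically. It reads off the unit and counit laws from the local models $x^3-tx\pm(y^2+z^2)$ of the $(0,1)$ and $(2,3)$ birth--death singularities. It then uses $\chi(\widehat{M})=0$ to sort the possible exchanges of two critical values into the cases $3\to 1$, $1\to 3$ and $2\to 2$ (index pairs $(1,1)$, $(2,2)$, $(1,2)$), which yield associativity, coassociativity and the Frobenius relation. Finally it cites Kock and simply asserts co/commutativity.

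You instead verify each relation directly on diffeomorphism types. Every composite occurring in the relations is a tree of punctured spheres, hence $S^3$ minus $k$ labelled balls, and such a bordism is unique up to boundary-compatible diffeomorphism. Your version is more self-contained as an existence proof. In particular, it actually proves commutativity and takes care of the boundary parametrizations, both of which the paper passes over. The paper's version instead attaches each relation to a specific codimension-one singularity of spherical Morse functions. That is the form in which these relations are reused later, as local failure types in Proposition~\ref{prop:keyprop}, to prove sufficiency.

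Two small corrections. First, the uniqueness of $S^3\setminus\operatorname{int}(\sqcup^k D^3)$ with labelled incoming and outgoing spheres is not really supplied by Comment~\ref{S3minusballs}. It is the disc theorem: any two orientation-preserving embeddings of $\sqcup^k D^3$ into $S^3$ are ambient isotopic, as the paper uses in the proof of Theorem~\ref{thm:Cob2Cob3CFIso}. The connectedness of $\operatorname{Diff}^+(S^2)$ is needed only to absorb changes of boundary parametrization.

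Second, your closing cross-check is wrong as stated. In dimension three the unit and counit laws correspond to $(0,1)$ and $(2,3)$ birth--death. A $(1,2)$ birth--death produces a level set of positive genus and is not a relation among CF generators (Remark~\ref{rmk:12Bnotspherical}). The $(1,2)$ birth--death plays the counit role only one dimension down, in $\operatorname{Cob}(2)$.
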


\begin{proof}
We handle the proof in two parts. The first part handles relations due to birth-death singularities, and the second part handles exchanges.

\begin{enumerate}
\item A birth-death singularity is the creation/annihilation of a pair of critical points whose indices are adjacent $(i,i+1)$, $i=0,1, \dots , \dim(M)$. In 3 dimensions, the possible birth-death singularities are between $(0,1)$, $(1,2)$, and $(2,3)$. Near the birth of these singularities, the path $f_t$ has the local form (up to a constant) \[
  f_t(x, y , z) = x^3 - t x \pm y^2 \pm z^2 ,\quad t\in \left[ -1, 1 \right] 		
.\]
More specifically, for $(0,1)$ birth\footnote{For the death version, take $f_{-t}$}, the local form is \[
  f_t(x,y,z) = x^3 - tx + y^2 + z^2
.\]
where the two critical points are $(\pm \sqrt{\frac{t}{3}} ,0,0)$ of index $0,1$ respectively. Geometrically, consider filling one of the missing 3-balls in $S^3 \setminus \operatorname{int}(D^3 \sqcup D^3 \sqcup D^3)$, resulting in $S^3 \setminus \operatorname{int}(D^3 \sqcup D^3)$.

\begin{figure}[ht]
    \centering
\def\svgwidth{1\columnwidth} 
\begingroup%
  \makeatletter%
  \providecommand\color[2][]{%
    \errmessage{(Inkscape) Color is used for the text in Inkscape, but the package 'color.sty' is not loaded}%
    \renewcommand\color[2][]{}%
  }%
  \providecommand\transparent[1]{%
    \errmessage{(Inkscape) Transparency is used (non-zero) for the text in Inkscape, but the package 'transparent.sty' is not loaded}%
    \renewcommand\transparent[1]{}%
  }%
  \providecommand\rotatebox[2]{#2}%
  \newcommand*\fsize{\dimexpr\f@size pt\relax}%
  \newcommand*\lineheight[1]{\fontsize{\fsize}{#1\fsize}\selectfont}%
  \ifx\svgwidth\undefined%
    \setlength{\unitlength}{680.31496063bp}%
    \ifx\svgscale\undefined%
      \relax%
    \else%
      \setlength{\unitlength}{\unitlength * \real{\svgscale}}%
    \fi%
  \else%
    \setlength{\unitlength}{\svgwidth}%
  \fi%
  \global\let\svgwidth\undefined%
  \global\let\svgscale\undefined%
  \makeatother%
  \begin{picture}(1,0.33333333)%
    \lineheight{1}%
    \setlength\tabcolsep{0pt}%
    \put(0,0){\includegraphics[width=\unitlength,page=1]{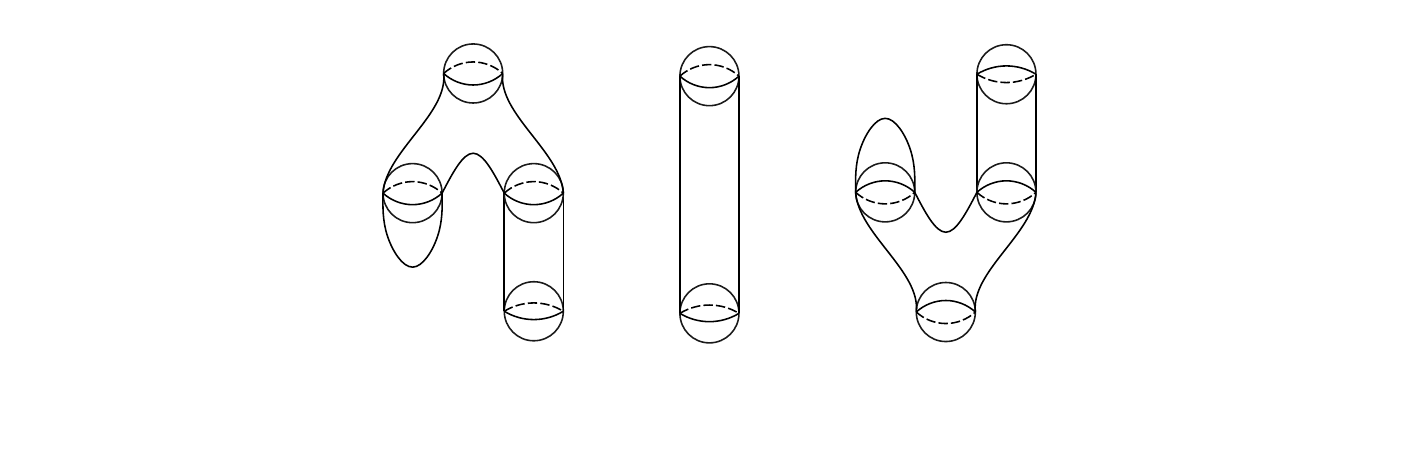}}%
    \put(0.42515872,0.19250444){\makebox(0,0)[lt]{\lineheight{1.25}\smash{\begin{tabular}[t]{l}$\cong$\end{tabular}}}}%
    \put(0.54908555,0.19137529){\makebox(0,0)[lt]{\lineheight{1.25}\smash{\begin{tabular}[t]{l}$\cong$\end{tabular}}}}%
    \put(0.46941468,0.03702234){\makebox(0,0)[lt]{\lineheight{1.25}\smash{\begin{tabular}[t]{l}$S^2 \times I$\end{tabular}}}}%
  \end{picture}%
\endgroup%

    \caption[Birth and Death of $(0,1)$ and $(2,3)$ critical points]{The birth and death of a pair of critical points of index $(0,1)$ and $(2,3)$. The first equality, when read from left to right, shows the death of critical points of indices $0$ and $1$. One can imagine pushing the left cone up until the bottom tip is level with the crouch of the pair of pants, then smoothly deform the result into a straight cylinder. When read from right to left, it depicts the birth of critical points of indices $0$ and $1$. The second equality when read from left to right depicts the birth of $2$ and $3$. }
    \label{fig:01death}
\end{figure}

The former is a representative of the pair of pants bordism $m$, while the latter is diffeomorphic to $S^2\times I$ which is a representative for the identity morphism $id_{S^2}$. The filling 3-ball is a representative of the unit bordism $1$. The first equality in figure \ref{fig:01death} provides a familiar geometric intuition. The local form of the birth of $(0,1)$ critical points encodes the geometric fact that $1$ is the unit with respect to the multiplication: \[
  m \circ (1 \otimes id_{S^2}) \cong id_{S^2}
.\]

The local form for the birth of $(2,3)$ critical points is \[
  f_t(x,y,z) = x^3 - tx -y^2 -z^2
.\]
which corresponds to the same picture but turned upside down as illustrated in figure \ref{fig:01death} as the second equality, encoding the geometric fact that $tr$ is the counit with respect to co multiplication\footnote{Strictly speaking, the left side needs to be pre-composed with a unitor. We shall ignore the unitors and associators for this part of the proof for clarity. However much of the rest of the paper work with them more carefully and explicitly.}: \[
  (tr \otimes id_{S^2})\circ m^\vee \cong id_{S^2}
.\]


\item This part of the proof is inspired by \cite{FreedLecture23}. By a classic theorem in Morse theory \cite{hirsch_differential_1976} Thm 3.4 and 3.5, if $f:\widehat{M}\to \left[ a,b \right] $ is a Morse function on a closed manifold $\widehat{M}$, and $v_k$ is the number of critical points of $f$ of index $k$, then \[
  \chi (\widehat{M}) = \sum_{k=0}^{\dim(M)}  (-1)^k v_k
.\]

We want to classify all possible connected bordisms before and after exchanges of two critical values. We demand such a bordism has two critical points at all times along the Cerf path $f_t$. Let $(i,j)$ denote the indices of the critical points whose critical values are exchanged. We know that neither $i$ nor $j$ can be $0$ -- indeed, at the singular time when the critical values coincide, the critical level set must be connected. Otherwise it takes a handle attachment to connect the index $0$ critical point to the other critical point, which is only possible if $M$ has at least 3 critical points. 

A similar argument shows that neither $i$ nor $j$ can be $3$.

So there are only three possible exchanges we need to classify:  indices $(1,1)$, $(1,2)$, and $(2,2)$ exchanges. The $(2,1)$ exchange is the reversal of the $(1,2)$ exchange by taking $f_{-t}$. 

Since $m$ and $m^\vee$ are the generators of index $1$ and $2$ in CF, we need only consider relations involving these two generators. They respectively decreases/increases the number of boundary components by $1$. By using only two of them, the only possible changes in incoming to outgoing boundary components is $1\to 1$, $2\to 2$, $3\to 1$, and $1\to 3$.

Let $M$ be such a bordism, and let $\widehat{M}$ be the closed manifold one obtains by filling in the boundary components. Then \[
  0= \chi(\widehat{M}) = v_0 -v_1 +v_2 -v_3 
.\]
where $v_0 $ is the number of incoming components and $v_3 $ is the number of outgoing components. \footnote{Secretely using the fact that one can glue two Morse functions together}

For the case of $3\to 1$, we learn that $-v_1 +v_2=-2$, and since we only have two critical points on $M$ at all times, the critical points being exchanged must have indices $(1,1)$. Therefore we are looking for two combinations of two $m$ 's which are both bordisms with three incoming and one outgoing boundary components. There are only two, illustrated in figure \ref{fig:assocandcoassoc}-(a), which are the three dimensional analogue of the associativity condition in two dimensions: \[
  m \circ (m \otimes id_{S^2}) \cong m \circ (id_{S^2} \otimes m)
.\]

\begin{figure}[ht]
    \centering
\def\svgwidth{1\columnwidth} 
\begingroup%
  \makeatletter%
  \providecommand\color[2][]{%
    \errmessage{(Inkscape) Color is used for the text in Inkscape, but the package 'color.sty' is not loaded}%
    \renewcommand\color[2][]{}%
  }%
  \providecommand\transparent[1]{%
    \errmessage{(Inkscape) Transparency is used (non-zero) for the text in Inkscape, but the package 'transparent.sty' is not loaded}%
    \renewcommand\transparent[1]{}%
  }%
  \providecommand\rotatebox[2]{#2}%
  \newcommand*\fsize{\dimexpr\f@size pt\relax}%
  \newcommand*\lineheight[1]{\fontsize{\fsize}{#1\fsize}\selectfont}%
  \ifx\svgwidth\undefined%
    \setlength{\unitlength}{680.31496063bp}%
    \ifx\svgscale\undefined%
      \relax%
    \else%
      \setlength{\unitlength}{\unitlength * \real{\svgscale}}%
    \fi%
  \else%
    \setlength{\unitlength}{\svgwidth}%
  \fi%
  \global\let\svgwidth\undefined%
  \global\let\svgscale\undefined%
  \makeatother%
  \begin{picture}(1,0.5)%
    \lineheight{1}%
    \setlength\tabcolsep{0pt}%
    \put(0,0){\includegraphics[width=\unitlength,page=1]{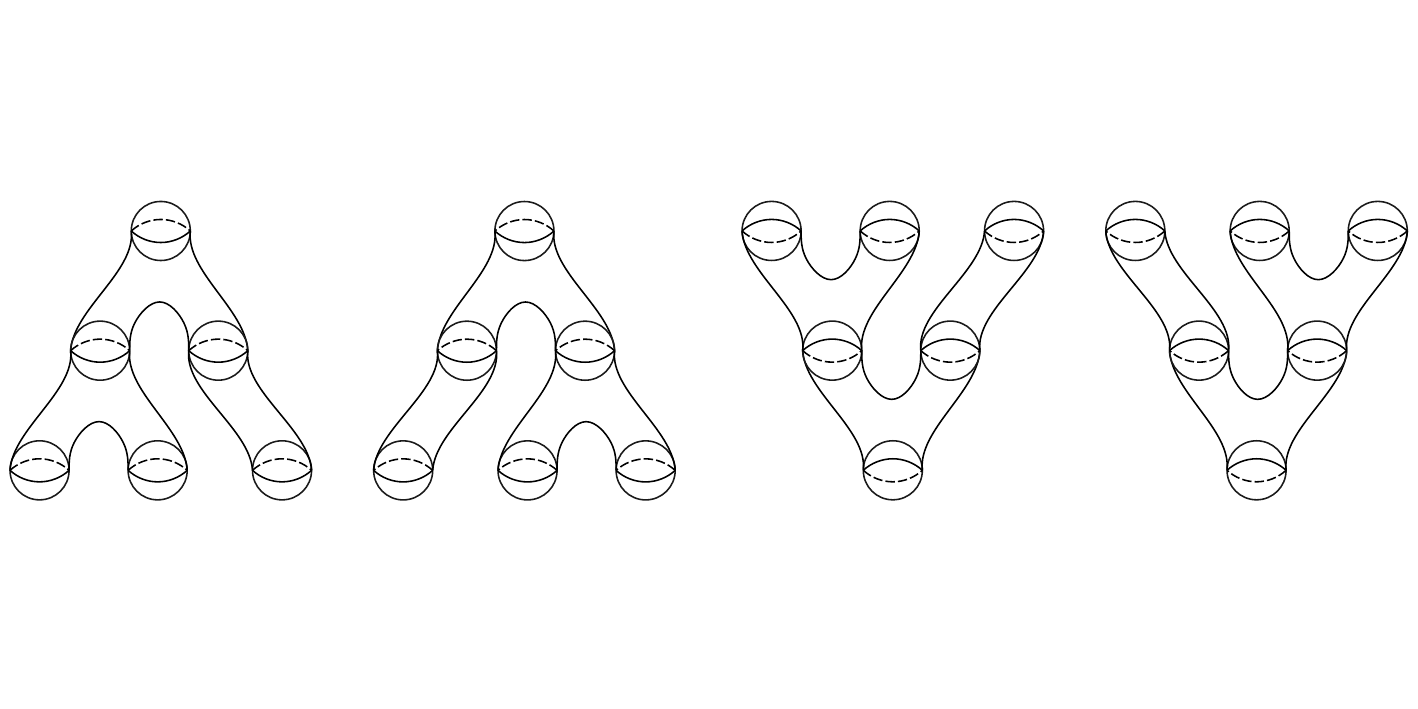}}%
    \put(0.23594689,0.24806544){\makebox(0,0)[lt]{\lineheight{1.25}\smash{\begin{tabular}[t]{l}$\cong$\end{tabular}}}}%
    \put(0.74022967,0.24800498){\makebox(0,0)[lt]{\lineheight{1.25}\smash{\begin{tabular}[t]{l}$\cong$\end{tabular}}}}%
    \put(0.23574162,0.072133){\makebox(0,0)[lt]{\lineheight{1.25}\smash{\begin{tabular}[t]{l}(a)\end{tabular}}}}%
    \put(0.74750907,0.072133){\makebox(0,0)[lt]{\lineheight{1.25}\smash{\begin{tabular}[t]{l}(b)\end{tabular}}}}%
  \end{picture}%
\endgroup%

    \caption[Associativity and Co-associativity]{(a) Associativity of multiplication. (b) Coassociativity of comultiplication.}
    \label{fig:assocandcoassoc}
\end{figure}
For the case of $1\to 3$, we learn that $-v_1 +v_2 =2$, meaning the exchange is of type $(2,2)$, so we are looking for ways to compose two $m^\vee$ which results in one incoming and two outgoing boundary components. The only two possible compositions illustrated in figure \ref{fig:assocandcoassoc}-(b) are the three dimensional analogue of coassociativity in two dimensions: \[
  (m^\vee \otimes id_{S^2}) \circ m^\vee \cong (id_{S^2} \otimes m^\vee) \circ m^\vee
.\]


For the case of $2\to 2$, we learn that $-v_1 +v_2 =0$, so the exchange is of type $(1,2)$. The only two possible compositions of one  $m$ and one $m^\vee$ which result in a bordism from two incoming to two outgoing boundary components are the analogue of the Frobenius relation: \[
  m^\vee \circ m \cong (m \otimes id_{S^2})\circ  (id_{S^2} \otimes m^\vee)
.\]

This is depicted in figure \ref{fig:12frob}.
\begin{figure}[ht]
    \centering
\def\svgwidth{1\columnwidth} 
\begingroup%
  \makeatletter%
  \providecommand\color[2][]{%
    \errmessage{(Inkscape) Color is used for the text in Inkscape, but the package 'color.sty' is not loaded}%
    \renewcommand\color[2][]{}%
  }%
  \providecommand\transparent[1]{%
    \errmessage{(Inkscape) Transparency is used (non-zero) for the text in Inkscape, but the package 'transparent.sty' is not loaded}%
    \renewcommand\transparent[1]{}%
  }%
  \providecommand\rotatebox[2]{#2}%
  \newcommand*\fsize{\dimexpr\f@size pt\relax}%
  \newcommand*\lineheight[1]{\fontsize{\fsize}{#1\fsize}\selectfont}%
  \ifx\svgwidth\undefined%
    \setlength{\unitlength}{680.31496063bp}%
    \ifx\svgscale\undefined%
      \relax%
    \else%
      \setlength{\unitlength}{\unitlength * \real{\svgscale}}%
    \fi%
  \else%
    \setlength{\unitlength}{\svgwidth}%
  \fi%
  \global\let\svgwidth\undefined%
  \global\let\svgscale\undefined%
  \makeatother%
  \begin{picture}(1,0.29166667)%
    \lineheight{1}%
    \setlength\tabcolsep{0pt}%
    \put(0,0){\includegraphics[width=\unitlength,page=1]{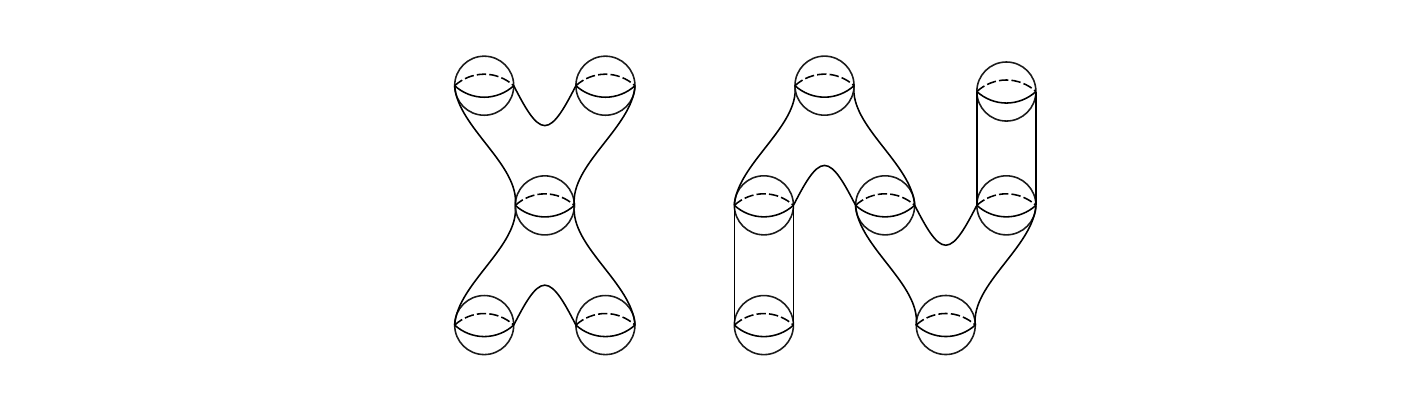}}%
    \put(0.46167628,0.14224078){\makebox(0,0)[lt]{\lineheight{1.25}\smash{\begin{tabular}[t]{l}$\cong$\end{tabular}}}}%
  \end{picture}%
\endgroup%

    \caption[Frobenius relation]{Frobenius relation}
    \label{fig:12frob}
\end{figure}


\end{enumerate}

So far we have demonstrated that CF has units, counits, associativity, coassociativity, and the Frobenius relation. By \cite{kock_frobenius_2003} these are equivalent to the relations which define a Frobenius algebra, namely these bordisms form the Frobenius PROP.  We have co/commutativity as well, upgrading to a commutative Frobenius relation. All together we see that CF generators satisfy commutative Frobenius relations.
\end{proof}

\begin{rmk} \label{rmk:12Bnotspherical}
It is worth drawing attention to the fact that even though type $(1,2)$ birth-death singularities are allowed by generic Cerf theory along path $f_t$ connecting two Morse functions $f_{0,1}:M\to \mathbb{R}$, it does not constitute a relation among the CF generators-- these generators, along with their pre-equipped Morse functions, have spherical regular surfaces. However the local form of type $(1,2)$ birth singularity is given by \[
f_t(x,y,z) = x^3 - tx -y^2 + z^2 ,\quad t\in \left[ -1,1 \right] 
.\]
For $t<0$ there are no critical points. $t=0$ has a degenerate critical point at the origin. For $t>0$ there are two critical points $(\pm \sqrt{\frac{t}{3}} ,0,0)$, of index $1$ and $2$ respectively. Let $r \in f_{t}(\pm \sqrt{\frac{t}{3}} ,0,0)$ be a regular value, one can show that the regular level surface $f_t ^{-1}(r) $ has genus at least 1 (see figure \ref{fig:12b}). Therefore there is no type $(1,2)$ birth-death relations between CF generators.

\begin{figure}[ht]
    \centering
\def\svgwidth{1\columnwidth} 
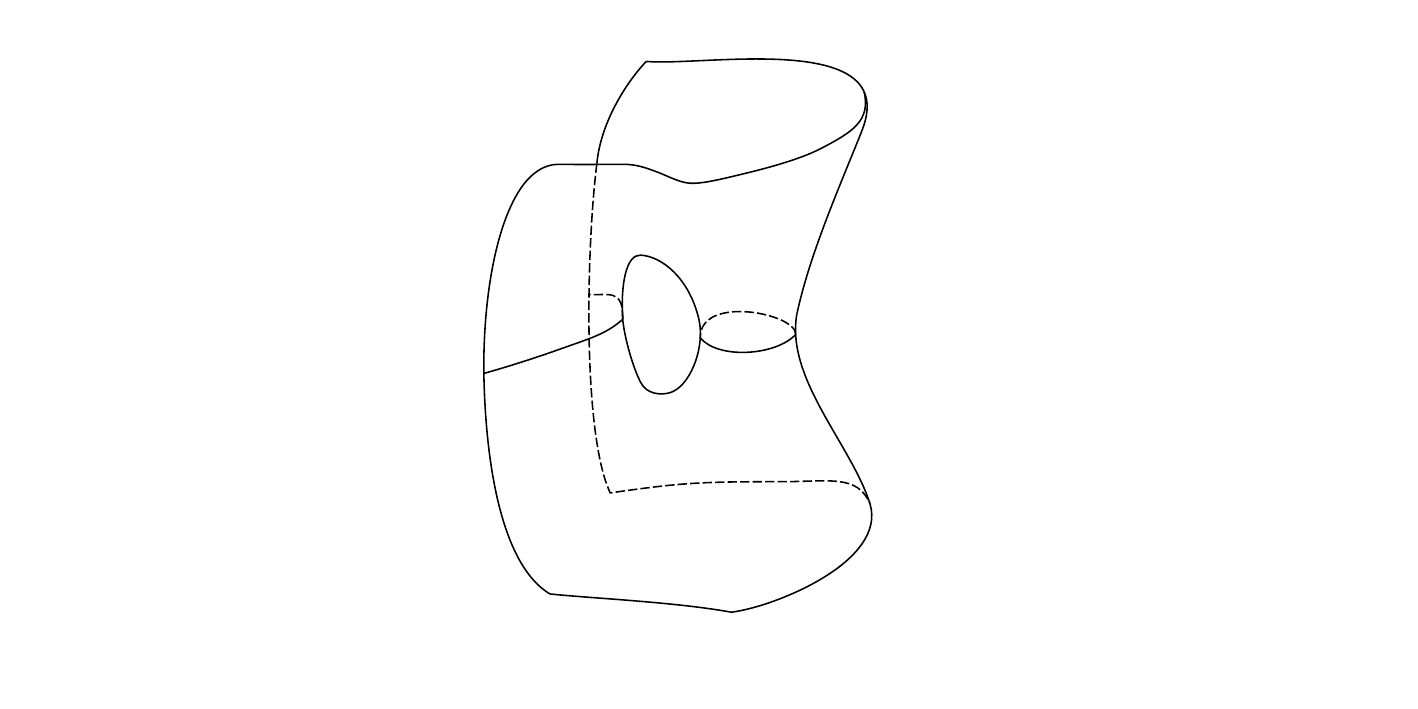

    \caption[The local picture of the birth of a pair of critical points of index $1$ and $2$]{The local picture of the regular surface $f^{-1}(r)$ after the birth of a pair of critical points $c_{1,2}$ of index $1$ and $2$, where $f(c_1) < r < f(c_2)$. Notice that this surface has genus at least 1, hence is not spherical.}
    \label{fig:12b}
\end{figure}

Cerf-theoretically, this means the type $(1,2)$ birth-death singularities cannot occur if we demand the path $f_t$ between two ``good'' Morse functions $f_0 ,f_1 $ remains ``good'' Morse. By ``good'' Morse here we mean spherical admissible Morse functions-- whose regular level sets are all disjoint unions of two-spheres. 
\end{rmk}

\begin{rmk} \label{S2xS1PantsvsHeegaard}
It is also worth noting that we did not treat the type $1 \to 1$ exchanges, which has $-v_1 +v_2 =0$, meaning the critical values being exchanged are of indices $(1,2)$. There is only one such bordism which is a composition of CF generators, analogous to the pants decomposition of the torus in two dimensions in figure \ref{fig:s2s2pantsvsheegaard}-(a) : \[
\Big[ (S^2 \times S^1 )\setminus \operatorname{int}(D^3 \sqcup D^3) \Big]  = m \circ m^\vee
.\]
\begin{figure}[ht]
    \centering
\def\svgwidth{1\columnwidth} 
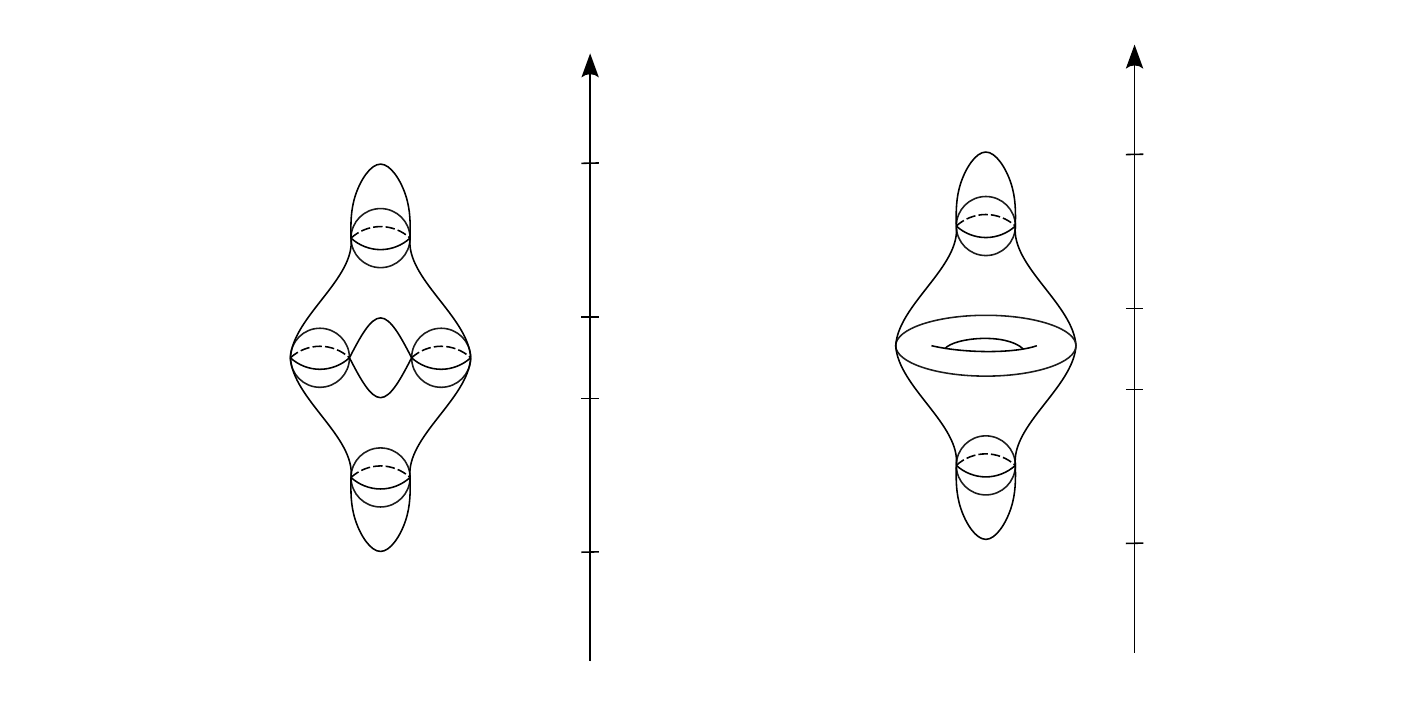

    \caption[Pants and Heegaard decomposition of $S^2 \times S^1$]{Two different decompositions of $S^2 \times S^1$. Figure (a) is the usual ``pants'' decomposition which resembles the pants decomposition of a torus in 2-dimensions. A regular surface $f^{-1}(r)$, for any $r$ between the index $2$ and $1$ critical values, is the disjoint union $S^2 \sqcup S^2$. In contrast, figure (b) is the (minimal) Heegaard decomposition, where a regular surface $f^{-1}(r)$, for any $r$ between the index $1$ and $2$ critical values, is a torus. }
    \label{fig:s2s2pantsvsheegaard}
\end{figure}

To understand what happens if one insists on a $(1,2)$ exchange to occur, consider the following Morse function introduced earlier in remark \ref{rmk:S2xS1redundant}\[
S^2 \times S^1 \overset{\text{proj}}{\longrightarrow } S^1 \overset{\text{Height}}{\longrightarrow} \mathbb{R}
.\]

The standard decomposition of this Morse function induces the composition \[
[S^2 \times S^1] = tr \circ m \circ m^\vee \circ 1
\]
where the elementary bordisms are of index $3, 1, 2, 0$ respectively, in descending critical value . If we exchange the critical values of the critical points whose indices are $(1,2)$, the indices would read $3,2,1,0$ in descending critical value. But the only way to attach a 1-handle to a ball would increase the bounding surface genus by 1. So the regular surface between index 2 and 1 is a torus, and this Morse function gives the (minimal) Heegaard splitting \ref{thm:HeegaardSplitting} of $S^2\times S^1$, illustrated in figure \ref{fig:s2s2pantsvsheegaard}-(b).


While this type of exchange is perfectly normal in generic Cerf theory, it does not give us a relation between CF generators, which requires that the Morse functions before and after have no regular level set which has a non-zero genus component.

We will say more about this in example \ref{eg:S2xS1pantstoHeegaardtopants} after introducing an important lemma.
\end{rmk}

%

\begin{lemma}
Let $\left[ M \right] $ be a morphism in $\operatorname{Cob}(3)_{S^2,CF}$. Then there exists an admissible excellent spherical Morse function on a representative $M$.
\end{lemma}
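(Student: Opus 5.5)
The plan is to build the Morse function generator by generator and glue. By Definition \ref{defn:Cob(3)_S^2,CF}, $[M]$ admits a composition
\[
[M] = [M_1]\circ [M_2]\circ \dots \circ [M_k],
\]
where each $[M_i]$ is a tensor product of identities, braidings and elements of $C\cup F$. One could also use the isomorphism of Theorem \ref{thm:Cob2Cob3CFIso} to put it in a normal form, but that is not needed. By Claim (4), this composition comes from a decomposition of some representative
\[
M = M_1 \cup_{X_1} \dots \cup_{X_{k-1}} M_k ,
\]
where each $X_i$ is a disjoint union of $S^2$'s and each $M_i$ is a disjoint union of copies of $S^2\times I$, $D^3$ (as unit or trace) and $S^3\setminus \operatorname{int}(\sqcup^3 D^3)$ (as $m$ or $m^\vee$). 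Braidings and unitors are realized by diffeomorphisms of cylinders, so they are absorbed into the gluing maps.

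First, each local piece gets a standard spherical Morse function, taking values in $[0,1]$ with the incoming boundary at level $0$ and the outgoing boundary at level $1$.
\begin{itemize}
\item On $S^2\times I$, use the projection to $I$. It has no critical points, and every level set is $S^2$.
\item On $D^3$, use $\pm|x|^2$, rescaled. This has a single critical point of index $0$ (for the unit) or index $3$ (for the trace). The level sets are spheres or a point.
\item On $S^3\setminus\operatorname{int}(\sqcup^3 D^3)$, use the function from the $(0,1)$ birth local model in Proposition \ref{prop:CFcommFrob}. Concretely, take a height function on $S^3$ with one minimum, one maximum and a cancelling index $0$/$1$ pair, and delete small balls around the two minima and the maximum. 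This gives a single critical point of index $1$ for $m$; the index $2$ version for $m^\vee$ is obtained by negating. The regular level sets are two spheres below the critical value and one sphere above it (reversed for $m^\vee$). So every regular level set is a disjoint union of spheres.
\end{itemize}
I need to check this last point explicitly: a single index $1$ handle attached to $S^2\sqcup S^2$ along points in different components yields $S^2$. This is exactly the connected-handle case, as opposed to the genus-raising case of Remark \ref{rmk:12Bnotspherical}.

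Second, I assemble these into a function on each slab $M_i$ and then on $M$. On the slab $M_i$, which is a disjoint union of such pieces, I shift the pieces so that all critical values in that slab are distinct. This is possible because each non-cylindrical piece has exactly one critical point. I reparametrize piece by piece so that the critical point lies at a chosen height in $(0,1)$, while the function stays equal to the boundary projection near the boundary. Then I place slab $M_i$ in the interval $[k-i,\,k-i+1]$. Near each gluing surface $X_i$, every function agrees with a collar projection $X_i\times(-\epsilon,\epsilon)\to\mathbb{R}$, so they glue to a smooth function $f$ on $M$ (cf. Hirsch, as already used in the Euler characteristic footnote).

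Third, I verify the three properties of Definition \ref{defn:good} for $f$.
\begin{itemize}
\item \emph{Admissible:} the minimum and maximum level sets are exactly $X_-$ and $X_+$, and these are regular values.
\item \emph{Excellent:} critical values are distinct within each slab by construction, and slabs occupy disjoint intervals.
\item \emph{Spherical:} any regular level set lies inside a single slab, where it is a disjoint union of regular level sets of the pieces, each of which is a union of spheres.
\end{itemize}

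The main obstacle is the last local verification in the first step: showing that the chosen function on $S^3\setminus\operatorname{int}(\sqcup^3D^3)$ really has all regular level sets spherical, and that the reparametrization used to separate critical values keeps the function Morse and collar-compatible. This needs care rather than new ideas.
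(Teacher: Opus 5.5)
Your proposal is correct and takes essentially the same route as the paper: each CF generator carries a standard spherical Morse function, and these are glued along collars of the gluing spheres after shifting. The one small difference is how excellence is obtained. You place the critical values at distinct heights by reparametrizing within each slab, whereas the paper applies a small perturbation at the end. Your version also makes explicit the check that regular level sets stay spherical, which the paper's proof leaves implicit.
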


\begin{proof}
Each CF generator comes with a standard Morse function which is spherical. Since $ M $ comes with a decomposition in terms of glueing CF generators, we can glue their Morse functions together in a collared neighbourhood of the glueing boundaries after appropriate shifts. The resulting Morse function is by definition spherical and admissible on $M$. If two critical values coincide, take a small perturbation of the Morse function, rendering the values distinct and the function excellent.
\end{proof}

\begin{rmk}
The lemma addresses the existence of an admissible excellent spherical Morse function. It is always possible that there is a different composition, giving rise to a different admissible excellent spherical Morse function on $M$. Given that these compositions are the Morse data of two Morse functions, we know the full theory of Cerf can be employed. Therefore these two compositions must be related by a sequence of Cerf moves. However, the example below shows that we cannot maintain the ``spherical-ness'' of the Cerf path in 3 dimensions. Therefore it raises the question whether CF relations are sufficient. The next two sections ensure that they are indeed sufficient.
\end{rmk}

\begin{eg} \label{eg:S2xS1pantstoHeegaardtopants}
($S^2\times S^1$ Pants to Heegaard to Pants) Take any pants decomposition of $S^2 \times S^1$, induced by a Morse function we call $f_{\text{pants}}$. There is a path from $f_{\text{pants}}$ to itself, which first exchanges the order of the 1-handle and 2-handle attachments, then exchanges the order again, undoing the first switch. As we explained in remark \ref{S2xS1PantsvsHeegaard}, a Morse function along the path after the first exchange has a toric regular level set between the index $1$ and $2$ critical values, hence the interval between the two exchanges are aspherical, corresponding to the minimal Heegaard splitting of $S^2\times S^1$. Figure \ref{fig:kirby} presents the Kirby graphics of this Cerf path.

\begin{figure}[ht]
    \centering
\def\svgwidth{1\columnwidth} 
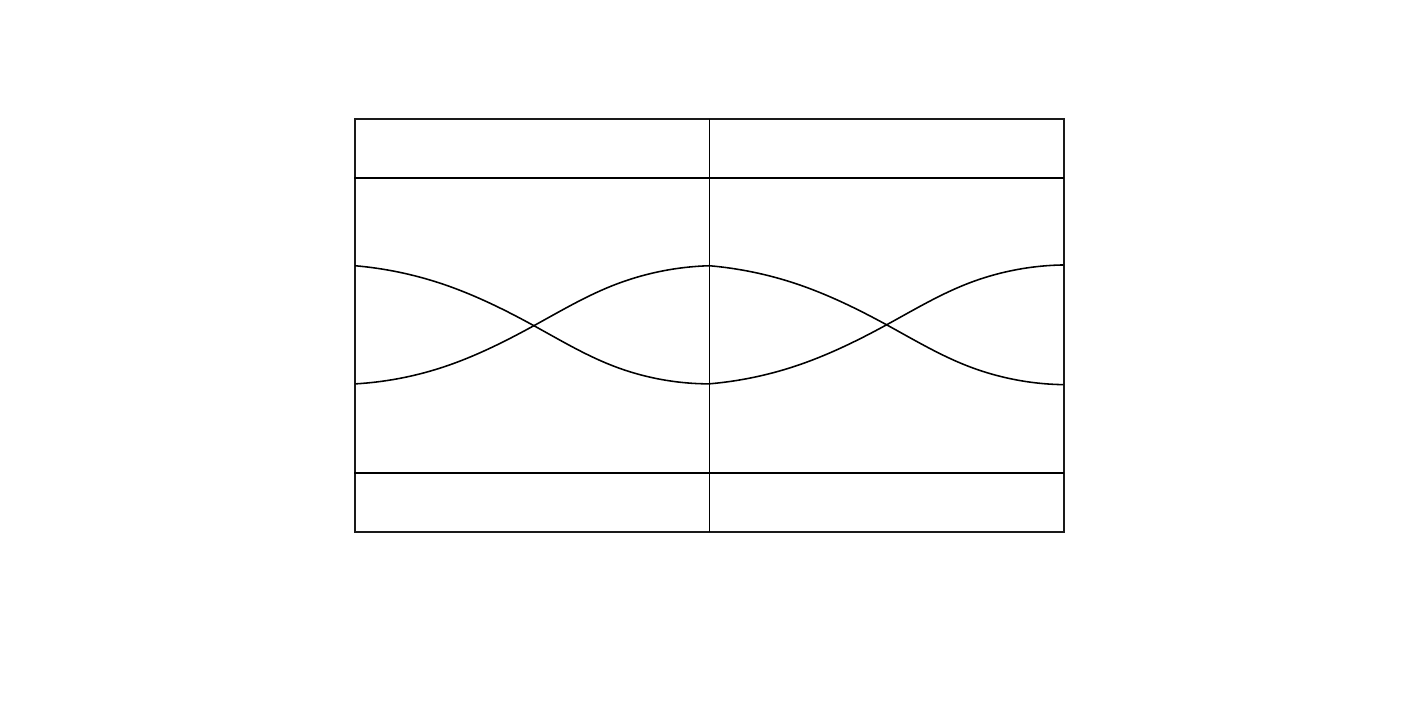

\caption[The Kirby graphics for a path $f_t$ which fails to remain spherical]{The Kirby graphics for a path at $f_{\text{pants}}: S^2 \times S^1$ which fails to remain spherical as it exchanges the index $1$ and $2$ critical points. After the exchange the Morse function along the path segment now gives the genus 1 Heegaard splitting of $S^2\times S^1$.}
    \label{fig:kirby}
\end{figure}
\end{eg}

\section{Spherical Morse Functions}\label{sec:Cob2Cob3Iso}
Before we proceed to show that the commutative Frobenius relations are sufficient among CF generators, it is necessary to examine their Morse theory. We will prove in this section that there is an isomorphism between the categories $\operatorname{Cob}( 3)_{S^2,CF} $ and $\operatorname{Cob}( 2)$ in theorem \ref{thm:Cob2Cob3CFIso}. Furthermore we show that $G_1 $ and $G_2 $ are both minimal in proposition \ref{prop:G1G2minimal}.

\begin{prop} \label{prop:sphericalnoirreducible}
Let $M$ be a closed oriented 3-manifold, and $f:M\to \mathbb{R}$ a Morse function such that for every regular value $r\in \mathbb{R}$, $f^{-1}(r) \cong \sqcup S^2$. Namely, $f$ is a spherical Morse function-- a Morse function whose regular surfaces are disjoint unions of $S^2$. Then 
\begin{enumerate}
\item $\pi_1(M)$ is free, and
\item $M$ is diffeomorphic to either $S^3$ or $\#^r \left( S^2\times S^1 \right) $ with $r\geq 1$ the rank of $\pi_1(M)$.
\end{enumerate}
\end{prop}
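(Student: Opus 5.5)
We decompose $M$ along regular levels of $f$ into elementary pieces. Every piece is then one of the CF generators, so $M$ is a closed morphism in $\operatorname{Cob}(3)_{S^2,CF}$. We then compute the resulting manifold by induction on the number of critical points, using the Euler characteristic to count the $S^2\times S^1$ factors.

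\textbf{Step 1 (local classification).} Since $M$ is compact, $f$ has finitely many critical points. After a small perturbation supported near the critical points, which does not change the regular level sets away from small neighbourhoods of the critical values, we may assume $f$ is excellent. Choose regular values $r_0<r_1<\dots<r_N$ separating the critical values. Each slab $f^{-1}[r_{i-1},r_i]$ is then a cylinder on a disjoint union of spheres together with one elementary cobordism $W$ from $\sqcup^a S^2$ to $\sqcup^b S^2$ containing a single critical point.
\begin{itemize}
\item If the index is $0$ or $3$, $W$ is $D^3$ read forwards or backwards, i.e.\ $1$ or $tr$.
\item If the index is $1$, the attaching region is a pair of points. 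If both points lie on the same sphere, surgery produces a torus, contradicting sphericity. So they lie on distinct spheres, and $W\cong S^3\setminus \operatorname{int}(\sqcup^3 D^3)$ represents $m$.
\item Index $2$ is the dual case. Surgery along a circle on a sphere always splits it into two spheres, so $W$ represents $m^\vee$.
\end{itemize}
Consequently $M$ is a composition of CF generators, i.e.\ $[M]$ is a closed morphism $\emptyset\to\emptyset$ in $\operatorname{Cob}(3)_{S^2,CF}$.

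\textbf{Step 2 (identify $M$).} We induct along the decomposition. The claim is that every connected bordism built from CF generators is diffeomorphic to $\#^g(S^2\times S^1)$ with some balls removed, where $S^3$ is the case $g=0$.
\begin{itemize}
\item Gluing a ball ($1$ or $tr$) fills a boundary sphere.
\item Gluing $m$ or $m^\vee$ along spheres belonging to different components realizes a boundary connected sum along spheres. The result is again of the stated form.
\item Gluing along two spheres of the same component adds an $S^2\times S^1$ summand. This uses the standard fact that identifying two boundary spheres of a connected $3$-manifold $N$ gives $\widehat N\#(S^2\times S^1)$, and it is exactly the mechanism in the remark on $\widehat{N\circ M}$.
\end{itemize}
Disconnected components are handled separately, and $M$ itself is connected as a closed manifold. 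Hence $M\cong \#^g(S^2\times S^1)$ or $M\cong S^3$. The value of $g$ can be read off from the Euler characteristic count $v_0-v_1+v_2-v_3=0$ of Proposition \ref{prop:CFcommFrob}, in parallel with the genus count for $\operatorname{Cob}(2)$.

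\textbf{Step 3 (fundamental group).} By van Kampen, $\pi_1(\#^g(S^2\times S^1))\cong F_g$, which is free. For $g\ge 1$ its rank is $r=g$, and $g=0$ corresponds to $S^3$. This gives both (1) and (2). Alternatively, (1) follows from (2) directly.

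\textbf{Main obstacle.} The expected main obstacle is the self-gluing case in Step 2. We must make sure that identifying two boundary spheres of the same connected piece always produces $S^2\times S^1$ and never a twisted variant. Orientability rules out the non-orientable $S^2$-bundle over $S^1$. Since every orientation-preserving diffeomorphism of $S^2$ is isotopic to the identity (Smale), the gluing is independent of the choice of identification. We would invoke that theorem rather than prove it.
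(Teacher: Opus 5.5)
Your proof is sound after one repair (below), and it runs in the opposite direction from the paper's.

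\textbf{The paper's route.} It computes $\pi_1$ first. Preimages of vertex neighbourhoods and edges of the Reeb graph $R_f$ are simply connected, so van Kampen gives $\pi_1(M)\cong\pi_1(R_f)$, a free group. Only then is $M$ identified: the paper asserts that a free $\pi_1$ forces $M\cong\#^r(S^2\times S^1)\#N$ with $N$ simply connected, and invokes the Poincar\'e conjecture.

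\textbf{Your route.} You identify the manifold first, handle by handle. Each elementary piece is a CF generator. Along the upward sweep, every component of a sublevel set stays of the form $\#^g(S^2\times S^1)$ minus balls, because each index-$1$ point either connect-sums two components or self-glues one (adding an $S^2\times S^1$ summand). Freeness of $\pi_1$ then follows from van Kampen.

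\textbf{What each buys.} Your route avoids the Poincar\'e conjecture. It also avoids the step from free $\pi_1$ to the prime decomposition, which the paper states without proof. That step needs genuine $3$-manifold input: a closed irreducible $3$-manifold with infinite $\pi_1$ is aspherical, so its $\pi_1$ cannot be a nontrivial free group. You only need Schoenflies on $S^2$, uniqueness of ball embeddings, and Smale's theorem. Your argument also proves the bordism version, Corollary~\ref{cor:hominCF}, directly without capping off. It gives $g=v_1-v_0+1$ explicitly, as the number of self-gluing index-$1$ points. The paper's argument is shorter given the big theorems, and it identifies $\pi_1(M)$ intrinsically with $\pi_1(R_f)$.

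\textbf{The repair in Step 1.} An arbitrary small perturbation to an excellent function need not remain spherical. Suppose an index-$1$ and an index-$2$ point share a critical value, with the feet of the $1$-handle on one sphere separated by the attaching circle of the $2$-handle. Lowering the index-$1$ point then creates a torus level (the genus-one Heegaard splitting of $S^2\times S^1$ from Remark~\ref{S2xS1PantsvsHeegaard}), and your classification of pieces no longer applies.

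To fix this, choose the perturbation so that, among critical points with a common value, the index-$2$ ones go slightly below the index-$1$ ones. Surgery along a circle in a sphere always gives two spheres. Since the level above is spherical, the $1$-handles must then form a forest on the resulting sphere components, so every intermediate level is spherical. The paper's Reeb-graph argument tacitly needs the same reduction: in that configuration the vertex preimage is $(S^2\times S^1)\setminus\operatorname{int}(D^3\sqcup D^3)$, which is not simply connected.

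\textbf{A smaller point.} The identity $v_0-v_1+v_2-v_3=0$ holds for every closed $3$-manifold, so it cannot determine $g$. You don't need it: $g$ is the rank of $\pi_1$ by Step 3, or $v_1-v_0+1$ by the sweep.
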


\begin{proof}
Let $R_f$ be the Reeb graph of $f:M\to \mathbb{R}$. Recall that for a Morse function $f$ on a compact manifold $M$, the Reeb graph $R_f$ is a finite graph such that
\begin{itemize}
\item the vertices of $R_f$ are the critical level components
\item the edges of $R_f$ are copies of intervals of regular values 
\end{itemize}

Let $q: M \to M/\sim=R_f$ be the quotient map. Then the Morse function factors through $q$:
\[\begin{tikzcd}
M & {M/\sim=R_f} & {\mathbb{R}}
\arrow["q", from=1-1, to=1-2]
\arrow["f"', curve={height=20pt}, from=1-1, to=1-3]
\arrow["{\exists \tilde{f}}", dashed, from=1-2, to=1-3]
\end{tikzcd}\]

Between consecutive critical values $a<b$, standard Morse theory shows that $f^{-1}((a,b)) \cong f^{-1}(t) \times (a,b)$ for any regular value $t\in (a,b)$.  Since our regular surfaces are disjoint unions of  $S^2$, we have by definition \[
q^{-1}(e) \cong  S^2 \times (a,b) 
.\]
for any edge $e$ which corresponds to the regular interval $(a,b)$. See figure \ref{fig:egreeb} for an example.

\begin{figure}[ht]
    \centering
\def\svgwidth{1\columnwidth} 
\begingroup%
  \makeatletter%
  \providecommand\color[2][]{%
    \errmessage{(Inkscape) Color is used for the text in Inkscape, but the package 'color.sty' is not loaded}%
    \renewcommand\color[2][]{}%
  }%
  \providecommand\transparent[1]{%
    \errmessage{(Inkscape) Transparency is used (non-zero) for the text in Inkscape, but the package 'transparent.sty' is not loaded}%
    \renewcommand\transparent[1]{}%
  }%
  \providecommand\rotatebox[2]{#2}%
  \newcommand*\fsize{\dimexpr\f@size pt\relax}%
  \newcommand*\lineheight[1]{\fontsize{\fsize}{#1\fsize}\selectfont}%
  \ifx\svgwidth\undefined%
    \setlength{\unitlength}{680.31496063bp}%
    \ifx\svgscale\undefined%
      \relax%
    \else%
      \setlength{\unitlength}{\unitlength * \real{\svgscale}}%
    \fi%
  \else%
    \setlength{\unitlength}{\svgwidth}%
  \fi%
  \global\let\svgwidth\undefined%
  \global\let\svgscale\undefined%
  \makeatother%
  \begin{picture}(1,0.5)%
    \lineheight{1}%
    \setlength\tabcolsep{0pt}%
    \put(0,0){\includegraphics[width=\unitlength,page=1]{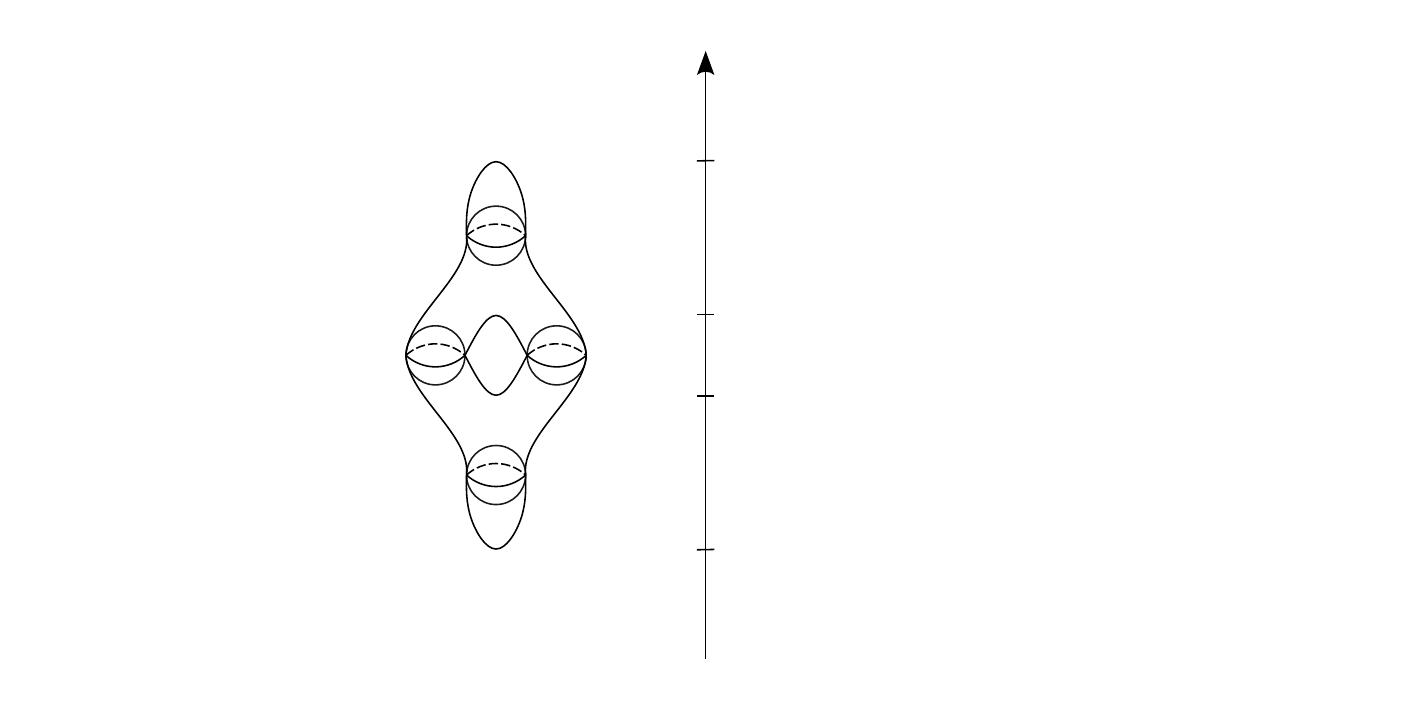}}%
    \put(0.51509171,0.10792796){\makebox(0,0)[lt]{\lineheight{1.25}\smash{\begin{tabular}[t]{l}$0$\end{tabular}}}}%
    \put(0.51509171,0.21616567){\makebox(0,0)[lt]{\lineheight{1.25}\smash{\begin{tabular}[t]{l}$2$\end{tabular}}}}%
    \put(0.51509171,0.27357705){\makebox(0,0)[lt]{\lineheight{1.25}\smash{\begin{tabular}[t]{l}$1$\end{tabular}}}}%
    \put(0.51509161,0.38232295){\makebox(0,0)[lt]{\lineheight{1.25}\smash{\begin{tabular}[t]{l}$3$\end{tabular}}}}%
    \put(0,0){\includegraphics[width=\unitlength,page=2]{egreeb.pdf}}%
    \put(0.622052,0.01564776){\makebox(0,0)[lt]{\lineheight{1.25}\smash{\begin{tabular}[t]{l}$R_f$\end{tabular}}}}%
    \put(0.30865307,0.01564776){\makebox(0,0)[lt]{\lineheight{1.25}\smash{\begin{tabular}[t]{l}$S^2 \times S^1 $\end{tabular}}}}%
    \put(0.48091755,0.01577009){\makebox(0,0)[lt]{\lineheight{1.25}\smash{\begin{tabular}[t]{l}$\mathbb{R}$\end{tabular}}}}%
    \put(0,0){\includegraphics[width=\unitlength,page=3]{egreeb.pdf}}%
    \put(0.42314704,0.0378952){\makebox(0,0)[lt]{\lineheight{1.25}\smash{\begin{tabular}[t]{l}$f$\end{tabular}}}}%
  \end{picture}%
\endgroup%

    \caption[A Reeb graph example]{An example of a Reeb graph $R_f$ associated to a Morse function $f$. Here $f:S^2 \times S^1 \to \mathbb{R}$ is the Morse function described in \ref{rmk:S2xS1redundant}, which is the projection onto the $S^1$ factor and then taking the height function.}
    \label{fig:egreeb}
\end{figure}

At a vertex $v$, the local Morse model in 3 dimensions shows the preimage of a small neighbourhood of $v$ is obtained from a union of $3$-balls by attaching a single $k$-handle, for $k\in \left\{ 0, \dots ,3 \right\} $. Since the regular level sets just above $v$ and just below are union of spheres, the only allowed local changes are: 
\begin{itemize}
\item Birth/death of a spherical component (when $v$ corresponds to an index $0$ or $3$ critical point)
\item Merge/split of spherical component(s) that never creates a torus (when $v$ corresponds to an index $1$ or $2$ critical point).
\end{itemize}

In particular, the preimage of a small vertex neighbourhood is simply connected. Indeed, a union of $3$-balls with $1/2$-handle attached along does not create $\pi _1$. The preimage of an open edge is $S^2\times (a,b)$, which is also simply connected.

\textbf{Claim} $\pi_1 (M)$ is a free group.

Cover $M$ by preimages $\left\{ U_v \right\} $ of vertex neighbourhoods and $\left\{ U_e \right\} $ of open edge neighbourhoods in $R_f$. Then each $U_v, U_e$ is simply connected.

By Seifert-van Kampen, $\pi_1 (M) = \pi_1(\text{the nerve of this cover})$. But the nerve of this cover is canonically $R_f$. So \[
\pi_1(M) \cong \pi_1 (R_f) \cong <e-v+1 \text{ many generators}>
.\]
Most importantly, we learn that $\pi_1 (M)$ is a free group.

By prime decomposition, any closed oriented $3$-manifold $M$ splits uniquely as a connected sum of prime summands, and $\pi_1 (M)=$ free product of $\pi_1 (\text{prime summands})$. In particular, if $\pi_1 (M)$ is free of rank $r$, then \[
M \cong \# ^r (S^2\times S^1) \# N
.\]
where $N$ is simply connected. By the Poincare conjecture, $N \cong S^3$.
\end{proof}

\begin{cor}
Let $M$ be a closed oriented prime $3$-manifold, and $f:M \to \mathbb{R}$ a spherical Morse function. Then $M$ is diffeomorphic to either $S^3$ or $S^2 \times S^1$.
\end{cor}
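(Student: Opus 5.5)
This is a direct consequence of Proposition \ref{prop:sphericalnoirreducible}, combined with primality. Since $M$ is closed and oriented and $f$ is spherical, the proposition applies. It says $M$ is diffeomorphic either to $S^3$ or to $\#^r(S^2\times S^1)$, where $r\geq 1$ is the rank of the free group $\pi_1(M)$. In the first case we are done, so it remains to treat $M\cong \#^r(S^2\times S^1)$ with $r\geq 1$.

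We now use that $M$ is prime. If $r\geq 2$, write
\[
M\cong (S^2\times S^1)\,\#\,\big(\#^{r-1}(S^2\times S^1)\big).
\]
Neither summand is $S^3$: each has nontrivial fundamental group, free of rank $1$ and $r-1\geq 1$ respectively, while $\pi_1(S^3)=1$. This contradicts primality, so $r=1$ and $M\cong S^2\times S^1$.

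The only point needing care is that a nontrivial connected sum decomposition really contradicts primality as the paper defines it: $X=P\#Q$ must force $P\cong S^3$ or $Q\cong S^3$. Comparing fundamental groups, as above, shows neither summand can be $S^3$, so this is immediate. One could instead appeal to the uniqueness part of the Kneser--Milnor theorem, which says $\#^r(S^2\times S^1)$ has exactly $r$ nontrivial prime factors, so it is prime only when $r=1$. The fundamental group argument is more elementary, and it is the one I would use. I expect no real obstacle here; all of the substance lies in Proposition \ref{prop:sphericalnoirreducible}, which also absorbs the use of the Poincar\'e conjecture.
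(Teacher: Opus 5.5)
Your proof is correct and follows the paper's route. The paper gives no separate proof: the corollary is read off directly from Proposition~\ref{prop:sphericalnoirreducible}. Your fundamental-group check, that $\#^r(S^2\times S^1)$ with $r\geq 2$ splits into two summands neither of which is $S^3$, supplies exactly the primality step the paper leaves implicit.
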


%

\begin{rmk}
The spirit of the proposition and the corollary is: no matter how hard one tries, one cannot glue CF generators into a prime 3-manifold, except for $S^3$ and $S^2\times S^1$.
\end{rmk}

\begin{cor} \label{cor:hominCF}
Every connected bordism $[M]$ in $\operatorname{Cob}(3)_{S^2, CF}$ has a representative $M$ which is diffeomorphic to either $S^3$ with the interior of a finite number of disjoint $D^3$ removed, or to $\#^r S^2 \times S^1$ with the interior of a finite number of disjoint $D^3$ removed.
\end{cor}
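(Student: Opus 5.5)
The plan is to reduce the statement to Proposition \ref{prop:sphericalnoirreducible} by capping off the boundary. Let $[M]: \sqcup^{a} S^2 \to \sqcup^{b} S^2$ be a connected morphism of $\operatorname{Cob}(3)_{S^2,CF}$. By the preceding lemma, a representative $M$ carries an admissible excellent spherical Morse function $f: M \to [c,d]$ with $f^{-1}(c) = \sqcup^a S^2$ and $f^{-1}(d) = \sqcup^b S^2$. Concretely, $f$ is obtained by gluing the standard Morse functions of the $C \cup F$ generators appearing in some composition of $[M]$.

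\textbf{Capping off.} Form $\widehat{M} = (\sqcup^{a} D^3) \cup_{\partial M} M \cup (\sqcup^{b} D^3)$. Extend $f$ over each incoming ball by the standard Morse function of the unit $1$, which has a single index $0$ critical point with value below $c$. Extend over each outgoing ball by the standard function of $tr$, which has a single index $3$ critical point with value above $d$. These extensions are glued in collars, exactly as in the proof of the lemma.

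The new regular level sets are of three kinds, and all are disjoint unions of $S^2$:
\begin{itemize}
\item below $c$, concentric spheres in the incoming balls;
\item in $[c,d]$, the original regular level sets of $f$;
\item above $d$, concentric spheres in the outgoing balls.
\end{itemize}
So the extension $\widehat f$ is a spherical Morse function on the closed oriented manifold $\widehat{M}$. Since $M$ is connected and each ball meets $M$ along a sphere, $\widehat M$ is also connected.

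\textbf{Identification.} Proposition \ref{prop:sphericalnoirreducible} now gives $\widehat{M} \cong S^3$ or $\widehat M \cong \#^r(S^2\times S^1)$ with $r \geq 1$. Removing the interiors of the $a+b$ capping balls recovers $M$. Hence $M$ is diffeomorphic to $S^3$ or $\#^r(S^2\times S^1)$ with the interiors of $a+b$ disjoint embedded balls deleted.

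\textbf{Main obstacle.} The delicate point is that ``diffeomorphic to $\widehat M$ minus some balls'' must not depend on which balls are removed. I would handle this with the disc theorem (Palais/Cerf, as in \cite{hirsch_differential_1976}): any two orientation-preserving embeddings of $\sqcup^{k} D^3$ into a connected oriented $3$-manifold are ambient isotopic. The same fact was already used implicitly in Proposition \ref{generate} to say the filling is well defined.

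Orientation needs care, since $\#^r(S^2\times S^1)$ and $S^3$ admit orientation-reversing diffeomorphisms. Any orientation of the capped manifold is therefore equivalent to the standard one, and the diffeomorphism can be chosen to respect the boundary ordering up to the braiding, as permitted in the proof of Proposition \ref{generate}.

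A minor additional check is that $\widehat f$ is still Morse near the gluing spheres. This holds because in the collars $f$ has no critical points and is a product $S^2 \times$ interval there, so the standard radial functions on the balls glue smoothly.
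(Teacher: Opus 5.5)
Your proposal is correct and follows the paper's proof essentially step for step. Like the paper, you cap off the boundary spheres, extend the glued spherical Morse function over the caps by the standard unit and trace functions, apply Proposition~\ref{prop:sphericalnoirreducible} to the closed manifold $\widehat M$, and then remove the capping balls. Your extra remarks on connectivity of $\widehat M$, smooth gluing in collars, orientation and the disc theorem make explicit details the paper leaves implicit; the disc theorem is only needed to make ``$S^3$ (or $\#^r(S^2\times S^1)$) minus $k$ balls'' a well-defined diffeomorphism type.
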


\begin{proof}
First cap off the incoming and outgoing boundaries of $M$, resulting in a closed oriented 3-manifold $\widehat{M}$. By definition, $CF$ generates morphisms in $\operatorname{Cob}(3)_{S^2, CF}$, so the standard spherical Morse functions on the $CF$ generators induces a spherical Morse function on $\widehat{M}$, $f : \widehat{M} \to \mathbb{R}$. Then by proposition \ref{prop:sphericalnoirreducible}, $\widehat{M} \cong S^3$ or $\widehat{M} \cong \#^r \left( S^2 \times S^1 \right) $. Hence $M \cong S^3 \setminus \left( \operatorname{int}(\sqcup D^3) \right) $ or $M \cong \#^r \left( S^2 \times S^1 \right)  \setminus \left( \operatorname{int}(\sqcup D^3) \right) $.
\end{proof}

\begin{defn} \label{defn:Sillyfunctor}
Define the \textit{silly functor} $S$ as the  map \[
S: \operatorname{Cob}( 2) \to \operatorname{Cob}( 3)_{S^2, CF}
.\]
sending
\begin{enumerate}
\item A disjoint union $\sqcup^n S^1$ to a disjoint union $\sqcup^n S^2.$
\item The unit morphism $1: \emptyset \to S^1$ to $1: \emptyset \to S^2$.
\item The trace morphism $tr: S^1 \to \emptyset $ to $tr: S^2 \to \emptyset$.
\item The multiplication $m: S^1 \sqcup S^1 \to S^1$ to $m: S^2 \sqcup S^2 \to S^2$.
\item The comultiplication $m^\vee: S^1\to S^1 \sqcup S^1$ to $m^\vee: S^2 \to S^2 \sqcup S^2$.
\item The genus $g$ bordism $[\Sigma_g \setminus int( \sqcup^{n_{inc}+n_{out}}D^2)]: \sqcup^{n_{inc}}S^1 \to \sqcup^{n_{out}}S^1$ to the bordism $ [\#^g(S^2 \times S^1) \setminus int( \sqcup^{n_{inc}+n_{out}}D^3)]: \sqcup^{n_{inc}}S^2 \to \sqcup^{n_{out}}S^2$.
\end{enumerate}
Theorem \ref{thm:Cob2Cob3CFIso} shows that the silly functor is an isomorphism.
\end{defn}

\begin{thm} \label{thm:Cob2Cob3CFIso}
$S: \operatorname{Cob}(2) \to \operatorname{Cob}(3)_{S^2,CF} $ is an isomorphism.
\end{thm}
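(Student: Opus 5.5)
The plan is to show that $S$ is a well-defined symmetric monoidal functor that is bijective on objects and on each hom-set. Morphisms of $\operatorname{Cob}(2)$ and of $\operatorname{Cob}(3)_{S^2,CF}$ will both be described combinatorially by the same data. For $\operatorname{Cob}(2)$, by the classification of surfaces, a morphism $\sqcup^{a}S^1\to\sqcup^b S^1$ is determined by a partition of the labelled incoming and outgoing boundary circles among the connected components, together with a genus for each component. For $\operatorname{Cob}(3)_{S^2,CF}$, corollary \ref{cor:hominCF} says each connected component is $\#^g(S^2\times S^1)$ or $S^3$ with balls removed. Here $g$ is recovered as the rank of $\pi_1$ of the capped-off component. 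By prime decomposition uniqueness (Kneser–Milnor), different $g$ give non-diffeomorphic bordisms, so $g$ is an invariant. I will also need that an orientation-preserving diffeomorphism can realize any permutation of the boundary spheres of $\#^g(S^2\times S^1)\setminus \operatorname{int}(\sqcup^n D^3)$, compatibly with the boundary parametrizations. This holds because the balls may be isotoped around a connected manifold, and every orientation-preserving diffeomorphism of $S^2$ is isotopic to the identity (Smale). Hence a morphism in $\operatorname{Cob}(3)_{S^2,CF}$ is also determined exactly by a partition of boundary labels plus a genus $g\ge 0$ per component. Conversely, every such datum is realized in $\operatorname{Cob}(3)_{S^2,CF}$: remark \ref{rmk:S2xS1redundant} gives $S^2\times S^1$-summands as $m\circ m^\vee$, and comment \ref{S3minusballs} gives the spheres with holes.

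Next I will check well-definedness and functoriality. Define $S$ on a morphism by its combinatorial data, as in definition \ref{defn:Sillyfunctor}, extended componentwise to disconnected bordisms. It is then automatically well defined on diffeomorphism classes, and it is monoidal and compatible with the braiding. The key point is compatibility with composition. Gluing surfaces along $b$ circles joins components according to the induced graph, and the new genus is the sum of the genera plus the first Betti number of the gluing graph. By the remark following proposition \ref{generate}, the same formula holds in dimension 3: gluing along $b$ spheres adds one $S^2\times S^1$ summand for each independent cycle in the gluing graph, by van Kampen and Kneser–Milnor. Components closed off during gluing get matched in the same way, including closed components $\widehat{M}$. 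A cleaner alternative is to invoke Theorem \ref{thm:2dtftCFA}. Proposition \ref{prop:CFcommFrob} shows $S^2$ is a commutative Frobenius object in $\operatorname{Cob}(3)_{S^2,CF}$, so by freeness there is a unique symmetric monoidal functor from $\operatorname{Cob}(2)$ sending $S^1$ to $S^2$ and the generators to the generators. I will take this as the definition of $S$. The genus-counting argument then only serves to identify its values with the list in definition \ref{defn:Sillyfunctor}, which follows by writing $\Sigma_g$ with holes as a composite of generators in standard form.

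With this, fullness is immediate, since every CF morphism is a composite of $C\cup F$ generators, each of which is in the image of $S$. Faithfulness follows from the combinatorial description. If $S(\Sigma)=S(\Sigma')$, the two 3-dimensional bordisms are diffeomorphic rel boundary, so they have the same boundary partition and the same $g$ on each component. The $\pi_1$-rank invariant is essential here. Hence $\Sigma\cong\Sigma'$ by the classification of surfaces, and $S$ is bijective on objects and morphisms, i.e. an isomorphism. I expect the main obstacle to be the step showing that genus and boundary partition form a complete invariant on the 3-dimensional side. In particular, the diffeomorphism must respect the boundary parametrizations. This requires the isotopy-extension argument for permuting boundary spheres and the fact that $\pi_0\operatorname{Diff}^+(S^2)$ is trivial; I would handle it first, before everything else.
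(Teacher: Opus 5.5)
Your argument is correct, but it takes a different route from the paper.

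\textbf{The paper's route.} The main proof goes through labelled cospan categories. It quotes Steinebrunner's equivalence $\operatorname{Cob}(2)\simeq Csp(\mathbb{Z}_{\geq 0},1)$ and builds a parallel functor on the 3-dimensional side, labelling each component $W'$ by $[\widehat{W'}]\in\{[\#^r(S^2\times S^1)]\}$ via Corollary \ref{cor:hominCF}. It then proves bijectivity on connected homs by inverting $[W]\mapsto[\widehat W]$ with the disc theorem. Compatibility with composition is only asserted to be ``rigged'' into the cospan composition law. The paper's alternative proof defines $S$ hom-wise and does not verify compatibility with composition at all.

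\textbf{Your route.} You get functoriality and symmetric monoidality for free from Theorem \ref{thm:2dtftCFA} together with Proposition \ref{prop:CFcommFrob}. Fullness follows because $\operatorname{Cob}(3)_{S^2,CF}$ is by definition CF-generated. Faithfulness follows from invariance of the boundary partition and the $\pi_1$-rank, together with the classification of surfaces. This makes rigorous the 3-dimensional genus-additivity under gluing that the paper leaves implicit.

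\textbf{A further payoff you did not claim.} The step you flag as the main obstacle is not needed for bijectivity. Faithfulness only uses that the partition and $\pi_1$-rank are invariants preserved by $S$, which you check on normal forms. It does not use that they are complete on the 3-dimensional side. Completeness, and indeed Corollary \ref{cor:hominCF} itself, then follow as consequences. So your proof avoids the Poincar\'e-conjecture input of Proposition \ref{prop:sphericalnoirreducible}. The disc theorem (and Smale) is needed only to match $S$ with the explicit formula in Definition \ref{defn:Sillyfunctor}, i.e.\ to know that $[\#^g(S^2\times S^1)\setminus\operatorname{int}(\sqcup D^3)]$ does not depend on the chosen balls.
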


\begin{proof}
%
By \cite{steinebrunner2026surfacecategorytropicalcurves}, $\pi_0: \operatorname{Cob}(2) \to Csp(\mathbb{Z}_{\geq 0},1)$ is an equivalence of symmetric monoidal labelled cospan categories. Slightly abusing the notations, our strategy is to construct another equivalence $\pi_0: \operatorname{Cob}(3)_{S^2, CF} \to Csp(M,[S^2 \times S^1])  $ of symmetric monoidal labelled cospan categories, where $Csp(\mathcal{M}, [S^2 \times S^1])$ is canonically isomorphic to $Csp(\mathbb{Z}_{\geq 0},1)$. Let its inverse be $F: Csp(\mathcal{M}, [S^2 \times S^1]) \to \operatorname{Cob}(3)_{S^2, CF}$. Then we show that the composite \[
\operatorname{Cob}(2) \overset{\pi_0}{\to } Csp(\mathbb{Z}_{\geq 0},1) \overset{\sim}{\to} Csp(\mathcal{M}, [S^2 \times S^1]) \overset{F}{\to } \operatorname{Cob}(3)_{S^2, CF}
.\]
is the desired functor $S: \operatorname{Cob}(2) \to \operatorname{Cob}(3)_{S^2, CF}$.

First, recall the definition of the cospan category of finite sets $Csp$. The category $Csp$ has objects finite sets and morphisms $A \to B$ is an isomorphism class of cospans $[A \to X \leftarrow B]$, where two cospans are isomorphic if there is a bijection $X \cong X'$ that is compatible with the maps from $A$ and from $B$. Disjoint union equips $Csp$ with a symmetric monoidal structure, with the empty set as the monoidal identity. If $(A, +)$ is an abelian monoid and $\alpha  \in A$, then we can define the category $Csp(A,\alpha )$ with finite sets as objects, and an arrow $A\to B$ is an equivalent class of cospans and labels
\[\begin{tikzcd}
	& A & \\
	& X \\
	A && B
	\arrow["l"', from=2-2, to=1-2]
	\arrow[from=3-1, to=2-2]
	\arrow[from=3-3, to=2-2]
\end{tikzcd}\]
The composition law of $Csp(A,\alpha )$ is rigged for the equivalence $ \pi_0:\operatorname{Cob}( 2)\to Csp(\mathbb{Z}_{\geq 0},1) $ to hold. The element $1=\alpha $ is the element that controls how much genus it increases when closing a loop via a composition. For details we refer the reader to \cite{steinebrunner2026surfacecategorytropicalcurves}.

Let $\left( \mathcal{M} = \left\{ [\#^r S^2 \times S^1] \mid r=0,1,\dots  \right\} , \# \right) $ be the abelian monoid under connect sum, whose elements are diffeomorphism classes of connect sums $[\#^r S^2 \times S^1]$. Note that $\mathcal{M}$ is canonically isomorphic to the abelian monoid $(\mathbb{Z}_{\geq 0}, +)$. Therefore there is canonical isomorphism \[
Csp(\mathbb{Z}_{\geq 0},1) \overset{\sim}{\to} Csp(\mathcal{M}, [S^2 \times S^1]) 
.\]
sending a label $l: S \to \mathbb{Z}_{\geq 0}$ to the label $l': S \to \mathbb{Z}_{\geq 0} \overset{\sim}{\to} \mathcal{M}$.

The functor $\pi_0: \operatorname{Cob}(3)_{S^2, CF} \to Csp(\mathcal{M},[S^2 \times S^1])  $ is defined as follows:
\begin{enumerate}
\item An object $\coprod_n S^2$ of $\operatorname{Cob}(3)_{S^2, CF} $ is mapped to $\pi_0 (\coprod_n S^2)$, the set of $n$ elements.
\item A bordism $W: M \to N$ is mapped to the cospan
\[\begin{tikzcd}
& {\pi_0(W)} & \\
{\pi_0(M)} && {\pi_0(N)}
\arrow[from=2-1, to=1-2]
\arrow[from=2-3, to=1-2]
\end{tikzcd}\]
and the label $l: \pi_0 (W) \to \mathcal{M}$ sends a connected component $W' : M' \to N'$ to the diffeomorphism class of the closed manifold \[
\widehat{W'} \cong \left( \sqcup_{|\pi_0 (M')|} D^3 \right) \cup W' \cup \left( \sqcup_{|\pi_0 (N')|} D^3 \right) 
.\]
where the unions are taken at the gluing boundaries. Note that $\widehat{W}$ is indeed diffeomorphic to a connect sum of $S^2 \times S^1$ by corollary \ref{cor:hominCF}.

\item Composition is well defined because the definition of $Csp(M, [S^2 \times S^1])$ is rigged specifically to do so.
\end{enumerate}

$\pi_0 $ is obviously compatible with the symmetric monoidal structure on both sides, so it's a symmetric monoidal functor. We want to show that this is an equivalence. Recall that a functor $F: \mathcal{C} \to \mathcal{D}$ of labelled cospan categories is an equivalence of categories if and only if it satisfies 

\begin{enumerate}
\item For any connected object $M_D \in \mathcal{D}$, there exists a connected object $M_C \in \mathcal{C}$ such that $F(M_C) \cong M_D$.
\item for all $M,N \in \mathcal{C}$, $F$ induces a bijection \[
\operatorname{Hom}_{\mathcal{C}}^{conn}(M,N) \cong \operatorname{Hom}_{\mathcal{D}}^{conn}(F(M),F(N)) 
.\]
\end{enumerate}

For the functor $\pi_0 : \operatorname{Cob}(3)_{S^2, CF} \to Csp(\mathcal{M}, [S^2\times S^1])$ we have defined bove, condition 1 is trivial. Hence we need to check that $\pi_0 $ induces a bijection \[
\operatorname{Hom}_{\operatorname{Cob}(3)_{S^2, CF}}^{conn}( \sqcup^{n_{inc}} S^2, \sqcup^{n_{out}} S^2) \cong \operatorname{Hom}_{Csp(\mathcal{M}, [S^2\times S^1])}^{conn}(\pi_0(\sqcup^{n_{inc}} S^2), \pi_0(\sqcup^{n_{out}} S^2))  
.\]
The left hand side is the set of diffeomorphism class of connected bordisms $[W]$ with spherical boundaries. The right hand side are all connected cospans $\pi_0(\sqcup^{n_{inc}} S^2) \to X \leftarrow \pi_0(\sqcup^{n_{out}} S^2)$ with label $l: X \to \mathcal{M}$. By definition, a cospan  $A \to X \leftarrow B$ of finite sets is connected if $X$ is a set with one elements. Hence a connected morphism belonging to the right hand side is an element $l(*) \in \mathcal{M}$. Therefore the right hand side is simply isomorphic to $\mathcal{M}$.

Under this identification, $\pi_0$ induces the map \[
\operatorname{Hom}_{\operatorname{Cob}(3)_{S^2, CF}}^{conn}( \sqcup^{n_{inc}} S^2, \sqcup^{n_{out}} S^2) \to  \left\{ [\#^r \left( S^2 \times S^1 \right) ] \mid r=0, 1, \dots  \right\} 
.\]
sending $[W]$ to $[\widehat{W}]$.

We will construct an inverse. Let $V \cong \#^r \left( S^2 \times S^1 \right) $. Choose an orientation-preserving embedding \[
\iota: \left( \sqcup^{n_{inc}} D^3 \right) \sqcup \left( \sqcup^{n_{out}} D^3  \right)  \hookrightarrow V
.\]
Then $V \cong \widehat{W_{\iota}}$, where $W_\iota = V \setminus im(\iota)^\circ$. For any other such embedding $\iota'$, there exists a diffeomorphism $\psi: V \to V$ with $\iota ' = \psi \circ \iota$, and such that $\psi|_{W_\iota} : W_\iota \to W_{\iota'}$ is a boundary and orientation preserving diffeomorphism.\cite[Ch8 Thm 3.2]{hirsch_differential_1976}. Therefore the map \[
[V] \mapsto [W_{\iota}]
.\]
is independent on the choice of $\iota$. It is clearly the inverse to the assignment $[W] \mapsto [\widehat{W}]$.

Hence we have shown that the map \[
\operatorname{Hom}_{\operatorname{Cob}(3)_{S^2, CF}}^{conn}( \sqcup^{n_{inc}} S^2, \sqcup^{n_{out}} S^2) \to  \left\{ [\#^r \left( S^2 \times S^1 \right) ] \mid r=0, 1, \dots  \right\} 
.\]
is a bijection. Therefore \[
\operatorname{Hom}_{\operatorname{Cob}(3)_{S^2, CF}}^{conn}( \sqcup^{n_{inc}} S^2, \sqcup^{n_{out}} S^2) \cong \operatorname{Hom}_{Csp(\mathcal{M}, [S^2\times S^1])}^{conn}(\pi_0(\sqcup^{n_{inc}} S^2), \pi_0(\sqcup^{n_{out}} S^2))  
.\]
is also a bijection, and $\pi_0 : \operatorname{Cob}(3)_{S^2, CF} \to Csp(\mathcal{M}, [S^2\times S^1])$ is an equivalence of labelled cospan categories, in particular an equivalence of categories.

We can construct a functor $F: Csp(\mathcal{M}, [S^2 \times S^1]) \to \operatorname{Cob}(3)_{S^2, CF}$ as the (quasi-)inverse to $\pi_0 $ in the following way:
\begin{enumerate}
\item $F$ sends an order $n$ finite set to $\sqcup^n S^2$. In particular, $F$ sends the empty set to the empty manifold.
\item For a labelled cospan $f: A \to X \leftarrow B :g $ with label $l: X \to \mathcal{M}$,  its image under $F$ is the bordism $W: \sqcup^{|A|}S^2 \to \sqcup^{|B|} S^2$ with $|X|$-many connected component, such that over the element $x\in X$, the corresponding component denoted $W_x$ is diffeomorphic to \[
	W_x \cong l(x) \setminus \Big( \underset{b\in g^{-1}(x)}{\coprod_{a \in f^{-1}(x)}} D^3 \Big) 
.\]
\end{enumerate}

Combining all the relevant functors together, we see that the composition \[
S: \operatorname{Cob}(2) \overset{\pi_0}{\to } Csp(\mathbb{Z}_{\geq 0},1) \overset{\sim}{\to} Csp(\mathcal{M}, [S^2 \times S^1]) \overset{F}{\to } \operatorname{Cob}(3)_{S^2, CF}
.\]
sends
\begin{enumerate}
\item A disjoint union $\sqcup^n S^1$ to a disjoint union $\sqcup^n S^2.$
\item The unit morphism $1: \emptyset \to S^1$ to $1: \emptyset \to S^2$.
\item The trace morphism $tr: S^1 \to \emptyset $ to $tr: S^2 \to \emptyset$.
\item The multiplication $m: S^1 \sqcup S^1 \to S^1$ to $m: S^2 \sqcup S^2 \to S^2$.
\item The comultiplication $m^\vee: S^1\to S^1 \sqcup S^1$ to $m^\vee: S^2 \to S^2 \sqcup S^2$.
\item The genus $g$ bordism $[\Sigma_g \setminus int( \sqcup^{n_{inc}+n_{out}}D^2)]: \sqcup^{n_{inc}}S^1 \to \sqcup^{n_{out}}S^1$ to the bordism $ [\#^g(S^2 \times S^1) \setminus int( \sqcup^{n_{inc}+n_{out}}D^3)]: \sqcup^{n_{inc}}S^2 \to \sqcup^{n_{out}}S^2$.
\end{enumerate}

\end{proof}


\begin{proof}
	(Alternative) An alternative and easier proof is the following: Let $S: \operatorname{Cob}(2)\to \operatorname{Cob}(3)_{S^2, CF}$ be given by 
\begin{enumerate}
\item $S$ sends  $\amalg^n S^1$ to  $\amalg^n S^2$.
\item $S$ sends the unique genus $g$ bordism $\amalg^{n_{inc}}S^1 \to \amalg^{n_{out}} S^1 $ to the unique bordism $[W]: \amalg^{n_{inc}}S^2 \to \amalg^{n_{out}}S^2$ where $\widehat{W} \cong \#^g (S^2 \times S^1)$. 
\end{enumerate}
Then $S(1)=1$, $S(tr)=tr$, $S(m)=m$, and $S(m^\vee) = m^\vee$. By the classification of morphisms in $\operatorname{Cob}( 3)_{S^2,CF} $ in corollary \ref{cor:hominCF}, $S$ induces a bijection \[
\operatorname{Hom}_{\operatorname{Cob}(2)}\Big(\coprod^{n_{inc}}S^1, \coprod^{n_{out}}S^1\Big) \to \operatorname{Hom}_{\operatorname{Cob}(3)_{S^2, CF}}\Big(\coprod^{n_{inc}}S^2, \coprod^{n_{out}}S^2\Big) 
.\]
sending the unique genus $g$ bordism to the unique bordism with $g$-many $S^2 \times S^1$ factors.
\end{proof}

\begin{prop} \label{prop:G1G2minimal}
Both $G_1 $ and $G_2 $ are minimal.
\end{prop}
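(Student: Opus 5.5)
Minimality means that no element of $G_1$ (resp. $G_2$) lies in the subcategory generated by the remaining elements together with their tensor products and orientation reversals. The plan is to attach to each morphism $[M]$ of $\operatorname{Cob}(3)_{S^2}$ a multiplicity invariant. Fill in all boundary spheres to get $\widehat{M}$, taken componentwise. By Kneser--Milnor uniqueness, for each oriented irreducible $p$ the number $n_p(M)$ of prime summands of $\widehat{M}$ diffeomorphic to $p$ is well defined; we count $p$ and its orientation reversal $\bar p$ separately when $p\not\cong\bar p$. We also record the number $g(M)$ of $S^2\times S^1$ summands.

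The key step is additivity under the operations of the category. For a tensor product, $n_p$ and $g$ are additive, since $\widehat{M\sqcup N}=\widehat M\sqcup\widehat N$. For a composition $N\circ M$, the remark following the notations (the one on the special role of $S^2\times S^1$) gives
\[
\widehat{N\circ M}\cong \widehat N\#\widehat M\#\big(\#^{k}(S^2\times S^1)\big)
\]
componentwise, for some $k\ge 0$. I would re-derive this carefully for disconnected pieces. Putting $M$ and $N$ in the normal form of Proposition \ref{generate}, gluing along $b$ spheres can only introduce connected sums of components and extra $S^2\times S^1$ summands, never a new irreducible summand. Hence $n_p(N\circ M)=n_p(N)+n_p(M)$. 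Orientation reversal sends $n_p$ to $n_{\bar p}$.

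For the CF generators, Corollary \ref{cor:hominCF} gives $n_p=0$ for every $p$. For a prime generator, $n_p(p^{\times\times})=n_p(p^\times)=1$, and $n_q=0$ for all irreducible $q\neq p$. Now suppose $p^{\times\times}$ were generated by $G_2\setminus\{p^{\times\times}\}$. By additivity, $n_p$ of any such composite is a sum of contributions from generators, each of which is $0$. If $p\not\cong \bar p$, the reversal $\bar p^{\times\times}$ is also removed; it contributes only to $n_{\bar p}$, which is a separate count. So $n_p$ of the composite is $0$, whereas $n_p(p^{\times\times})=1$, a contradiction. The same argument works for $p^\times$ in $G_1$.

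For the CF generators, I would use the Euler-characteristic and boundary-count constraints already exploited in Proposition \ref{prop:CFcommFrob}. Consider the pair $(n_{inc}-n_{out},\ \#\text{components})$. Every prime generator has $n_{inc}-n_{out}\in\{0,-1\}$ and is connected. The unit $1$ is the only generator changing $n_{out}-n_{inc}$ by $+1$ without containing any prime. Removing it, every remaining morphism $\emptyset\to S^2$ must contain a $p^\times$ factor, or equals $m^\vee$-type composites precomposed with $p^\times$, which have $n_p>0$. So $1$ is not generated, and dually $tr$ is not. For $m$, I would use the homomorphism to $\mathbb{Z}$ given by $n_{inc}-n_{out}$ on connected pieces: among the remaining generators only $tr$ and the orientation reversal of $m^\vee$ could decrease boundary count. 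Here I must check that the orientation reversal of $m^\vee$ is not itself $m$, which is the main obstacle, since the paper's ``orientation reversal'' convention may identify $m$ with a reversed $m^\vee$. I would resolve this by reading orientation reversal as reversing $M$ with fixed in/out labelling, so that $\overline{m^\vee}$ still maps $S^2\to S^2\sqcup S^2$. Then a connected morphism $S^2\sqcup S^2\to S^2$ built without $m$ would require $tr$ applied after merging, which is impossible without $m$, because connectivity of two incoming spheres requires a $2\to1$ piece. Dually for $m^\vee$.
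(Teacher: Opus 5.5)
Your proof is correct in substance, but it is organized differently from the paper's. The paper proves three separate claims:
\begin{itemize}
\item a prime generator cannot be built from CF generators alone, since Corollary~\ref{cor:hominCF} would force $p\cong S^3$ or $S^2\times S^1$;
\item a prime generator cannot be built from other prime generators, by uniqueness of prime decomposition;
\item the CF generators are necessary, via the isomorphism $S$ of Theorem~\ref{thm:Cob2Cob3CFIso}.
\end{itemize}
Your additive count $n_p$ rests on $\widehat{N\circ M}\cong\widehat N\#\widehat M\#(\#^k S^2\times S^1)$. It handles composites that mix CF and other prime generators in one step, which the paper's case split covers only implicitly. It is also what the paper's CF step silently needs. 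The $\operatorname{Cob}(2)$ isomorphism only shows that $\{1,m,m^\vee,tr\}$ is minimal inside $\operatorname{Cob}(3)_{S^2,CF}$. You still need $n_p$ to see that no composite involving a prime generator can equal a CF generator. Your direct boundary and connectivity arguments for the CF generators bypass $\operatorname{Cob}(2)$ entirely. The price is that you must fix a reading of ``orientation reversal'', an ambiguity the paper never resolves. Under the time-reversal reading, $m^\vee$ is the reversal of $m$, so $F$ would be redundant and minimality would fail outright.

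Tighten the CF step, though, because two of the invariants you name do not work as stated.
\begin{itemize}
\item The count $n_{out}-n_{inc}$ does not single out $1$: $m^\vee$ also raises it by one and contains no prime. The right invariant is the property ``every component has at least one incoming sphere''. Every generator except $1$ and the $p^\times$ has it, and it is preserved by $\otimes$ and $\circ$. So a morphism $\emptyset\to S^2$ must use $1$ or some $p^\times$, and in the latter case $n_p\ge 1$.
\item The count $n_{inc}-n_{out}$ equals $1$ on $tr\otimes id_{S^2}$ as well as on $m$, so it cannot exclude $m$. What does is the property ``every component has at most one incoming sphere''. Every generator except $m$ has it, and it is closed under $\otimes$ and $\circ$: a piece with at most one incoming sphere attaches to at most one existing component, so components never merge.
\end{itemize}
The dual properties, stated for outgoing spheres, handle $tr$ and $m^\vee$ in the same way.
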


\begin{proof}
Let $P$ be an oriented irreducible 3-manifold. Consider $[P] = tr \circ [P\setminus \operatorname{int}(D^3)] = tr \circ [P\setminus \operatorname{int}(D^3 \sqcup D^3)] \circ 1$. By proposition \ref{cor:hominCF}, if $[P \setminus \operatorname{int}(D^3)]$ or $[P\setminus \operatorname{int}(D^3 \sqcup D^3) ]$ were generated by commutative Frobenius generators, then $P$ is either $S^3$ or $S^2\times S^1$, which contradicts the premise. Uniqueness of prime decomposition shows that a fixed prime unit cannot be generated by other prime units, hence all prime units are necessary. Theorem \ref{thm:Cob2Cob3CFIso} shows that the commutative Frobenius generators are necessary. Hence $G_1 $ is minimal. The same argument shows that $G_2 $ is also minimal.
\end{proof}

\section{Local Models of Excellent Morse Failures}\label{sec:localmodels}
Proposition \ref{prop:CFcommFrob} gives the existence of commutative Frobenius relations among CF generators. We would like to know if they are sufficient-- namely, are there other necessary relations if one wants to give a presentation of the category $\operatorname{Cob}( 3)_{S^2,CF} $? This section introduces a tool which we call \textit{local models of failures}, which the next section uses alongside theorem \ref{thm:Cob2Cob3CFIso} to prove sufficiency.

\begin{defn} \label{defn:Local Model}
(Local Model) Let $f:M\to \mathbb{R}$ be a Morse function. A local model of critical points and indices $((m_j,i_j))_{j=1}^{n}$ is a connected submanifold $U\subset  M$ along with a Morse decomposition induced by $f|_U$: \[
U \cong U_1 \cup_{X_1} U_2 \cup_{X_2 } \cup \dots \cup_{X_{n-1}} U_n \subset  M
.\]
such that each $U_j$ is an elementary cobordism with critical point $m_j \in U_j$ of index $i_j$ satisfying \[
f(m_1)\leq f(m_2)\leq \dots \leq f(m_n)
.\]

If it is clear from context which critical points $(m_j)$ and which submanifold $U$ are being considered, we say the local model is of $(i_j)$ on $U$. Note that a local model on $U$ by definition is equipped with a inducing Morse function defined on at least $U$.

We say two local models $U \cong \cup _{i\in I} U_i, V \cong \cup_{i \in I} V_i$ are diffeomorphic if there exists a diffeomorphism $h:U \to V$ such that $h(U_j) = V_j$ preserves each boundary components (as a set, not necessarily acting as the identity on each boundary component) and $h$ sends critical points to corresponding critical points. 
\end{defn}

\begin{conventions}
This is a subtle convention. Given two local models $U_i, U_{i}'$ of the same critical points on $M$, we assume that they are two Morse decompositions of the same underlying connected submanifold $U\subset  M$. This is so that we can speak of a path of functions between two local models: it is a path of functions on a submanifold $U$ between the two Morse functions inducing the two local models $U_i$ and $U_i'$.
\end{conventions}

\begin{defn} \label{defn:Path of functions between two local models}
(Path of functions between two local models) Let $f_0, f_1 : M \to \mathbb{R}$ be Morse functions inducing two different local models on a connected submanifold $U \subset  M$. A path of functions between local models of $U$ is a path $f_t : I \times U \to \mathbb{R}$ from $f_0|_U $ to $f_1|_U$. Note we do not require that $f_t$ be defined on all of the ambient manifold $M$.
\end{defn}

\begin{eg}
For the associativity relation of $S^2$, figure \ref{fig:assocandcoassoc}-(a) gives two local models for $U=M$ and there exists an evident path of local models of $(1,1)$. This path is described by smoothly moving the two index $2$ critical points towards each other in height, and their critical values ultimately coincide and cross. In this case this path of local models is the same as a path of Morse functions on the whole $M$, which exchanges the critical values of the two critical points. Note that the local models are \textit{not} diffeomorphic: it exchanges boundary components.
\end{eg}


\begin{eg}
Consider the two torus $\Sigma_1 = S^1 \times S^1$ with $U$ the shaded region in figure \ref{fig:localmodelbirth}. Equip the torus on the left with the standard height function. In figure (a), $U$ has no critical points, so it is itself a trivial local model. In figure (b), $U$ has an index 1 critical point and an index $2$ critical point. In order to induce this on the same $\Sigma_1$, the Morse function on $\Sigma_1$ is no longer the height function in figure b. However standard Cerf theory shows there is a path $f_t$ between the height function and the Morse function inducing figure (b). When restricted to $U$, this path gives a path $f_t|_U$ of functions between the trivial local model of $U$ and the local model of indices $(1,2)$ of $U$.
\end{eg}

\begin{figure}[ht]
    \centering
\def\svgwidth{1\columnwidth} 
\begingroup%
  \makeatletter%
  \providecommand\color[2][]{%
    \errmessage{(Inkscape) Color is used for the text in Inkscape, but the package 'color.sty' is not loaded}%
    \renewcommand\color[2][]{}%
  }%
  \providecommand\transparent[1]{%
    \errmessage{(Inkscape) Transparency is used (non-zero) for the text in Inkscape, but the package 'transparent.sty' is not loaded}%
    \renewcommand\transparent[1]{}%
  }%
  \providecommand\rotatebox[2]{#2}%
  \newcommand*\fsize{\dimexpr\f@size pt\relax}%
  \newcommand*\lineheight[1]{\fontsize{\fsize}{#1\fsize}\selectfont}%
  \ifx\svgwidth\undefined%
    \setlength{\unitlength}{680.31496063bp}%
    \ifx\svgscale\undefined%
      \relax%
    \else%
      \setlength{\unitlength}{\unitlength * \real{\svgscale}}%
    \fi%
  \else%
    \setlength{\unitlength}{\svgwidth}%
  \fi%
  \global\let\svgwidth\undefined%
  \global\let\svgscale\undefined%
  \makeatother%
  \begin{picture}(1,0.33333333)%
    \lineheight{1}%
    \setlength\tabcolsep{0pt}%
    \put(0,0){\includegraphics[width=\unitlength,page=1]{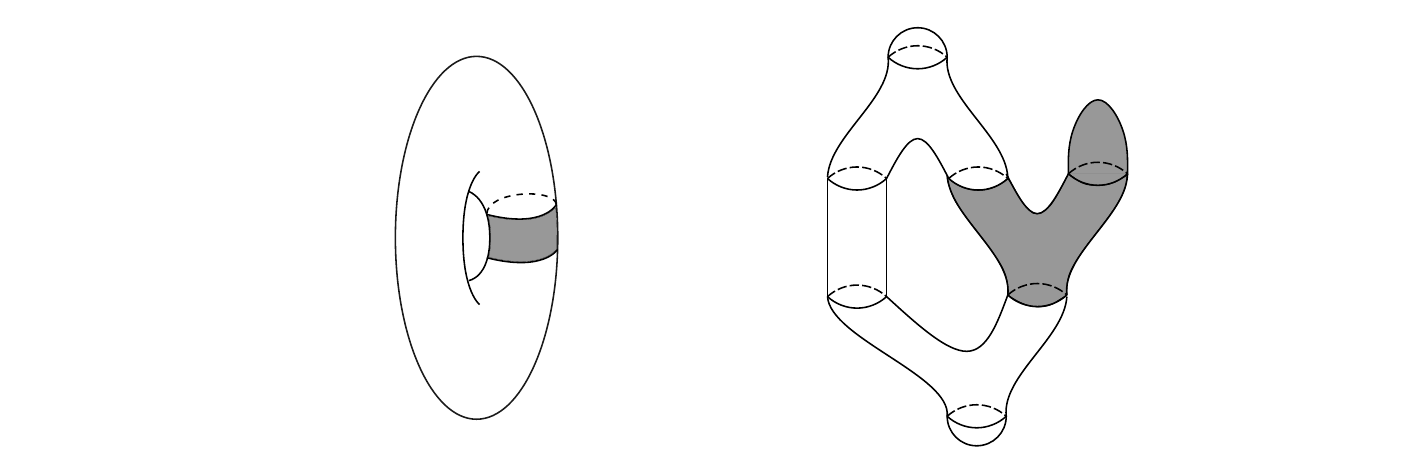}}%
    \put(0.31838802,0.01372356){\makebox(0,0)[lt]{\lineheight{1.25}\smash{\begin{tabular}[t]{l}(a)\end{tabular}}}}%
    \put(0.61373653,0.01909612){\makebox(0,0)[lt]{\lineheight{1.25}\smash{\begin{tabular}[t]{l}(b)\end{tabular}}}}%
    \put(0.40546954,0.16309013){\makebox(0,0)[lt]{\lineheight{1.25}\smash{\begin{tabular}[t]{l}$U$\end{tabular}}}}%
    \put(0.8130764,0.16257617){\makebox(0,0)[lt]{\lineheight{1.25}\smash{\begin{tabular}[t]{l}$U$\end{tabular}}}}%
  \end{picture}%
\endgroup%

    \caption[A local model of the birth of a pair of critical points]{A local model of the birth of a pair of critical points.}
    \label{fig:localmodelbirth}
\end{figure}

\begin{notations}
Let $f_0 ,f_1 $ be two admissible excellent Morse functions on $M$, and $\gamma :\left[ 0,1 \right] \to C^\infty(M)$ a path from $f_0 $ to $f_1 $. Let $f_t = \gamma (t)$. Then standard Cerf theory applies and $f_t$ is admissible excellent Morse for all but finitely many $t$, say at $\left\{ t_1 ,\dots t_n \right\} $. 
\begin{enumerate}
\item  We say $f_{t_i}$ is of failure type $(j,j+1)_{B}$ or $ (j,j+1)_D $ if two critical points of index $j, j+1$ were created or destroyed at time $t_i$. 
\item We say $f_{t_i}$ is of failure type $(k,l)_X$ if two critical points of index $k,l$ have the same critical value at $f_{t_i}$. The ordering is such that $f_{t_i - \epsilon} (x_k) < f_{t_i - \epsilon}(x_j)$ if $x_k$ is the critical point of index $k$.
\item We say $f_{t_i}$ is of local failure type $(j,j+1)_{B,D}$ if it is of failure type $(j,j+1)_{B,D}$.
\item We say $f_{t_i}$ is of local failure type $(k,l)_{X, M,M'}$ if it is of failure type $(k,l)_X$ and $M,M'$ are local models of $(k,l)$ before and after the exchange.
\item We say $f_{t_i}$ is of trivial local failure type $(k,l)_{X}$ if it is of local failure type $(k,l)_{X,M,M'}$ where $M,M'$ are diffeomorphic local models of $(k,l)$.
\end{enumerate}

In this language, a \textit{relation} between two (non-cylindrical) cobordism generators is either a local failure type $(j,j+1)_{B,D}$ or a non-trivial local failure type $(k,l)_{X,M,M'}$, where $M,M'$ are not the same (i.e. diffeomorphic) local models. To classify relations between CF generators, we want the local models before and after a failure to have only spherical boundaries.
\end{notations}

\begin{rmk}
The need to distinguish failure and local failure is because even though birth/death is purely local, exchanges of critical values are not. However the purpose of introducing local models is so we can isolate a local region and treat the whole region as a whole. Although this is not usually a focal issue in standard Cerf theory or in $\operatorname{Cob}(3)_{S^2, CF}$, presently it is necessary to distinguish $(1,1)_{X}$ exchanges in $\operatorname{Cob}(2) $ by their local models, as illustrated in the following example.
\end{rmk}

\begin{eg}
Consider the standard height function on the two-torus $T=\Sigma_1 \cong S^1 \times S^1$, embedded in  $\mathbb{R}^3$ as in figure \ref{fig:tiltingtorus}. Consider rotating the torus by $\theta(t)$ about the $x$-axis, where $\theta(t) = \pi t$. Then there is a path of maps parametrized by $t$ \[
f_t: T=\Sigma_1 \cong S^1 \times S^1 \hookrightarrow \mathbb{R}^3 \overset{R_x(\theta(t))}{\longrightarrow } \mathbb{R}^3 \overset{proj_z}{\relbar\joinrel\twoheadrightarrow} \mathbb{R}
.\]
such that at $\theta(0)=0$ one recovers the standard height function. As one increases $t$, the critical values of the two index $1$ critical points move towards each other, ultimately coinciding at some $\theta_c(t_c)$. For a time $t=t_c + \epsilon$ slightly after $t_c$, the critical values become distinct again.

\begin{figure}[ht]
    \centering
\def\svgwidth{1\columnwidth} 
\begingroup%
  \makeatletter%
  \providecommand\color[2][]{%
    \errmessage{(Inkscape) Color is used for the text in Inkscape, but the package 'color.sty' is not loaded}%
    \renewcommand\color[2][]{}%
  }%
  \providecommand\transparent[1]{%
    \errmessage{(Inkscape) Transparency is used (non-zero) for the text in Inkscape, but the package 'transparent.sty' is not loaded}%
    \renewcommand\transparent[1]{}%
  }%
  \providecommand\rotatebox[2]{#2}%
  \newcommand*\fsize{\dimexpr\f@size pt\relax}%
  \newcommand*\lineheight[1]{\fontsize{\fsize}{#1\fsize}\selectfont}%
  \ifx\svgwidth\undefined%
    \setlength{\unitlength}{680.31496063bp}%
    \ifx\svgscale\undefined%
      \relax%
    \else%
      \setlength{\unitlength}{\unitlength * \real{\svgscale}}%
    \fi%
  \else%
    \setlength{\unitlength}{\svgwidth}%
  \fi%
  \global\let\svgwidth\undefined%
  \global\let\svgscale\undefined%
  \makeatother%
  \begin{picture}(1,0.33333333)%
    \lineheight{1}%
    \setlength\tabcolsep{0pt}%
    \put(0,0){\includegraphics[width=\unitlength,page=1]{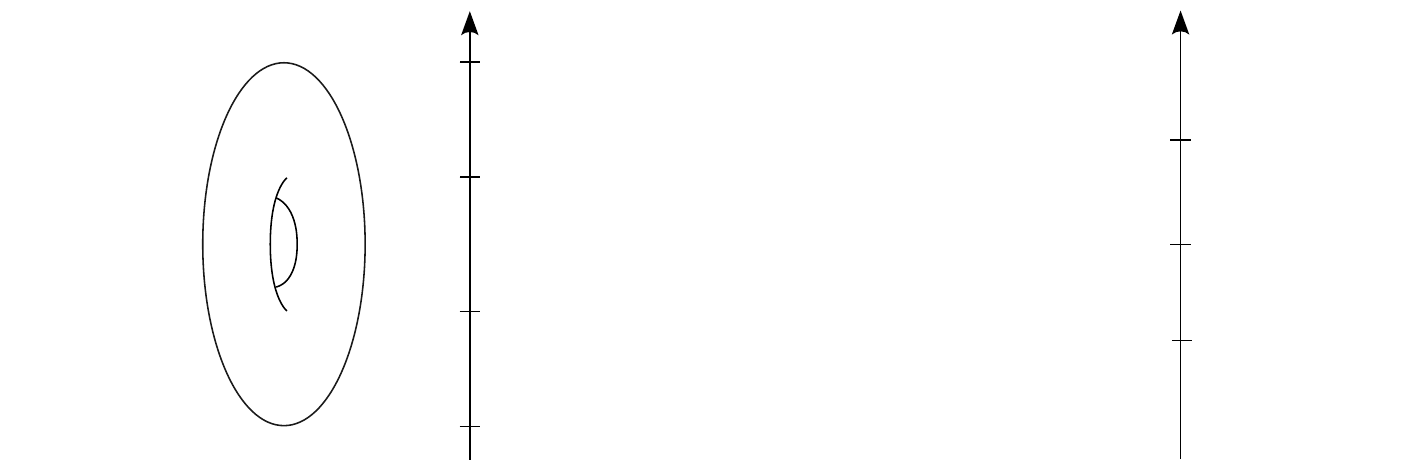}}%
    \put(0.34276403,0.02713814){\makebox(0,0)[lt]{\lineheight{1.25}\smash{\begin{tabular}[t]{l}0\end{tabular}}}}%
    \put(0.34253508,0.10815729){\makebox(0,0)[lt]{\lineheight{1.25}\smash{\begin{tabular}[t]{l}1\end{tabular}}}}%
    \put(0.34187978,0.20294158){\makebox(0,0)[lt]{\lineheight{1.25}\smash{\begin{tabular}[t]{l}1\end{tabular}}}}%
    \put(0.34280351,0.28415934){\makebox(0,0)[lt]{\lineheight{1.25}\smash{\begin{tabular}[t]{l}2\end{tabular}}}}%
    \put(0.8563776,0.22893499){\makebox(0,0)[lt]{\lineheight{1.25}\smash{\begin{tabular}[t]{l}2\end{tabular}}}}%
    \put(0.85830841,0.08662497){\makebox(0,0)[lt]{\lineheight{1.25}\smash{\begin{tabular}[t]{l}0\end{tabular}}}}%
    \put(0.85545387,0.15582581){\makebox(0,0)[lt]{\lineheight{1.25}\smash{\begin{tabular}[t]{l}1,1\end{tabular}}}}%
    \put(0,0){\includegraphics[width=\unitlength,page=2]{tiltingtorus.pdf}}%
  \end{picture}%
\endgroup%

    \caption[The tilting torus]{The tilting torus. The figure on the left is the standard embedding of the 2-torus. As one rotates the torus, the critical values of the index 1 critical points move towards each other, ultimately coinciding as depicted in the right figure. The critical level set is depicted as the two dashed circles wedged together at two points. For a better visualization, we encourage the reader to cut a few donuts or bagles like so in real life. }
    \label{fig:tiltingtorus}
\end{figure}
Note however that $f_{t_c}$ is of trivial local failure type $(1,1)_X$-- the local models of $(1,1)$, before and after the exchange of critical values, are diffeomorphic-- both are of the form \[
\text{pants} \cup_{S^1 \sqcup S^1} \text{copants}
.\]

Therefore in this tilting torus example, the exchange of the critical values of the two index $1$ critical points do not constitute a relation. 
\end{eg}

\begin{eg}
By remark \ref{rmk:12Bnotspherical}, the birth of the pair of critical points of index $(1,2)$ is not a relation between CF generators because the local model after the birth has torus boundaries.
\end{eg}

\begin{eg}
If $\dim(M)=2$, the possible failure types are \[
\left\{ (0,1)_B, (0,1)_D, (1,2)_B, (1,2)_D, (1,1)_X  \right\} 
.\]
The possible relations, aka non-trivial local failure types are \[
\left\{ (0,1)_B, (0,1)_D, (1,2)_B, (1,2)_D, (1,1)_{X, \text{assoc}}, (1,1)_{X, \text{coassoc}}, (1,1)_{X, \text{Frobenius}} \right\} 
.\]
where the pairs of local models are the evident ones.
\end{eg}

\begin{eg}
If $\dim(M)=3$, the possible failure types are \[
\left\{ (0,1)_B, (0,1)_D, (2,3)_B, (2,3)_D, (1,1)_X, (2,2)_X, (1,2)_X  \right\} 
.\]
The possible relations, aka non-trivial local failure types are \[
\left\{ (0,1)_B, (0,1)_D, (2,3)_B, (2,3)_D, (1,1)_{X, \text{assoc}}, (2,2)_{X, \text{coassoc}}, (1,2)_{X, \text{Frobenius}} \right\} 
.\]
where the pairs of local models are the evident ones.
\end{eg}

\section{Sufficiency of Commutative Frobenius Relations Among CF Generators} \label{sec:sufficient}
Recall that there is a \textit{silly functor} defined in \ref{defn:Sillyfunctor}, \[
S: \operatorname{Cob}( 2) \to \operatorname{Cob}( 3)_{S^2, CF}
\]
sending disjoint unions of $n$-copies of $S^1$ to disjoint unions of $n$-copies of $S^2$, and the genus $g$ bordism \[
	[\Sigma_g \setminus int( \sqcup^{n_{inc}+n_{out}}D^2)]: \sqcup^{n_{inc}}S^1 \to \sqcup^{n_{out}}S^1
\]
to the bordism \[
	[\#^g(S^2 \times S^1) \setminus int( \sqcup^{n_{inc}+n_{out}}D^3)]: \sqcup^{n_{inc}}S^2 \to \sqcup^{n_{out}}S^2.
\]

Let's examine what this functor does to local failure models. Unsurprisingly, it maps $\operatorname{Cob}(2) $ non-trivial local failure types, to $\operatorname{Cob}(3)_{S^2,CF}$ relations by 
\begin{align*}
(0,1)_{B,D} &\mapsto (0,1)_{B,D} \\
(1,2)_{B,D} &\mapsto (2,3)_{B,D}\\
(1,1)_{X,\text{assoc}} & \mapsto (1,1)_{X,\text{assoc}}\\
(1,1)_{X,\text{coassoc}}&\mapsto (2,2)_{X,\text{coassoc}}\\
(1,1)_{X,\text{Frobenius}}&\mapsto (1,2)_{X,\text{Frobenius}}
\end{align*}

It also behaves well with the associated Morse data: note that $S([M]\circ [N]) = S([M])\circ S([N])$ and $S([\Sigma_g]) = [\#^g S^2\times S^1]$. In particular, if $M$ is a representative of $[M]$ with Morse function $f$ inducing decomposition $M=M_1 \cup \dots  \cup M_n$, then there exists a manifold $N $ a representative of $S([M])$ with Morse function $F$ inducing decomposition $N=N_1 \cup \dots \cup N_n$ such that the Morse decompositions are mapped to each other:  \[
S([M_i]) = [N_i],\quad  \forall i = 1, \dots , n
.\]

See figure \ref{fig:sillymap} for a simple example. Note that there are infinitely many choices of such $F$ on $N$, and, by composing with suitable monotonically increasing functions, in fact infinitely many $F$ that additionally has the same as critical values as those of $f$. Therefore WLOG we impose the condition that $F$ has the same critical values as those of $f$.

\begin{notations}
For simplicity and abusing the notations, we refer to any representative $N \in S([M])$ as $S(M)$, and any Morse function $F:N \to \mathbb{R}$, with the same critical values as $f$ and inducing $N= N_1 \cup \dots N_n$ such that $S([M_i])=[N_i]$, as $S(f)$.
\end{notations}
\begin{figure}[ht]
    \centering
\def\svgwidth{1\columnwidth} 
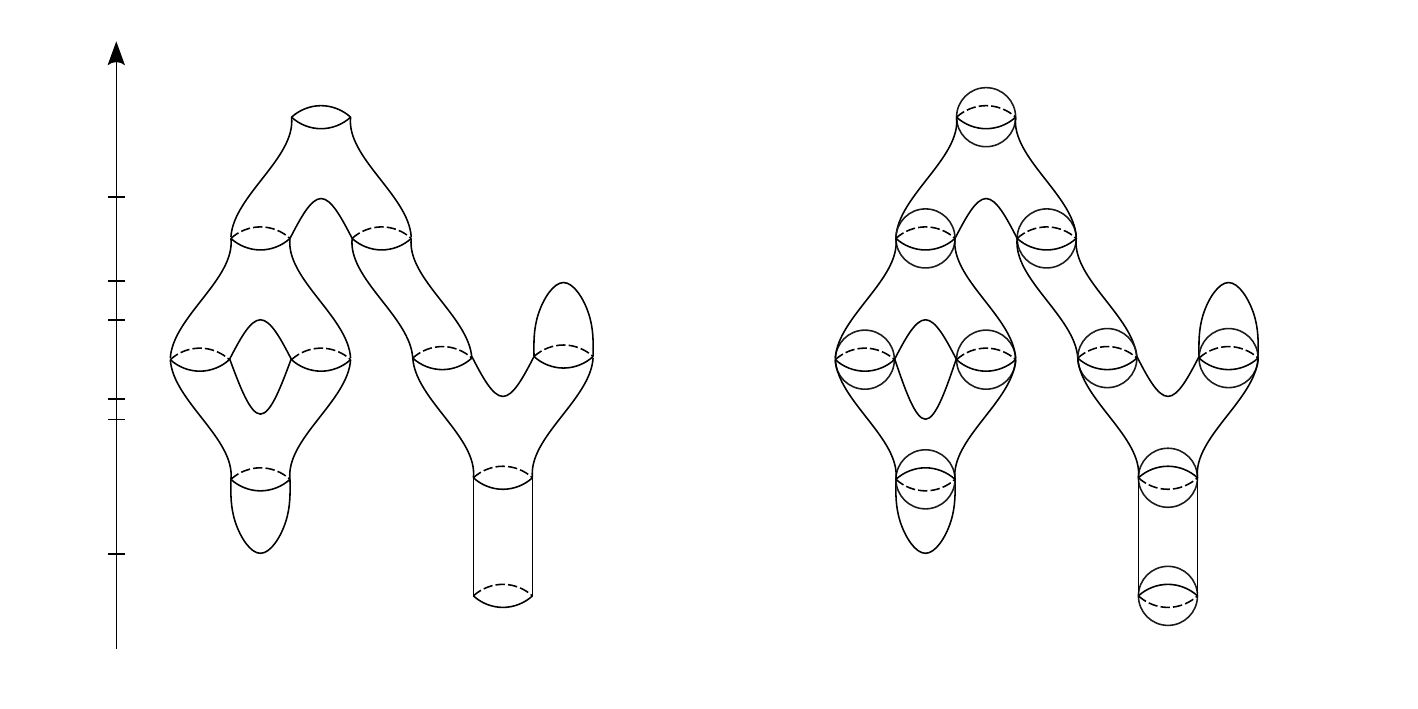

\caption[Morse data between $\operatorname{Cob}( 2) $ and $\operatorname{Cob}( 3)_{S^2} $ ]{The behavior of Morse data under the isomorphism $S: \operatorname{Cob}( 2) \to \operatorname{Cob}( 3)_{S^2}$. $S(f) : S(M) \to \mathbb{R}$ is any Morse function such that it has the same critical values as $f$, and induces decomposition $S(M) = N_1  \cup \dots \cup N_n$ such that $S([M_i])=[N_i]$.}
    \label{fig:sillymap}
\end{figure}

Naively, when one is given a morphism $[M]$ in $\operatorname{Cob}( 2) $ and two different compositions related by a single 2-dimensional commutative Frobenius relation, one may use the silly functor $S$ on $[M]$ and its two compositions to get a morphism $S([M])$ in $\operatorname{Cob}( 3)_{S^2, CF} $ with two compositions related by a 3-dimensional commutative Frobenius relation. 

Because the silly functor $S: \operatorname{Cob}( 2) \to \operatorname{Cob}( 3)_{S^2, CF}$ is an isomorphism, this also works the other way around: given a morphism $[N]$ in $\operatorname{Cob}( 3)_{S^2, CF}$ and two distinct compositions related by a single commutative Frobenius relation, one can use the inverse silly functor to get $S^{-1}([N])$ in $\operatorname{Cob}( 2)$ and two compositions related by a single commutative Frobenius relations.

The next proposition \ref{prop:keyprop} says that this process is repeatable, and in fact has a Cerf-theoretic lift: given two Morse functions $f_0 ,f_1 :M \to \mathbb{R}$ and a path $f_t$ between them, one can construct a path $F_t $ between $F_0 , F_1 :S(M) \to \mathbb{R}$, where $S(M)$ is a representative of $S([M])$, and $F_{0,1}$ is induced by $f_{0,1} $. Along this path except with at finitely many isolated singularities, the function is not only admissible excellent Morse, but importantly also spherical. The isolated singularities also match those (non-trivial) singularities of $f_t$ in the sense that they are the $S$-image of local failures.

The strategy of the proof is simple: to construct $F_t: S(M)\to \mathbb{R}$, follow $f_t:M\to \mathbb{R}$ and use it as a blueprint. More specifically, whenever one encounters a specific singularity along $f_t$, isolates a local region $U$ of the manifold $M$ where the topology change occurs, and consider its image $S(U)$ in $S(M)$. All the commutative Frobenius relations in $\operatorname{Cob}( 3)_{S^2,CF} $ can be described by some basic paths $\tilde{F}_t$, so we declare that over $S(U)$ the path of functions takes the form $\tilde{F}_t$, and outside $U$ the path of functions is constant. Then we can glue the two paths of functions together with a bump function over a collared neighbourhood of $\partial S(U)$.

The reader is warned that the wording of the proposition and technical aspects of the proof may obscure the intuition and the proof strategy outlined above.

\begin{prop} \label{prop:keyprop}
Let $\left[ M \right] $ be a morphism in $\operatorname{Cob}(2) $ and $f_0 ,f_1 :M \to \mathbb{R}$ two admissible excellent Morse functions. Let $f_t$ be a path from $f_0 $ to $f_1 $ where $f_t $ fails to be excellent Morse at time $\left\{ t_1 <t_2 < \dots < t_n \right\} $ of non-trivial local failure type $\left\{ T_1, T_2, \dots , T_n \right\}  $, and possibly of trivial local failure type $(1,1)_X$ at other times $\left\{ \widehat{t}_1< \dots < \widehat{t}_m \right\} $. Then there exists a path $F_t$ connecting $S(f_0 ) ,S(f_1 ) : S(M) \to \mathbb{R} $ which fails to be admissible excellent spherical Morse only at time $\left\{ t_1, \dots , t_n \right\} $ of local failure type $\left\{ S(T_1), \dots , S(T_n) \right\} $.
\end{prop}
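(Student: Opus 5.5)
The plan is to build $F_t$ by following $f_t$ as a blueprint, one segment at a time. First pick times $s_0=0<s_1<\dots<s_n=1$ with exactly one nontrivial failure time $t_i$ in each $(s_{i-1},s_i)$. Over each interval $[s_{i-1},s_i]$ (apart from the failure at $t_i$), and near each trivial exchange time $\widehat{t}_j$, the decomposition of $M$ induced by $f_t$ is constant up to diffeomorphism. At a trivial $(1,1)_X$ exchange the local models before and after are diffeomorphic by definition, so the induced composition in $\operatorname{Cob}(2)$ does not change.

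Step one is to treat these ``inert'' segments. On such a segment I would apply $S$ to the fixed composition, as in the discussion after Theorem \ref{thm:Cob2Cob3CFIso}. This gives a spherical excellent Morse function on $S(M)$ with the same critical values as $f_t$. Letting the critical values move as those of $f_t$ do, by composing with a monotone reparametrisation of $\mathbb{R}$ on each elementary piece, gives a path of spherical excellent Morse functions. At a trivial exchange $\widehat{t}_j$ the $S$-images of the pieces carrying the two critical points are disjoint in $S(M)$. This should follow from the pants $\cup$ copants form of the local model together with the isomorphism $S$. So the two critical values can be moved past each other independently. The crossing moment is then harmless for sphericity, and I would perturb the path so the two values never coincide, so no failure appears on $S(M)$ there.

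Step two treats a nontrivial failure $T_i$ at $t_i$. Isolate the local region $U\subset M$ on which the local models before and after live. Consider its image $S(U)\subset S(M)$, which is one of the standard pieces: a ball or pants for birth-death, or the associativity, coassociativity or Frobenius local models. For each of the possible types $(0,1)_{B,D}$, $(1,2)_{B,D}$ and the three $(1,1)_X$ exchanges, write down an explicit model path $\tilde F_t$ on $S(U)$ realising $S(T_i)$. Use the birth-death local forms $x^3-tx\pm y^2\pm z^2$ of Proposition \ref{prop:CFcommFrob} for births and deaths, and for exchanges slide one critical value past the other. Each model path is spherical at all noncritical times, since the level sets are those drawn in Proposition \ref{prop:CFcommFrob}. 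Keep $F_t$ constant off $S(U)$ and glue with a bump function in a collar of $\partial S(U)$, where both functions are standard product functions $S^2\times I\to I$. Finally concatenate the segments. The endpoints match because on either side of $t_i$ the function on $S(M)$ is $S$ of the corresponding decomposition, and such functions are unique up to the reparametrisations already allowed.

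The main obstacle is the gluing and matching. The $S$-image of a function is defined only up to diffeomorphism of $S(M)$ respecting the decomposition. So at each $s_i$ the function coming from the left segment and the one coming from the right segment may differ by a diffeomorphism. It must be shown that such a diffeomorphism can be realised by a path through spherical excellent Morse functions with no failures. I would first try to use the uniqueness up to isotopy of ball embeddings \cite[Ch8 Thm 3.2]{hirsch_differential_1976}, applied piece by piece on the elementary bordisms $D^3$ and $S^3\setminus\operatorname{int}(\sqcup^3D^3)$. This would isotope one standard function into the other while keeping its critical points and level sets spheres. I also need the bump-function interpolation in the collars to create no new critical points, which holds if both functions have nonzero derivative in the collar direction there.
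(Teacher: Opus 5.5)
Your blueprint is the paper's: segments, an explicit model path $\tilde F_t$ on $S(U)$ at each nontrivial failure, bump-function gluing in collars, and concatenation. The gap is exactly the step you flag as the main obstacle, namely matching the functions on adjacent segments. Your proposed fix does not work, and with both endpoints prescribed the step is impossible in general.

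Uniqueness of ball embeddings says nothing about pieces such as $S^3\setminus\operatorname{int}(\sqcup^3D^3)$, which are not balls. The level spheres of two admissible choices of ``$S(f)$'' can form non-isotopic sphere systems in $\#^g(S^2\times S^1)$, so your sentence claiming uniqueness up to reparametrisation is false. Concretely:
\begin{itemize}
\item By stability of excellent Morse functions, a path through admissible excellent Morse functions always has the form $g_t\circ F\circ\psi_t$, with $\psi_t$ an isotopy and $g_t$ an increasing reparametrisation of $\mathbb{R}$.
\item Hence $F$ and $F\circ\phi$ can be joined without failures only if $\phi$ is isotopic to some $\theta$ with $F\circ\theta=g\circ F$.
\item Such a $\theta$ descends to a height-compatible automorphism of the finite Reeb graph $R_F$, and these form a finite group up to homotopy.
\item Since $q_*:\pi_1(S(M))\to\pi_1(R_F)$ is an isomorphism (proof of Proposition \ref{prop:sphericalnoirreducible}), $\theta_*$ has finite order in $\operatorname{Out}(\pi_1)$.
\end{itemize}
Now take $M$ to be a closed genus-$2$ surface with its standard height function and the constant path, so $n=0$. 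Let $F$ be any valid $S(f)$ on $\#^2(S^2\times S^1)$. Let $\phi$ be a handle slide acting on $\pi_1=\langle a,b\rangle$ by $a\mapsto ab$, $b\mapsto b$, which has infinite order in $\operatorname{Out}$. Then $F\circ\phi$ is an equally valid $S(f)$, yet no failure-free path joins it to $F$.

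The repair is to stop choosing the functions on later segments independently. Fix $F_0=S(f_0)$ and propagate forward. On each new segment, define $S(f_\tau)$ to be the endpoint of the local model path, followed by target reparametrisations, and define $S(f_1)$ the same way. This proves the proposition with $S(f_1)$ produced by the construction rather than prescribed, which is the form in which the statement holds.

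This weaker form suffices at the level of compositions, because functions whose decompositions differ by a decomposition-preserving diffeomorphism induce the same composition. It does not justify the way Proposition \ref{prop:keypropCF} plugs in a prescribed $F_1$. The paper's proof has the same hole. It crosses each matching point by ``smooth interpolation'' and asserts that interpolation preserves sphericity, which is unjustified: a convex combination of two spherical Morse functions need not even be Morse.

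One smaller correction concerns trivial exchanges of tilting-torus type. There the $S$-image is $m\circ m^\vee$, whose two pieces share two gluing spheres rather than being disjoint. Letting the index-$1$ value pass the index-$2$ value is precisely the $(1,2)_X$ move onto the torus level of Example \ref{eg:S2xS1pantstoHeegaardtopants}, so the crossing is not harmless for sphericity. Your fallback of never letting the values cross is the right choice, and it is what the paper does.
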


\begin{proof}
WLOG we can choose the critical values of $F_0$ and $F_1$ to be the same as those of $f_0 , f_1$. Indeed, for any $\left\{ x_1 <\dots x_k \right\} $ and $\left\{ y_1 <\dots <y_k \right\} $, there exists a monotonically increasing smooth function mapping  $x_i$ to $y_i$, and we can always take a smooth interpolation from the identity to this function.

Take a strict total ordering of all the times that $f_t$ fails to be excellent Morse: $\left\{ \tau_1 ,\tau_2, \dots , \tau _{n+m} \right\} $ such that each $\tau_j$ is either in $\left\{ t_1 , \dots , t_n \right\} $ or $\left\{ \widehat{t}_1, \dots , \widehat{t}_m \right\} $, corresponding to non-trivial local failures or trivial ones. Let $\epsilon>0$ be small enough such that $\cap_{j=1}^{m+n} (\tau_j -\epsilon, \tau_j + \epsilon) = \emptyset$, or equivalently \[
\cap_{i=1}^n (t_i - \epsilon, t_i + \epsilon) \cap_{j=1}^m (\widehat{t}_j - \epsilon, \widehat{t}_j + \epsilon)= \emptyset
.\]

We shall construct the path $F_t: I \times M \to \mathbb{R}$ from $F_0 = S(f_0 ) $ to $F_1 = S(f_1 )$ satisfying the desired property. First we split the path $f_t$ into $n+m+1$ segments: \[
(0, \tau _1 -\epsilon), (\tau _1 +\epsilon, \tau _2 -\epsilon), \dots , (\tau _{n+m} +\epsilon, 1) 
.\]

Over any such segment, $f_t$ is excellent good Morse and the Morse data are diffeomorphic. For the ``base case'', let $F_{\tau_1 -\epsilon}$ have the same critical values as $f_{\tau _1 -\epsilon}$, and let $F_t$ be the smooth interpolation from $F_0$ to $F_{\tau_1 -\epsilon}$.

Suppose $F_t$ is defined up to and including $\tau_i-\epsilon$, we will construct $F_t$ up to and including $\tau_{i+1}-\epsilon$. Over the interval $(\tau_i -\epsilon, \tau_i +\epsilon)$, $f_t$ fails to be excellent Morse exactly once at $\tau_i$. There are three possible cases:
\begin{enumerate}
\item the failure is of trivial local failure type $(1,1)_X$.

The Morse data before and after the failure are diffeomorphic, meaning $f_{\tau _i-\epsilon}$ and $f_{\tau _i+\epsilon}$ have diffeomorphic Morse decompositions. Therefore we let $F_t$ be a smooth interpolation from $F_{\tau _i -\epsilon}$ to $F_{\tau_{i+1} -\epsilon}$, where $F_{\tau_{i+1}-\epsilon} = S(f_{\tau _{i+1}-\epsilon})$ has the same critical values as $f_{\tau _{i+1}-\epsilon}$.

\item the failure is of non-trivial local failure type $(j, j+1)_B$ or $(j,j+1)_D$.

We will construct $F_t$ over the time interval $(\tau_i -\epsilon, \tau_i + \epsilon)$ such that $F_t$ is of local failure type $S((j,j+1)_{B,D})$ at $t=\tau_i $.

The birth or death of a pair of critical points of indices $(S(j),S(j)+1)$ is described locally by a path $\tilde{F}_t$ provided by standard Cerf theory. Let $U\subset S(M) $ be a neighbourhood over which $\tilde{F}_t$ is defined. Outside of $U\subset  S(M)$, we can take  $\tilde{G}_t: (\tau _i -\epsilon, \tau _i + \epsilon) \times U^c \to \mathbb{R}$ to be the same as $F_{\tau_i -\epsilon }|_{U^c}$ for all $t \in (\tau _i -\epsilon, \tau _i + \epsilon)$. We then glue $\tilde{F}$ and $\tilde{G}$ together using a bump function, and set the result to be the desired $F_t$ over the interval $(\tau_i -\epsilon, \tau _i +\epsilon)$. We then take a smooth interpolation from $F_{\tau _i +\epsilon}$ to $F_{\tau _{i+1} -\epsilon}=S(f_{\tau _{i+1}-\epsilon})$.

\item the failure is of non-trivial local failure type $(j,k)_{X,w,w'}$ where $w,w'$ are local models of $(j,k)$ before and after the exchange.

We will construct $F_t$ over the time interval $(\tau _i - \epsilon, \tau _i + \epsilon)$ such that $F_t$ is of local failure type $S((j,k)_{X,w,w'})$ at $t=\tau _i$. We use a similar approach to the birth-death type of failure, except instead of gluing a path defined on a neighbourhood, we glue a path defined on the relevant local model. 

First note that for any pairs of local models associated with the $S$-image of a non-trivial local failure type, our settings are simple enough that there exists an evident path of functions on the local model between the two inducing Morse functions. So let $U\subset S(M)$ be a local model induced by $F_{\tau_i -\epsilon}$ of the pair of critical points of index $S(j), S(k)$ whose critical values are being exchanged, namely $U$ is the underlying submanifold of the local model $S(w)$. Choose any local model $\cup_l U_l$ on $U$ diffeomorphic to $S(w')$, then there exists a path of functions $\tilde{F}_t: (\tau_i - \epsilon, \tau_i + \epsilon) \times U \to \mathbb{R}$ from $F_{\tau _i -\epsilon}|_U$ to the Morse function inducing $\cup_l U_l$, such that it is of the desired failure type at $t=\tau _i$. Outside $U$, we can take $\tilde{G}_t : (\tau_i - \epsilon, \tau _i + \epsilon)\times U^c \to \mathbb{R}$ to be the same as $F_{\tau _i - \epsilon}|_{U^c}$. Glue $ \tilde{F}$ and $\tilde{G}$ together to get the desired $F_t: (\tau _i -\epsilon, \tau _i + \epsilon)$ and take a smooth interpolation to $S(f_{\tau _{i+1}-\epsilon})$ at time $\tau _{i+1}-\epsilon$.
\end{enumerate}

Since there are only finitely many $t$ such that $f_t$ fails to be excellent Morse, this process eventually must terminate and all the smooth interpolations are taken from the constructed $F_{\tau_i + \epsilon}$ to $S(f_1)$. Also since $F_0$ is spherical, smooth interpolations preserves spherical-ness, and all local failures are spherical except at the singular point, we have that the constructed path $F_t$ is admissible excellent spherical Morse at all but time $\left\{ t_1 , \dots , t_n \right\} $.
\end{proof}

%
%

The power of proposition \ref{prop:keyprop} is that it allows a Cerf-path in $\operatorname{Cob}( 2)$ to lift to a spherical Cerf-path in $\operatorname{Cob}( 3)_{S^2,CF} $. Together with the silly functor $S$, we can show that there is always a spherical Cerf-path between any admissible excellent spherical Morse functions on $N$, a representative of a morphism $[N]$ in $\operatorname{Cob}( 3)_{S^2,CF} $.

\begin{prop}\label{prop:keypropCF}
Let $\left[ N \right] $ be a morphism in $\operatorname{Cob}(3)_{S^2, CF} $ and $F_0 ,F_1 :N \to \mathbb{R}$ two admissible excellent spherical Morse functions. Then there exists a path $F_t$ in $C^\infty(N)$ from $F_0 $ to $F_1 $ which is admissible excellent spherical Morse at all but finitely many $t$. Moreover, the types of failure are of CF type.
\end{prop}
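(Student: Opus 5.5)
Since $S$ is an isomorphism (theorem \ref{thm:Cob2Cob3CFIso}), I would pull the problem back to $\operatorname{Cob}(2)$, use ordinary Cerf theory there, and push the resulting Cerf path forward with proposition \ref{prop:keyprop}. Proposition \ref{prop:keyprop} only produces a path between functions of the form $S(f_0)$ and $S(f_1)$, so the real content is to connect the given $F_0, F_1$ to such functions by spherical paths that contain no failures at all.

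\textbf{Step 1: produce surface data.} Each $F_i$ is admissible, excellent and spherical. Its Morse decomposition $N = N^i_1 \cup \dots \cup N^i_{k_i}$ has every piece a tensor product of $id$, $1$, $tr$, $m$, $m^\vee$, because every critical point is of index $0,1,2,3$ with spherical level sets on both sides. This gives a composition $C_i$ of $[N]$ in CF generators. Apply $S^{-1}$ piece by piece to get a composition of $S^{-1}([N]) = [M]$ in $\operatorname{Cob}(2)$. Realize it on a fixed representative $M$ by gluing the standard Morse functions on the pieces, arranged so that $f_i$ has the same critical values as $F_i$. Then $f_0, f_1$ are admissible excellent Morse on $M$.

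\textbf{Step 2: compare $F_i$ with $S(f_i)$.} I claim $F_i$ is itself a valid choice of $S(f_i)$. By construction it induces a decomposition $N^i_1\cup\dots\cup N^i_{k_i}$ with $S([M^i_l]) = [N^i_l]$, and it has the same critical values as $f_i$. The one subtlety is that proposition \ref{prop:keyprop} fixes the representative $S(M)=N$ once for both endpoints. If needed, this can be handled by the argument already used in its proof: two spherical excellent functions that induce diffeomorphic decompositions with the same critical values are related by a diffeomorphism of $N$ together with a failure-free interpolation. To get that diffeomorphism, I would glue piecewise diffeomorphisms $N^i_l \cong N^i_l$ that are compatible on the spherical gluing surfaces. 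This uses that orientation-preserving diffeomorphisms of $S^2$ are isotopic to the identity, so the pieces can be adjusted in collars. Along such an isotopy the level sets stay diffeomorphic to the original ones, so sphericity is preserved.

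\textbf{Step 3: apply Cerf theory and lift.} Cerf's theorem on $M$ gives a path $f_t$ from $f_0$ to $f_1$. It fails to be excellent Morse only at finitely many times, and each failure is either non-trivial, of one of the types $(0,1)_{B,D}$, $(1,2)_{B,D}$, $(1,1)_{X,\text{assoc}}$, $(1,1)_{X,\text{coassoc}}$, $(1,1)_{X,\text{Frobenius}}$, or a trivial $(1,1)_X$. Proposition \ref{prop:keyprop} lifts this to a path $F_t$ on $N$ from $F_0$ to $F_1$. This path is admissible excellent spherical except at the images of the non-trivial failures. By the dictionary before proposition \ref{prop:keyprop}, those images are $(0,1)_{B,D}$, $(2,3)_{B,D}$, $(1,1)_{X,\text{assoc}}$, $(2,2)_{X,\text{coassoc}}$ and $(1,2)_{X,\text{Frobenius}}$, all of CF type. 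Concatenating with the failure-free paths from Step 2 gives the required path.

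\textbf{Expected main obstacle.} The hard part is Step 2 together with the gluing inside proposition \ref{prop:keyprop}. One has to check that the bump-function interpolation between a local path on $S(U)$ and the constant function outside never creates extra critical points or non-spherical level sets in the collar, and that the identification of $F_i$ with "an" $S(f_i)$ can be done on one fixed $N$. My first attempt would be to choose $\epsilon$ and the collars small enough that the interpolation region contains no critical points and is a product $S^2\times I$ on which the function is monotone in the $I$-direction. Sphericity then follows from the fact that its level sets are copies of $S^2$.
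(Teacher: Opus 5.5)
This is essentially the paper's proof. It sets $f_i = S^{-1}(F_i)$ on a representative $M$ of $S^{-1}([N])$, takes an arbitrary path $f_t$, and applies proposition \ref{prop:keyprop} with the choice $S(f_i)=F_i$; that is your Steps 1 and 3 together with the first claim of your Step 2. Your ``if needed'' patch in Step 2 is unnecessary, because proposition \ref{prop:keyprop} is stated for arbitrary choices of $S(f_0), S(f_1)$ on one fixed $S(M)$, and as written the patch would not work anyway. A diffeomorphism $\phi$ of $N$ relating two such functions yields a failure-free path only if $\phi$ is isotopic to a diffeomorphism preserving the level sets of the function, and this fails in general. Level-preserving diffeomorphisms act on $\pi_1 N \cong \pi_1(R_F)$ only through the finitely many height-preserving symmetries of the Reeb graph, while a handle slide of $\#^2(S^2\times S^1)$ induces an infinite-order element of $\operatorname{Out}(\pi_1 N)$. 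This is exactly the point that the ``smooth interpolation'' steps in the proof of proposition \ref{prop:keyprop} take for granted.
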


\begin{proof}
Consider $f_0 = S^{-1}(F_0)$ and $f_1 = S^{-1}(F_1)$, both of which are Morse functions on $M$ a representative of $S^{-1}(\left[ N \right] )$. Let $f_t$ be any path of functions from $f_0$ to $f_1$ on $M$. Then by proposition \ref{prop:keyprop}, there exists a path $F_t$ from $S(f_0 ) = S(S^{-1}(F_0)) = F_0$ to $S(f_1 )=S(S^{-1}(F_1))=F_1 $ which fails to be admissible excellent spherical Morse at finitely many times, and the types of failure are of commutative Frobenius type.
\end{proof}

\begin{cor} \label{cor:CFonlycommFrob}
CF bordism generators satisfy no more relations.
\end{cor}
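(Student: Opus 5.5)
The plan is to show that any equality of morphisms in $\operatorname{Cob}(3)_{S^2,CF}$ between two compositions of CF generators follows from the commutative Frobenius relations. The formal meaning is that the tautological functor from the free symmetric monoidal category on one commutative Frobenius object to $\operatorname{Cob}(3)_{S^2,CF}$, which is full by definition \ref{defn:Cob(3)_S^2,CF}, is also faithful.

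The main route goes through the silly functor. Suppose two words $w_0,w_1$ in $C\cup F$ (with tensor products, braidings and unitors) represent the same morphism $[N]$. Apply $S^{-1}$, which is an isomorphism of categories by theorem \ref{thm:Cob2Cob3CFIso}. Since $S$ sends each 2-dimensional generator to the 3-dimensional generator of the same name, $S^{-1}$ carries $w_0,w_1$ to the formally identical words in $\operatorname{Cob}(2)$. These words represent the same morphism $S^{-1}([N])$. By theorem \ref{thm:2dtftCFA}, they are related by a finite chain of 2-dimensional commutative Frobenius relations. Applying $S$ to each step gives the same chain in $\operatorname{Cob}(3)_{S^2,CF}$, and each step is a valid relation by proposition \ref{prop:CFcommFrob}. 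Hence $w_0\sim w_1$ using only CF relations.

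I would also record the Cerf-theoretic version the paper has set up, since it explains why no other relation can appear. First, realise $w_0,w_1$ by admissible excellent spherical Morse functions $F_0,F_1$ on a common representative $N$. This uses the lemma on glued standard Morse functions, together with Claim (3)/(4) to pass between compositions, decompositions and Morse data. Next, invoke proposition \ref{prop:keypropCF} to obtain a path $F_t$ that is admissible excellent spherical away from finitely many times, with failures only of the types in the list $(0,1)_{B,D},(2,3)_{B,D},(1,1)_{X,\mathrm{assoc}},(2,2)_{X,\mathrm{coassoc}},(1,2)_{X,\mathrm{Frob}}$. On each excellent interval the induced decomposition is constant up to diffeomorphism, so the induced composition is constant up to the structural isomorphisms. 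Across each failure, the compositions before and after differ by exactly one unit, counit, associativity, coassociativity or Frobenius relation, as identified in proposition \ref{prop:CFcommFrob}. Concatenating these steps relates $w_0$ to $w_1$.

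The main obstacle is the bookkeeping in the claim that ``same Morse data up to diffeomorphism induces the same word.'' Two issues arise here. First, the labelling and ordering of boundary spheres introduce braidings, so one needs commutativity and cocommutativity. These follow from the CF relations and from the fact that the symmetric structure lets one forget labels, as in the proof of proposition \ref{generate}. Second, one must check that the chosen representatives and the gluing data are compatible. I would handle both by leaning on the first, purely algebraic argument via $S^{-1}$, which avoids these issues: a relation holds in $\operatorname{Cob}(3)_{S^2,CF}$ if and only if its $S^{-1}$-image holds in $\operatorname{Cob}(2)$. I would present the Cerf path only as the geometric justification.
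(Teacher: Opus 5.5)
Your proposal is correct, but your primary argument takes a different route from the paper's.

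\textbf{The paper's route.} The paper gives no separate proof of the corollary; it places it right after Proposition \ref{prop:keypropCF} and reads it off the Cerf-theoretic lift, which is essentially your secondary sketch:
\begin{enumerate}
\item realise the two CF compositions by admissible excellent spherical Morse functions $F_0,F_1$;
\item pull these back along $S^{-1}$ to a surface and take an arbitrary generic Cerf path there;
\item lift that path via Proposition \ref{prop:keyprop} to a path on $N$ that stays spherical except at $S$-images of 2-dimensional failures, each of which is a single CF relation.
\end{enumerate}

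\textbf{Your route.} You observe that the corollary is a formal consequence of Theorem \ref{thm:Cob2Cob3CFIso} and the classical presentation of $\operatorname{Cob}(2)$ in Theorem \ref{thm:2dtftCFA}. A symmetric monoidal isomorphism carrying $m,1,m^\vee,tr$ to their namesakes transports the presentation. So faithfulness in dimension three follows from faithfulness in dimension two.

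\textbf{Comparison.} Your argument is shorter and has strictly fewer dependencies.
\begin{itemize}
\item Both arguments need $S$ to be a functor compatible with the monoidal structure. The paper's lift uses $S([M]\circ[N])=S([M])\circ S([N])$ just as you do. Note that this functoriality comes from the labelled-cospan composition law in the first proof of Theorem \ref{thm:Cob2Cob3CFIso}, not from the hom-set bijection in the alternative proof.
\item You black-box the 2-dimensional Cerf theory inside Theorem \ref{thm:2dtftCFA}. This avoids the delicate steps of the paper's lift: gluing local paths with bump functions, claiming interpolations stay spherical, and matching decompositions to words up to braidings.
\item What the paper's route buys is a geometric statement: any two admissible excellent spherical Morse functions are joined by a path that is spherical away from CF-type singularities. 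So non-spherical detours such as the Heegaard segment in Example \ref{eg:S2xS1pantstoHeegaardtopants} can always be avoided. The paper then uses this as its ``Cerf theory'' narrative in Theorems \ref{mainthmG2} and \ref{mainthmG1}. For the purely algebraic conclusions drawn there, your version would already suffice.
\end{itemize}

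\textbf{A minor point.} In your chain argument, Proposition \ref{prop:CFcommFrob} is needed only to know that the functor factors through the CF quotient at all. The chain of relations itself lives in the free category. So ``applying $S$'' just amounts to noting that evaluating a formal word in $\operatorname{Cob}(3)_{S^2,CF}$ is $S$ applied to its evaluation in $\operatorname{Cob}(2)$.
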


\begin{rmk}
Proposition \ref{prop:keypropCF} and corollary \ref{cor:CFonlycommFrob} addresses the major subtlety of the isomorphism between $\operatorname{Cob}( 2) $ and $\operatorname{Cob}( 3)_{S^2,CF} $: while two compositions of the same morphism $[N]$ in $\operatorname{Cob}( 3)_{S^2,CF}$ are always induced by two Morse functions $F_0, F_1:N\to \mathbb{R}$, a path $F_t^! $ from $F_0 $ to $F_1 $ may fail to be spherical. An example is example \ref{eg:S2xS1pantstoHeegaardtopants}, where $F_0 =F_1: S^2 \times S^1 \to \mathbb{R}$ are both pants decompositions, but the path $F_t^!$ has a segment which is the Heegaard splitting, in particular not spherical.

However by proposition \ref{prop:keypropCF} and corollary \ref{cor:CFonlycommFrob}, we now know that there is always a path $F_t$ from $F_0 $ to $F_1 $ that is spherical all the way except at singularities of commutative Frobenius type. So any two compositions of the same morphisms $[N]$ in $\operatorname{Cob}( 3)_{S^2, CF}$ can always be turned into each other using only commutative Frobenius relations.
\end{rmk}

%
%

%

\chapter{Prime Relations} \label{ch:prime relations}

\section{No Internal Relations} \label{sec:no internal relations}


Let $p^{\times \times} =\left[ p \setminus \operatorname{int}(D^3 \sqcup D^3) \right]  \in P$ be a prime endomorphism. We say a relation is \textit{internal }to $p^{\times \times} $ if there exists a composition of CF generators and $P$ generators such that the composition is again  $p^{\times \times} $, other than the trivial composition $p^{\times \times}  = id_{S^2} \circ p^{\times \times}  = p^{\times \times}  \circ id_{S^2}$. One might reasonably suspect that internal relations could arise: two Morse data on a representative $p$ could induce very different Morse decompositions. Even for two Morse functions that are identical except being related by a simple birth, one could ask if a new critical point is coming from that of a CF generator. But since the prime factors of a manifold is unique up to diffeomorphism, the only $P$ generator that could possibly show up is $p^{\times \times} $ itself. This places a very strong constraint on the possible form of the internal relation.

The following lemma guarantees that there are no such internal relations.

\begin{lemma}\label{sphereinp}
Let $p $ be an oriented prime 3 manifold and $f: p \setminus \operatorname{int}( D^3 \sqcup D^3 )  \to \mathbb{R}$ an admissible Morse function. If a regular level set has a component diffeomorphic to $S^2$, namely $ S \cong S^2 \subset  f^{-1}(r)  $ for some regular value $r$, then either 
\begin{enumerate}
\item $p \cong S^2 \times S^1$
\end{enumerate}
Or one of the following mutually exclusive conditions must be true
\begin{enumerate}\setcounter{enumi}{1}
\item $S$ is the gluing sphere of $p\setminus \operatorname{int}( D^3\sqcup D^3 ) \cong \Big( p\setminus \operatorname{int}(D^3 \sqcup D^3)  \Big) \cup_S S^2\times I$, namely $S$ bounds $S^2\times I$ with the other boundary being the incoming or outgoing boundary of $p\setminus \operatorname{int}(D^3 \sqcup D^3)$, or
\item $S$ is the gluing sphere of $p\setminus \operatorname{int}( D^3 \sqcup D^3  ) 
\cong \Big( p \setminus \operatorname{int}(D^3 \sqcup D^3 \sqcup D^3) \Big)  \cup_S D^3$, or
\item $S$ is the gluing sphere of $\Big( p\setminus \operatorname{int}( D^3\sqcup D^3 )  \Big) 
\cong \operatorname{int}( p\setminus D^3 )  \cup_S \left( S^2\times I \setminus \operatorname{int}( D^3) \right)  $.
\end{enumerate}
\end{lemma}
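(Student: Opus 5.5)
The plan is to forget the Morse function and use only that $S$ is a smoothly embedded, two-sided (indeed orientable, since it is a component of a regular level set in an oriented manifold) $2$-sphere in $W := p \setminus \operatorname{int}(B_1 \sqcup B_2)$, where $B_1, B_2 \subset p$ are the two removed balls. Since $S$ lies in the interior of $W$, it is also an embedded sphere in $p$. The dichotomy is whether $S$ separates $p$.

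\textbf{Non-separating case.} Suppose $S$ does not separate $p$. Choose a simple closed curve $\gamma$ in $p$ meeting $S$ transversally in exactly one point; this exists because $p \setminus S$ is connected. A regular neighbourhood $N$ of $S \cup \gamma$ is diffeomorphic to $(S^2 \times S^1) \setminus \operatorname{int}(D^3)$. Its boundary sphere $\partial N$ therefore exhibits a decomposition $p \cong (S^2 \times S^1) \# Q$. Primality of $p$ forces $Q \cong S^3$, so $p \cong S^2 \times S^1$, which is alternative 1. I expect this step to be the main technical point, namely the standard ``non-separating sphere splits off an $S^2 \times S^1$ summand'' argument. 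I would cite it from Hempel or Hatcher's notes rather than reprove it. Note also that if $S$ fails to separate $W$, it fails to separate $p$, since the balls $B_i$ are attached along boundary components of $W$; so every separating case below is separating in $W$ as well.

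\textbf{Separating case.} Suppose $S$ separates $p$ into $X \cup_S Y$. Then $p \cong \widehat{X} \# \widehat{Y}$, where the hats denote capping off $S$ with a ball. Primality gives $\widehat{X} \cong S^3$ or $\widehat{Y} \cong S^3$. By Alexander's theorem (any smooth $2$-sphere in $S^3$ bounds a ball on both sides), the corresponding piece, say $X$, is diffeomorphic to $D^3$. Since $B_1$ and $B_2$ are disjoint from $S$, each lies entirely in $X$ or entirely in $Y$. I would then split on the number $k \in \{0,1,2\}$ of the $B_i$ contained in the ball $X$.
\begin{itemize}
\item If $k=0$, then $X \subset W$ is a ball bounded by $S$, which is alternative 3.
\item If $k=1$, then $X \cap W \cong D^3 \setminus \operatorname{int}(D^3) \cong S^2 \times I$. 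This uses the disc theorem of Palais/Cerf to place the inner ball standardly. The other boundary of this $S^2 \times I$ is an incoming or outgoing sphere of $W$, which is alternative 2.
\item If $k=2$, then $X \cap W \cong D^3 \setminus \operatorname{int}(D^3 \sqcup D^3) \cong (S^2 \times I) \setminus \operatorname{int}(D^3)$. The complementary side is $Y \cong p \setminus \operatorname{int}(D^3)$, which is alternative 4.
\end{itemize}

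\textbf{Mutual exclusivity.} Alternatives 2--4 are distinguished by the number of boundary components of $W$ lying on the ball side of $S$ (respectively $1$, $0$, $2$). Alternative 1 is excluded from the others because there $S$ is non-separating; this requires the small observation that in $S^2 \times S^1$ the sphere cannot also bound a ball in the way described, which follows from the Kneser--Milnor uniqueness of prime decomposition.

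One degenerate subtlety should be flagged: if $p \cong S^3$, both $X$ and $Y$ can be balls, and the case labels could coincide. I would handle this by always taking the side with the fewest $B_i$, or by noting that the lemma is applied to irreducible $p \neq S^3$.
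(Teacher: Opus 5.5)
Your argument is the paper's own: a non-separating $S$ splits off an $S^2\times S^1$ summand, a separating $S$ bounds a ball by primality, and you then case-split on how many removed balls that ball contains. You also supply steps the paper skips: Alexander's theorem to get an actual ball, the disc theorem for the $k=1,2$ identifications, and the observation that alternatives 3 and 4 coincide when $p\cong S^3$.

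The one claim to drop is that alternative 1 excludes 2--4. Alternative 1 is a condition on $p$, not on $S$, and for $p\cong S^2\times S^1$ a level sphere can perfectly well separate (e.g.\ one parallel to a boundary sphere, or a small sphere around an interior minimum), so 2 or 3 also holds; the paper's comment right after the lemma says exactly this. The lemma only asserts exclusivity among 2--4, and your count of removed balls on the ball side already gives that for $p\not\cong S^3$.
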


\begin{proof}
This is a fancy restatement of the fact that an oriented 3-manifold is prime if and only if it's irreducible, except for $S^2 \times S^1$. If $S$ is non-separating, one can take a tubular neighbourhood of the connecting path and show that $p$ must contain a connected sum factor of $S^2\times S^1$. Since $p$ is prime, $p$ must be $S^2\times S^1$. If $S$ is separating, then $p$ has a connected sum factor with attaching sphere $S$. Since $p$ is prime, the other prime factor must be $S^3$, namely $S$ bounds a three-ball in $p$. This three ball could contain the deleted incoming ball, the deleted outgoing ball, neither deleted balls, or both deleted balls. If the three-ball bound by $S $ contains only the incoming or the outgoing ball, then $S$ must be one of the boundary components of $S^2\times I$, the other being the boundary of the incoming/outgoing ball. If the three-ball bound by $S$ contains neither the incoming nor the outgoing three balls, then $S$ is the boundary of a simple $D^3$, glued to the rest of the manifold via $S$. If the three-ball bound by $S$ contains both the incoming and the outgoing ball, then $S$ is the gluing boundary of $\left( S^2 \times I \right)  \setminus \operatorname{int}(D^3)$ to the topologically non-trivial part, which is  $p \setminus \operatorname{int}(D^3)$. See figure \ref{fig:sphereinprimeendo} for a sketch of the latter three cases.
\end{proof}

\begin{figure}[ht]
    \centering
\def\svgwidth{1\columnwidth} 
\begingroup%
  \makeatletter%
  \providecommand\color[2][]{%
    \errmessage{(Inkscape) Color is used for the text in Inkscape, but the package 'color.sty' is not loaded}%
    \renewcommand\color[2][]{}%
  }%
  \providecommand\transparent[1]{%
    \errmessage{(Inkscape) Transparency is used (non-zero) for the text in Inkscape, but the package 'transparent.sty' is not loaded}%
    \renewcommand\transparent[1]{}%
  }%
  \providecommand\rotatebox[2]{#2}%
  \newcommand*\fsize{\dimexpr\f@size pt\relax}%
  \newcommand*\lineheight[1]{\fontsize{\fsize}{#1\fsize}\selectfont}%
  \ifx\svgwidth\undefined%
    \setlength{\unitlength}{680.31496063bp}%
    \ifx\svgscale\undefined%
      \relax%
    \else%
      \setlength{\unitlength}{\unitlength * \real{\svgscale}}%
    \fi%
  \else%
    \setlength{\unitlength}{\svgwidth}%
  \fi%
  \global\let\svgwidth\undefined%
  \global\let\svgscale\undefined%
  \makeatother%
  \begin{picture}(1,0.33333333)%
    \lineheight{1}%
    \setlength\tabcolsep{0pt}%
    \put(0,0){\includegraphics[width=\unitlength,page=1]{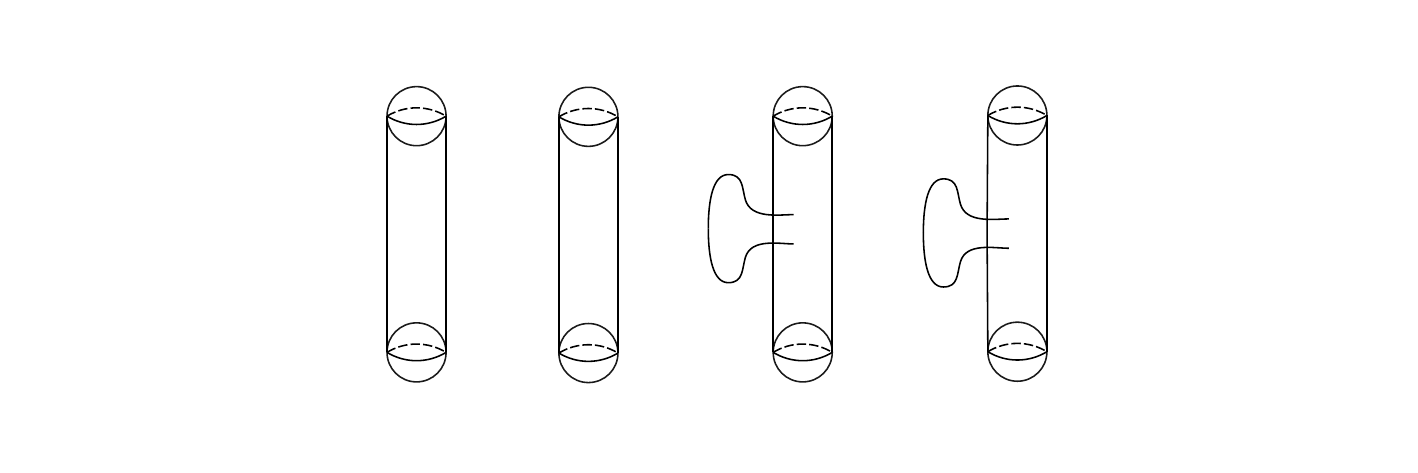}}%
    \put(0.28251171,0.16345018){\makebox(0,0)[lt]{\lineheight{1.25}\smash{\begin{tabular}[t]{l}$p$\end{tabular}}}}%
    \put(0.40678186,0.20464675){\makebox(0,0)[lt]{\lineheight{1.25}\smash{\begin{tabular}[t]{l}$p$\end{tabular}}}}%
    \put(0.55549882,0.20613613){\makebox(0,0)[lt]{\lineheight{1.25}\smash{\begin{tabular}[t]{l}$p$\end{tabular}}}}%
    \put(0.66651287,0.16439107){\makebox(0,0)[lt]{\lineheight{1.25}\smash{\begin{tabular}[t]{l}$p$\end{tabular}}}}%
    \put(0.34087143,0.16324355){\makebox(0,0)[lt]{\lineheight{1.25}\smash{\begin{tabular}[t]{l}$\cong$\end{tabular}}}}%
    \put(0.44878785,0.16214078){\makebox(0,0)[lt]{\lineheight{1.25}\smash{\begin{tabular}[t]{l}$\cong$\end{tabular}}}}%
    \put(0.60122604,0.16357034){\makebox(0,0)[lt]{\lineheight{1.25}\smash{\begin{tabular}[t]{l}$\cong$\end{tabular}}}}%
    \put(0,0){\includegraphics[width=\unitlength,page=2]{sphereinprimeendo.pdf}}%
  \end{picture}%
\endgroup%

    \caption[Spheres in a prime endomorphism]{Classifications of spherical regular surface in a prime endomorphism. The shaded sphere is the regular surface $S\cong S^2$. Not depicted is the case $p=S^2\times S^1$. Note that $p$ here denotes the diffeomorphism class of a connected closed oriented prime 3-manifold, but only up to some finite number of $D^3$ removed.}
    \label{fig:sphereinprimeendo}
\end{figure}


\begin{mycom}
Note that there exist separating spheres in $S^2\times S^1$, for example one could pick $S$ to be the level set near a global minimum. Therefore even though condition 2, 3, 4 are mutually exclusive, they are each compatible with condition 1. 
\end{mycom}

\begin{prop}
There are no internal relations among generators in $\mathcal{G}_2$.
\end{prop}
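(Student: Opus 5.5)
The plan is to argue that any composition of CF generators and $P$ generators equal to $p^{\times\times}$ must, after filling in boundary balls, contain exactly one prime summand $p$. We then show that all the remaining pieces assemble into cylinders, so the composition is the trivial one up to commutative Frobenius relations.

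Suppose $p^{\times\times} = [M_1]\circ\cdots\circ[M_k]$ is a prime composition. By Claim 4 it comes from a prime decomposition of a representative $M\cong p\setminus\operatorname{int}(D^3\sqcup D^3)$, and by Claim 3 from a prime Morse datum $(f,b)$ on $M$.

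First, capping the two boundary spheres gives $\widehat M\cong p$. A prime summand $q^{\times\times}$ appearing among the $M_i$ contributes a summand $q$ to $\widehat M$. CF pieces contribute only $S^3$ or $S^2\times S^1$ summands, by Corollary \ref{cor:hominCF} and by the genus-increasing gluing described in the remark on composition. Kneser--Milnor uniqueness then forces exactly one prime piece, namely $p^{\times\times}$ itself. It also forces the CF part of the composition to contribute no $S^2\times S^1$ summand, since $p$ is irreducible and not $S^2\times S^1$.

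Second, we localise using Lemma \ref{sphereinp}. Every gluing surface $f^{-1}(b_i)$ is a disjoint union of spheres, and since $p\not\cong S^2\times S^1$, each component $S$ falls into one of cases 2, 3, 4. In case 2, $S$ cobounds a collar $S^2\times I$ with the incoming or outgoing boundary. In case 3, $S$ bounds a ball $D^3$ on the CF side. In case 4, $S$ cuts off $p\setminus\operatorname{int}(D^3)$ from $(S^2\times I)\setminus\operatorname{int}(D^3)$. Applying this to all components of all level sets, we expect to show that the unique prime piece sits inside a region $p\setminus\operatorname{int}(D^3\sqcup D^3)$. Everything else is a union of CF-generated pieces whose capped closures are $S^3$ with balls removed, namely copies of $S^2\times I$, $D^3$, or $(S^2\times I)\setminus\operatorname{int}(D^3)$.

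This exhibits the composition as
\[
[M] = A\circ p^{\times\times}\circ B,
\]
possibly after inserting the auxiliary pieces of case 4, where $A,B$ are CF-generated endomorphisms of $S^2$ with $\widehat A\cong\widehat B\cong S^3$. By Theorem \ref{thm:Cob2Cob3CFIso} these correspond to genus-zero cylinders in $\operatorname{Cob}(2)$. Corollary \ref{cor:CFonlycommFrob} then gives $A=B=id_{S^2}$ using only commutative Frobenius relations, so the composition reduces to the trivial one and no new internal relation arises.

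The main obstacle is case 4. There the prime piece is attached through a ball-with-a-hole, so the CF pieces are not cleanly separated before and after $p^{\times\times}$: they wrap around it, as in a legs-type configuration. I would first try to reorganise such pieces, by cutting along the case-4 sphere and recombining, into $m\circ(p^{\times}\otimes id)$ followed by a cylinder. This would only require the identification in Comment \ref{criticalrelations}. One must then check that this uses no relation beyond commutative Frobenius ones, or else concede that it is an external relation rather than an internal one.
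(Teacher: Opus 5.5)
Your reduction to a single prime piece $\widehat p$ (via Kneser--Milnor) and your use of Lemma~\ref{sphereinp} match the paper. However, the argument does not close, and the obstacle you flag cannot be removed using commutative Frobenius relations.

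First sharpen the case analysis. Since $p$ is irreducible, each bounding sphere of $\widehat p$ cuts off a ball of $\widehat M\cong p$ on the side away from $\widehat p$. The two deleted balls of $M$ are then distributed $(1,1)$, $(2,0)$ or $(0,2)$ between these two balls. The $(2,0)$/$(0,2)$ distribution is exactly your case 3 at one end of $\widehat p$ together with case 4 at the other, and it does occur: $p^{\times\times}=m\circ\big((p^{\times\times}\circ 1)\otimes id_{S^2}\big)\circ\lambda^{-1}_{S^2}$ holds in $\operatorname{Cob}(3)_{S^2}$, as do its $\rho$-version and the duals built from $tr$ and $m^\vee$. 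This is a non-trivial $G_2$-composition equal to $p^{\times\times}$; the paper itself calls it a genuine relation in \eqref{eqn:consistency}. By Lemma~\ref{lem:consistencyislegs} it is equivalent, modulo CF relations, to the legs relation. In the Hadamard model of Proposition~\ref{Plegsnecessary}, with $Z(p^{\times\times})=R$ the rotation, its right-hand side is $\mathrm{diag}(1,-1)\neq R$. So your plan to reorganise into $m\circ(p^\times\otimes id)$ using only CF relations must fail. Passing through $p^\times$ via Comment~\ref{criticalrelations} also leaves $G_2$.

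There is a second, unflagged hole in your case 2. In the $(1,1)$ distribution both ends of $\widehat p$ are collars, but the collar at the incoming end of $\widehat p$ may run to the \emph{outgoing} sphere of $M$. The CF parts are then a copairing $m^\vee\circ 1$ and a pairing $tr\circ m$, and the composition is the zig-zag $\big((tr\circ m)\otimes id_{S^2}\big)\circ\big(id_{S^2}\otimes p^{\times\times}\otimes id_{S^2}\big)\circ\big(id_{S^2}\otimes(m^\vee\circ 1)\big)$, with the evident unitors and associator. It equals $p^{\times\times}$ because $p^{\times\times}$ is its own time reversal. Yet in the same model it evaluates to the transpose $R^{T}=R^{-1}\neq R$. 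So your assertion that $A$ and $B$ are CF-generated \emph{endomorphisms} of $S^2$ is unjustified.

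The paper's proof has the same two omissions. It asserts that Lemma~\ref{sphereinp} forces condition 2 with the collar sitting before or after $p^{\times\times}$, and its subsequent remark rules out conditions 3 and 4 only one at a time, never together at the two ends of $\widehat p$. Your fallback, conceding that this configuration gives a genuine relation, is the right diagnosis. Once both configurations are included, your case analysis proves a weaker statement: every $G_2$-composition equal to $p^{\times\times}$ reduces to the trivial one using CF relations \emph{together with} the legs relations. The ingredients are Lemma~\ref{lem:consistencyislegs}, Proposition~\ref{cowaistfromCF}, and self-adjointness of multiplication for the form $tr\circ m$. This is consistent with Theorem~\ref{thm:trueG2thm}, but it does not hold with CF relations alone.
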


\begin{proof}
Suppose there is a composition sequence of a prime endomorphism $p^{\times \times} $. Then there exists a Morse function on $p\setminus\operatorname{int}(D^3\sqcup D^3)$ which induces a Morse decomposition containing a factor $\widehat{p}$ diffeomorphic to $p\setminus \operatorname{int}(D^3\sqcup D^3)$ itself. Since $p$ is connected, one of the bounding spheres of the Morse factor $\widehat{p}$ must be the gluing sphere. By lemma \ref{sphereinp}, since $p$ is not $S^2\times S^1$, condition 2 must be true. Hence by naturality of the coherence data (unitors, associators and braiding), the composition can be brought to the form \[
p^{\times \times}  = p^{\times \times}  \circ (\left[ b_1  \right] \circ \left[ b_2  \right] \circ \dots ) 
\]
if the gluing sphere is incoming, or of the form \[
p^{\times \times}  = (\left[ b_1  \right] \circ \left[ b_2  \right] \circ \dots )\circ p^{\times \times}   
\]
if the gluing sphere is outgoing. Here the morphisms $[b_i]$ are generated by CF generators.

Lemma \ref{sphereinp} forces both parentheses to compose to the identity $id_{S^2} = [S^2 \times I]$. And by proposition \ref{prop:keyprop}, they can do so using commutative Frobenius relations. We end up getting the trivial identity relations $p^{\times \times} = p^{\times \times} \circ id_{S^2} = id_{S^2} \circ p^{\times \times} $, and neither case constitutes an internal relation by definition.
\end{proof}

\begin{rmk}
Cerf-theoretically, we have shown that given two Morse functions on $p\setminus \operatorname{int}(D^3 \sqcup D^3)$ and a path connecting them, new critical points coming from CF generators must together cancel to an identity cylinder, attached to the incoming or the outgoing part of $p\setminus \operatorname{int}(D^3 \sqcup D^3)$. They can do so with commutative Frobenius relations. They cannot cancel to a ball like in condition 3 since the only way such ball would attach to the remainder of the manifold is if the remainder has three boundary components, but our prime endomorphisms $p^{\times \times} $ all only have two boundary components. They cannot cancel to a pair of pants like in condition 4 since the only way such pants would attach to the remainder of the manifold is if the remainder has one boundary components. 

While it is true that the Morse data for the prime part might change drastically, it does not concern us-- over the prime part we care only about the diffeomorphism type. 
\end{rmk}

The analogous version for $G_1 $ generators is as follows: 

\begin{lemma}
Let $p $ be an oriented prime 3 manifold and $f: p \setminus  \operatorname{int}(D^3)   \to \mathbb{R}$ an admissible Morse function. If a regular level set has a component diffeomorphic to $S^2$, namely $ S \cong S^2 \subset  f^{-1}(r)  $ for some regular value $r$, then either 
\begin{enumerate}
\item $p \cong S^2 \times S^1$
\end{enumerate}
Or one of the two following mutually exclusive conditions must be true
\begin{enumerate}\setcounter{enumi}{1}
\item $S$ is the gluing sphere of $p\setminus \operatorname{int}(D^3)  \cong \left( p\setminus \operatorname{int}(D^3) \right)  \cup_S S^2\times I$, namely $S$ bounds $S^2\times I$ with the other boundary being the boundary of $p\setminus \operatorname{int}(D^3)$, or
\item $S$ is the gluing sphere of $p\setminus  \operatorname{int}(D^3) \cong p \setminus \operatorname{int}(D^3 \sqcup D^3 ) \cup_S D^3$.
\end{enumerate}
\end{lemma}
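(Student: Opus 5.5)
The plan is to mirror the proof of lemma \ref{sphereinp}, now for a manifold with a single boundary sphere. Let $S \cong S^2$ be a component of a regular level set $f^{-1}(r)$ in $W := p\setminus \operatorname{int}(D^3)$. Regular values are isolated from critical values and $S$ lies in the interior (admissibility puts $\partial W$ at the extreme levels). So $S$ is a smoothly embedded two-sphere in $\operatorname{int}(W) \subset p$. We split into two cases according to whether $S$ separates $W$.

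\textbf{Non-separating case.} Suppose $W \setminus S$ is connected. Choose a simple closed curve $\gamma$ in $\operatorname{int}(W)$ meeting $S$ transversely in exactly one point. A regular neighbourhood of $S \cup \gamma$ is $(S^2 \times S^1) \setminus \operatorname{int}(D^3)$, and its boundary sphere splits off $S^2\times S^1$ as a connected summand of $p$. Since $p$ is prime, $p \cong S^2 \times S^1$, which is condition 1.

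\textbf{Separating case.} Suppose instead that $S$ separates $W$. Then $S$ also separates $p$, since filling in the deleted ball lies on one side. Hence $p = A \#_S B$, where $A$ and $B$ are obtained by capping the two sides of $S$ with balls. Primality forces one side, say $B$, to be $S^3$, so $S$ bounds a three-ball $D \subset p$. Because $p \not\cong S^3$ (we may assume this, otherwise $W$ is a ball and both subcases below still apply), only one complementary side is a ball. This is where the Poincaré conjecture, already used in proposition \ref{prop:sphericalnoirreducible}, or just the Alexander/Schoenflies theorem in $p$, enters.

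There are now two subcases according to whether $D$ contains the deleted ball.
\begin{enumerate}
\item If $D$ contains the deleted ball, then $D \cap W = D\setminus \operatorname{int}(D^3)$. By the annulus theorem in dimension three (or uniqueness of embedded balls up to isotopy, \cite[Ch8 Thm 3.2]{hirsch_differential_1976}), this is $S^2\times I$, with one end $S$ and the other $\partial W$. This gives condition 2.
\item If $D$ misses the deleted ball, then $D \subset \operatorname{int}(W)$. Writing $W = (W \setminus \operatorname{int}(D)) \cup_S D$ with $W\setminus \operatorname{int}(D) \cong p\setminus \operatorname{int}(D^3 \sqcup D^3)$ gives condition 3.
\end{enumerate}
These two subcases are mutually exclusive, since the deleted ball either lies in $D$ or it does not. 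They are also exhaustive, because the deleted ball is connected and disjoint from $S$.

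The step I expect to be the main obstacle is the separating case when $p$ is arbitrary. One has to make sure that "$S$ bounds a ball" is applied on the correct side, and that the side containing $\partial W$, once capped, is a genuine ball rather than merely a homotopy ball. I would handle this by capping off $\partial W$ first and then invoking Kneser–Milnor uniqueness. The remaining identifications, such as $D$ minus an interior ball being $S^2\times I$, are standard and I would cite them rather than reprove them.
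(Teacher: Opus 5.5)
Your proposal is correct and follows the paper's own argument. In the non-separating case you split off an $S^2\times S^1$ summand, and in the separating case primality forces $S$ to bound a ball, which either contains the deleted ball (condition 2) or misses it (condition 3). Three of your detours are unnecessary: the Poincar\'e conjecture, a ``Schoenflies theorem in $p$'' (which does not hold for general $p$), and Kneser--Milnor uniqueness. Primality as the paper defines it already gives a genuine $S^3$ on one side. Your observation that mutual exclusivity of conditions 2 and 3 requires $p\not\cong S^3$ is a point the paper leaves implicit.
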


\begin{proof}
The proof is identical to the proof of \ref{sphereinp}, except the three ball bound by $S$ may now contain the only boundary ball or not at all. The former gives condition 2 while the later gives condition 3. See figure \ref{fig:sphereinprimeunit} for an illustration of the two cases.
\end{proof}
\begin{figure}[ht]
    \centering
\def\svgwidth{1\columnwidth} 
\begingroup%
  \makeatletter%
  \providecommand\color[2][]{%
    \errmessage{(Inkscape) Color is used for the text in Inkscape, but the package 'color.sty' is not loaded}%
    \renewcommand\color[2][]{}%
  }%
  \providecommand\transparent[1]{%
    \errmessage{(Inkscape) Transparency is used (non-zero) for the text in Inkscape, but the package 'transparent.sty' is not loaded}%
    \renewcommand\transparent[1]{}%
  }%
  \providecommand\rotatebox[2]{#2}%
  \newcommand*\fsize{\dimexpr\f@size pt\relax}%
  \newcommand*\lineheight[1]{\fontsize{\fsize}{#1\fsize}\selectfont}%
  \ifx\svgwidth\undefined%
    \setlength{\unitlength}{680.31496063bp}%
    \ifx\svgscale\undefined%
      \relax%
    \else%
      \setlength{\unitlength}{\unitlength * \real{\svgscale}}%
    \fi%
  \else%
    \setlength{\unitlength}{\svgwidth}%
  \fi%
  \global\let\svgwidth\undefined%
  \global\let\svgscale\undefined%
  \makeatother%
  \begin{picture}(1,0.33333333)%
    \lineheight{1}%
    \setlength\tabcolsep{0pt}%
    \put(0,0){\includegraphics[width=\unitlength,page=1]{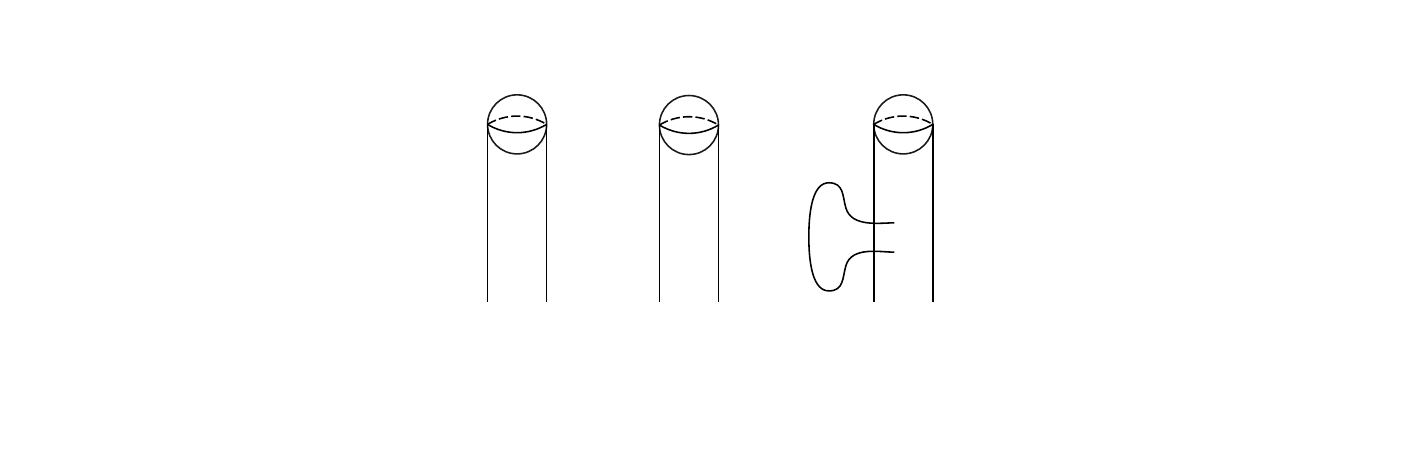}}%
    \put(0.35344388,0.15764974){\makebox(0,0)[lt]{\lineheight{1.25}\smash{\begin{tabular}[t]{l}$p$\end{tabular}}}}%
    \put(0.477714,0.19884631){\makebox(0,0)[lt]{\lineheight{1.25}\smash{\begin{tabular}[t]{l}$p$\end{tabular}}}}%
    \put(0.62643102,0.20033569){\makebox(0,0)[lt]{\lineheight{1.25}\smash{\begin{tabular}[t]{l}$p$\end{tabular}}}}%
    \put(0.41180356,0.15744311){\makebox(0,0)[lt]{\lineheight{1.25}\smash{\begin{tabular}[t]{l}$\cong$\end{tabular}}}}%
    \put(0.51972001,0.15634034){\makebox(0,0)[lt]{\lineheight{1.25}\smash{\begin{tabular}[t]{l}$\cong$\end{tabular}}}}%
    \put(0,0){\includegraphics[width=\unitlength,page=2]{sphereinprimeunit.pdf}}%
  \end{picture}%
\endgroup%

    \caption[Spheres in a prime unit]{Classifications of spherical regular surface in a prime unit. The shaded sphere is the regular surface $S\cong S^2$. Not depicted is the case $p=S^2 \times S^1$. Note that $p$ here denotes the diffeomorphism class of a connected closed oriented prime 3-manifold, but only up to some finite number of $D^3$ removed.}
    \label{fig:sphereinprimeunit}
\end{figure}

\begin{prop}
There are no internal relations among generators in $G_1 $.
\end{prop}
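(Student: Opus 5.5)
The plan is to mirror the proof for $G_2$, replacing Lemma \ref{sphereinp} with its prime-unit analogue just stated. Suppose $p^\times = [p\setminus \operatorname{int}(D^3)]$, with $p$ oriented irreducible (so $p \not\cong S^2\times S^1$), admits a composition in terms of $G_1$ generators other than the trivial ones $p^\times = id_{S^2}\circ p^\times$. Choose a representative $p\setminus\operatorname{int}(D^3)$ and a prime Morse datum inducing this composition.

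The first step is to show that exactly one factor of the composition carries $p$, and that it is $p^\times$ itself. Cap off the single boundary sphere to get the closed manifold $p$. Each $L$ generator $q^\times$ appearing in the composition contributes a connect-sum factor $q$. Each CF block contributes only $S^3$ or $S^2\times S^1$ factors, by Corollary \ref{cor:hominCF}. Uniqueness in the Kneser--Milnor theorem then says there is exactly one prime generator, namely $p^\times$, and no $S^2\times S^1$ summands can arise. In particular, no gluing of CF pieces may close a loop, since that would produce an $S^2\times S^1$ factor, as in the remark preceding Comment \ref{criticalrelations}.

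The second step locates this factor. Its outgoing boundary is a sphere $S$ that is a regular level component of the glued Morse function on $p\setminus \operatorname{int}(D^3)$. By the lemma above, since $p\not\cong S^2\times S^1$, one of two cases holds.
\begin{enumerate}
\item $S$ cobounds $S^2\times I$ with $\partial(p\setminus \operatorname{int}(D^3))$. This is condition 2.
\item $S$ bounds a ball $D^3$ disjoint from the boundary. This is condition 3.
\end{enumerate}
Condition 3 is impossible. In that case the complement of the ball would be $p\setminus\operatorname{int}(D^3\sqcup D^3)$, but the factor we found is $p^\times$, which has only one boundary sphere and contains no deleted ball. So condition 2 holds. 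Using naturality of the coherence data (unitors, associators, braiding), the composition can then be rewritten as $p^\times = (\left[b_1\right]\circ\left[b_2\right]\circ\cdots)\circ p^\times$. Here the parenthesised morphism is CF-generated and its underlying manifold is $S^2\times I$.

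The third step applies Corollary \ref{cor:CFonlycommFrob} and Proposition \ref{prop:keypropCF}. The parenthesised CF composition equals $id_{S^2}$ in $\operatorname{Cob}(3)_{S^2,CF}$, and this equality is already obtained using commutative Frobenius relations alone. So the relation reduces to the trivial $p^\times = id_{S^2}\circ p^\times$, which by definition is not an internal relation.

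I expect the main obstacle to be the first step. We must rigorously exclude CF pieces attached in a way that is not a cylinder glued to the outgoing boundary, for example pieces interleaved on the outgoing side that form loops. The tool for this is the $S^2\times S^1$ count from Kneser--Milnor uniqueness, applied to the capped-off manifold.
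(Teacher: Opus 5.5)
Your proposal is correct and follows the paper's argument. You isolate the unique factor diffeomorphic to $p\setminus\operatorname{int}(D^3)$, use the prime-unit sphere lemma with $p\not\cong S^2\times S^1$ to force condition 2, rewrite by coherence as $p^\times=(\cdots)\circ p^\times$ with the bracket a CF composite equal to $id_{S^2}$, and reduce it to $id_{S^2}$ by commutative Frobenius relations. The paper does the same; you simply make explicit two steps it leaves implicit, namely the Kneser--Milnor count showing that exactly one prime generator and no $S^2\times S^1$ summands occur, and the boundary-component count that rules out condition 3.
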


\begin{proof}
Suppose there is a composition of a prime unit $p^\times$. Then there exists a Morse function on $p\setminus \operatorname{int}(D^3)$ which induces a Morse decomposition containing a factor $\widehat{p}$ diffeomorphic to $p\setminus \operatorname{int}(D^3)$. Since $p$ is connected, the bounding sphere of $\widehat{p}$ must be a gluing sphere. By the previous lemma, since $p$ is not $S^2\times S^1$, condition 2 must be true. Hence by naturality of the unitors, associators, and braiding, the composition can be brought to the form \[
p^\times = ([b_1 ]\circ [b_2 ] \circ \dots ) \circ p^\times
.\]
where the parenthesis must compose to $[S^2 \times I]$ and the morphisms $[b_i]$ are generated by CF generators.  By proposition \ref{prop:keyprop} can do so with commutative Frobenius relations. We end up with the identity relation $p^\times = id_{S^2} \circ p^\times$, which by definition is not an internal relation.
\end{proof}

\begin{rmk}
While this may be an obvious statement, the lack of internal relations means that under no circumstance can a CF generator exist ``inside'' a prime unit/endomorphism. Therefore intuitively we can think of the prime units/endomorphisms as belonging to a different ``species'' of bordisms, even when studying the underlying Cerf theory.
\end{rmk}

%

\section{External Prime Relations} \label{sec: external prime relations}

\begin{prop} \label{primecommutativity}
(Prime Commutativity) Let $p^{\times \times}_1 , p^{\times \times}_2  \in P$ be prime endomorphisms. Then \[
p^{\times \times}_1 \circ p^{\times \times}_2 = p^{\times \times}_2 \circ p^{\times \times}_1 
.\]
\end{prop}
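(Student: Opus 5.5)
We argue at the level of prime decompositions: we show that both composites have diffeomorphic representatives, via orientation- and boundary-preserving diffeomorphisms. No Cerf path is needed. The outline at the start of Chapter \ref{ch:CFrelations} notes that this is the convenient route whenever the diffeomorphism of the underlying manifolds is easy to exhibit.

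First, identify the representatives. The composite $p^{\times\times}_1\circ p^{\times\times}_2$ is represented by
\[
W_{12}=\big(p_1\setminus \operatorname{int}(D^3\sqcup D^3)\big)\cup_{S^2}\big(p_2\setminus\operatorname{int}(D^3\sqcup D^3)\big).
\]
Here the outgoing sphere of the $p_2$ piece is glued to the incoming sphere of the $p_1$ piece. Using $M\#N\cong (M\setminus\operatorname{int}D^3)\cup_{S^2}(N\setminus\operatorname{int}D^3)$, we have $W_{12}\cong (p_1\#p_2)\setminus\operatorname{int}(D^3\sqcup D^3)$. The incoming ball lies in the $p_2$ summand and the outgoing ball in the $p_1$ summand. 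Symmetrically, $W_{21}\cong (p_2\#p_1)\setminus\operatorname{int}(D^3\sqcup D^3)$, with the incoming ball in $p_1$ and the outgoing ball in $p_2$.

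Next, compare the two sides. Connected sum of oriented manifolds is commutative up to orientation-preserving diffeomorphism, so $p_1\#p_2\cong p_2\#p_1$. Equivalently, both closed manifolds $\widehat{W_{12}}$ and $\widehat{W_{21}}$ have the same prime factors $p_1,p_2$, so by Kneser--Milnor they are diffeomorphic. The remaining issue is the boundary data. A bordism class in $\operatorname{Cob}(3)_{S^2}$ from $S^2$ to $S^2$ is determined by the closed manifold together with two disjoint embedded balls, one labelled incoming and one labelled outgoing.

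This is the step I expect to be the main obstacle: the diffeomorphism must carry incoming ball to incoming ball and outgoing to outgoing, while the positions of the balls relative to the summands have been swapped. I would invoke the disc theorem already used in the proof of Theorem \ref{thm:Cob2Cob3CFIso}, namely \cite[Ch8 Thm 3.2]{hirsch_differential_1976}. On a connected oriented manifold $V$, any two orientation-preserving embeddings of $D^3\sqcup D^3$ differ by an orientation-preserving diffeomorphism of $V$. So fix an orientation-preserving diffeomorphism $\phi:\widehat{W_{12}}\to\widehat{W_{21}}$, and compose it with an ambient diffeomorphism of $\widehat{W_{21}}$ that moves $\phi(\text{incoming ball})$ onto the incoming ball and $\phi(\text{outgoing ball})$ onto the outgoing ball. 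The result restricts to a boundary-preserving diffeomorphism $W_{12}\to W_{21}$, so $[W_{12}]=[W_{21}]$.

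One subtlety remains: the restriction must induce the correct parametrizations of the boundary spheres. This follows from the orientation-preserving disc theorem, which lets us match the embeddings themselves, not merely their images. Alternatively, it follows because any orientation-preserving self-diffeomorphism of $S^2$ is isotopic to the identity, which can be absorbed in a collar.

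Finally, I would remark that this argument matches the informal ``flip upside down'' picture in the outline. Precomposing a prime Morse datum with $x\mapsto -x$ reverses the order of the two prime slabs. The reversed datum, however, exchanges incoming and outgoing, and the disc theorem is exactly what repairs this.
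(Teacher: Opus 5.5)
Your argument is correct, and its core is the same as the paper's: both composites are represented by $(p_1\#p_2)\setminus\operatorname{int}(D^3\sqcup D^3)$, by commutativity of connected sum.

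Where you differ is in how the boundary labels are handled. The paper only observes $(p_1\setminus\operatorname{int}D^3)\#(p_2\setminus\operatorname{int}D^3)=(p_2\setminus\operatorname{int}D^3)\#(p_1\setminus\operatorname{int}D^3)$ and adds the time-reversal picture $f\mapsto -f$. However, the evident symmetric identification sends the sphere in the $p_1$ summand to the sphere in the $p_1$ summand. That is, it sends the outgoing boundary of $p^{\times\times}_1\circ p^{\times\times}_2$ to the incoming boundary of $p^{\times\times}_2\circ p^{\times\times}_1$. Post-composing with $x\mapsto -x$ likewise interchanges incoming and outgoing. So the paper's argument tacitly needs the fact that the two boundary balls can be swapped by an ambient diffeomorphism. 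Your appeal to the disc theorem \cite[Ch8 Thm 3.2]{hirsch_differential_1976}, used exactly as in the proof of Theorem \ref{thm:Cob2Cob3CFIso}, supplies this and yields a label-preserving diffeomorphism. Your version is therefore the more complete one.

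Your argument in fact shows more: a connected morphism with spherical boundary is determined by $\widehat{W}$ together with the numbers of incoming and outgoing spheres.

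One small correction: the ``equivalently'' via Kneser--Milnor is misattributed. Uniqueness of prime decomposition gives the converse implication. That two manifolds built from the same summands are diffeomorphic is just well-definedness and commutativity of $\#$, which itself rests on the same disc theorem.
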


\begin{figure}[ht]
    \centering
\def\svgwidth{1\columnwidth} 
\begingroup%
  \makeatletter%
  \providecommand\color[2][]{%
    \errmessage{(Inkscape) Color is used for the text in Inkscape, but the package 'color.sty' is not loaded}%
    \renewcommand\color[2][]{}%
  }%
  \providecommand\transparent[1]{%
    \errmessage{(Inkscape) Transparency is used (non-zero) for the text in Inkscape, but the package 'transparent.sty' is not loaded}%
    \renewcommand\transparent[1]{}%
  }%
  \providecommand\rotatebox[2]{#2}%
  \newcommand*\fsize{\dimexpr\f@size pt\relax}%
  \newcommand*\lineheight[1]{\fontsize{\fsize}{#1\fsize}\selectfont}%
  \ifx\svgwidth\undefined%
    \setlength{\unitlength}{680.31496063bp}%
    \ifx\svgscale\undefined%
      \relax%
    \else%
      \setlength{\unitlength}{\unitlength * \real{\svgscale}}%
    \fi%
  \else%
    \setlength{\unitlength}{\svgwidth}%
  \fi%
  \global\let\svgwidth\undefined%
  \global\let\svgscale\undefined%
  \makeatother%
  \begin{picture}(1,0.33333333)%
    \lineheight{1}%
    \setlength\tabcolsep{0pt}%
    \put(0,0){\includegraphics[width=\unitlength,page=1]{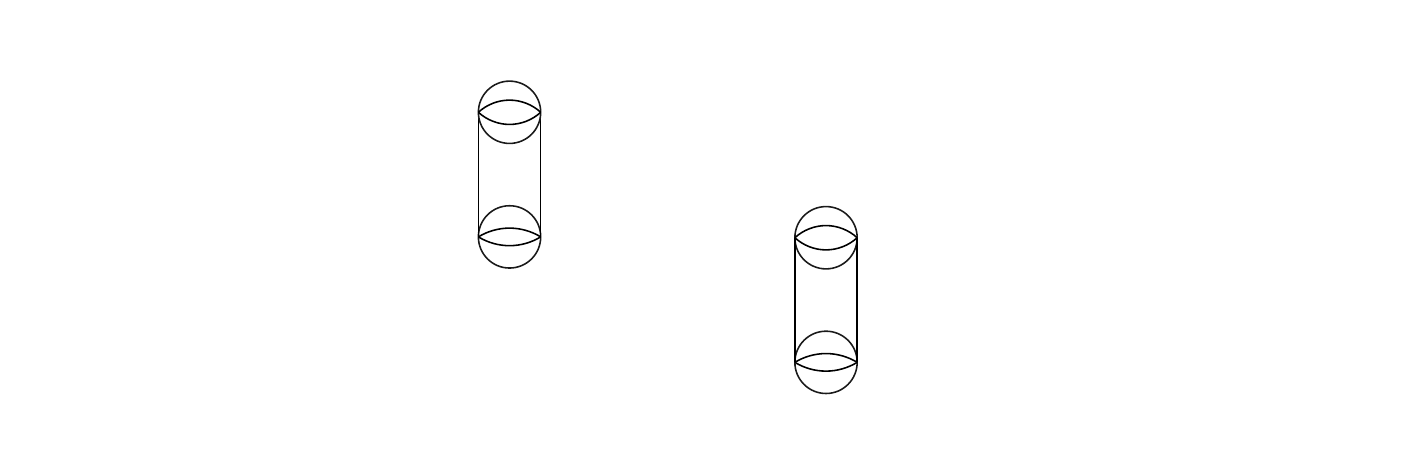}}%
    \put(0.46757476,0.16723779){\makebox(0,0)[lt]{\lineheight{1.25}\smash{\begin{tabular}[t]{l}$\cong$\end{tabular}}}}%
    \put(0.34075705,0.20369193){\makebox(0,0)[lt]{\lineheight{1.25}\smash{\begin{tabular}[t]{l}$p_1^{\times \times}$\end{tabular}}}}%
    \put(0,0){\includegraphics[width=\unitlength,page=2]{primecommutativity.pdf}}%
    \put(0.56405099,0.20309575){\makebox(0,0)[lt]{\lineheight{1.25}\smash{\begin{tabular}[t]{l}$p_2^{\times \times}$\end{tabular}}}}%
    \put(0.56394793,0.11905231){\makebox(0,0)[lt]{\lineheight{1.25}\smash{\begin{tabular}[t]{l}$p_1^{\times \times}$\end{tabular}}}}%
    \put(0,0){\includegraphics[width=\unitlength,page=3]{primecommutativity.pdf}}%
    \put(0.34065406,0.11964849){\makebox(0,0)[lt]{\lineheight{1.25}\smash{\begin{tabular}[t]{l}$p_2^{\times \times}$\end{tabular}}}}%
  \end{picture}%
\endgroup%

    \caption[Prime endomorphism commutativity]{Prime endomorphism commutativity. Geometrically turning the left hand side upside down. Equivalently one can post-compose the Morse function by $x\mapsto -x$.}
    \label{fig:primecommutativity}
\end{figure}

\begin{proof}
By definition, connected sum is commutative, hence  $p_1 \# p_2 \cong p_2 \# p_1 $. Deleting a disk from both $ p_1, p_2  $, we have \[
	\left(p_1 \setminus\operatorname{int}( D^3)\right) \# \left(p_2 \setminus \operatorname{int}(D^3)\right) = \left(p_2 \setminus \operatorname{int}(D^3)\right) \# \left(p_1 \setminus \operatorname{int}(D^3)\right)
.\] 
Noting that $\#$ is the same as gluing over a bounding sphere, we have the desired relation.

Morse-theoretically, any Morse datum $(f,b=0)$ inducing the decomposition \[
	\left(p_1 \setminus \operatorname{int}(D^3 \sqcup D^3)\right) \cup_{S^2} \left(p_2 \setminus \operatorname{int}(D^3 \sqcup D^3)\right)
\]
also induces, via ``time reversal'' by post-composing with $\mathbb{R} \to \mathbb{R}$ sending $x \mapsto -x$ , the decomposition \[
	\left(p_2 \setminus \operatorname{int}(D^3 \sqcup D^3)\right) \cup_{S^2} \left(p_1 \setminus \operatorname{int}(D^3 \sqcup D^3)\right)
\]
.
\end{proof}

\begin{rmk} \label{rmk:Cerftheoryofprimecomm}
	Note however that this relation does not correspond to any specific type of singularities along Cerf-paths. Let $(f,b=0)$ be a Morse datum inducing $p^{\times \times}_1 \circ p^{\times \times}_2 $ and $(-f,b=0)$ be the time reversal Morse datum inducing $p^{\times \times}_2 \circ p^{\times \times}_1 $. Then a path $f_t$ from $f$ to $-f$ will generally be very complicated and contain multiple singularities. This is a feature of \textit{prime relations}. A prime relation is induced by a \textit{pair of Morse functions}  $f_0 ,f_1 : M \to \mathbb{R}$, rather than a specific type of singularities along a path from $f_0 $ to $f_1 $.
\end{rmk}

\begin{prop} \label{prop:legsandwaistrelations}
(Legs and Waist Relations) Let $p^{\times \times} \in P$ be a prime endomorphism, $m \in C$ the multiplication map, and $m^\vee\in F$ the comultiplication map. Then the following relations are always true: 
\begin{align}
p^{\times \times}  \circ m &= m \circ \left(p^{\times \times} \otimes id_{S^2}\right) = m \circ (id_{S^2} \otimes p^{\times \times} )\\
m^\vee \circ p^{\times \times}  &= (p^{\times \times} \otimes id_{S^2}) \circ m^\vee = (id_{S^2} \otimes p^{\times \times} ) \circ m^\vee
\end{align}
We shall call the first equality ``waist relations'' and the second equality ``legs relations,'' and the second line are the ``co-waist relations'' and ``co-legs relations.''
\end{prop}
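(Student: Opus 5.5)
The plan is to argue at the level of prime decompositions: build an explicit representative for each side of each equality and show the representatives are diffeomorphic rel boundary, preserving the labelling of the incoming and outgoing spheres. The model is the proof of Proposition \ref{primecommutativity} and the generation argument of Proposition \ref{generate}. For every side of the first line, the glued manifold is
\[
\bigl(p\setminus \operatorname{int}(D^3\sqcup D^3)\bigr)\cup_{S^2}\bigl(S^3\setminus \operatorname{int}(\sqcup^3 D^3)\bigr).
\]
It is connected, has two incoming spheres and one outgoing sphere, and is diffeomorphic to $p\setminus\operatorname{int}(\sqcup^3 D^3)$. This holds because gluing along a sphere is a connected sum with the $S^3$ factor. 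Hence
\[
\widehat{M}\cong p\#S^3\cong p
\]
in every case, where $\widehat{M}$ is obtained by capping off all three boundary spheres as in the proof of Proposition \ref{generate}.

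The key step is to show that the class of a connected bordism $W$ with spherical boundary is determined by $\widehat{W}$ together with the labelled boundary data. I would reuse the argument from the proof of Theorem \ref{thm:Cob2Cob3CFIso}, citing \cite[Ch8 Thm 3.2]{hirsch_differential_1976}. Fix a closed oriented connected manifold $V$. Any two orientation-preserving embeddings
\[
\iota,\iota':\sqcup^{k} D^3\hookrightarrow V
\]
are ambient isotopic, including an arbitrary matching of the labelled balls, since $V$ is connected and of dimension $3>1$. Therefore $V\setminus\operatorname{im}(\iota)^\circ$ and $V\setminus\operatorname{im}(\iota')^\circ$ are diffeomorphic by a diffeomorphism preserving orientation and each labelled boundary sphere. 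Taking $V=p$ and $k=3$, with the three balls labelled (first incoming, second incoming, outgoing), all three expressions in the first line define the same morphism $p\setminus\operatorname{int}(\sqcup^3 D^3):S^2\sqcup S^2\to S^2$. This gives both the waist and the legs relations.

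For the second line, with one incoming and two outgoing spheres, I would either repeat the argument verbatim or apply incoming/outgoing time reversal to the first line. Reversal sends $m$ to $m^\vee$ and fixes $p^{\times\times}$, as noted in Notations \ref{notations:CF}. Orientation conventions must be checked, but reversing the orientation of a prime summand yields the corresponding generator of the reversed manifold, which is already accounted for by the ``orientation reversal'' generators.

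The main obstacle is the boundary-labelling issue. One must ensure that the diffeomorphism matches the two incoming spheres in the prescribed order; otherwise the legs relation would reduce to mere commutativity. Such a diffeomorphism exists because the ambient isotopy extension can permute balls: move one ball around the other inside a connected $3$-manifold, whose complement of a point is still connected. I expect some care is needed to make this precise and orientation-preserving, but it is a standard disc-theorem statement. I would also remark, as in Remark \ref{rmk:Cerftheoryofprimecomm}, that at the level of Morse data these are prime relations, not single Cerf singularities, so no Cerf-path analysis is required here.
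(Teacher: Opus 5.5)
Your proposal is correct and follows essentially the paper's route. The paper also notes that every side of each line is a decomposition of the single manifold $p\setminus\operatorname{int}(\sqcup^3 D^3)$, with one ball cut from $p\setminus\operatorname{int}(D^3)$ and two from a trivially glued-on $D^3$, and then matches incoming and outgoing boundaries. Your appeal to the multi-disc theorem, as used in the proof of Theorem \ref{thm:Cob2Cob3CFIso}, makes the label-matching step rigorous, in particular the swap of the two incoming spheres that the legs relation needs, which the paper only asserts.
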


\begin{figure}[ht]
    \centering
\def\svgwidth{1\columnwidth} 
\import{./figures/}{legsandwaist.pdf_tex}

    \caption[Legs and waist relations]{A cartoon for legs and waist relations. All manifolds depicted are diffeomorphic, and boundary-preserving within each line. $p^{\times \times} $ here denotes the manifold $p\setminus \operatorname{int}(D^3 \sqcup D^3) $.}
    \label{fig:legsandwaist}
\end{figure}


\begin{proof}
Consider $p\setminus \operatorname{int}(\sqcup^3 D^3)$, but instead of $p$ itself, take the trivial gluing $p \cong \left( p \setminus \operatorname{int}(D^3 )\right)  \cup_S D^3$ along a two sphere $S$ (or equivalently the trivial connect sum $p \cong p \# S^3$). There are six ways to cut out three disjoint three-balls away from the gluing boundary such that a factor $p\setminus \operatorname{int}(D^3 \sqcup D^3)$ is present: Topologically, one ball must be cut from $p\setminus \operatorname{int}(D^3)$ to recover the prime endomorphism $p \setminus \operatorname{int}(D^3 \sqcup D^3)$, the other two must be cut from the trivially glued on three-ball. Those two deleted balls can form $(2,0), (1,1) $ or $ (0,2)$ (incoming, outgoing) boundaries. In the $(2,0)$ case, $m$ has two incoming boundaries, so $S$ must be the outgoing boundary of $m$ and the incoming boundary of $p^{\times \times} $, giving $p^{\times \times} \circ m$. In the $(0,2)$ case, $m^\vee$ has two outgoing boundaries, so $S$ must be the incoming boundary of $m^\vee$ and the outgoing boundary of $p^{\times \times} $, giving $m^\vee \circ p^{\times \times} $.

The case $(1,1)$ is only slightly more complicated. If $S$ is the incoming boundary of $p^{\times \times} $, then the only possible compositions are $(p^{\times \times}  \otimes id_{S^2})\circ m^\vee$ and $(id_{S^2} \otimes p^{\times \times} )\circ m^\vee$ (recall that the boundaries are labelled, so these two are different morphisms in $\operatorname{Cob}( 3)_{S^2, CF} $). If $S$ is the outgoing boundary of $p^{\times \times} $, then the only possible compositions are $m\circ (p^{\times \times} \otimes  id_{S^2}) $ and $m\circ (id_{S^2} \otimes p^{\times \times} )$.

All of the cases mentioned above are diffeomorphic as manifolds with boundaries. To establish equalities as bordisms in $\operatorname{Cob}(3)_{S^2}$, match the number of incoming and outgoing boundaries to give the claimed relations.
\end{proof}

\begin{rmk}
The Cerf-theoretic interpretation for the (co)legs and (co)waist relations are worth explaining. The (co)legs relations do not have any interesting Cerf theory-- for example, if the Morse datum $(f,b)$ induces \[
p\setminus \operatorname{int}\Big(\bigsqcup_{i=1}^3 D^3\Big) = \Big( S^3 \setminus \operatorname{int}(\sqcup_{i=1}^3 D^3) \Big) \bigcup_{S^2 \sqcup S^2} \Big( (p \setminus \operatorname{int}(D^3 \sqcup D^3)) \bigsqcup \left( S^2 \times I \right)  \Big) 
.\]
which induces the composition $m\circ (p^{\times \times} \otimes id_{S^2})$, then the same Morse datum $(f,b)$ also induces $m \circ (id_{S^2} \otimes p^{\times \times} )$ by switching the factors $p \setminus \operatorname{int}(D^3 \sqcup D^3)$ and $S^2 \times I$. So echoing the comments made in remark \ref{rmk:Cerftheoryofprimecomm}, the (co)legs relations are induced by the pair of Morse functions $f, f:p \setminus \operatorname{int}(\sqcup^3 D^3) \to \mathbb{R}$. We can always take the constant path from $f$ to itself when constructing a Cerf path.

The (co)waist relations are, in contrast, induced by two different Morse functions $f_{\text{upper}}, f_{\text{lower}} : p \setminus (\sqcup^3 D^3) \to \mathbb{R}$, where the subscript denotes where the prime endomorphism $p^{\times \times} $ sits in relations to $m$ or $m^\vee$ (later in the composition or earlier in the composition). Again, a path $f_t$ from $f_ \text{upper}$ to $f_ \text{lower}$ is generally very complicated and contain multiple singularities. One can imagine that, even in the simplest case, the index $2$ critical point of $m$ must be moved past all critical points of $p^{\times \times} $, so the path $f_t$ contains at least as many singularities as $f_{\text{upper}}$ has critical points, minus one. This is a feature of prime relations, as first discussed in \ref{rmk:Cerftheoryofprimecomm}.

\end{rmk}

\begin{rmk}
As there are countably many diffeomorphism classes of prime 3-manifolds, there are countably many such legs and waist relations.
\end{rmk}

\begin{rmk}
Notice that $p^{\times \times} $ is not an algebra homomorphism with respect to $m$: for this to hold, we would need $p^{\times \times} \circ m = m\circ (p^{\times \times}  \otimes p^{\times \times} )$, but the left hand side has one factor of $p$ while the right has two. Nor does it preserve the unit: in fact, it turns the unit into the corresponding prime unit $p^{\times \times} \circ 1 = p^\times$.
\end{rmk}

One may wonder if there are similar relations involving the prime units and the commutative Frobenius generators. The answer is yes-- but it's not very interesting or useful. Since a prime unit's boundary is outgoing, it can only be attached to incoming boundaries. Thus we are led to the following relations. Note that the proposition \ref{prop:LlegsfromCF} immediately afterwards renders these relations unnecessary, as they are implied by commutativity of the commutative Frobenius relations. In fact, the proof of theorem \ref{mainthmG1} does not make use of these relations at all, in contrast to the proof of theorem \ref{mainthmG2} which uses all of the prime endomorphism relations.

\begin{prop} \label{prop:Llegs}
(Legs Relations) Let $p^\times \in L$ be a prime unit, $m \in C$ the multiplication map and $m^\vee \in F$ the comultiplication map. Then the following relations are always true: \[
m \circ (p^\times \otimes id_{S^2}) \circ \lambda^{-1}_{S^2} = m\circ (id_{S^2} \otimes p^\times)\circ \rho ^{-1}_{S^2}
.\]
where $\lambda_{S^2}: \emptyset \sqcup S^2 \overset{\sim}{\to} S^2$ is the left unitor and $\rho_{S^2} : S^2 \sqcup \emptyset \overset{\sim}{\to} S^2$ is the right unitor.
We shall call this equality the ``legs relations.''
\end{prop}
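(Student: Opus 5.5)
The plan is to argue at the level of prime decompositions, in the same way as proposition \ref{prop:legsandwaistrelations}: exhibit representatives of both sides, show they are diffeomorphic as bordisms $S^2 \to S^2$ by a diffeomorphism preserving the incoming and outgoing boundary, and conclude equality in $\operatorname{Cob}(3)_{S^2}$. The unitors are only bookkeeping. Precomposing with $\lambda^{-1}_{S^2}$ or $\rho^{-1}_{S^2}$ simply views a morphism out of $\emptyset \sqcup S^2$ (resp.\ $S^2 \sqcup \emptyset$) as a morphism out of $S^2$. So the claim reduces to the following: gluing $p \setminus \operatorname{int}(D^3)$ onto the first incoming sphere of $m$ gives the same bordism class $S^2 \to S^2$ as gluing it onto the second incoming sphere.

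First, I would identify each side with a standard manifold. The left hand side is represented by
\[
\Big(S^3 \setminus \operatorname{int}(\sqcup^3 D^3)\Big) \cup_{S_1} \Big(p\setminus \operatorname{int}(D^3)\Big),
\]
where $S_1$ is the first incoming boundary sphere. Since gluing along a sphere is the connect sum, this is $(p \# S^3) \setminus \operatorname{int}(D^3 \sqcup D^3) \cong p \setminus \operatorname{int}(D^3 \sqcup D^3)$, exactly as in comment \ref{criticalrelations}. Here the remaining incoming sphere $S_2$ and the outgoing sphere of $m$ become the incoming and outgoing boundary. The right hand side gives the same description with $S_1$ and $S_2$ swapped.

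Second, I would produce the boundary-compatible diffeomorphism. The cleanest route is a diffeomorphism $\sigma$ of the pair of pants $S^3 \setminus \operatorname{int}(\sqcup^3 D^3)$ that preserves orientation, exchanges $S_1$ and $S_2$, and fixes the outgoing sphere. Concretely, $\sigma$ can be a rotation of $S^3$ by $\pi$ that swaps the centers of two of the removed balls and fixes the third, with the balls placed symmetrically. This is the same geometric input as the commutativity $m \circ \beta_{S^2,S^2} = m$ in proposition \ref{prop:CFcommFrob}. Extending $\sigma$ by the identity on $p \setminus \operatorname{int}(D^3)$ identifies the two glued manifolds, and it carries incoming to incoming and outgoing to outgoing. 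Equivalently, the two sides are $m \circ (p^\times \otimes id)$ and $m\circ \beta \circ (p^\times \otimes id)$ after naturality of the braiding and unitors, so the relation follows from commutativity of $m$.

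The main obstacle is orientation and parametrization of the boundary spheres. Three things must be checked. First, $\sigma$ must preserve orientation; this holds because a rotation does. Second, $\sigma$ must restrict on $S_1 \to S_2$ compatibly with the gluing parametrization used for $p^\times$. If it does not, I would compose with an isotopy, using that orientation-preserving diffeomorphisms of $S^2$ are isotopic to the identity, which lets the gluing be corrected in a collar. Third, $\sigma$ should be the identity near the outgoing sphere, up to isotopy. The argument at the level of Morse data is not needed, since the relation is a statement about diffeomorphism classes only.
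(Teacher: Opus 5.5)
Your proposal is correct and follows essentially the paper's argument: the paper likewise observes that $p\setminus\operatorname{int}(D^3\sqcup D^3)\cong (p\setminus\operatorname{int}(D^3))\cup_{S^2}(S^3\setminus\operatorname{int}(\sqcup^3 D^3))$, and that the two ways of declaring the remaining pants spheres incoming/outgoing give the two sides, with the unitors absorbing the $\emptyset$ factor. Your explicit leg-swapping $\pi$-rotation and the Smale-isotopy check on boundary parametrizations make precise what the paper leaves implicit, and your closing remark that the relation reduces to $m\circ\beta_{S^2,S^2}=m$ is exactly the paper's Proposition~\ref{prop:LlegsfromCF}.
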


\begin{figure}[ht]
    \centering
\def\svgwidth{1\columnwidth} 
\begingroup%
  \makeatletter%
  \providecommand\color[2][]{%
    \errmessage{(Inkscape) Color is used for the text in Inkscape, but the package 'color.sty' is not loaded}%
    \renewcommand\color[2][]{}%
  }%
  \providecommand\transparent[1]{%
    \errmessage{(Inkscape) Transparency is used (non-zero) for the text in Inkscape, but the package 'transparent.sty' is not loaded}%
    \renewcommand\transparent[1]{}%
  }%
  \providecommand\rotatebox[2]{#2}%
  \newcommand*\fsize{\dimexpr\f@size pt\relax}%
  \newcommand*\lineheight[1]{\fontsize{\fsize}{#1\fsize}\selectfont}%
  \ifx\svgwidth\undefined%
    \setlength{\unitlength}{680.31496063bp}%
    \ifx\svgscale\undefined%
      \relax%
    \else%
      \setlength{\unitlength}{\unitlength * \real{\svgscale}}%
    \fi%
  \else%
    \setlength{\unitlength}{\svgwidth}%
  \fi%
  \global\let\svgwidth\undefined%
  \global\let\svgscale\undefined%
  \makeatother%
  \begin{picture}(1,0.33333333)%
    \lineheight{1}%
    \setlength\tabcolsep{0pt}%
    \put(0,0){\includegraphics[width=\unitlength,page=1]{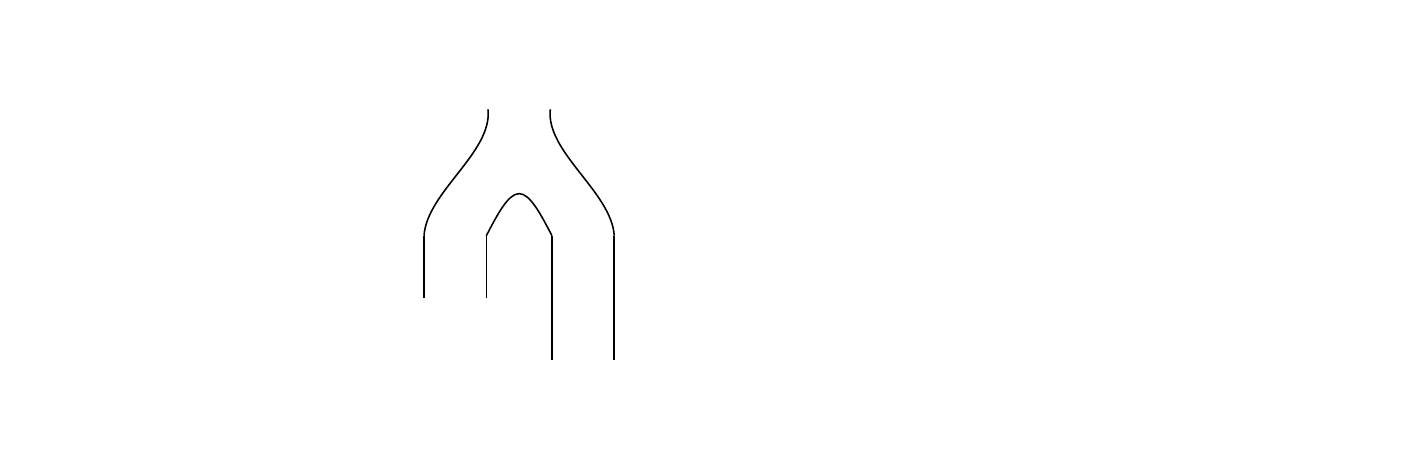}}%
    \put(0.30913973,0.12047736){\makebox(0,0)[lt]{\lineheight{1.25}\smash{\begin{tabular}[t]{l}$p^\times$\end{tabular}}}}%
    \put(0,0){\includegraphics[width=\unitlength,page=2]{llegs.pdf}}%
    \put(0.66495056,0.12047736){\makebox(0,0)[lt]{\lineheight{1.25}\smash{\begin{tabular}[t]{l}$p^\times$\end{tabular}}}}%
    \put(0,0){\includegraphics[width=\unitlength,page=3]{llegs.pdf}}%
    \put(0.49381984,0.16247292){\makebox(0,0)[lt]{\lineheight{1.25}\smash{\begin{tabular}[t]{l}$\cong$\end{tabular}}}}%
    \put(0,0){\includegraphics[width=\unitlength,page=4]{llegs.pdf}}%
  \end{picture}%
\endgroup%

    \caption[Legs relations for $G_1$]{Legs relations for $p^\times L \subset G_1$.}
    \label{fig:llegs}
\end{figure}

\begin{proof}
Geometrically this is an obvious statement. Consider the gluing \[
p\setminus  \operatorname{int}\Big(D^3\bigsqcup D^3\Big)  \cong \Big( p \setminus \operatorname{int}(D^3) \Big)  \bigcup_{S^2} \Big( S^3 \setminus \operatorname{int}(\bigsqcup_{i=1}^3 D^3) \Big)
.\]
Since the sphere boundary of a prime unit is outgoing, one of the sphere boundaries of the $S^3\setminus \operatorname{int}(\sqcup^3 D^3)$ factor must be incoming. Choosing one of the remaining two as outgoing and the other as incoming, this decomposition induces two compositions of $[p\setminus \operatorname{int}(\sqcup^2 D^3)]$ (which we recognize as the corresponding prime endomorphism, see \ref{criticalrelations}). Since the incoming boundary is either $\emptyset \sqcup S^2$ or  $S^2 \sqcup \emptyset$, we can resolve the two cases with a left or right unitor respectively, giving the desired equality of morphisms.
\end{proof}

\textbf{Warning:} what follows, and what will follow the next two sections, lean heavy on coherence conditions of symmetric monoidal categories. We made the deliberate choice of carefully exhibiting unitors, associators, and braiding, because it enables the reader to apply the results in a non-strict symmetric monoidal category $\mathcal{C}$ after passing through a $3$-dimensional topological field theories $Z: \operatorname{Cob}( 3) \to \mathcal{C}$.

\begin{notations} \label{coherencenotations}
Let $(\mathcal{C}, \otimes , 1_{\otimes })$ be a symmetric monoidal category and $\lambda: 1_ \otimes \otimes  (-) \Rightarrow Id_{\mathcal{C}}$ the natural isomorphism which gives the left unitors, and $\rho : (-) \otimes 1_{\otimes } \Rightarrow Id_{\mathcal{C}}$ the natural isomorphism which gives the right unitors. Denote their morphisms at an object $A\in \mathcal{C}$ by $\lambda_A : 1_{\otimes } \otimes A \overset{\sim}{\to} A$ and $\rho _A : A \otimes 1_{\otimes } \overset{\sim}{\to} A$. Denote by $\beta _{A,B}$ the braiding $A \otimes B \overset{\sim}{\to} B \otimes A$. Denote by $\alpha_{A,B,C}$ the associator $(A \otimes B) \otimes C \overset{\sim}{\to} A \otimes (B \otimes C)$
\end{notations}

\begin{prop} \label{prop:LlegsfromCF}
Leg relations for $L$ are implied by commutativity of the commutative Frobenius relations.
\end{prop}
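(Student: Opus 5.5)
The plan is to derive the $L$-legs relation of proposition \ref{prop:Llegs} purely formally from the commutativity of $m$, that is $m\circ\beta_{S^2,S^2}=m$, together with the coherence axioms of the symmetric monoidal structure. No geometric input is needed beyond the commutative Frobenius relations of proposition \ref{prop:CFcommFrob}. Start from the right-hand side $m\circ(id_{S^2}\otimes p^\times)\circ\rho^{-1}_{S^2}$ and insert commutativity:
\[
m\circ(id_{S^2}\otimes p^\times)\circ\rho^{-1}_{S^2} = m\circ\beta_{S^2,S^2}\circ(id_{S^2}\otimes p^\times)\circ\rho^{-1}_{S^2}.
\]

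Next, use naturality of the braiding, $\beta_{S^2,S^2}\circ(id_{S^2}\otimes p^\times)=(p^\times\otimes id_{S^2})\circ\beta_{S^2,\emptyset}$. Here we regard $p^\times:\emptyset\to S^2$ as a morphism out of the monoidal unit $1_\otimes=\emptyset$, and use notation \ref{coherencenotations}. This rewrites the expression as
\[
m\circ(p^\times\otimes id_{S^2})\circ\beta_{S^2,\emptyset}\circ\rho^{-1}_{S^2}.
\]

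It then suffices to check the standard coherence identity $\beta_{S^2,\emptyset}\circ\rho^{-1}_{S^2}=\lambda^{-1}_{S^2}$, or equivalently $\lambda_{S^2}\circ\beta_{S^2,\emptyset}=\rho_{S^2}$. This holds in any braided monoidal category; it follows from the hexagon axiom together with the triangle axiom, as in Joyal--Street. Substituting it yields the left-hand side $m\circ(p^\times\otimes id_{S^2})\circ\lambda^{-1}_{S^2}$, completing the derivation.

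The only step needing care is this last coherence identity, relating the braiding with the unit object to the unitors. I would either cite it as a standard consequence of the braided monoidal axioms, or, since $\operatorname{Cob}(3)$ is symmetric monoidal, verify it directly in $\operatorname{Cob}(3)_{S^2}$. The direct check is easy there: both sides are represented by the cylinder $S^2\times I$ with the same boundary identifications. Citing the general identity is preferable, however, because it keeps the argument valid after passing through a TFT into a non-strict target, in the spirit of the warning preceding notation \ref{coherencenotations}.
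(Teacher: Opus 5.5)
Your argument is correct and is essentially the paper's proof. Both combine commutativity $m\circ\beta_{S^2,S^2}=m$, naturality of the braiding applied to $p^\times$, and the unit coherence that writes the braiding with $\emptyset$ as a composite of unitors. You run the computation from the right-hand side and state the coherence $\lambda_{S^2}\circ\beta_{S^2,\emptyset}=\rho_{S^2}$ explicitly, whereas the paper uses it in the form $\beta_{\emptyset,S^2}=\rho^{-1}_{S^2}\circ\lambda_{S^2}$ without justification.
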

\begin{proof}
We want to show that $m\circ ( p^\times \otimes id_{S^2} )  \circ \lambda^{-1}_{S^2} =m \circ (id_{S^2} \otimes p^\times) \circ \rho^{-1}_{S^2} $ is a consequence of commutative Frobenius relations.

Let $f: A\to A'$ and $g:B\to B'$ be two morphisms of a symmetric monoidal category. Then by naturality of braiding , the following diagram commutes 
\[\begin{tikzcd}
{A\otimes B} & {A'\otimes B'} \\
{B\otimes A} & {B'\otimes A'}
\arrow["{f\otimes g}", from=1-1, to=1-2]
\arrow["{\beta _{A,B}}"', from=1-1, to=2-1]
\arrow["{g\otimes f}", from=2-1, to=2-2]
\arrow["{\beta _{B',A'}}"', from=2-2, to=1-2]
\end{tikzcd}\]
In particular, for $f=p^\times: \emptyset \to S^2 $ and $g=id_{S^2}$, we have
\[
p^\times \otimes id_{S^2} = \beta _{S^2 \sqcup S^2} \circ (id_{S^2} \otimes p^\times) \circ \beta_{\emptyset \sqcup S^2} = \beta  _{S^2 \sqcup S^2} \circ (id_{S^2} \otimes p^\times) \circ (\rho^{-1}_{S^2} \circ \lambda_{S^2})
\]

Therefore we may substitute the above relation into the left hand side of the legs relation \[
m\circ (p^\times \otimes id_{S^2}) \circ \lambda^{-1}_{S^2} = m \circ \beta _{S^2 \sqcup S^2} \circ (id_{S^2} \otimes p^\times) \circ (\rho^{-1}_{S^2} \circ \lambda_{S^2}) \circ \lambda^{-1}_{S^2} = m \circ (id_{S^2} \otimes p^\times) \circ \rho^{-1}_{S^2}
.\]
where the second equality is due to commutativity $m = m\circ \beta _{S^2 \sqcup S^2}$. We recognize the right hand side as the desired composition.
\begin{figure}[ht]
    \centering
\def\svgwidth{1\columnwidth} 
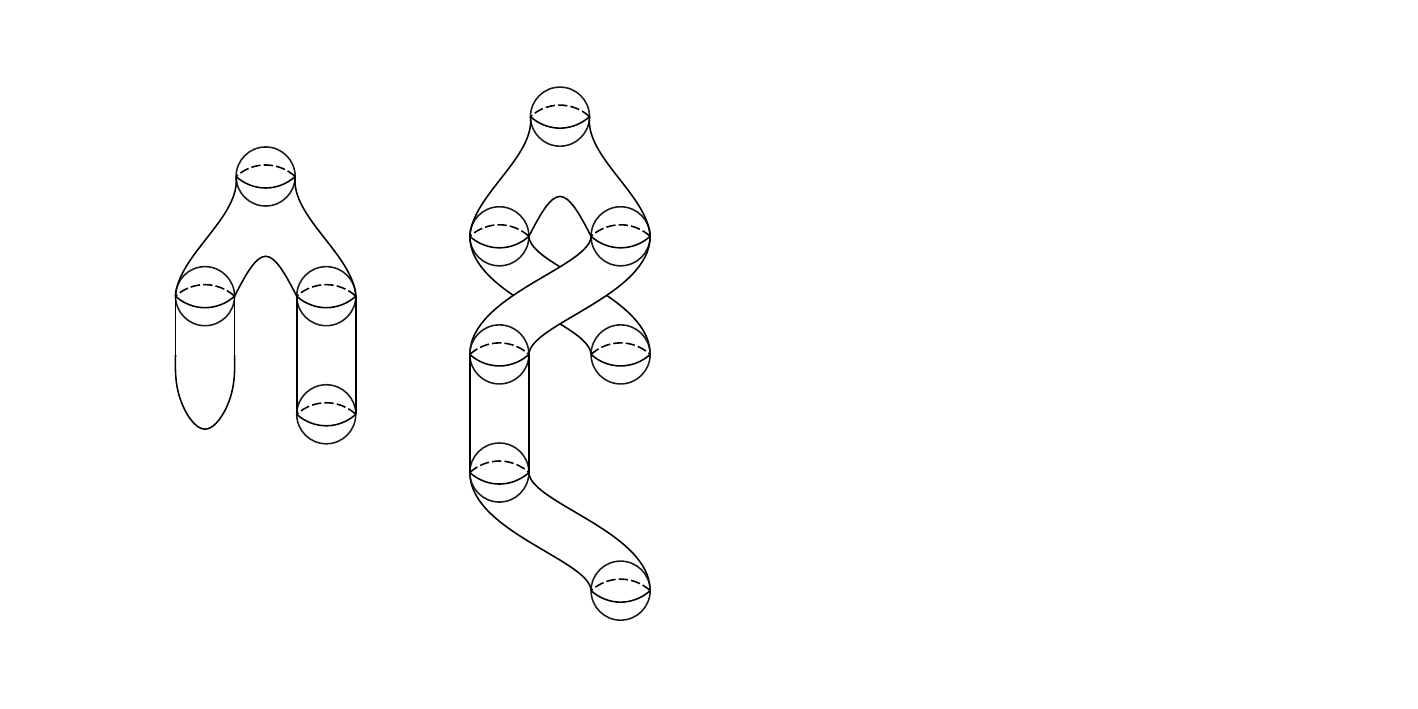

    \caption[Redundancy of $G_1$ legs relations]{Redundancy of $L$ legs relations.}
    \label{fig:llegsfromcf}
\end{figure}
\end{proof}

\begin{rmk}
Note that in $\operatorname{Cob}(3)_{S^2}$, there is equality of morphisms $[p\setminus (D^3 \sqcup D^3)]= m \circ ([p \setminus D^3] \otimes id_{S^2}) \circ \lambda^{-1}_{S^2} = m\circ (id_{S^2} \otimes [p \setminus D^3])\circ \rho ^{-1}_{S^2} $, or more succinctly  \[
p^{\times \times}  = m\circ (p^\times \otimes id_{S^2}) \circ \lambda^{-1}_{S^2} = m \circ (id_{S^2} \otimes p^\times ) \circ \rho ^{-1}_{S^2}
.\]
But this does not constitute a relation in the usual sense-- the left hand side is a $P$ generator while the right hand sides involve $L$ generators. They belong to different presentations, $G_2$ and $G_1$. More on this in lemma \ref{lem:consistencyislegs} and the discussion right before it.
\end{rmk}

\section{Sufficiency of External Relations} \label{sec:external sufficient}
Equipped with prime commutativity, legs and waist relations, co-legs and co-waist relations, we are ready to state a sufficiency theorem among $G_2 $ generators.

\begin{thm} \label{mainthmG2}
$\operatorname{Cob}( 3)_{S^2} $ has a presentation with $G_2$ generators under commutative Frobenius relations, prime commutativity, legs and waist relations, and co-legs and co-waist relations.
\end{thm}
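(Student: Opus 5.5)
Generation by $G_2$ is Proposition \ref{generate} together with Remark \ref{rmk:S2xS1redundant}, so the plan concentrates on sufficiency of the relations. The method is a normal form. Fix a morphism $[M]$ and two $G_2$-compositions $C_0, C_1$ of it. By the Claim following Definition \ref{defn:Prime composition}, each $C_i$ comes from a prime decomposition of a single representative $M$. We may split $M$ into connected components, using naturality of braidings, and so assume $M$ is connected. Let $\widehat{M} \cong P_1 \# \dots \# P_m \# (\#^g S^2\times S^1)$ be its Kneser--Milnor decomposition, with the $P_j$ irreducible. Counting prime factors in each composition, and using uniqueness of prime decomposition together with Corollary \ref{cor:hominCF}, shows that each $C_i$ contains exactly one copy of $p_j^{\times\times}$ for each $j$, with the same multiset of labels. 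The no-internal-relations lemmas (Lemma \ref{sphereinp} and the subsequent proposition) ensure that no CF generator sits inside a prime piece.

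The target normal form is
\[ N \;=\; B \circ P_1^{\times\times}\circ \cdots \circ P_m^{\times\times}, \]
where $P_1^{\times\times}$ is precomposed on a fixed incoming leg and $B$ is a CF-generated bordism with $\widehat{B}\cong \#^g(S^2\times S^1)$. Concretely, all prime endomorphisms are stacked on the first incoming sphere, in a fixed order of labels, before any CF generator acts. The reduction of $C_i$ to $N$ goes by induction on the number of CF generators occurring ``below'' some prime generator. Take a prime generator $p^{\times\times}$ that is immediately preceded, along its incoming sphere, by a CF generator. There are three cases.

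If the generator is $m$, apply the waist relation $p^{\times\times}\circ m = m\circ(p^{\times\times}\otimes id)$ to push $p^{\times\times}$ below $m$ onto a leg, and then use the legs relation to choose which leg.

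If it is $m^\vee$, the co-waist and co-legs relations move $p^{\times\times}$ across in the same way.

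If it is $1$, then $p^{\times\times}\circ 1 = p^\times$ has no incoming leg. By Comment \ref{criticalrelations} and the waist relation we rewrite it as $p^{\times\times}\circ m\circ(1\otimes id)$ composed with a unitor; the unit relation then lets us reinsert it on an existing strand. An analogous move handles $tr$ above a prime.

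Once every prime generator sits at the bottom on incoming legs, the legs relations let us slide all of them onto the first incoming sphere, through the intervening pair of pants. If $M$ has no incoming boundary, we first rewrite using a unit, as above. Prime commutativity then sorts their order. Throughout, we keep track of unitors, associators and braidings by naturality, as in Proposition \ref{prop:LlegsfromCF}.

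After both $C_0$ and $C_1$ are in normal form, they share the prime block, so they differ only in their CF parts $B_0, B_1$. Cutting $M$ along the incoming sphere of the prime block, these are two CF-compositions of the same bordism: we need $[B_0] = [B_1]$, and this follows by cancelling the prime block geometrically, since both equal $M$ with the $P_j$ factors removed, by uniqueness of prime decomposition and the disc theorem used in Theorem \ref{thm:Cob2Cob3CFIso}. Proposition \ref{prop:keypropCF} and Corollary \ref{cor:CFonlycommFrob} then show that $B_0$ and $B_1$ are related by commutative Frobenius relations alone. This completes the chain $C_0 \to N \to C_1$.

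The main obstacle is the pushing step when a prime generator sits between CF generators that create genus. An example is a prime piece on a strand lying between an $m^\vee$ and an $m$ that together form an $S^2\times S^1$ loop. The question is whether the pushing terminates, and whether it depends only on the sphere a prime is attached to rather than on its position along the loop. My first approach would be to induct on the height, in a CF Morse function, of the gluing sphere of each prime piece. Each waist/legs move strictly lowers this height, so the process terminates. The remaining ambiguity, namely which leg the prime ends up on, is then absorbed by the legs relations, because all incoming legs are connected through $B$.
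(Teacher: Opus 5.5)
Your overall plan, stacking all prime endomorphisms on one boundary sphere and then comparing the remaining CF parts via Theorem \ref{thm:Cob2Cob3CFIso} and Proposition \ref{prop:keypropCF}, is the paper's. The paper stacks them on an outgoing sphere rather than an incoming one, and your final comparison step is fine. The gap is in the transport step.

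Your procedure only moves a prime down past the generator immediately below it. Its unit case only works when an $m$ sits directly above the bubble $p^{\times\times}\circ 1$, and that can fail. In
\[
(m\otimes id_{S^2})\circ\alpha^{-1}_{S^2,S^2,S^2}\circ\bigl(id_{S^2}\otimes(m^\vee\circ p^{\times\times}\circ 1)\bigr)\circ\rho^{-1}_{S^2}
\]
the prime is preceded by $1$ and followed by $m^\vee$, so none of your moves applies. To reach the incoming strand you must either move the prime \emph{up} (co-waist through $m^\vee$, then legs across $m$) or first reshape the CF part (here by the Frobenius relation). Neither is part of your procedure.

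Your termination measure also fails:
\begin{itemize}
\item a legs move does not change height at all;
\item your own unit move applies the waist relation in the upward direction;
\item the final ``slide onto the first incoming sphere'' assumes two incoming spheres meet at a single pair of pants, whereas in general the path between them through $B$ goes up and down.
\end{itemize}
The obstacle you flag, genus loops, is harmless: a prime on a leg of $m^\vee$ simply descends by co-waist. The real obstruction is local minima, i.e.\ units.

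To repair it, transport primes as the paper does:
\begin{itemize}
\item Choose a path $\gamma$ in $M$ from the prime piece to the target sphere. It crosses finitely many generator pieces.
\item At each crossing, apply waist, legs, co-waist, co-legs or prime commutativity, in whichever direction $\gamma$ passes.
\item Do nothing when $\gamma$ enters and leaves a piece through the same sphere. This is how $1$ and $tr$ are passed.
\end{itemize}
Termination then comes from the finiteness of the crossings, not from a height function. Repeating with the target replaced by the exposed sphere of the previously placed prime produces the stacked block.

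Alternatively, you can bypass transport entirely:
\begin{itemize}
\item Rewrite each $p^{\times\times}$ as $m\circ((p^{\times\times}\circ 1)\otimes id_{S^2})\circ\lambda^{-1}_{S^2}$, which uses only the waist and unit relations.
\item Pull every bubble $p^{\times\times}\circ 1$ to the bottom by coherence alone, via Lemma \ref{lemma:pullaside}.
\item Compare the remaining purely CF parts as in the proof of Theorem \ref{mainthmG1}.
\end{itemize}
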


\begin{proof}
Proposition \ref{generate} shows that $G_2 $ is generating. We need to show that the relations listed above are sufficient. Let $[M]$ be a bordism, and $f_0, f_1:M\to \mathbb{R}$ admissible excellent Morse functions inducing two generating compositions. We claim that there is a path from $f_0$ to $f_1$ which exhibits finitely many prime-commutativity relations, co/legs and co/waist relations, or fails to be excellent Morse at finitely many times due to commutative Frobenius relations. Thus the two compositions of $[M]$ are related by a sequence of prime relations and commutative Frobenius relations.

First, either $M$ is closed and $[M] \in \operatorname{End}(\emptyset)$, or $[M]$ has a non-empty boundary. We treat the two cases slightly differently.

If $[M]$ has at least one boundary component $S$, say outgoing, then for the Morse decomposition induced by $f_0 $, 
\begin{enumerate}
\item take a path $\gamma $ from a point on a particular prime endomorphism $p^{\times \times} $ to a point 
$x\in S$.

\item since $M$ is compact, it is finitely generated. Therefore the path $\gamma $ intersects finitely many generating bordisms. In particular it may intersect the boundary of an individual bordism multiple times.

\item The idea is to ``carry'' $p^{\times \times} $ along this path until we reach the outgoing boundary. Take a chronological ordering of the bordism pieces intersected by $\gamma $ as $p\setminus \operatorname{int}(D^3\sqcup D^3), M_1, M_2, \dots M_I$ . For each $M_i$ take a chronological ordering of the boundary components intersected by $\gamma $ as $b_1^i, b_2^i, \dots b_{J_i}^i $. Since the path originates from $p^{\times \times} $, $b_1^1$ is the boundary along which $p^{\times \times}  $ is composed with $[M_1]$. Since the path terminates on $S$, we have $b_{J_I}^I = S$.

If $b_1^i, b_{J_i}^i$ are different boundary components, apply the appropriate relation. After the relation is applied, $[p]$ is composed with $[M_i]$ along boundary component $b_{J_i}^i$ and, since $[M_i]$ and $[M_{i+1}]$ were glued along $b_{J_i}^i=b_1^{i+1}$, $p^{\times \times} $ is also composed with $[M_{i+1}]$ along boundary component $b_1^{i+1}$.

If $b_1^i = b_{J_i}^i$ are the same boundary component, namely $\gamma $ enters and exists $[M_i]$ through the same boundary component, then $[M_{i+1}] = [M_{i-1}]$. We take the identity relation (aka do nothing). Then $p^{\times \times} $ is still composed with $[M_i]$ along $b_1^i = b_{J_i}^i$ and with  $[M_{i+1}]$ along $b_1^{i+1}$.

\item The fact that $\gamma $ ends on $S=b_{J_I}^I$ guarantees that this process terminates with $p^{\times \times} $ ending up composed with $[M_I]$ along $S$.

\item We will then repeat this finite process for the remaining finitely many prime factors, each time choosing the target $S$ as the outgoing boundary of the previously chosen prime factor.

\item Now that all the prime bordisms are composed in a sequence, we use naturality to bring the composition to the form \[
\left( (p^{\times \times}_1 \circ p^{\times \times}_2 \circ \dots \circ p^{\times \times}_n) \otimes id  \right) \circ s_0
.\]
where $s_0$ is a composition of CF bordisms which equals $[\#^g \left( S^2 \times S^1 \right)  \setminus \operatorname{int}(\sqcup^{n_{inc}+n_{out}}D^3) ]$. Let $f_0 ': M\to  \mathbb{R}$ be a corresponding admissible Morse function. 

\item Repeat this process for $f_1 $, choosing the same outgoing boundary component $S$ at the beginning, bringing its corresponding composition to the form \[
\left( (p^{\times \times}_1 \circ p^{\times \times}_2 \circ \dots \circ p^{\times \times}_n) \otimes id  \right) \circ s_1
.\]
where $s_1$ is again a (possibly different) composition of CF bordisms which equals $[\#^g \left( S^2 \times S^1 \right)  \setminus \operatorname{int}(\sqcup^{n_{inc}+n_{out}}D^3) ]$. Let $f_1 ':M\to \mathbb{R}$ be a corresponding admissible Morse function.

\item By proposition \ref{prop:keypropCF}, there exists a path \[
\widehat{f}_t':I \times \left( \#^g \left( S^2\times S^1 \right) \setminus \operatorname{int}(\sqcup^{n_{inc}+n_{out}}D^3) \right) \to \mathbb{R}
\]
from $f_0' |_{\#^g \left( S^2\times S^1 \right)  \setminus \operatorname{int}(\sqcup^{n_{inc}+n_{out}}D^3)}$ to $f_1' |_{\#^g \left( S^2\times S^1 \right) \setminus \operatorname{int}(\sqcup^{n_{inc}+n_{out}}D^3)}$ which is admissible excellent spherical Morse at all but finitely many times where the failures are of CF type.

\item Hence there is a sequence of commutative Frobenius relations which sends $s_0 $ to $s_1 $. Taking the reverse process from $f_1 '$ to $f_1 $, we have obtained a sequence of relations which sends one composition to the other. Cerf-theoretically, there is a path $f_0$ to $f_0'$ exhibiting all the prime relations used in turning the composition into the standard form. We glue the path $\widehat{f}_t'$ and the constant path $f_0'|_{p_1 \# p_2 \dots \# p_n \setminus \operatorname{int}(D^3 \sqcup D^3)}$ together, forming a path $f_t'$ from $f_0'$ to $f_1 '$. Taking the path from $f_1 $ to $f_1 '$ in reverse, we have constructed a path from $f_0 $ to $f_1 $ with the desired property.
\end{enumerate}

If $[M]$ has empty boundary component, we must deal with the fact that there is no preferred boundary component to stack the prime factors. At first glance the situation is further complicated by the fact that $f_0 $ and $f_1 $ may achieve minima and maxima at different points on $M$, so there is no preferred point towards which to send the prime factors. However, we may treat $f_0 $, $f_1 $ independently. One can still choose regular values $r_0, r_1 $ close to say the maxima, such that the regular surfaces $S_0 = f_0 ^{-1}(r_0 ) ,S_1 = f_1 ^{-1}(r_1 ) $ are still diffeomorphic to $S^2$. We may repeat the same process, bringing all the prime factors towards the respective maxima using only prime commutativity and legs and waist relations, such that the composition induced by $f_0 $ is brought to the form \[
tr \circ p^{\times \times}_1 \circ p^{\times \times}_2 \circ \dots \circ p^{\times \times}_n \circ s_0 
.\]
and the composition induced by $f_1 $ brought to the form \[
tr \circ p^{\times \times}_1 \circ p^{\times \times}_2 \circ \dots \circ p^{\times \times}_n \circ s_1
.\]
where $s_0 ,s_1 $ are two composition sequences of CF bordisms which equals $[\#^g \left( S^2 \times S^1 \right)  \setminus \operatorname{int}( D^3)]$. Then proposition \ref{prop:keyprop} provides a sequence of commutative Frobenius moves turning $s_0 $ into $s_1 $.
\end{proof}

\begin{thm} \label{mainthmG1}
$\operatorname{Cob}( 3)_{S^2} $ has a presentation with $G_1$ generators and commutative Frobenius relations.
\end{thm}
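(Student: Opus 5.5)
Proposition \ref{generate}, together with Remarks \ref{rmk:S2xS1redundant} and \ref{rmk:Lv'redundant}, already shows that $G_1$ generates. So the plan is to prove sufficiency of the commutative Frobenius relations by imitating the proof of Theorem \ref{mainthmG2}, with a simpler normal form. Let $[M]$ be a bordism and let $f_0,f_1$ be Morse data on a representative $M$ inducing two $G_1$-compositions. Each composition then consists of prime units $p_j^\times$ glued onto a part $N$ generated by CF generators. By uniqueness of prime decomposition (Kneser--Milnor), the multiset of irreducible primes $\{p_j\}$ is the same for both compositions. The absence of internal relations for $G_1$ guarantees that no CF generator sits inside a prime unit, so each composition splits as a CF morphism precomposed with the tensor product of the $p_j^\times$.

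First I bring each composition to the normal form
\[
[M] = s\circ\Big(\bigotimes_{j=1}^n p_j^\times\otimes id\Big),
\]
where $s$ is a composition of CF generators representing $[\#^g(S^2\times S^1)\setminus\operatorname{int}(\sqcup D^3)]$ with $n+n_{inc}$ incoming spheres. Here I use that each $p_j^\times$ has a single outgoing boundary sphere, which is glued to an incoming sphere of some CF piece. By naturality of the unitors, associators and braiding, every prime unit can be slid to the very beginning of the composition without changing the bordism. This sliding uses only coherence isomorphisms, so it needs no additional relations; this is exactly the point where the $G_1$ situation is easier than the waist-relation step for $G_2$. Prime commutativity likewise becomes braiding naturality: reordering the $p_j^\times$ in the tensor product is absorbed into $s$ via the braiding, and the braiding is part of the commutative Frobenius structure. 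Proposition \ref{prop:LlegsfromCF} shows that the legs relation for $L$ is a consequence of commutativity, so it is not needed as a separate relation.

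Next I compare the two normal forms $s_0\circ(\bigotimes p_j^\times\otimes id)$ and $s_1\circ(\bigotimes p_{\sigma(j)}^\times\otimes id)$. After inserting a braiding I may assume the two orderings agree. Capping off the prime units turns both $s_0$ and $s_1$ into CF morphisms between the same objects. I then need $s_0=s_1$ in $\operatorname{Cob}(3)_{S^2,CF}$, meaning their underlying bordisms must be diffeomorphic rel boundary. Once that holds, Proposition \ref{prop:keypropCF} and Corollary \ref{cor:CFonlycommFrob} connect $s_0$ to $s_1$ using commutative Frobenius relations only.

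The main obstacle is exactly this equality $s_0=s_1$. The bordism $M$ determines $\widehat M$, but the complement of the prime pieces inside $M$ depends on the chosen splitting spheres, and different compositions may attach a prime to different components. My approach is a counting argument via Theorem \ref{thm:Cob2Cob3CFIso}, which classifies connected CF morphisms by their number of incoming and outgoing spheres together with $g$. Take a connected component of $M$. Its prime factors, which by Kneser--Milnor are the same multiset for both compositions, together with the count of $S^2\times S^1$ summands, determine the number of components of $s_i$ meeting it and the genus of each such component. Components of $s_i$ that meet no boundary sphere are closed, of the form $[\#^g(S^2\times S^1)]$ with $p$'s attached, and are matched by their prime and genus data. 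If the two normal forms still differ in how the primes are distributed among the components, I expect to resolve this by moving a prime unit across a pair of pants $m$ using associativity and the legs relation, both of which are consequences of the commutative Frobenius relations. Closed bordisms are handled as in Theorem \ref{mainthmG2} by capping with $tr$.
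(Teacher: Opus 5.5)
Your strategy is the paper's. The paper isolates the prime units purely by coherence (Lemma \ref{lemma:pullaside}, applied inductively), so that both words become $\overline{H}^m_i\circ(id\otimes p_m^\times)\circ\rho^{-1}\circ\cdots\circ(id\otimes p_1^\times)\circ\rho^{-1}$ with $\overline{H}^m_i$ a CF word. It then appeals to CF sufficiency for the remainder. Your sliding of the $p_j^\times$ to the front is the same manipulation; it happens in the free symmetric monoidal category, so it needs neither Morse data nor the no-internal-relations result. You are right that the real content is $s_0=s_1$; the paper uses the corresponding equality $\overline{H}^m_1=\overline{H}^m_2$ without comment. Counting via Theorem \ref{thm:Cob2Cob3CFIso} is the right tool. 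Note that each component of $M$ contains exactly one component of $s_i$, since a prime unit has a single boundary sphere and cannot join components.

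The step that fails is your fallback. Sliding $p^\times$ from one leg of $m$ to the other (associativity plus the legs relation) keeps it in the same component. So it cannot correct a mismatch in how primes are distributed among components, and within a single component the choice of leg does not affect the bordism class of $s_i$ anyway.

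The repair belongs in your earlier step ``after inserting a braiding I may assume the two orderings agree'': align the prime-unit factors component by component, not merely by diffeomorphism type.
\begin{itemize}
\item The two representatives are diffeomorphic preserving the labelled boundary, and this matches their components (closed ones via the diffeomorphism).
\item Kneser--Milnor on each capped component shows that matched components carry the same multiset of irreducible primes and the same number of $S^2\times S^1$ summands. So you can choose a type-preserving, component-preserving bijection of the prime units.
\item Copies of the same prime are literally the same generator $p^\times$. The required permutation is therefore a braiding that naturality (with $\beta_{\emptyset,\emptyset}=id$) absorbs into $s_1$.
\end{itemize}
Now $s_0$ and $s_1$ induce the same labelled cospan: the same partition of the $n_{inc}+n$ inputs and $n_{out}$ outputs, and the same genus on each component. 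Hence $s_0=s_1$ by Theorem \ref{thm:Cob2Cob3CFIso}, and Corollary \ref{cor:CFonlycommFrob} finishes.

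Done this way, a genuine component mismatch never arises. No special treatment of closed $M$ (capping with $tr$) is needed either: prime units have domain $\emptyset$, so the normal form covers that case uniformly.
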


It is very tempting to treat this as a corollary of theorem \ref{mainthmG2} by exploiting the fact that every $p^\times \in L$ can be written as $p^{\times \times}  \circ 1$ and then ``carry'' the prime factors to some extrema or boundary component like we did in the proof of \ref{mainthmG2}. However this is not a correct proof of the sufficiency of commutative Frobenius relations, at least not on the nose-- we would have to use the legs and waist relations of $G_2$, as well as prime commutativity of $G_2$.
What we need is instead a sequence of relations which preserves the prime $L$ morphisms of $G_1$. 

One way to fix this approach is by noting that even though $p^\times = p^{\times \times} \circ 1$ inevitably uses $p^{\times \times} $, we can rewrite $p^{\times \times} $ in terms of $p^\times$ to get \[
p^\times = p^{\times \times} \circ 1 = \Big(m\circ (p^\times \otimes id_{S^2})\circ \lambda^{-1}_{S^2}\Big) \circ 1
.\]
Notice that the right most term is entirely in terms of $G_1 $ generators. Then one only needs to show that prime commutativity, legs and waist relations of $P$ have analogous versions in terms of $m\circ (p^\times \otimes id_{S^2} \circ \lambda^{-1}_{S^2})$ as a whole, instead of $p^{\times \times} $, using only commutative Frobenius relations. Therefore each use of prime commutativity or legs and waist relations of $P$ does in fact result in a composition in terms of $G_1$ generators.

Below we present a cleaner proof without breaking up the $L$ generators. The proof requires the repeated use of the following lemma, see notations \ref{coherencenotations}:

\begin{lemma}\label{lemma:pullaside}
Let  $(\mathcal{C}, 1_{\otimes }, \alpha , \beta , \lambda, \rho )$ be a symmetric monoidal category, and let $f: A \to A'$, $g: B \to B'$, $h: C \to A \otimes B$, $p: 1_ \otimes \to S$ be morphisms. Then the following is true as morphisms $C \to A' \otimes B'$ \[
\left( f \otimes \left( p \otimes g \right)  \right) \circ \left( id_A \otimes \lambda ^{-1}_B \right) \circ h = \left( f \otimes \left( id_S \otimes g \right)  \right) \circ \left( id_A \otimes \beta_{B,S} \right) \circ \alpha _{A,B,S} \circ \left( h \otimes p \right) \circ \rho ^{-1}_C
\]
\end{lemma}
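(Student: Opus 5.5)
The plan is to prove the identity by a direct diagram chase in $\mathcal{C}$, using only three ingredients: functoriality of $\otimes$, naturality of $\alpha$, $\beta$ and $\rho$, and two standard coherence identities. First I check that both sides are well-typed morphisms $C \to A' \otimes (S \otimes B')$. The left side factors as $C \to A\otimes B \to A \otimes (1_\otimes \otimes B) \to A' \otimes (S\otimes B')$. The right side factors as $C \to C\otimes 1_\otimes \to (A\otimes B)\otimes S \to A \otimes (B \otimes S) \to A \otimes (S\otimes B) \to A'\otimes (S\otimes B')$.

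Next I strip off the common outer factor. By functoriality, $p \otimes g = (id_S \otimes g)\circ (p \otimes id_B)$, so $f \otimes (p\otimes g) = (f\otimes (id_S\otimes g))\circ (id_A \otimes (p\otimes id_B))$. Both sides then begin with $f\otimes(id_S\otimes g)$, and it suffices to prove
\[
(id_A \otimes ((p\otimes id_B)\circ \lambda^{-1}_B))\circ h = (id_A\otimes \beta_{B,S})\circ \alpha_{A,B,S}\circ (h\otimes p)\circ \rho^{-1}_C .
\]

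To establish this, I rewrite the right side step by step, moving $p$ leftward and $h$ rightward:
\begin{enumerate}
\item Write $h\otimes p = (id_{A\otimes B}\otimes p)\circ (h\otimes id_{1_\otimes})$. Naturality of $\rho$ then gives $(h\otimes id_{1_\otimes})\circ\rho_C^{-1} = \rho^{-1}_{A\otimes B}\circ h$.
\item Naturality of $\alpha$ in its third variable gives $\alpha_{A,B,S}\circ(id_{A\otimes B}\otimes p) = (id_A\otimes(id_B\otimes p))\circ \alpha_{A,B,1_\otimes}$.
\item Naturality of $\beta$ gives $\beta_{B,S}\circ (id_B\otimes p) = (p\otimes id_B)\circ \beta_{B,1_\otimes}$.
\end{enumerate}
After these moves, the right side becomes $(id_A\otimes (p\otimes id_B))\circ (id_A\otimes \beta_{B,1_\otimes})\circ \alpha_{A,B,1_\otimes}\circ \rho^{-1}_{A\otimes B}\circ h$.

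It remains to identify $(id_A\otimes \beta_{B,1_\otimes})\circ \alpha_{A,B,1_\otimes}\circ \rho^{-1}_{A\otimes B}$ with $id_A\otimes \lambda_B^{-1}$. I expect this step to be the main obstacle, since it is pure coherence rather than naturality. Two standard consequences of the monoidal and symmetric axioms settle it:
\begin{enumerate}
\item Kelly's derived triangle identity $(id_A\otimes \rho_B)\circ\alpha_{A,B,1_\otimes} = \rho_{A\otimes B}$. This gives $\alpha_{A,B,1_\otimes}\circ\rho^{-1}_{A\otimes B} = id_A\otimes \rho_B^{-1}$.
\item The unit compatibility of the symmetry, $\lambda_B\circ \beta_{B,1_\otimes} = \rho_B$. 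This gives $\beta_{B,1_\otimes}\circ\rho^{-1}_B = \lambda^{-1}_B$.
\end{enumerate}
Combining these two identities yields exactly $id_A\otimes\lambda_B^{-1}$, which finishes the chase. Alternatively, I could invoke Mac Lane's coherence theorem for symmetric monoidal categories: both composites are built solely from structure maps, have the same underlying permutation, and are therefore equal.
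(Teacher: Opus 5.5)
Your proposal is correct and follows the paper's proof: the same identities appear (bifunctoriality, naturality of $\rho$, $\alpha$ and $\beta$, and Kelly's identity $(id_A\otimes\rho_B)\circ\alpha_{A,B,1_\otimes}=\rho_{A\otimes B}$), except that you run the chain from the right-hand side to the left, and you split the paper's single ``braiding relation'' $(p\otimes id_B)\circ\lambda^{-1}_B=\beta_{B,S}\circ(id_B\otimes p)\circ\rho^{-1}_B$ into naturality of $\beta$ plus $\lambda_B\circ\beta_{B,1_\otimes}=\rho_B$. You also correctly take the codomain to be $A'\otimes(S\otimes B')$, not the $A'\otimes B'$ written in the statement.
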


\begin{rmk}
Notice that one can use bifunctoriality and write the right hand side as \[
\left( f \otimes \left( id_S \otimes g \right)  \right) \circ \left( id_A \otimes \beta_{B,S} \right) \circ \alpha _{A,B,S} \circ \left( h \otimes id_S \right) \circ (id_C \otimes p) \circ \rho ^{-1}_C
.\]
\end{rmk}

\begin{mycom}
The take home lesson is this: given any morphism $M : C \to C'$ which is equal to a composition containing a factor of $p: 1_ \otimes \to S$, $M$ is necessarily of the form \[
M = H \circ \left( f \otimes \left(p \otimes g\right) \right) \circ (id_A \otimes \lambda^{-1}_B) \circ h
.\]
for some $H: A' \otimes \left( S \otimes B' \right) \to C'$. We have shown that $M$ has an alternative composition as \[
M = H \circ \left( f \otimes \left( id_S \otimes g \right)  \right) \circ \left( id_A \otimes \beta_{B,S} \right) \circ \alpha _{A,B,S} \circ \left( h \otimes p \right) \circ \rho ^{-1}_C
.\]
We can clean up the mess by packaging all but the last two terms together, into the form \[
M = \overline{H} \circ \left( h \otimes p \right) \circ \rho ^{-1}_C
.\]
This means that we can always extract/isolate the morphism $p$ by ``pulling it aside/passing through the right.''

The alternative form in the remark enables us to write \[
M = \overline{H'} \circ (id_C \otimes p) \circ \rho ^{-1}_C
.\]
which gives an extraction/isolation of $p$ by ``pulling aside and pushing down.'' These are best summarized intuitively in the following figure \ref{fig:pullaside}.

\begin{figure}[ht]
    \centering
\def\svgwidth{1\columnwidth} 
\import{./figures/}{pullaside.pdf_tex}

    \caption[Pulling aside a map out of $1_\otimes$]{Pulling aside a map out of $1_ \otimes $.}
    \label{fig:pullaside}
\end{figure}

\end{mycom}

\begin{proof}[Proof of Lemma \ref{lemma:pullaside}]
Starting with the left hand side $\left( f \otimes \left( p \otimes g \right)  \right) \circ \left( id_A \otimes \lambda ^{-1}_B \right) \circ h$, first rewrite it using bifunctoriality as \[
\left( f \otimes \left( id_S \otimes g \right)  \right) \circ \left( id_A \otimes \left( \left( p \otimes id_B \right) \circ \lambda^{-1}_B \right)  \right) \circ h
.\]
Using the braiding relation $\left( p \otimes id_B \right) \circ \lambda^{-1}_B = \beta _{B,S}\circ \left( id_B \otimes p \right) \circ \rho ^{-1}_B$, the above is equivalent to \[
\left( f \otimes \left( id_S \otimes g \right)  \right) \circ \left( id_A \otimes \left( \beta _{B,S}\circ \left( id_B \otimes p \right) \circ \rho ^{-1}_B \right)  \right) \circ h
.\]
Rewrite this expression using bifunctoriality as \[
\left( f \otimes \left( id_S \otimes g \right)  \right) \circ \left( id_A \otimes \beta _{B,S} \right) \circ \left( id_A \otimes \left( id_B \otimes p \right)  \right) \circ \left( id_A \otimes \rho ^{-1}_B \right) \circ h
.\]
Using a coherence condition [Kelly 64] of unitors and aassociators, the following diagram commutes for all objects $X,Y$,
\[\begin{tikzcd}
{(X \otimes Y) \otimes 1_\otimes} && {X\otimes Y} \\
{X \otimes (Y \otimes 1_\otimes)}
\arrow["{\rho_{X \otimes Y}}", from=1-1, to=1-3]
\arrow["{\alpha_{X,Y,1_\otimes}}"', from=1-1, to=2-1]
\arrow["{id_X \otimes \rho_Y}"', from=2-1, to=1-3]
\end{tikzcd}\]
In particular for $X=A, Y=B$, we have $id_A \otimes \rho^{-1}_B= \alpha _{A,B, 1_ \otimes } \circ \rho ^{-1}_{A \otimes B} $, so the left hand side is equal to  \[
\left( f \otimes \left( id_S \otimes g \right)  \right) \circ \left( id_A \otimes \beta _{B,S} \right) \circ \left( id_A \otimes  \left( id_B \otimes p \right)  \right)  \circ \alpha _{A,B, 1_ \otimes } \circ \rho ^{-1}_{A \otimes B} \circ h
.\]
We may pass the associator to the left using naturality and get \[
\left( f \otimes \left( id_S \otimes g \right)  \right) \circ \left( id_A \otimes \beta_{B,S} \right) \circ  \alpha _{A,B,S} \circ \left( id_{A \otimes B}  \otimes p   \right) \circ \rho ^{-1}_{A \otimes B} \circ h
.\]
Shifting our attention to the last two terms, note that the naturality of the right unitor makes the following diagram commute
\[\begin{tikzcd}
C & {C\otimes 1_\otimes} \\
{A\otimes B} & {(A\otimes B) \otimes 1_\otimes}
\arrow["{\rho^{-1}_C}", from=1-1, to=1-2]
\arrow["h"', from=1-1, to=2-1]
\arrow["{h \otimes id_{1_\otimes}}", from=1-2, to=2-2]
\arrow["{\rho^{-1}_{A\otimes B}}"', from=2-1, to=2-2]
\end{tikzcd}\]
Hence $\rho ^{-1}_{A \otimes B} \circ h = \left( h \otimes id_{1_ \otimes } \right) \circ \rho ^{-1}_C$, and we get \[
\left( f \otimes \left( id_S \otimes g \right)  \right) \circ \left( id_A \otimes \beta_{B,S} \right) \circ  \alpha _{A,B,S} \circ \left( id_{A \otimes B}  \otimes p   \right) \circ \left( h \otimes id_{1_ \otimes } \right) \circ \rho ^{-1}_C
.\]
Combining the third and second to last terms, we get the desired right hand side \[
\left( f \otimes \left( id_S \otimes g \right)  \right) \circ \left( id_A \otimes \beta_{B,S} \right) \circ  \alpha _{A,B,S} \circ (h \otimes p) \circ \rho ^{-1}_C
.\]
\end{proof}

\begin{proof}[Proof of Theorem \ref{mainthmG1}]
Proposition \ref{generate} shows that $G_1 $ generates morphisms. So we need to show that commutative Frobenius relations are sufficient and necessary.

The idea of the proof for sufficiency is simple: use lemma \ref{lemma:pullaside} repeatedly by induction over finitely many prime units, isolating and grouping them into a manageable composite, such that whatever remains is generated by non-prime morphisms.  We then invoke proposition \ref{prop:keyprop}, freely changing the remaining morphisms using only commutative Frobenius relations. 

Precisely, let $[M]$ be a morphism in $\operatorname{Cob}(3)_{S^2}$ from $n_{inc}$-many $S^2$ to $n_{out}$-many $S^2$. Pick any representative and let its unique (non-$S^2 \times S^1$) prime factors be $p_1 , p_2 , \dots p_m$. Hence $[M]$ is $G_1 $-generated by commutative Frobenius generators as well as prime units $p^\times_1, \dots , p^\times_m: \emptyset \to S^2$. We want to show that any two such compositions are related by, and only by, commutative Frobenius relations.

Without loss of generality, consider the prime unit $p^\times_1$ in either compositions. As a morphism whose domain is $\emptyset $, the compositions necessarily take the form 
\begin{align*}
[M] &= H_1 \circ \left( f_1 \otimes (p^\times_1 \otimes g_1 ) \right) \circ (id_{A_1 } \otimes \lambda^{-1}_{B_1 }) \circ h_1 \\
[M] &= H_2 \circ \left( f_2 \otimes (p^\times_1 \otimes g_2 ) \right) \circ (id_{A_2 } \otimes \lambda^{-1}_{B_2 }) \circ h_2 
\end{align*}
for some morphisms $f_{1,2} : A_{1,2} \to A_{1,2}' $, $g_{1,2}: B_{1,2} \to B_{1,2}'$, $h_{1,2}: \otimes ^{n_{inc}} S^2 \to A_{1,2} \otimes B_{1,2}$, $H_{1,2}: A_{1,2}' \otimes (S^2 \otimes B_{1,2}') \to \otimes ^{n_{out}} S^2$. These morphisms themselves are generated by commutative Frobenius generators and the remaining prime units $p^\times_2 ,\dots , [p_m\setminus D^3]$.

Focusing our attention to the first composition, lemma \ref{lemma:pullaside} allows us to use only coherences to set the first composition as \[
[M] = \left( f_1  \otimes \left( id_{S^2} \otimes g_1  \right)  \right) \circ \left( id_{A_1 } \otimes \beta_{B_1 ,S^2} \right) \circ \alpha _{A_1 ,B_1 ,S^2} \circ \left( h_1 \otimes id_{S^2} \right) \circ (id_{\otimes ^{n_{inc}} S^2} \otimes p^\times_1) \circ \rho ^{-1}_{\otimes ^{n_{inc}} S^2}
.\]

Keeping the last two terms untouched, rewrite this composition in a shorten form as \[
[M] = \overline{H}_1^1 \circ (id_{\otimes ^{n_{inc}} S^2} \otimes p^\times_1)\circ \rho ^{-1}_{\otimes ^{n_{inc}}S^2}
.\]
where $\overline{H}_1^1 = \left( f_1  \otimes \left( id_{S^2} \otimes g_1  \right)  \right) \circ \left( id_{A_1 } \otimes \beta_{B_1 ,S^2} \right) \circ \alpha _{A_1 ,B_1 ,S^2} \circ \left( h_1 \otimes id_{S^2} \right): \otimes ^{n_{inc}+1} S^2 \to \otimes ^{n_{out}}S^2 $. Note that since $\overline{H}_1 ^1$ no longer has the $p^\times_1$ factor, it contains only the remaining prime factors $p^\times_2, \dots p^\times_m$. The $1$ in the superscript of $\overline{H}_1^1$ records this fact.

We may inductively repeat this process for each of the remaining prime factors, giving a sequence of equalities \[
\overline{H}_1^i = \overline{H}_1^{i+1} \circ (id_{\otimes ^{inc+i} S^2} \otimes p^\times_{i+1})\circ \rho ^{-1}_{\otimes ^{inc + i}S^2}
.\]
where $\overline{H}_1^j$ is generated by commutative Frobenius generators and prime factors $p^\times_{j+1}, \dots p^\times_m$. In particular, at the termination of this process, $\overline{H}_1^m $ is generated by commutative Frobenius generators alone.

Therefore using purely coherence relations, the first composition can be rewritten as\footnote{We are essentially casting the underlying 3 manifold into a normal form.} \[
[M] = \overline{H}_1^m \circ (id_{\otimes ^{inc + m} S^2} \otimes p^\times_m) \circ \rho ^{-1}_{\otimes ^{n_{inc}+m} S^2} \circ \dots \circ (id_{\otimes^{n_{inc}} S^2} \otimes p^\times_1) \circ \rho ^{-1}_{\otimes ^{n_{inc}} S^2}
.\]

Similarly, using purely coherence relations, the second composition can be rewritten as \[
[M] = \overline{H}_2^m \circ (id_{\otimes ^{inc + m} S^2} \otimes p^\times_m) \circ \rho ^{-1}_{\otimes ^{n_{inc}+m} S^2} \circ \dots \circ (id_{\otimes^{n_{inc}} S^2} \otimes p^\times_1) \circ \rho ^{-1}_{\otimes ^{n_{inc}} S^2}
.\]

Since both $\overline{H}_{1,2}^m$ are generated entirely by commutative Frobenius generators, proposition \ref{prop:keyprop} shows that they are related by commutative Frobenius relations. Recalling that we have not used any non-coherence relations, we have shown that any two $G_1$ compositions of $[M]$ can be turned into each other using commutative Frobenius relations alone.

That the commutative Frobenius relations are necessary follows from theorem \ref{thm:Cob2Cob3CFIso}. 

%
\end{proof}

\section{Necessity of Legs Relations} \label{sec:legsnecessary}
This section exhibits all coherence data. See notation \ref{coherencenotations}. The reader is urged to reference the graphical summary of the proofs whenever possible.

\begin{mycom}
\textbf{(Important comment)} Before we prove that some of the prime relations of $P$ are redundant, we need to make an important observation.

There is an important geometric relation which shall play a role in the proofs. It is the relation we introduced earlier in comment \ref{criticalrelations}: \[
[p \setminus \operatorname{int}(D^3 \sqcup D^3)] = m \circ ([p \setminus \operatorname{int}(D^3)] \otimes id_{S^2}) \circ \lambda^{-1}_{S^2}= m \circ (id_{S^2} \otimes [p \setminus \operatorname{int}(D^3)] ) \circ \rho ^{-1}_{S^2}
.\]
where the second equality is the same as legs relations for $L$, and the morphism $[p\setminus \operatorname{int}(D^3)] : \emptyset \to S^2$ is interpreted as a prime unit. This equality, along with the simple $[p\setminus \operatorname{int}(D^3)] = [p\setminus \operatorname{int}(D^3 \sqcup D^3)] \circ 1$, showed that there is a bijection between the $G_1 $ generators and the $G_2 $ generators. 


It is necessary and important to stress that these two equalities do not constitute relations for either $G_1 $ or $G_2 $ presentations, as they are expressed in terms of a mix of $P$ generators (on the left hand sides) and $L$ generators (on the right hand sides). \footnote{These two equalities could be thought of as ``consistency conditions'' or ``compatibility conditions'' (across $P$ and $L$, along with the existing commutative Frobenius generators). Topology forces these conditions to be true in $\operatorname{Cob}(3)_{S^2} $.}

Note, however, if one chooses to express the right hand side $L$ generators in terms of the $P$ generators using $[p\setminus \operatorname{int}(D^3)] = [p\setminus \operatorname{int} (D^3 \sqcup D^3)] \circ 1$ 
then they do become genuine relations. The conditions read 
\begin{equation}\label{eqn:consistency}
p^{\times \times}  = m \circ \left( (p^{\times \times}  \circ 1) \otimes id_{S^2} \right) \circ \lambda^{-1}_{S^2} = m \circ \left( id_{S^2} \otimes \left( p^{\times \times}  \circ 1 \right)  \right)  \circ \rho ^{-1}_{S^2}
\end{equation}
which we will show is equivalent to legs relations for $P$.
\end{mycom}

\begin{lemma}\label{lem:consistencyislegs}
The following two terms are both equal to $p^{\times \times} $ if and only if the legs relations for $P$ holds:
\[
m \circ \left( (p^{\times \times}  \circ 1) \otimes id_{S^2} \right) \circ \lambda^{-1}_{S^2} ,\quad   m \circ \left( id_{S^2 } \otimes  (p^{\times \times}  \circ 1)  \right) \circ \rho ^{-1}_{S^2} \]
\end{lemma}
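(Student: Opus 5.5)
The argument is purely algebraic, inside the free symmetric monoidal category on the $G_2$ generators modulo the commutative Frobenius relations; no further topology is needed. I read the statement as follows: given commutative Frobenius relations (in particular the unit axiom $m\circ(1\otimes id_{S^2})\circ\lambda^{-1}_{S^2}=id_{S^2}$ and its right-handed version), the two consistency conditions in \eqref{eqn:consistency} hold if and only if the legs relation $m\circ(p^{\times\times}\otimes id_{S^2})=m\circ(id_{S^2}\otimes p^{\times\times})$ holds.

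For ``legs $\Rightarrow$ consistency'', start from $m\circ((p^{\times\times}\circ 1)\otimes id_{S^2})\circ\lambda^{-1}_{S^2}$. Use bifunctoriality to rewrite it as
\[
m\circ(p^{\times\times}\otimes id_{S^2})\circ(1\otimes id_{S^2})\circ\lambda^{-1}_{S^2}.
\]
Apply the legs relation to replace $m\circ(p^{\times\times}\otimes id_{S^2})$ by $m\circ(id_{S^2}\otimes p^{\times\times})$. Then $id_{S^2}\otimes p^{\times\times}$ commutes past $1\otimes id_{S^2}$ by bifunctoriality, and naturality of $\lambda$ gives $(id_\emptyset\otimes p^{\times\times})\circ\lambda^{-1}_{S^2}=\lambda^{-1}_{S^2}\circ p^{\times\times}$. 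The expression becomes $m\circ(1\otimes id_{S^2})\circ\lambda^{-1}_{S^2}\circ p^{\times\times}$, which equals $p^{\times\times}$ by the unit axiom of Proposition \ref{prop:CFcommFrob}. The second term is handled symmetrically: apply the legs relation in the other direction and use $\rho$ together with the right unit axiom. That the right unit axiom follows from the left one plus commutativity is standard and is the same braiding trick as in Proposition \ref{prop:LlegsfromCF}.

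For ``consistency $\Rightarrow$ legs'', substitute the first consistency expression for $p^{\times\times}$ inside $m\circ(p^{\times\times}\otimes id_{S^2})$. Write $u=p^{\times\times}\circ 1:\emptyset\to S^2$, so the left side of the legs relation is
\[
m\circ\big((m\circ(u\otimes id)\circ\lambda^{-1})\otimes id\big).
\]
Associativity turns this into $m\circ(id\otimes m)\circ(u\otimes id\otimes id)$, up to associators and unitors. Now apply Lemma \ref{lemma:pullaside}, together with commutativity and associativity of $m$, to move the leg $u$ onto the other factor, so it becomes $m\circ(id\otimes(m\circ(u\otimes id)\circ\lambda^{-1}))$. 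By the consistency condition this is $m\circ(id_{S^2}\otimes p^{\times\times})$. Alternatively, substitute the second consistency form for the right side of the legs relation and meet in the middle; both sides reduce to ``multiply three things, one of which is $u$''.

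The main obstacle is bookkeeping rather than ideas: tracking unitors, associators and the braiding carefully enough that only commutative Frobenius relations and coherence are used. In particular, we must never use legs relations for $L$ disguised as naturality, although Proposition \ref{prop:LlegsfromCF} shows this would be harmless anyway. I would first do the computation in a strictified setting, where it is a one-line string-diagram argument, and then insert the coherence isomorphisms using the Kelly triangle identity exactly as in the proof of Lemma \ref{lemma:pullaside}.
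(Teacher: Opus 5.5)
Your proof is correct. The forward direction is exactly the paper's: precompose the legs relation with $(1\otimes id_{S^2})\circ\lambda^{-1}_{S^2}$ (resp.\ $(id_{S^2}\otimes 1)\circ\rho^{-1}_{S^2}$), slide $p^{\times\times}$ past the unitor by naturality, and apply the unit axiom.

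The converse is where you take a different route.

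\textbf{Your route.} You substitute the left-handed form $p^{\times\times}=m\circ(u\otimes id_{S^2})\circ\lambda^{-1}_{S^2}$ into the left leg. In element notation this gives $(u\cdot a)\cdot b$. You then need commutativity (the $L$-legs relation of Proposition \ref{prop:LlegsfromCF}, or the pull-aside lemma plus braiding) to reach $a\cdot(u\cdot b)$.

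\textbf{The paper's route.} It substitutes the \emph{right}-handed form $m\circ(id_{S^2}\otimes u)\circ\rho^{-1}_{S^2}$ into $m\circ(p^{\times\times}\otimes id_{S^2})$, which gives $(a\cdot u)\cdot b$. One associativity move, naturality of $\alpha$, and the triangle identity $\alpha_{S^2,\emptyset,S^2}\circ(\rho^{-1}_{S^2}\otimes id_{S^2})=id_{S^2}\otimes\lambda^{-1}_{S^2}$ then land directly on $a\cdot(u\cdot b)$. The left-handed form finishes the job.

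\textbf{Trade-off.} Your argument needs only one of the two consistency conditions, since under commutativity they are interchangeable, as you observe. The paper's argument uses both conditions but involves no braiding at all. That is why the paper can remark that commutativity plays no role in the lemma, and it also avoids the braiding bookkeeping you flag as the main obstacle.

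Your ``meet in the middle'' alternative also needs commutativity. Putting the left form on the left leg and the right form on the right leg compares $(u\cdot a)\cdot b$ with $a\cdot(b\cdot u)$. The commutativity-free pairing is the opposite one.
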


\begin{proof}

Given the legs relation $m\circ \left( p^{\times \times}  \otimes id_{S^2} \right) = m \circ \left( id_{S^2} \otimes p^{\times \times} \right) $, precompose both sides with $(1 \otimes id_{S^2}) \circ \lambda^{-1}_{S^2}$, to get \[
m\circ \left( \left( p^{\times \times}  \circ 1 \right)  \otimes id_{S^2} \right) \circ \lambda^{-1}_{S^2} = m \circ \left( 1 \otimes  p^{\times \times} \right) \circ \lambda ^{-1}_{S^2}
.\]
The left hand side is in the desired form. We just need the right to be $p^{\times \times} $. Rewrite the middle term on the right hand side as  $\left(1 \otimes id_{S^2}\right) \circ \left( id_\emptyset \otimes p^{\times \times}  \right) $, and use naturality of the unitor $ \lambda ^{-1}_{S^2} \circ p^{\times \times}  = (id_\emptyset \otimes p^{\times \times} ) \circ \lambda^{-1}_{S^2}$, the right hand side becomes \[
m\circ (1 \otimes id_{S^2}) \circ \lambda^{-1}_{S^2} \circ p^{\times \times}  = id_{S^2} \circ p^{\times \times}  =p^{\times \times}  
\]
where the first equality is due to $1$ being the multiplicative unit.

Note that similarly by precomposing both sides with $(id_{S^2} \otimes 1)\circ \rho ^{-1}_{S^2}$ one can show that legs relations give \[
	p^{\times \times}  = m\circ \left( id_{S^2} \otimes \left( p^{\times \times} \circ 1 \right)  \right) \circ \rho ^{-1}_{S^2}
.\]
which does not come as a surprise-- the right hand sides are related by braiding and commutativity, or can be thought of as a legs relations for $L$ in disguise.

Now for the only if direction, which is summarized in figure \ref{fig:plegsfromconsistency}, given that 
\[
p^{\times \times}  = m\circ \left(\left( p^{\times \times}  \circ 1 \right)  \otimes id_{S^2}\right) \circ \lambda_{S^2}^{-1} = m \circ \left(id_{S^2} \otimes \left( p^{\times \times}  \circ 1 \right)  \right) \circ \rho _{S^2}^{-1}
.\]
Substitute this into $m\circ (p^{\times \times}  \otimes id_{S^2})$to get \[
m \circ (p^{\times \times}  \otimes id_{S^2}) = m\circ \left( \left(  m \circ (id_{S^2} \otimes \left( p^{\times \times}  \circ 1 \right) ) \circ \rho _{S^2}^{-1}
\right) \otimes id_{S^2}  \right) 
.\]
By bifunctoriality of $\otimes $, this is \[
=m\circ (m \otimes id_{S^2}) \circ \left( (id_{S^2} \otimes \left( p^{\times \times}  \circ 1 \right) ) \otimes id_{S^2} \right)  \circ (\rho ^{-1}_{S^2} \otimes id_{S^2})
.\]

Use the precise form of associativity $m\circ (m \otimes id_{S^2})= m\circ (id_{S^2} \otimes m) \circ \alpha_{S^2, S^2, S^2}$ on the left most two terms giving \[
=m \circ (id_{S^2} \otimes m) \circ \alpha _{S^2,S^2,S^2} \circ \left(\left(id_{S^2}\otimes \left( p^{\times \times}  \circ 1 \right) \right)\otimes id_{S^2}\right) \circ (\rho ^{-1}_{S^2} \otimes id_{S^2})
.\]

by naturality of the associator, we may pass it through the right, giving \[
=m\circ (id_{S^2} \otimes m) \circ (id_{S^2} \otimes \left(\left( p^{\times \times}  \circ 1 \right)  \otimes id_{S^2}\right)) \circ \alpha_{S^2, \emptyset, S^2} \circ (\rho ^{-1}_{S^2} \otimes id_{S^2})
.\]
which we rewrite using bifunctoriality as \[
=m\circ \left(id_{S^2} \otimes \left(m \circ \left(\left( p^{\times \times}  \circ 1 \right)  \otimes id_{S^2}\right)\right)\right) \circ \left(\alpha_{S^2, \emptyset, S^2} \circ \left(\rho ^{-1}_{S^2} \otimes id_{S^2}\right)\right)
.\]
By coherence of the unitors and the associators, $\alpha_{S^2, \emptyset, S^2} \circ (\rho ^{-1}_{S^2} \otimes id_{S^2}) = id_{S^2} \otimes \lambda_{S^2}^{-1}$. We may use bifunctoriality again to get \[
=m\circ \left( id_{S^2} \otimes \left( m \circ \left( \left( p^{\times \times}  \circ 1 \right) \otimes id_{S^2}\right) \circ \lambda_{S^2}^{-1} \right)  \right) 
.\]
henceforth we reach the desired right hand side $m\circ (id_{S^2} \otimes p^{\times \times} )$.
\begin{figure}[ht]
    \centering
\def\svgwidth{1\columnwidth} 
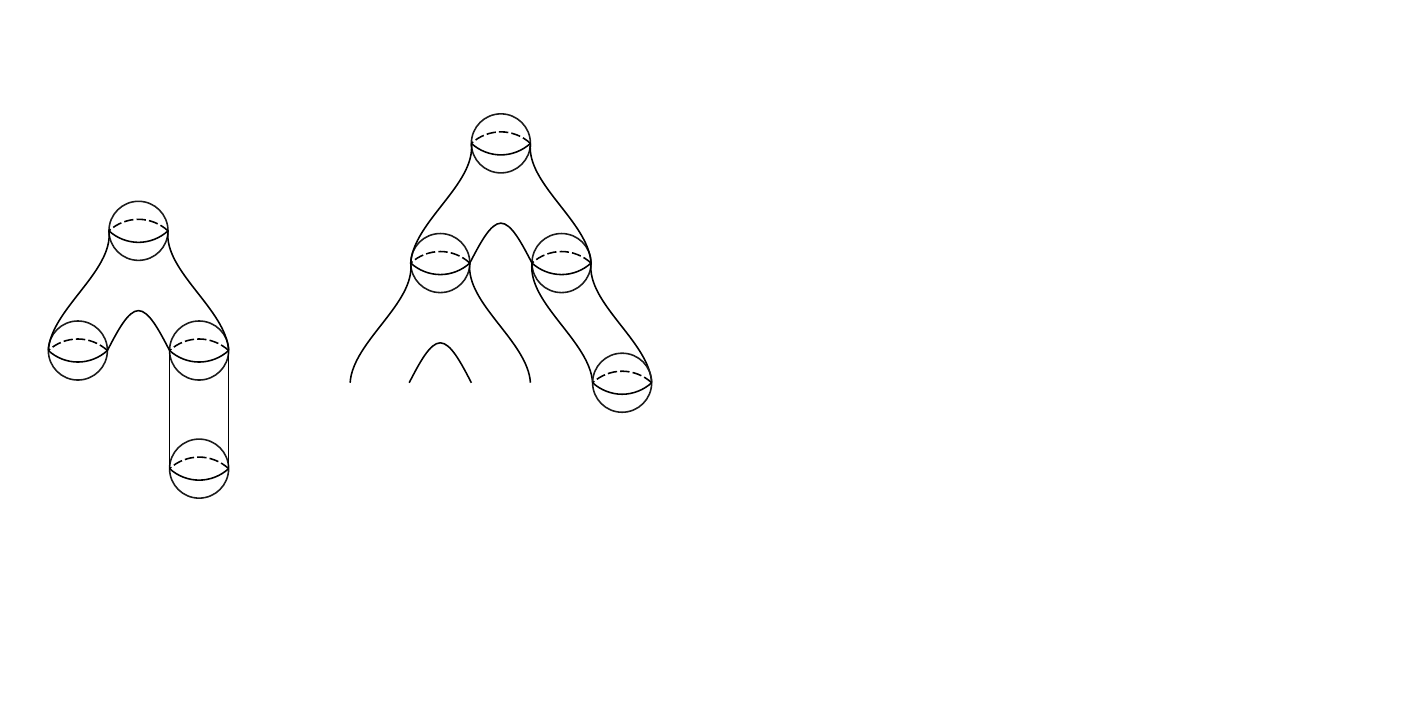

\caption[$G_{1,2}$ bijection induces $G_2$ legs relations]{The only if direction of lemma \ref{lem:consistencyislegs}.}
    \label{fig:plegsfromconsistency}
\end{figure}
\end{proof}

\begin{rmk}
Notice that the two terms in the lemma above are equal due to braiding and commutativity, the proof of which is apparent once one interprets $[p\setminus \operatorname{int}(D^3 \sqcup D^3)]\circ 1 = [p\setminus \operatorname{int}(D^3)] : \emptyset \to S^2$ as a prime unit. The proof for legs relation for $L$ (which uses commutativity) applies directly. However, we do \textit{not} need this fact for the if and only if statement to be true, so commutativity is not involved in the proof of the lemma.
\end{rmk}

\begin{mycom}
\textbf{Important comment continued} The proofs of following propositions require that the legs relations hold, or equivalently that all three terms in lemma \ref{lem:consistencyislegs} are equal. 
This means that, with the exception of legs relations and commutative Frobenius relations, all other prime relations of $G_2 $ are redundant.
\end{mycom}

\begin{notations}
Due to the repeated occurrence of the term $p^{\times \times}  \circ 1$ in the following proofs, we will adopt the short hand $x=p^{\times \times}  \circ 1$. The reason not to use the equality of composition $[p\setminus \operatorname{int}(D^3)] = [p\setminus \operatorname{int}(D^3 \sqcup D^3)] \circ 1 = p^{\times \times} \circ 1$ is that the left hand side is naturally interpreted as a generator of $G_1 $. We write $x$ to remind ourselves that $x$ is generated by $G_2$.
\end{notations}

\begin{prop} \label{waistfromCF}
Waist relations for $P$ are implied by legs relations and associativity of the commutative Frobenius relations.
\end{prop}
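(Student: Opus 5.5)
We must show $p^{\times \times}\circ m = m\circ(p^{\times \times}\otimes id_{S^2})$ using only the legs relations and associativity (plus unit and coherence data). The waist relation's second equality, $m\circ(p^{\times\times}\otimes id_{S^2}) = m\circ(id_{S^2}\otimes p^{\times\times})$, is exactly the legs relation, so only the first equality needs proof. The plan is to replace $p^{\times\times}$ by its expression through $x = p^{\times\times}\circ 1$ and then move the resulting extra multiplication with associativity.

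Concretely, by lemma \ref{lem:consistencyislegs} the legs relations give
\[
p^{\times \times} = m\circ\left(id_{S^2}\otimes x\right)\circ \rho^{-1}_{S^2}.
\]
Substituting this into $p^{\times\times}\circ m$ gives
\[
p^{\times\times}\circ m = m\circ(id_{S^2}\otimes x)\circ\rho^{-1}_{S^2}\circ m .
\]
Next I use naturality of the right unitor, $\rho^{-1}_{S^2}\circ m = (m\otimes id_\emptyset)\circ\rho^{-1}_{S^2\sqcup S^2}$, together with bifunctoriality. This rewrites the expression as
\[
m\circ(m\otimes id_{S^2})\circ\big((id_{S^2}\otimes id_{S^2})\otimes x\big)\circ\rho^{-1}_{S^2\sqcup S^2}.
\]
Applying associativity in the precise form $m\circ(m\otimes id)=m\circ(id\otimes m)\circ\alpha_{S^2,S^2,S^2}$, and then passing $\alpha$ to the right by naturality, I obtain
\[
m\circ\big(id_{S^2}\otimes (m\circ(id_{S^2}\otimes x))\big)\circ\alpha_{S^2,S^2,\emptyset}\circ\rho^{-1}_{S^2\sqcup S^2}.
\]

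The coherence identity from lemma \ref{lemma:pullaside} gives $\alpha_{S^2,S^2,\emptyset}\circ\rho^{-1}_{S^2\sqcup S^2} = id_{S^2}\otimes\rho^{-1}_{S^2}$. The inner factor therefore becomes $m\circ(id_{S^2}\otimes x)\circ\rho^{-1}_{S^2} = p^{\times\times}$, again by lemma \ref{lem:consistencyislegs}. So $p^{\times\times}\circ m = m\circ(id_{S^2}\otimes p^{\times\times})$. Applying the legs relation once more gives the remaining equality with $m\circ(p^{\times\times}\otimes id_{S^2})$.

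The main obstacle is purely bookkeeping: getting the coherence isomorphisms right. This means using the correct unitor (right, not left), so that associativity lands the $x$-factor on the correct tensor slot without invoking commutativity, and checking the triangle-type identity for $\alpha_{S^2,S^2,\emptyset}$. Commutativity is not used, only associativity, unit coherence and the legs relations, consistent with the statement. The co-waist relation follows dually, by time reversal, from the co-legs relation and coassociativity.
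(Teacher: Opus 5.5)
Your proof is correct and is essentially the paper's argument in mirror image. The paper substitutes the left-unitor form $p^{\times\times}=m\circ(x\otimes id_{S^2})\circ\lambda^{-1}_{S^2}$ and, via naturality of $\lambda$, unitor--associator coherence and associativity, lands directly on $m\circ(p^{\times\times}\otimes id_{S^2})$; your right-unitor version lands on $m\circ(id_{S^2}\otimes p^{\times\times})$ and needs one extra application of the legs relation. Your closing aside does not affect this statement, but in the paper's $G_2$ presentation the co-legs relation is not an axiom (it is deduced from co-waist in Proposition~\ref{colegsfromCF}), so co-waist is obtained from the legs and Frobenius relations (Proposition~\ref{cowaistfromCF}) rather than by dualizing your argument.
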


\begin{proof} 
We need to show that $p^{\times \times} \circ m = m\circ (p^{\times \times} \otimes id_{S^2})$ is induced by commutative Frobenius relations. This proof is graphically summarized by figure \ref{fig:waistfromcf}

First, by lemma \ref{lem:consistencyislegs}, the following relations hold  \[
p^{\times \times}  = m\circ (x \otimes id_{S^2}) \circ \lambda_{S^2}^{-1} = m \circ (id_{S^2} \otimes x) \circ \rho _{S^2}^{-1}
.\]
where $x = p^{\times \times}  \circ 1 : \emptyset \to S^2$. Substitute this for $p^{\times \times} $ on the left hand side to get \[
m\circ (x \otimes id_{S^2} ) \circ \left( \lambda_{S^2} ^{-1} \circ m \right) 
.\]

Unitors are natural isomorphisms. By the naturality of the left unitor $\lambda$, the following diagram commutes 
\[\begin{tikzcd}
A & {1_\otimes \otimes A} \\
B & {1_\otimes \otimes B}
\arrow["{\lambda_A^{-1}}", from=1-1, to=1-2]
\arrow["f", from=1-1, to=2-1]
\arrow["{id_{1_\otimes} \otimes f}", from=1-2, to=2-2]
\arrow["{\lambda_B^{-1}}", from=2-1, to=2-2]
\end{tikzcd}\]
for all $f:A\to B$ in the monoidal category. In particular when $f$ is $m: S^2 \sqcup S^2 \to S^2$, we learn that $\lambda_{S^2}^{-1} \circ m = \left( id_{\emptyset} \otimes m  \right) \circ \lambda^{-1}_{S^2 \sqcup S^2} $. Substituting this to the right most parentheses, we get \[
m \circ (x \otimes  id_{S^2}) \circ (id_{\emptyset }\otimes m) \circ \lambda^{-1}_{S^2 \sqcup S^2}
.\]

By a coherence\cite{kelly_maclanes_1964} of unitors and associators, the following diagram commutes 
\[\begin{tikzcd}
{A \otimes B} & {1_\otimes\otimes(A\otimes B)} \\
& {(1_\otimes\otimes A)\otimes B}
\arrow["{\lambda^{-1}_{A\otimes B}}", from=1-1, to=1-2]
\arrow["{\lambda^{-1}_A \otimes id_B}"', from=1-1, to=2-2]
\arrow["{\alpha_{1_\otimes, A, B}}"', from=2-2, to=1-2]
\end{tikzcd}\]
In particular for $A=B=S^2$, we learn that $\lambda^{-1}_{S^2\sqcup S^2} = \alpha_{\emptyset, S^2, S^2} \circ \lambda^{-1}_{S^2} \otimes id_{S^2}$. Substituting in to get \[
m\circ (x \otimes id_{S^2} ) \circ (id_{\emptyset } \otimes m) \circ \alpha _{\emptyset, S^2, S^2} \circ (\lambda^{-1}_{S^2} \otimes id_{S^2})
.\]
We will use bifunctoriality $(x \otimes id_{S^2}) \circ (id_{\emptyset } \otimes m) = x \otimes m = (id_{S^2} \otimes m) \circ (x \otimes (id_{S^2} \otimes  id_{S^2}))$, and pass the associator to the left using naturality to get \[
m \circ (id_{S^2} \otimes m) \circ \alpha _{S^2, S^2, S^2} \circ ((x \otimes id_{S^2}) \otimes id_{S^2}) \circ (\lambda^{-1}_{S^2} \otimes id_{S^2})
.\]
Now use associativity on the left most two terms, giving \[
m\circ (m \otimes id_{S^2}) \circ ((x \otimes id_{S^2}) \otimes id_{S^2}) \circ (\lambda^{-1}_{S^2} \otimes id_{S^2})
.\]
Combine the terms to get \[
m\circ \left( \left( m\circ \left( x \otimes id_{S^2} \right) \circ \lambda^{-1}_{S^2} \right) \otimes id_{S^2} \right) 
.\]
and henceforth we reach the desired right hand side $m\circ (p^{\times \times}  \otimes id_{S^2})$

\begin{figure}[ht]
    \centering
\def\svgwidth{1\columnwidth} 
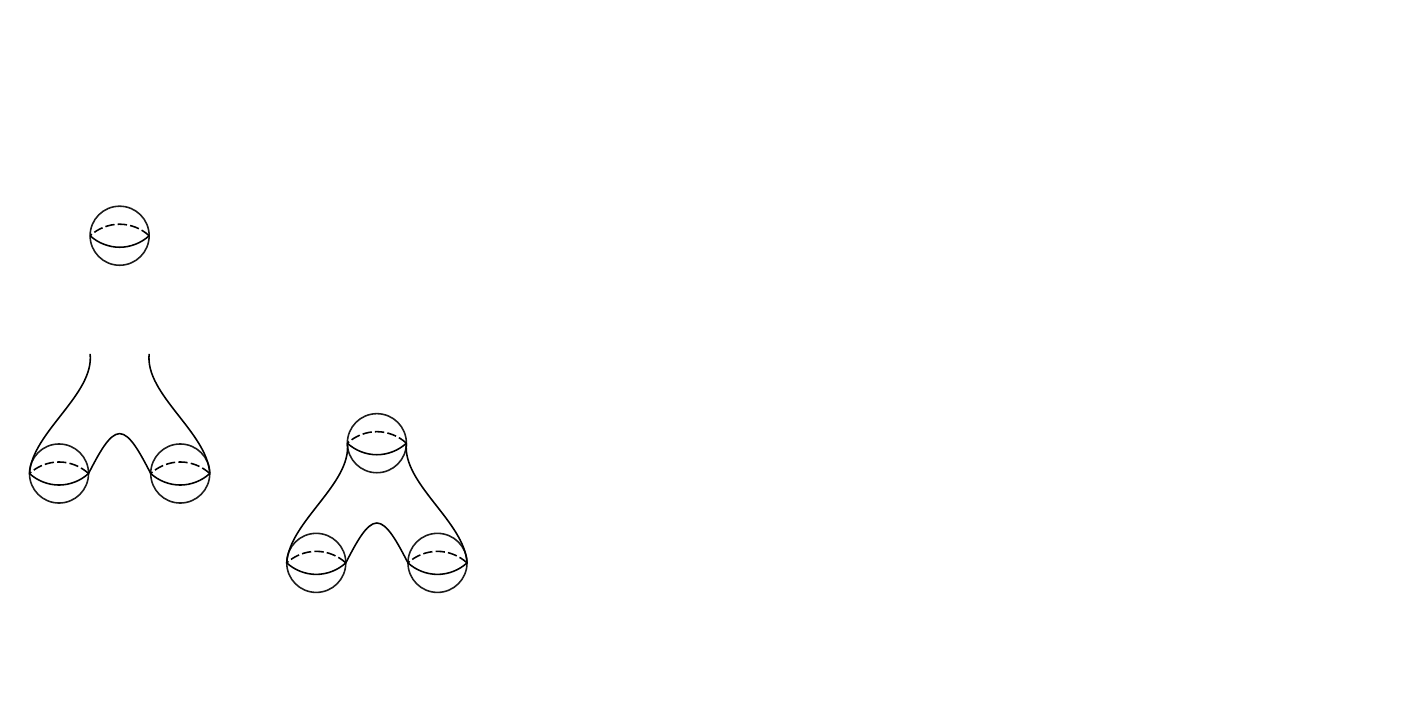

    \caption[Redundancy of $G_2$ waist relations]{Redundancy of $G_2$ waist relations.}
    \label{fig:waistfromcf}
\end{figure}

\end{proof}

\begin{prop} \label{primecommfromCF}
The prime commutativity relations are implied by legs relations and associativity and commutativity of the commutative Frobenius relations.
\end{prop}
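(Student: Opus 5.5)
We want $p_1^{\times\times}\circ p_2^{\times\times} = p_2^{\times\times}\circ p_1^{\times\times}$ using only the legs relations for $P$ and commutative Frobenius relations. The plan mirrors the proof of Proposition \ref{waistfromCF}. Write $x_i = p_i^{\times\times}\circ 1:\emptyset\to S^2$. By Lemma \ref{lem:consistencyislegs}, the legs relations give
\[
p_i^{\times\times} = m\circ(x_i\otimes id_{S^2})\circ\lambda^{-1}_{S^2}.
\]
The idea is to express both sides of prime commutativity as ``multiply by $x_1$ and $x_2$''. The problem then reduces to associativity and commutativity of $m$.

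\textbf{Step 1: rewrite the left-hand side.} Substitute the above expression for $p_2^{\times\times}$ into $p_1^{\times\times}\circ p_2^{\times\times}$. This gives $p_1^{\times\times}\circ m\circ(x_2\otimes id)\circ\lambda^{-1}$. By the already established waist relation (Proposition \ref{waistfromCF}), $p_1^{\times\times}\circ m = m\circ(p_1^{\times\times}\otimes id)$. Hence
\[
p_1^{\times\times}\circ p_2^{\times\times} = m\circ\bigl((p_1^{\times\times}\circ x_2)\otimes id\bigr)\circ\lambda^{-1}.
\]

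\textbf{Step 2: compute the unit term.} Next substitute the formula for $p_1^{\times\times}$ into $p_1^{\times\times}\circ x_2$ to get $m\circ(x_1\otimes id)\circ\lambda^{-1}\circ x_2$. By naturality of $\lambda$, this equals $m\circ(x_1\otimes x_2)\circ\lambda^{-1}_\emptyset$. We now have
\[
p_1^{\times\times}\circ p_2^{\times\times} = m\circ\Bigl(\bigl(m\circ(x_1\otimes x_2)\bigr)\otimes id\Bigr)\circ(\text{unitors}),
\]
and symmetrically
\[
p_2^{\times\times}\circ p_1^{\times\times} = m\circ\Bigl(\bigl(m\circ(x_2\otimes x_1)\bigr)\otimes id\Bigr)\circ(\text{unitors}).
\]

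\textbf{Step 3: swap the factors.} Naturality of the braiding gives $x_2\otimes x_1 = \beta_{S^2,S^2}\circ(x_1\otimes x_2)\circ\beta_{\emptyset,\emptyset}$. Since $\beta_{\emptyset,\emptyset}=id$ by coherence, commutativity $m\circ\beta = m$ yields
\[
m\circ(x_1\otimes x_2) = m\circ(x_2\otimes x_1).
\]
Substituting into Step 2 identifies the two sides.

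\textbf{Main obstacle.} If one prefers not to invoke the waist relation, the Step 1 manipulation must be redone directly with associativity, as in Proposition \ref{waistfromCF}. The main obstacle is the unitor and associator bookkeeping, particularly tracking $\lambda^{-1}_{\emptyset}=\rho^{-1}_\emptyset$ and the coherence $\alpha_{\emptyset,\emptyset,S^2}$, so that both sides end with the identical coherence morphism $\emptyset\otimes(\emptyset\otimes S^2)\leftarrow S^2$. I would handle this by fixing the normal form $m\circ(m\otimes id)\circ((x_1\otimes x_2)\otimes id)\circ c$ for a single canonical coherence isomorphism $c$, justified by Mac Lane coherence.
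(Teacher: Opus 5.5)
Your proof is correct. You end where the paper ends: both composites become multiplication by $m\circ(x_1\otimes x_2)$, and commutativity together with $\beta_{\emptyset,\emptyset}=id$ finishes the argument. The route there differs, though.

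The paper substitutes the $\lambda$-form for $p^{\times\times}$ and the $\rho$-form for $p^{\times\times\prime}$. It then applies associativity, then the $L$-legs relation $m\circ(x\otimes id_{S^2})\circ\lambda^{-1}_{S^2}=m\circ(id_{S^2}\otimes x)\circ\rho^{-1}_{S^2}$, then associativity again. This lands on $m\circ(id_{S^2}\otimes(m\circ(x\otimes x')))\circ\alpha_{S^2,\emptyset,\emptyset}\circ(\rho^{-1}_{S^2}\otimes id_\emptyset)\circ\rho^{-1}_{S^2}$.

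You instead invoke the already-established waist relation (Proposition \ref{waistfromCF}) to move $p_1^{\times\times}$ past $m$ in one step. That proposition uses only legs and associativity, so there is no circularity.

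Your route has a concrete advantage. After Step 2 the coherence isomorphisms on the two sides are literally identical: $\lambda^{-1}_\emptyset$ inside the tensor factor and $\lambda^{-1}_{S^2}$ outside. So the unitor/associator bookkeeping you flag as the main obstacle never arises, and no Mac Lane normal form is needed. You also never need the $L$-legs step.

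The paper's computation is self-contained, not relying on the waist result. It pays for this with the extra associativity move and heavier coherence tracking.
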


\begin{proof}
We need to show that $p^{\times \times}  \circ p^{\times \times \prime}  = p^{\times \times \prime} \circ p^{\times \times} $ is a consequence of associativity and commutativity. The idea of the proof is summarized in figure \ref{fig:primecommfromcf}.

First, by lemma \ref{lem:consistencyislegs}, consider the relations  \[
p^{\times \times}  = m\circ (x \otimes id_{S^2}) \circ \lambda_{S^2}^{-1} = m \circ (id_{S^2} \otimes x) \circ \rho _{S^2}^{-1}
.\]
where $x = p^{\times \times}  \circ 1 : \emptyset \to S^2$. Similarly let $x' = p^{\times \times \prime}  \circ 1 : \emptyset \to S^2$ and write \[
p^{\times \times \prime}  = m\circ (x' \otimes id_{S^2}) \circ \lambda_{S^2}^{-1} = m \circ (id_{S^2} \otimes x') \circ \rho _{S^2}^{-1}
.\]

Substitute $m\circ (x \otimes id_{S^2}) \circ \lambda_A^{-1}$ for $p^{\times \times} $ and $m\circ (id_{S^2}^{-1} \otimes x') \circ \rho _{S^2}^{-1}$ for $p^{\times \times \prime}$ to get \[
p^{\times \times} \circ p^{\times \times \prime}  = m \circ (x \otimes id_{S^2}) \circ \lambda^{-1}_{S^2} \circ m \circ (id_{S^2} \otimes x') \circ \rho ^{-1}_{S^2}
.\]

By naturality of the left unitor explained in the proof of proposition \ref{waistfromCF}, we have $\lambda^{-1}_{S^2} \circ m = (id_{\emptyset} \otimes m) \circ \lambda^{-1}_{S^2 \sqcup S^2}$. So our expression is the same as \[
m\circ (x \otimes id_{S^2}) \circ (id_{\emptyset } \otimes m) \circ \lambda^{-1}_{S^2 \sqcup S^2} \circ (id_{S^2}\otimes  x') \circ \rho ^{-1}_{S^2}
.\]
We may use naturality of $\lambda$ again to obtain $\lambda^{-1}_{S^2 \sqcup S^2} \circ (id_{S^2} \otimes x') = (id_{\emptyset } \otimes (id_{S^2} \otimes x')) \circ \lambda^{-1}_{S^2 \sqcup \emptyset}$, and use bifunctoriality of $\otimes $ to turn the above expression to \[
m\circ  (id_{S^2 } \otimes m) \circ \left( x \otimes (id_{S^2} \otimes x') \right) \circ \lambda^{-1}_{S^2 \sqcup \emptyset} \circ \rho ^{-1}_{S^2}
.\]
By associativity $m\circ (m \otimes id_{S^2}) = m\circ (id_{S^2} \otimes m) \circ \alpha_{S^2, S^2, S^2}$, we may rewrite the left most two terms and get \[
m\circ  (m \otimes id_{S^2}) \circ \alpha^{-1}_{S^2, S^2, S^2} \circ \left( x \otimes (id_{S^2} \otimes x') \right) \circ \lambda^{-1}_{S^2 \sqcup \emptyset} \circ \rho ^{-1}_{S^2}
.\]
Passing the associator $\alpha ^{-1}_{S^2, S^2, S^2}$ to the right using naturality, we get  \[
m\circ  (m \otimes id_{S^2}) \circ  \left( \left( x \otimes id_{S^2} \right)  \otimes x' \right) \circ \alpha_{\emptyset, S^2, \emptyset} \circ \lambda^{-1}_{S^2 \sqcup \emptyset} \circ \rho ^{-1}_{S^2}
.\]

Noting that a coherence condition of \cite{kelly_maclanes_1964} introduced in the proof of proposition \ref{waistfromCF}, when $B = \emptyset$ and  $A = S^2$, gives  $\alpha ^{-1}_{\emptyset, S^2, \emptyset} \circ \lambda^{-1}_{S^2 \sqcup \emptyset }  =\lambda^{-1}_{S^2} \otimes id_{\emptyset}$, we can combine the resulting terms by bifunctoriality and arrive at the expression \[
m\circ \left( m \circ (x \otimes id_{S^2}) \circ \lambda^{-1}_{S^2}  \right) \otimes (id_{S^2} \circ x' \circ id_{\emptyset }) \circ \rho^{-1}_{S^2}
.\]
Apply the leg relation for $L$, we get \[
m \circ \left( m\circ (id_{S^2} \otimes x) \circ \rho_{S^2}^{-1} \right) \otimes \left( id_{S^2} \circ x' \circ id_{\emptyset } \right) \rho ^{-1}_{S^2}
.\]
Next, we reorganize the terms by bifunctoriality in preparation of a use of associativity: \[
m\circ (m \otimes id_{S^2}) \circ \left( (id_{S^2} \otimes x) \otimes x'  \right) \circ (\rho ^{-1}_{S^2} \otimes  id_{\emptyset}) \circ \rho ^{-1}_{S^2}
.\]
Use associativity, and pass the resulting associator to the right using naturality to get \[
m\circ (id_{S^2} \otimes m) \circ \left( id_{S^2} \otimes (x \otimes x') \right) \circ \alpha_{S^2, \emptyset, \emptyset} \circ (\rho ^{-1}_{S^2} \otimes id_{\emptyset}) \circ \rho ^{-1}_{S^2}
.\]
Combine the second and third terms, we reach the expression \[
[p] \circ [p'] = m\circ \left( id_{S^2} \otimes (m \circ (x \otimes x')) \right)  \circ \alpha_{S^2, \emptyset, \emptyset} \circ  \left( \rho ^{-1}_{S^2} \otimes id_{\emptyset} \right) \circ \rho ^{-1}_{S^2}
.\]
A critical observation is that, by braiding, commutativity, and  naturality of the unitors, $m\circ (x \otimes x') = m\circ (x' \otimes x)$. So we have in fact reached an expression which is invariant under the exchange $p^{\times \times}  \leftrightarrow p^{\times \times \prime} $ and $x \leftrightarrow x'$. Therefore we have the desired equality $p^{\times \times}  \circ p^{\times \times \prime} = p^{\times \times \prime}  \circ p^{\times \times} $. Note that we explicitly used associativity in one of our steps and commutativity is the symmetric exchange.
\begin{figure}[ht]
    \centering
\def\svgwidth{1\columnwidth} 
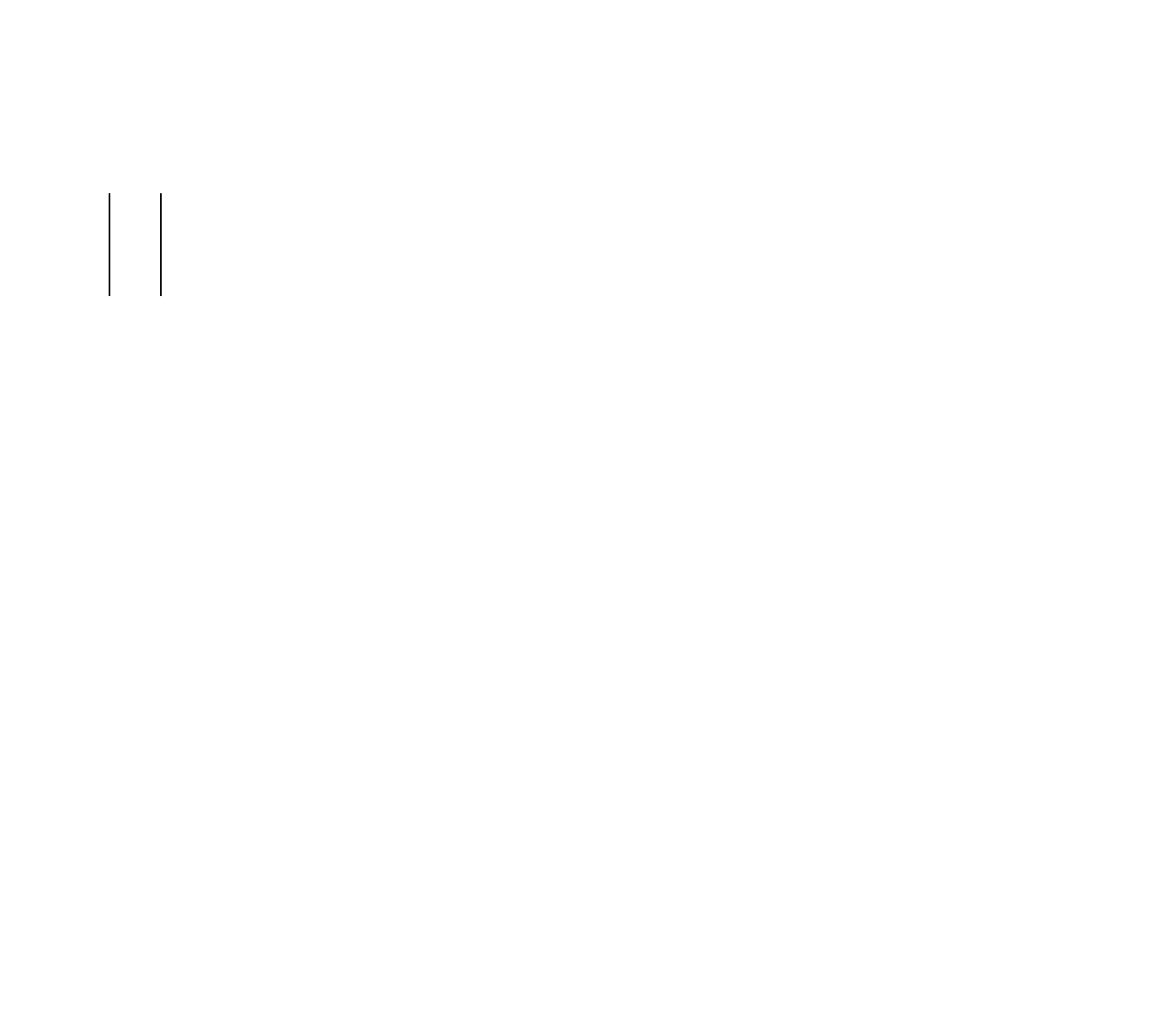

    \caption[Redundancy of prime commutativity]{Redundancy of prime commutativity. Note that the final figure is symmetric under the exchange of $p^{\times \times} _1 \circ 1 \leftrightarrow p^{\times \times} _2 \circ 1$.}
    \label{fig:primecommfromcf}
\end{figure}

\end{proof}

\begin{prop} \label{cowaistfromCF}
Cowaist relations are implied by legs relations and Frobenius.
\end{prop}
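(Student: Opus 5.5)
We must show that the co-waist relation $m^\vee \circ p^{\times \times} = (p^{\times \times} \otimes id_{S^2}) \circ m^\vee$ (and its mirror $(id_{S^2}\otimes p^{\times \times})\circ m^\vee$) follows from the legs relations together with the commutative Frobenius relations, in particular the Frobenius relation. The plan mirrors the proof of proposition \ref{waistfromCF}. First, by lemma \ref{lem:consistencyislegs} we replace $p^{\times \times}$ by $m\circ(id_{S^2}\otimes x)\circ\rho^{-1}_{S^2}$ (or the left-handed version with $x\otimes id_{S^2}$ and $\lambda^{-1}_{S^2}$), where $x = p^{\times \times}\circ 1$. Both sides of the co-waist relation then become expressions in $m$, $m^\vee$, $x$ and coherence isomorphisms. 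The left side becomes $m^\vee\circ m\circ(id_{S^2}\otimes x)\circ\rho^{-1}_{S^2}$.

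Next we apply the Frobenius relation in the form $m^\vee\circ m = (m\otimes id_{S^2})\circ\alpha^{-1}_{S^2,S^2,S^2}\circ(id_{S^2}\otimes m^\vee)$. This turns the left side into
\[
(m\otimes id_{S^2})\circ\alpha^{-1}_{S^2,S^2,S^2}\circ(id_{S^2}\otimes m^\vee)\circ(id_{S^2}\otimes x)\circ\rho^{-1}_{S^2}.
\]
Now $x$ sits on the second input, but $m^\vee$ is applied after it, which is the wrong shape. So for this side we instead use the other form of the Frobenius relation, $m^\vee\circ m = (id_{S^2}\otimes m)\circ\alpha_{S^2,S^2,S^2}\circ(m^\vee\otimes id_{S^2})$. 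With that form, $m^\vee$ acts only on the untouched $S^2$ strand and $x$ feeds the second input of the outer $m$. By bifunctoriality we slide $x$ past $m^\vee$, and we push the associator and $\rho^{-1}$ through using naturality and the Kelly coherence $\alpha_{S^2,S^2,\emptyset}\circ\rho^{-1}_{S^2\sqcup S^2}=id\otimes\rho^{-1}_{S^2}$. The expression should then collapse to $(id_{S^2}\otimes (m\circ(id_{S^2}\otimes x)\circ\rho^{-1}_{S^2}))\circ m^\vee$, which by lemma \ref{lem:consistencyislegs} is $(id_{S^2}\otimes p^{\times \times})\circ m^\vee$. The $(p^{\times \times}\otimes id_{S^2})\circ m^\vee$ version follows symmetrically, using the left-handed substitution with $\lambda^{-1}$ and the first form of the Frobenius relation. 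Alternatively it follows from co-commutativity and braiding naturality, as in proposition \ref{prop:LlegsfromCF}.

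The main obstacle is bookkeeping rather than ideas. We must choose, for each side, the version of the Frobenius relation whose comultiplication lands on the strand not carrying $x$. We must also make the unitor–associator coherence identities line up exactly, with $\emptyset$ as the monoidal unit, in the style of the computations in propositions \ref{waistfromCF} and \ref{primecommfromCF}. We would first do the computation string-diagrammatically to fix the right sequence of moves, and only then transcribe it with explicit $\alpha,\lambda,\rho$. Finally we note that the co-legs relations themselves are not needed: only the legs relations, entering through lemma \ref{lem:consistencyislegs}, and the Frobenius relation are used.
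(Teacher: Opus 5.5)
Your argument is correct and is essentially the paper's proof: substitute $p^{\times \times}$ via lemma \ref{lem:consistencyislegs}, apply the Frobenius relation, and clean up using naturality of the unitors and associator together with Kelly coherence. The paper writes out the $(p^{\times \times}\otimes id_{S^2})\circ m^\vee$ case, using the left-handed substitution and the stated form of the Frobenius relation, whereas you write out the mirror case $(id_{S^2}\otimes p^{\times \times})\circ m^\vee$; that case needs the other form of the Frobenius relation, which is fine because it follows from the stated form by commutativity, cocommutativity and braiding naturality.
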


\begin{proof}
We want to show $(p^{\times \times}  \otimes id_{S^2}) \circ m^\vee = m^\vee \circ p^{\times \times}  $ is induced by the legs relations and the Frobenius relation. This proof is summarized graphically by figure \ref{fig:cowaistfromcf}

First, by lemma \ref{lem:consistencyislegs}, consider the relations  \[
p^{\times \times}  = m\circ (x \otimes id_{S^2}) \circ \lambda_{S^2}^{-1} = m \circ (id_{S^2} \otimes x) \circ \rho _{S^2}^{-1}
.\]
where $x = p^{\times \times}  \circ 1 : \emptyset \to S^2$. Substitute the first equality into the left hand side, we get \[
\left( \left( m\circ \left( x  \otimes id_{S^2} \right) \circ \lambda^{-1}_{S^2} \right) \otimes id_{S^2}  \right) \circ m^\vee
.\]

Rewrite this in a more convenient form \[
\left( m \otimes id_{S^2} \right) \circ \left( \left(x \otimes id_{S^2} \right) \otimes id_{S^2} \right) \circ \left( \lambda^{-1}_{S^2} \otimes id_{S^2} \right) \circ m^\vee
.\]
and recall the consistency condition of \cite{kelly_maclanes_1964} that $\lambda^{-1}_{S^2} \otimes id_{S^2} = \alpha^{-1}_{\emptyset, S^2, S^2} \circ \lambda^{-1}_{S^2 \sqcup S^2}$, we can substitute it into the expression above and use naturality of the associator to pass it to the left, giving \[
\left( m \otimes id_{S^2} \right) \circ \alpha^{-1}_{S^2, S^2, S^2} \circ \left( x  \otimes id_{S^2 \sqcup S^2} \right) \circ \lambda^{-1}_{S^2 \sqcup S^2} \circ m^\vee
.\]
By naturality of the left unitor, \[
\left( m \otimes id_{S^2} \right) \circ \alpha ^{-1}_{S^2, S^2, S^2} \circ \left( id_{S^2} \otimes id_{S^2 \sqcup S^2} \right) \circ \left( x \otimes m^\vee \right) \circ \lambda^{-1}_{S^2}
.\]
Rewrite this as \[
\left( m \otimes id_{S^2} \right) \circ \alpha ^{-1}_{S^2, S^2, S^2} \circ \left( id_{S^2} \otimes m^\vee \right) \circ \left( x  \otimes id_{S^2} \right) \circ \lambda^{-1}_{S^2}
.\]
and use the Frobenius relation on the first three terms, giving \[
m^\vee \circ m \circ \left( x \otimes id_{S^2} \right) \circ \lambda^{-1}_{S^2}
.\]
which gives the desired right hand side upon using lemma \ref{lem:consistencyislegs}.

\begin{figure}[ht]
    \centering
\def\svgwidth{1\columnwidth} 
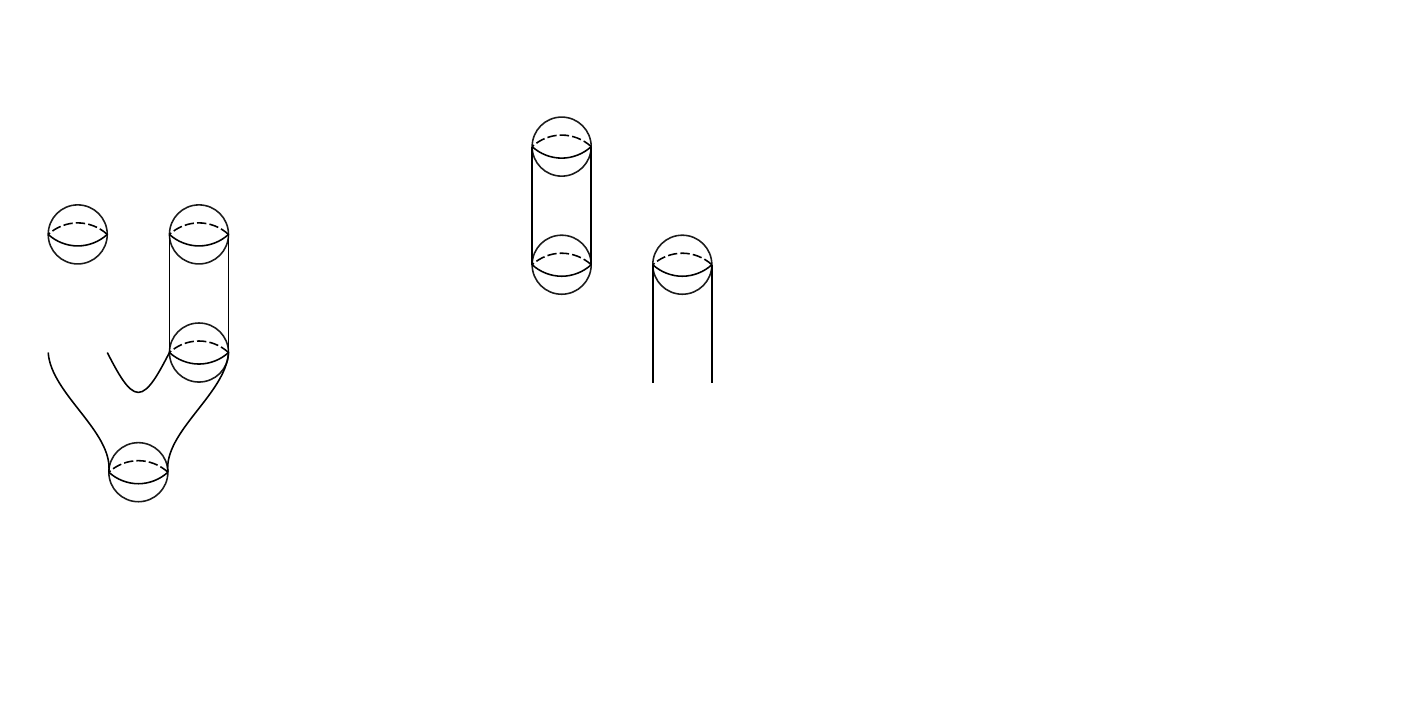

    \caption[Redundancy of $G_2$ cowaist relations]{Redundancy of cowaist relations.}
    \label{fig:cowaistfromcf}
\end{figure}
\end{proof}

\begin{rmk}
The other cowaist relation also holds by essentially the same proof \[
\left( id_{S^2} \otimes p^{\times \times}  \right) \circ m^\vee = m^\vee \circ p^{\times \times} 
.\]
therefore we have the following
\end{rmk}

\begin{prop} \label{colegsfromCF}
Colegs relations are implied by legs relations and the Frobenius relation.
\end{prop}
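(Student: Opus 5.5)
We need to show that the colegs relations
\[
(p^{\times \times}\otimes id_{S^2})\circ m^\vee = (id_{S^2}\otimes p^{\times \times})\circ m^\vee
\]
follow from the legs relations for $P$ and the Frobenius relation. The shortest route reuses what has already been shown: both sides equal $m^\vee\circ p^{\times \times}$. Proposition \ref{cowaistfromCF} gives
\[
(p^{\times \times}\otimes id_{S^2})\circ m^\vee = m^\vee\circ p^{\times \times},
\]
and the remark after it gives the mirror identity $(id_{S^2}\otimes p^{\times \times})\circ m^\vee = m^\vee\circ p^{\times \times}$. Chaining the two gives the colegs relation at once. The substantive work is therefore to write out the mirror cowaist identity carefully, since the paper only asserts it.

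For that mirror identity I would follow the proof of proposition \ref{cowaistfromCF} with left and right exchanged.

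\begin{enumerate}
\item By lemma \ref{lem:consistencyislegs} (which is where the legs relations enter), substitute $p^{\times \times} = m\circ(id_{S^2}\otimes x)\circ\rho^{-1}_{S^2}$, where $x=p^{\times \times}\circ 1$.
\item Rewrite $(id_{S^2}\otimes m)\circ(id_{S^2}\otimes(id_{S^2}\otimes x))\circ(id_{S^2}\otimes\rho^{-1}_{S^2})\circ m^\vee$ using the Kelly coherence $id\otimes\rho^{-1} = \alpha\circ\rho^{-1}_{S^2\sqcup S^2}$.
\item Use naturality of $\rho$ to move $x$ below $m^\vee$, obtaining $(id_{S^2}\otimes m)\circ\alpha_{S^2,S^2,S^2}\circ(m^\vee\otimes id_{S^2})\circ(id_{S^2}\otimes x)\circ\rho^{-1}_{S^2}$.
\item Apply the mirror form of the Frobenius relation, $(id\otimes m)\circ\alpha\circ(m^\vee\otimes id)=m^\vee\circ m$.
\item Conclude with lemma \ref{lem:consistencyislegs} to get $m^\vee\circ p^{\times \times}$.
\end{enumerate}

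The main obstacle is step 4. The paper only records the Frobenius relation in one form, $(m\otimes id)\circ(id\otimes m^\vee)=m^\vee\circ m$, so the mirror version must be derived. I would get it from the stated one together with commutativity and cocommutativity: conjugate by braidings $\beta$ and use naturality of $\beta$ and the hexagon axioms. This is standard for commutative Frobenius objects (cf.\ \cite{kock_frobenius_2003}). If bookkeeping of the associators becomes messy, there is a cleaner alternative that avoids the mirror Frobenius relation entirely. Start from $(id_{S^2}\otimes p^{\times \times})\circ m^\vee$, insert $m^\vee = \beta\circ m^\vee$ (cocommutativity), and use naturality of $\beta$ to get $\beta\circ(p^{\times \times}\otimes id_{S^2})\circ m^\vee$. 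This equals $\beta\circ m^\vee\circ p^{\times \times}$ by proposition \ref{cowaistfromCF}, and hence $m^\vee\circ p^{\times \times}$ by cocommutativity again. This route uses only cocommutativity, which is part of the commutative Frobenius relations, so the statement still holds with "Frobenius" read as the full commutative Frobenius package. I would present this braiding argument as the main proof and mention the direct mirror computation as a remark.
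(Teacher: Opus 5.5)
Your proof is correct and is essentially the paper's argument: the paper also shows that both sides of the colegs relation equal $m^\vee\circ p^{\times \times}$, using proposition \ref{cowaistfromCF} together with its mirror version, which the paper only asserts holds ``by essentially the same proof.'' Your derivation of that mirror cowaist identity from proposition \ref{cowaistfromCF} via cocommutativity and naturality of the braiding is valid and fills that gap. It also shows where the symmetric structure enters: mirroring the computation literally would need the identity $(id_{S^2}\otimes m)\circ\alpha_{S^2,S^2,S^2}\circ(m^\vee\otimes id_{S^2}) = m^\vee\circ m$, and the paper never records this form of the Frobenius relation.
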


\begin{proof}
We want to show $\left( p^{\times \times}  \otimes id_{S^2} \right) \circ m^\vee = \left( id_{S^2} \otimes p^{\times \times}  \right) \circ m^\vee$. By legs relations and Frobenius, cowaist relations make both sides equal to $m^\vee \circ p^{\times \times} $.
\end{proof}

\begin{mycom}
The last remaining question is: are the legs relations redundant as well, despite the fact that they hold in $\operatorname{Cob}(3)_{S^2}$ due to topology? To phrase the question differently, are the legs relations implied by commutative Frobenius relations alone? Then answer is negative-- the legs relations for $P$ are not a consequence of just commutative Frobenius relations. 
\end{mycom}

\begin{prop} \label{Plegsnecessary}
Legs relations for $P$ are not induced by commutative Frobenius relations.
\end{prop}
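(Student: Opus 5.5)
To show the legs relations for $P$ do not follow from the commutative Frobenius relations, I would construct a countermodel. This is a symmetric monoidal functor out of the free category on $G_2$ modulo only the commutative Frobenius relations, in which the legs relation fails. Concretely, the free symmetric monoidal category on a commutative Frobenius object $A$ together with arbitrary endomorphisms $e_p : A \to A$ maps to $\operatorname{Vect}_k$ (or $\operatorname{Set}$-valued models) by choosing any commutative Frobenius algebra $A$ and any linear maps $e_p$. No compatibility between the $e_p$ and the Frobenius structure is imposed, because the only relations are the commutative Frobenius ones. If some choice gives $m\circ(e_p\otimes id)\neq m\circ(id\otimes e_p)$, then the legs relation cannot be a consequence of the commutative Frobenius relations. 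If it were, it would hold in every such model, since functors preserve derived equalities.

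The key steps in order are as follows.

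\textbf{Step 1.} Make precise the category being modelled. This is the quotient of $F(\text{Gra}_2)$ by the congruence generated by the commutative Frobenius relations, in the sense of the appendix on presentations. A functor to $\operatorname{Vect}_k$ is then determined by a commutative Frobenius algebra $A$ and arbitrary maps $e_p$.

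\textbf{Step 2.} Pick the smallest nontrivial example: $A = k\times k$ with componentwise multiplication, unit $(1,1)$, and trace $tr(a,b)=a+b$. This is a commutative Frobenius algebra, with comultiplication determined by the nondegenerate pairing. Let $e_p$ be the swap-free projection $e_p(a,b) = (a,0)$, or simply a non-multiplication map such as $e_p(a,b)=(b,a)$.

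\textbf{Step 3.} Evaluate both sides on $u\otimes v$ with $u=(1,0)$ and $v=(1,0)$ using the swap map. The left side is $e_p(u)\cdot v = (0,1)\cdot(1,0)=0$. The right side is $u\cdot e_p(v)=(1,0)\cdot(0,1)=0$. This choice fails to distinguish the sides, so instead take $u=(1,0)$ and $v=(0,1)$. Then the left side is $(0,1)(0,1)=(0,1)$ and the right side is $(1,0)(1,0)=(1,0)$, which differ.

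\textbf{Step 4.} Conclude that no sequence of commutative Frobenius relations (plus coherence) connects the two compositions $m\circ(p^{\times\times}\otimes id_{S^2})$ and $m\circ(id_{S^2}\otimes p^{\times\times})$. This holds for each irreducible $p$ independently.

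The main obstacle is conceptual rather than computational: being careful that ``induced by commutative Frobenius relations'' is interpreted in the free quotient. It must not be taken in $\operatorname{Cob}(3)_{S^2}$ itself, where the geometric equalities of Comment~\ref{criticalrelations} (such as $p^{\times\times}=m\circ(p^{\times\times}\circ1\otimes id)\circ\lambda^{-1}$) hold topologically. Those mixed consistency equalities are not commutative Frobenius relations, and Lemma~\ref{lem:consistencyislegs} shows they are equivalent to legs. I would therefore emphasize that the model may violate them, which is exactly why it witnesses non-derivability.

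As an alternative sanity check, note that Lemma~\ref{lem:consistencyislegs} forces any model satisfying legs to have $e_p$ equal to multiplication by $e_p(1)$. The swap on $k\times k$ is not a multiplication operator, since $e_p(1)=(1,1)$ would give the identity, so this confirms that the model violates legs.
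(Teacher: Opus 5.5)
Your countermodel is correct and is essentially the paper's own proof. The paper likewise takes $\mathbb{R}^2$ with the Hadamard product and trace $a+b$, sends $p^{\times\times}$ to rotation by $\pi/2$ (your swap up to a sign), and evaluates both sides on $e_1\otimes e_2$; just discard the projection $(a,b)\mapsto(a,0)$ floated in your Step 2, since it is multiplication by $(1,0)$ and therefore does satisfy the legs relation.
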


\begin{proof}
Let $\mathbb{R}^2$ be the commutative Frobenius algebra with the Hadamard product. Namely, the multiplication map $m: \mathbb{R}^2 \otimes \mathbb{R}^2 \to \mathbb{R}^2$ is given by $(a,b) \cdot (c,d) = (ac,bd)$. The unit with respect to this multiplication is the element $(1,1)$. Let $e_1 = (1,0)$ and $e_2 = (0,1)$ be the standard basis. The comultiplication is defined by $m^\vee (e_1) = e_1 \otimes e_1 $ and $m^\vee (e_2 ) = e_2 \otimes e_2 $, then extended by linearity. Trace is given by $tr(a,b) = a+b$. One easily verifies that this is a commutative Frobenius algebra.

Let $Gra_2$ be the graph with $\left\{ \sqcup^{i\geq 0} S^2 \right\}\cong Z_{\geq 0}$ as the vertex set, and $G_2 $ and all its monoidal products as directed edges, and $F(Gra_2)$ the free symmetric monoidal category on the graph $Gra_2$. Let $F(Gra_2) /CF$ be the quotient category by the least congruence containing commutative Frobenius relations (see appendix \ref{defn:Free Category}, \ref{defn:Quotient Category} for the definitions used here).

Suppose that the commutative Frobenius algebra $(\mathbb{R}^2, \text{Hadamard})$ is $Z(S^2)$ for some symmetric monoidal functor $Z: F(Gra_2)/CF \to \text{Vect}$. Fix a connected closed irreducible 3-manifold $p$. Consider the assignment $Z(p^{\times \times} ) = $ rotation clockwise by $\pi/2$. Send the remaining prime endomorphisms to the identity. Then \[
Z(m) \circ \Big(Z(p^{\times \times} ) \otimes Z(id_{S^2})\Big) (e_1 \otimes  e_2 ) = -e_2 \cdot e_2 =-e_2 
\]
while \[
Z(m) \circ \Big(Z(id) \otimes Z(p^{\times \times} )\Big) (e_1 \otimes e_2 ) = e_1 \cdot e_1 =e_1 
\]

So it is not true that $m \circ (p^{\times \times} \otimes id_{S^2}) = m\circ (id_{S^2} \otimes p^{\times \times} )$.

Therefore the legs relations are not in the least congruence containing commutative Frobenius relations. 


%
\end{proof}

%
\begin{thm}\label{thm:trueG2thm}
$\operatorname{Cob}( 3)_{S^2} $ has a presentation with $G_2$ generators under commutative Frobenius relations and legs relations. These relations are necessary.
\end{thm}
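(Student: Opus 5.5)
The plan is to deduce the statement from Theorem \ref{mainthmG2} by showing that its list of relations collapses to commutative Frobenius relations plus legs relations. Theorem \ref{mainthmG2} already says that $G_2$, with commutative Frobenius relations, prime commutativity, legs and waist relations, and co-legs and co-waist relations, is a presentation of $\operatorname{Cob}(3)_{S^2}$. It therefore suffices to show two things. First, the least congruence on $F(\mathrm{Gra}_2)$ generated by commutative Frobenius relations and legs relations already contains every relation on that list, so the two quotient categories coincide. Second, neither family in the smaller set can be dropped.

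For sufficiency I will go through the list one item at a time.
\begin{enumerate}
\item The waist relation $p^{\times\times}\circ m = m\circ(p^{\times\times}\otimes id_{S^2})$ follows from Proposition \ref{waistfromCF}, which uses legs relations and associativity.
\item The second waist relation, $p^{\times\times}\circ m = m\circ(id_{S^2}\otimes p^{\times\times})$, then follows by composing with the legs relation.
\item Prime commutativity is Proposition \ref{primecommfromCF}.
\item The co-waist relations come from Proposition \ref{cowaistfromCF} and the remark after it.
\item The co-legs relations are Proposition \ref{colegsfromCF}.
\end{enumerate}
Each of these proofs starts from Lemma \ref{lem:consistencyislegs}, which turns the legs relations into the consistency identities (\ref{eqn:consistency}). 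After that they use only coherence data and commutative Frobenius relations, so every derived relation lies in the congruence generated by CF and legs. One point needs checking. The proof of Proposition \ref{primecommfromCF} invokes the ``legs relation for $L$'' applied to $x = p^{\times\times}\circ 1$. I must confirm that this step is valid inside the $G_2$ congruence. It is: the argument of Proposition \ref{prop:LlegsfromCF} uses only naturality of the braiding and commutativity of $m$, and works for any morphism $\emptyset\to S^2$, in particular for $x$. Combined with Theorem \ref{mainthmG2}, this shows that every equality of morphisms in $\operatorname{Cob}(3)_{S^2}$ between $G_2$-words is induced by CF and legs relations, which gives the presentation.

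For necessity, Proposition \ref{Plegsnecessary} exhibits a symmetric monoidal functor out of $F(\mathrm{Gra}_2)/CF$ that violates the legs relation, so the legs relations cannot be removed. The commutative Frobenius relations cannot be removed either. By Theorem \ref{thm:Cob2Cob3CFIso} and Corollary \ref{cor:CFonlycommFrob}, the CF-generated part is exactly $\operatorname{Cob}(2)$, whose presentation requires all the commutative Frobenius relations. Legs relations always involve some $p^{\times\times}$, so they cannot produce a relation among pure CF words. To make this rigorous I would use a functor that sends every $p^{\times\times}$ to the identity (or to multiplication by a fixed element, which satisfies the legs relations). Then a violation of a commutative Frobenius relation in a non-Frobenius target shows that the legs relations do not imply it.

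I expect the main obstacle to be this last independence argument, together with checking that the derivations in Propositions \ref{waistfromCF}--\ref{colegsfromCF} never tacitly use a relation outside the CF+legs congruence. The second issue is mostly bookkeeping. The first one I will handle with the functor just described: send each $p^{\times\times}$ to the identity on a chosen monoidal structure that satisfies all but one targeted CF axiom. For an axiom such as the Frobenius relation, where exhibiting such a structure is awkward, I will instead argue through the $\operatorname{Cob}(2)$ isomorphism: a congruence generated by a proper subset of the relations would yield a strictly larger quotient of the CF part.
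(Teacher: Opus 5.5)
Your plan is the paper's proof. Sufficiency comes from Theorem~\ref{mainthmG2} combined with Propositions~\ref{waistfromCF}--\ref{colegsfromCF}. Necessity comes from Proposition~\ref{Plegsnecessary} for the legs relations and from Theorem~\ref{thm:Cob2Cob3CFIso} for the commutative Frobenius relations. Your two additional checks are correct and fill in steps the paper leaves implicit: the ``legs relation for $L$'' applied to $x=p^{\times\times}\circ 1$ in Proposition~\ref{primecommfromCF} is just braiding naturality plus commutativity, as in Proposition~\ref{prop:LlegsfromCF}; and sending every $p^{\times\times}$ to the identity makes the legs relations trivial, so they cannot help derive a CF relation.

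One caution applies equally to the paper. The axiom-by-axiom independence you aim for in your last paragraph is only assumed for $\operatorname{Cob}(2)$; your fallback sentence restates it rather than proving it. It also cannot hold for every item of the usual list, since coassociativity, for instance, is derivable from the remaining commutative Frobenius axioms. Necessity of the commutative Frobenius relations should therefore be read at the level of the whole family, and there your identity-substitution argument already suffices.
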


\begin{proof}
By theorem \ref{mainthmG2}, $G_2 $ generates homs of $\operatorname{Cob}(3)_{S^2}$, and commutative Frobenius relations, co/legs and co/waist relations, and prime commutativity relations are sufficient. By propositions \ref{waistfromCF} - \ref{cowaistfromCF}, co/waist relations, prime commutativity, and colegs relations are induced by commutative Frobenius relations and legs relations, therefore they are not necessary. Theorem \ref{thm:Cob2Cob3CFIso} shows that commutative Frobenius relations are necessary, and proposition \ref{Plegsnecessary} shows that the legs relations are also necessary. Therefore the commutative Frobenius relations and legs relations are sufficient and necessary.
\end{proof}

\newpage
\textbf{Sumary Page}

Define the following morphisms of $\operatorname{Cob}( 3)_{S^2} $ :
\begin{align*}
id_{S^2} &= \left[ S^2 \times I \right]  \\
m &= \left[ S^3 \setminus \operatorname{int}(\sqcup_{}^3 D^3) \right]  : S^2 \sqcup S^2 \to S^2 \\
1 &= \left[ D^3  \right] : \emptyset \to S^2\\
m^\vee &= \left[ S^3 \setminus \operatorname{int}(\sqcup_{}^3 D^3)  \right] : S^2  \to S^2\sqcup S^2 \\
\tr &= \left[ D^3 \right]  :S^2 \to \emptyset
\end{align*}

$\operatorname{Cob}( 3)_{S^2}$ has two sets of generators:
\begin{align*}
G_1& = C \cup F \cup L \cup \text{orientation reversal}\\
G_2 &= C \cup F \cup P \cup \text{orientation reversal}
\end{align*}
where 
\begin{align*}
C &= \left\{ 1, m \right\} \\
F &= \left\{ tr, m^\vee \right\} \\
P &= \left\{ p^{\times \times} = \Big[ p\setminus \operatorname{int}(D^3 \sqcup D^3) \Big] :S^2 \to S^2 \mid p\text{ oriented irreducible} \right\} \\
L &= \left\{ p^\times = \Big[ p \setminus \operatorname{int}(D^3) \Big] : \emptyset \to S^2 \mid p \text{ oriented irreducible} \right\}  
\end{align*}

The $G_2 $ presentation of $\operatorname{Cob}( 3)_{S^2} $ satisfy the following complete list of relations: \[
\begin{cases}
\begin{aligned}
&m \circ (1 \otimes id_{S^2}) = id_{S^2}, &\text{unit}\\
&(tr \otimes id_{S^2}) \circ m^\vee = id_{S^2}, &\text{counit}\\
&m\circ (m \otimes id_{S^2}) = m\circ (id_{S^2} \otimes m), &\text{associativity}\\
&(m^\vee \otimes id_{S^2}) \circ m^\vee = (id_{S^2} \otimes m^\vee) \circ m^\vee, &\text{coassociativity} \\
&m^\vee \circ m = (m \otimes id_{S^2}) \circ \alpha^{-1}_{S^2, S^2, S^2} \circ (id_{S^2} \otimes m^\vee), &\text{Frobenius relation}\\
&[p] \circ [p'] = [p'] \circ [p], &\text{prime commutativity*}\\
&m\circ ([p]\otimes id_{S^2}) = m \circ (id_{S^2} \otimes [p]), &\text{legs relations}\\
&[p] \circ m = m \circ ([p] \otimes id_{S^2}), &\text{waist relations*}\\
&([p] \otimes id_{S^2}) \circ m^\vee = (id_S^2 \otimes [p])\circ m^\vee, &\text{colegs relations*}\\
&m^\vee \circ [p] = ([p] \otimes id_{S^2}) \circ m^\vee ,& \text{cowaist relations*}
\end{aligned}
\end{cases}
\]
The relations marked * are implied by the non-* relations.

The $G_1 $ presentation of $\operatorname{Cob}( 3)_{S^2} $ satisfy the following complete list of relations: \[
\begin{cases}
\begin{aligned}
&m \circ (1 \otimes id_{S^2}) = id_{S^2}, &\text{unit}\\
&(tr \otimes id_{S^2}) \circ m^\vee = id_{S^2}, &\text{counit}\\
&m\circ (m \otimes id_{S^2}) = m\circ (id_{S^2} \otimes m), &\text{associativity}\\
&(m^\vee \otimes id_{S^2}) \circ m^\vee = (id_{S^2} \otimes m^\vee) \circ m^\vee, &\text{coassociativity} \\
&m^\vee \circ m = (m \otimes id_{S^2}) \circ \alpha _{S^2, S^2, S^2} \circ (id_{S^2} \otimes m^\vee), &\text{Frobenius relation}\\
&m\circ ([p]\otimes id_{S^2}) = m \circ (id_{S^2} \otimes [p]), &\text{legs relations*}
\end{aligned}
\end{cases}
\]
The relation marked * is implied by the non-* relations.

\newpage
\chapter{Passing through TFTs} \label{ch:throughZ}
\section{P and L Monoids} \label{sec:PLmonoids}
Inspired by the list of relations which define the $G_1$ and $G_2$ presentations of $\operatorname{Cob}(3)_{S^2}$, we define the following monoids in symmetric monoidal categories, in anticipation that they will come from a 3d TFT.

\begin{defn} \label{defn:Sketch, P/L monoids}
(Sketch, P/L monoids) Let $(A, 1, m, tr, m^\vee)$ be a commutative Frobenius monoid in a symmetric monoidal category $(\mathcal{C}, \otimes , 1_{\otimes })$. We say that $A$ is a P-monoid if there exist endomorphisms $e_p \in \operatorname{End}_{\mathcal{C}}(A)   $ where $p$ is a diffeomorphism class of oriented irreducible prime 3 manifolds, satisfying 
\begin{enumerate}
\item (Prime commutativity) $e_p \circ e_{p'}=e_{p'} \circ  e_p $ 
\item (Legs and waist relations) $e_p \circ m = m \circ (e_p \otimes id_A) = m\circ (id_A \otimes e_p)$
\item (Colegs and cowaist relations) $m^\vee \circ e_p = (e_p \otimes id_A) \circ m^\vee = (id_A \otimes e_p) \circ m^\vee$.
\end{enumerate}

We say that $A$ is a $L$-monoid if there exist morphisms $1_p \in \operatorname{Hom}_{\mathcal{C}}(1_ \otimes , A) $ where $p$ is a diffeomorphism class of irreducible oriented prime 3 manifolds, satisfying 
\begin{enumerate}
\item (Legs relations) $m \circ (1_p \otimes id_A) = m\circ (id_A \otimes 1_p)$.
\end{enumerate}
\end{defn}

The sketch definition has an immediate consequence:

\begin{lemma} \label{lemma:PLbijection}
There is a bijection of $P$ monoids and $L$ monoids.
\end{lemma}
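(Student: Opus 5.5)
The plan is to write down explicit maps in both directions, mirroring comment \ref{criticalrelations}, and to check that they compose to the identity on both sides. Fix a commutative Frobenius monoid $(A,1,m,tr,m^\vee)$ in $\mathcal{C}$. Given a P-monoid structure $\{e_p\}$, define
\[
\Phi(\{e_p\}) = \{1_p := e_p \circ 1\}.
\]
Given an L-monoid structure $\{1_p\}$, define
\[
\Psi(\{1_p\}) = \{e_p := m \circ (1_p \otimes id_A)\circ \lambda_A^{-1}\}.
\]
We have to show four things: each map lands in the right kind of structure, and the two composites are identities.

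\textbf{Well-definedness of $\Phi$.} The legs relation $m\circ(1_p\otimes id_A) = m\circ(id_A\otimes 1_p)$ should be read with the unitors inserted, as in proposition \ref{prop:Llegs}. For $1_p = e_p\circ 1$ it is exactly the first half of lemma \ref{lem:consistencyislegs}, transplanted verbatim from $\operatorname{Cob}(3)_{S^2}$ to $\mathcal{C}$. That argument used only the P-legs relation, naturality of the unitors and the unit axiom. Alternatively, proposition \ref{prop:LlegsfromCF} says the L-legs relation already follows from commutativity.

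\textbf{Well-definedness of $\Psi$.} We must verify prime commutativity, the (co)legs relations and the (co)waist relations for $e_p := m\circ(1_p\otimes id)\circ\lambda^{-1}$. First, the P-legs relation follows from the ``only if'' direction of lemma \ref{lem:consistencyislegs}, where $x$ is replaced by $1_p$. That computation used only associativity, coherence, and the identity $e_p = m\circ(x\otimes id)\lambda^{-1} = m\circ(id\otimes x)\rho^{-1}$. Here this identity holds because of the L-legs relation. With the P-legs relation in hand, propositions \ref{waistfromCF}, \ref{primecommfromCF}, \ref{cowaistfromCF} and \ref{colegsfromCF} give the remaining relations. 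Their proofs are purely formal manipulations in a symmetric monoidal category, using only the commutative Frobenius axioms and the consistency identity (\ref{eqn:consistency}).

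\textbf{$\Phi\circ\Psi = id$.} We compute
\[
\Psi(1_p)\circ 1 = m\circ(1_p\otimes id)\circ\lambda^{-1}\circ 1 .
\]
Naturality of $\lambda$ rewrites this as $m\circ(1_p\otimes 1)\circ \lambda_{1_\otimes}^{-1}$. Then, using $\lambda_{1_\otimes}=\rho_{1_\otimes}$, commutativity and the unit axiom, it reduces to $1_p$.

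\textbf{$\Psi\circ\Phi = id$.} We need $m\circ((e_p\circ 1)\otimes id)\circ\lambda^{-1} = e_p$, which is precisely the ``if'' direction of lemma \ref{lem:consistencyislegs} applied in $\mathcal{C}$.

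The main obstacle is making sure the cited Cob-level computations really are valid in an arbitrary symmetric monoidal category, and that the P-monoid axioms as stated (legs and waist) give what lemma \ref{lem:consistencyislegs} needs. One subtlety is that definition \ref{defn:Sketch, P/L monoids} lists the waist relations as axioms. This is harmless for $\Phi$, and for $\Psi$ we derive them, so the bijection goes through. I would also remark that both constructions respect morphisms of Frobenius monoids commuting with the extra data, so the bijection upgrades to an isomorphism of categories.
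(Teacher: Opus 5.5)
Your proposal is correct and uses the same two constructions as the paper, $1_p = e_p\circ 1$ and $e_p = m\circ(1_p\otimes id_A)\circ\lambda_A^{-1}$. The paper only asserts that the checks are easy, whereas you carry them out by transplanting the purely formal arguments of Lemma \ref{lem:consistencyislegs} and Propositions \ref{waistfromCF}--\ref{colegsfromCF} to $\mathcal{C}$, and you also verify that the two assignments are mutually inverse, which the paper leaves implicit.
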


\begin{proof}
Let $(A,1,m,tr, m^\vee)$ be a $P$ monoid, with $e_p$ a prime endomorphism. We may define the corresponding prime unit as $1_p = e_p \circ 1$.  

Similarly if $A$ is an $L$ monoid with $1_p$ a prime unit, we may define the corresponding prime endomorphism $e_p$ as $e_p = m \circ (1_p \otimes id_A) = m \circ (id_A \otimes 1_p)$.

It is easy to check that the proposed prime units and prime endomorphisms satisfy the desired conditions.
\end{proof}

\begin{rmk}
By essentially the same proofs of propositions \ref{waistfromCF} - \ref{colegsfromCF}, not all of the conditions above are necessary. Proposition \ref{prop:LlegsfromCF} shows that the legs relations for $A$ to be an $L$-monoid is induced by commutativity of $A$ being a commutative Frobenius algebra. Hence all commutative Frobenius monoids equipped with morphisms $1_p \in \operatorname{Hom}_{\mathcal{C}}(1_ \otimes , A) $, where $p$ is a diffeomorphism class of irreducible oriented prime 3 manifold, is an $L$-monoid. Similarly, proposition \ref{primecommfromCF}, \ref{waistfromCF}, \ref{colegsfromCF}, and \ref{cowaistfromCF} shows that prime commutativity, waist relations, colegs and cowaist relations are implied by commutative Frobenius relations and legs relations. Therefore any commutative Frobenius monoid equipped with prime endomorphisms $e_p \in \operatorname{Hom}_{\mathcal{C}}(A) $ satisfying the legs relations $m\circ (e_p \otimes id_A) = m\circ (id_A \otimes e_p)$ is a $P$-monoid.
\end{rmk}

\begin{mycom}
In light of lemma \ref{lemma:PLbijection} and the previous remark, one may wonder why there is a slight asymmetry between $P$ monoids and $L$ monoids. Both are commutative Frobenius algebras, both come equipped with maps labelled by diffeomorphism classes of oriented prime 3 manifolds, yet $P$ monoids require that the maps satisfy some relations while $L$ monoids don't. This is because for any commutative Frobenius object $X$ there is a canonical injection \[
\operatorname{Hom}_{\mathcal{C}}(1_{\otimes }, X)  \hookrightarrow \operatorname{End}_{\mathcal{C}}(X) 
.\]
sending $f: 1_ \otimes \to X$ to $m \circ (f \otimes id_X) \circ \lambda^{-1}_X : X \cong 1_{\otimes } \otimes X \to X \otimes X \to X $.

Hence, heuristically speaking, one may find that there are more endomorphisms. The legs relations say that only those induced by a prime unit are prime endomorphisms in a $P$-monoid. This is best illustrated in the following two examples.
\end{mycom}

\begin{prop}
If $(\mathcal{C}, \otimes , 1_ \otimes )$ is the category $(\textbf{Sets}, \times, \left\{ * \right\} )$, then a prime endomorphism $e_p$ of a $P$ monoid of $\mathcal{C}$ acts by multiplication by the element $1_p(*)=e_p(1_A) $, where $1_p$ is the prime unit of the corresponding $L$ monoid.
\end{prop}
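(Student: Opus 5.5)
We evaluate the legs relation of Definition \ref{defn:Sketch, P/L monoids} on elements, using the unit axiom of the commutative Frobenius monoid $A$. In $(\textbf{Sets}, \times, \{*\})$ a morphism $1_\otimes \to A$ is the same thing as an element of $A$. The monoid structure gives a multiplication $m(a,b)=a\cdot b$ and a unit element $1_A := 1(*)$ with $m(1_A,a)=a=m(a,1_A)$. Here $m(a,1_A)=a$ follows from the left unit law together with commutativity $m = m\circ\beta_{A,A}$. The unitors $\lambda_A,\rho_A$ are the canonical bijections $\{*\}\times A \cong A$ and $A\times\{*\}\cong A$.

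\textbf{Step 1.} Take the legs relation $m\circ(e_p\otimes id_A) = m\circ(id_A\otimes e_p)$ and evaluate both sides on the pair $(a,1_A)\in A\times A$. The left side is $e_p(a)\cdot 1_A = e_p(a)$. The right side is $a\cdot e_p(1_A)$. Hence $e_p(a) = a\cdot e_p(1_A)$ for every $a\in A$, so $e_p$ acts by multiplication by $e_p(1_A)$. By commutativity this is equivalently $e_p(1_A)\cdot a$.

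\textbf{Step 2.} Identify $e_p(1_A)$ with the prime unit. Lemma \ref{lemma:PLbijection} defines the corresponding prime unit by $1_p = e_p\circ 1$, so $1_p(*) = e_p(1(*)) = e_p(1_A)$, which is the claimed formula. As a consistency check, the inverse direction of the bijection defines $e_p := m\circ(1_p\otimes id_A)\circ\lambda_A^{-1}$, which sends $a\mapsto 1_p(*)\cdot a$. This agrees with Step 1, so the two directions of Lemma \ref{lemma:PLbijection} are mutually inverse in this setting.

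\textbf{Main obstacle.} There is no real difficulty here. The only care needed is in unpacking the unitors and the morphism $1:\{*\}\to A$ as elements, and in noticing that the legs relation alone suffices; the waist and colegs relations are not needed. One point to flag is the choice of test pair: it must be $(a,1_A)$ rather than $(1_A,a)$. The latter only gives the tautology $e_p(1_A)\cdot a = e_p(a)$ read in the other direction. Either choice works once the unit law on the appropriate side is invoked.
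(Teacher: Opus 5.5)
Your argument is correct and is essentially the paper's proof: evaluate the legs relation at $(a,1_A)$, use the unit law to get $e_p(a)=a\cdot e_p(1_A)$, then identify $e_p(1_A)=e_p(1(*))=1_p(*)$ via Lemma \ref{lemma:PLbijection}. The only differences are that you drop the paper's redundant appeal to the waist relation in $e_p(a)=e_p(a\cdot 1_A)=e_p(a)\cdot 1_A$, which the unit law alone supplies, and that your closing remark is mistaken: evaluating the legs relation at $(1_A,a)$ is not a tautology, since it gives $e_p(1_A)\cdot a=e_p(a)$ directly, which is the claim itself and uses only the left unit law.
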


\begin{proof}
Let $A$ be a $P$ monoid and $e_p:A \to A$ be a prime endomorphism. Then it satisfy the legs and waist relation $ e_p \circ m = m\circ (e_p \otimes id_A) = m\circ (id_A \otimes e_p) : A \otimes A \to A$. Meaning that \[
e_p(a \cdot b) = e_p(a) \cdot b = a\cdot e_p(b)
.\]
Let $1_A \in A$ be the multiplicative unit with respect to $m$. Then \[
e_p(a) = e_p(a \cdot 1_A) = e_p(a) \cdot 1_A = a \cdot e_p(1_A)
.\]
for all $a\in A$. Therefore $e_p$ acts by multiplication by the element $e_p(1_A)$. 

Since $1_A\in A$ is the image of the unit map $1: \left\{ * \right\} \to A$, we may identify $e_p(1_A)=e_p(1(*)) $ with the composition $e_p \circ 1$. By lemma \ref{lemma:PLbijection}, the composition $1_p = e_p \circ 1$ is the prime unit in the corresponding $L$ monoid. 
\end{proof}

\begin{rmk}
The same proof establishes that in the category $(\text{Veck}_k, \otimes , k)$ a prime endomorphism $e_p$ acts by multiplication by the element $1_p: k \to A$, or equivalently $e_p \circ 1$ where $1: k \to A$ is the unit. Note that there are more linear endomorphisms of a vector space than linear maps of the form  $k\to A$. If $A$ is finite dimensional, the former is given by $n\times n$ matrices while the latter by $n\times 1$. Hence for there to be a bijection between $P$ and $L$ monoids (algebras), it is necessary to impose the legs relations on the prime endomorphisms to rule out the endomorphisms not induced by prime units.
\end{rmk}

We are now finally ready to give the most compact definition for $P$ and $L$ monoids.

\begin{defn} \label{defn:P/L monoids}
(P/L monoids) Let $(A, 1, m, tr, m^\vee)$ be a commutative Frobenius monoid in a symmetric monoidal category $(\mathcal{C}, \otimes , 1_{\otimes })$. Then the following are equivalent,

\begin{enumerate}
\item There exist morphisms $1_p \in \operatorname{Hom}_{\mathcal{C}}(1_ \otimes , A) $, where $p$ is a diffeomorphism class of oriented irreducible prime 3 manifolds. We say $A$ is a $L$-monoid.
\item There exist endomorphisms $e_p \in \operatorname{End}_{}(A) $ that satisfies $m\circ (e_p \otimes id_A) = m\circ (id_A \otimes e_p)$, where $p$ is a diffeomorphism class of oriented irreducible prime 3 manifolds. We say $A$ is a $P$-monoid.
\end{enumerate}
The equivalence is exhibited by $1_p = e_p \circ 1$ and $e_p = m \circ (1_p \otimes id_A) = m \circ (id_A \otimes 1_p)$.
\end{defn}

Restricting ourselves to the setting of $(\text{Vect}_k, \otimes , k)$, we can say a bit more about the prime units and prime endomorphisms:

\begin{defn} \label{defn:P/L algebras}
(P/L algebras) Let $(A,1,m,tr,m^\vee)$ be a commutative Frobenius algebra over $k$. We say $A$ is a $L$ algebra if there are elements $\left\{ 1_p \in A \right\} $ for all $p$ diffeomorphism classes of oriented irreducible prime 3 manifold. Equivalently, we say $A$ is a $P$ algebra if there are linear endomorphisms $\left\{ e_p \right\} $ that acts by multiplications by the element $1_p$. The equivalence is exhibited by $1_p = e_p(1)$ and $e_p = m \circ (1_p \otimes id_A) = m\circ (id_A \otimes e_p)$.
\end{defn}

\begin{thm} \label{thm:Z(S2)isPLmonoid}
Let $Z: \operatorname{Cob}(3)_{S^2} \to \mathcal{C}$ be a symmetric monoidal functor. Then $Z(S^2) $ is a $P/L$- monoid. In the case $\mathcal{C} = \operatorname{Vect}_k$, $Z(S^2)$ is a $P/L$-algebra.
\end{thm}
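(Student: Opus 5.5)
We push the algebraic structure of $S^2$ in $\operatorname{Cob}(3)_{S^2}$ forward along $Z$. Set $A = Z(S^2)$ and define the structure maps as the images of the generators:
\[
1_A = Z(1),\quad m_A = Z(m),\quad tr_A = Z(tr),\quad m^\vee_A = Z(m^\vee),\quad 1_p = Z(p^\times),\quad e_p = Z(p^{\times\times}).
\]
We must transport the coherence data carefully. Since $Z$ is symmetric monoidal (strong), it comes with structure isomorphisms $Z(X)\otimes Z(Y)\cong Z(X\sqcup Y)$ and $1_\otimes \cong Z(\emptyset)$. We conjugate by these so that, for example, $m_A : A\otimes A \to A$ and $1_p : 1_\otimes \to A$ are genuinely morphisms with the correct sources. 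The compatibility of $Z$ with associators, unitors and braidings then guarantees that every equation in $\operatorname{Cob}(3)_{S^2}$ involving $\alpha,\lambda,\rho,\beta$ maps to the corresponding equation in $\mathcal{C}$.

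\textbf{Step 1 (commutative Frobenius).} By Proposition \ref{prop:CFcommFrob}, together with co/commutativity, the generators $m,1,m^\vee,tr$ satisfy the unit, counit, associativity, coassociativity, Frobenius and commutativity relations. These hold either in the precise coherent form recorded on the summary page or, equivalently, through Theorem \ref{thm:Cob2Cob3CFIso} via the commutative Frobenius object $S^1$ in $\operatorname{Cob}(2)$. A functor preserves equalities of morphisms. Hence applying $Z$, and using the monoidal structure isomorphisms as above, shows that $A$ is a commutative Frobenius monoid in $\mathcal{C}$.

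\textbf{Step 2 (prime structure).} For the $L$-monoid structure, Definition \ref{defn:P/L monoids} requires nothing beyond the morphisms $1_p : 1_\otimes \to A$, and these are the $Z(p^\times)$. For the $P$-monoid side, the legs relation $m\circ(p^{\times\times}\otimes id)=m\circ(id\otimes p^{\times\times})$ holds in $\operatorname{Cob}(3)_{S^2}$ by Proposition \ref{prop:legsandwaistrelations}. Applying $Z$ yields $m_A\circ(e_p\otimes id_A)=m_A\circ(id_A\otimes e_p)$.

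Consistency between the two structures comes from the equalities in Comment \ref{criticalrelations}, namely $p^\times = p^{\times\times}\circ 1$ and $p^{\times\times}=m\circ(p^\times\otimes id)\circ\lambda^{-1}$. Applying $Z$ to these gives $1_p=e_p\circ 1_A$ and $e_p = m_A\circ(1_p\otimes id_A)\circ\lambda^{-1}_A$, which is exactly the correspondence of Definition \ref{defn:P/L monoids}.

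\textbf{Step 3 ($\operatorname{Vect}_k$).} For $\mathcal{C}=\operatorname{Vect}_k$, the element $1_p := Z(p^\times)(1)\in A$ determines the structure. Step 2 shows $e_p = Z(p^{\times\times})$ equals multiplication by $1_p$: evaluate $e_p(a)=m_A(1_p\otimes a)$, or apply the argument of the \textbf{Sets} proposition to the legs and waist relations. This is precisely Definition \ref{defn:P/L algebras}.

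The only genuinely delicate point is Step 1's bookkeeping of the monoidal structure maps of a strong (not strict) $Z$. This is formal, so no real obstacle is expected; the topological content was already established in the earlier propositions.
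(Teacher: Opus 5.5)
Your proposal is correct and follows the paper's argument. You transport the commutative Frobenius relations of Proposition \ref{prop:CFcommFrob} along $Z$, take $1_p = Z(p^\times)$ as the prime units, and let the $P$-side follow from the correspondence in Definition \ref{defn:P/L monoids}. Your explicit checks simply fill in details the paper leaves implicit: the monoidal structure maps, the legs relation, the identities $1_p = e_p\circ 1_A$ and $e_p = m_A\circ(1_p\otimes id_A)\circ\lambda_A^{-1}$, and the $\operatorname{Vect}_k$ case.
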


\begin{proof}
By proposition \ref{prop:CFcommFrob} and corollary \ref{cor:CFonlycommFrob}, $(Z(S^2), Z(1), Z(m), Z(tr), Z(m^\vee))$ is a commutative Frobenius algebra. The prime units $1_p \in \operatorname{Hom}_{\mathcal{C}}(1_ \otimes , Z(S^2)) $ are given by $Z([p\setminus D^3])$.
\end{proof}

\begin{thm} \label{thm:3dTFTgivesPLmonoid}
Let $Z: \operatorname{Cob}(3) \to \mathcal{C}$ be an oriented 3-dimensional topological field theory. Then $Z(S^2)$ is a $P/L$-monoid. In the case $\mathcal{C} = \operatorname{Vect}_k$, $Z(S^2)$ is a $P /L$-algebra.
\end{thm}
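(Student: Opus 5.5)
The plan is to reduce Theorem \ref{thm:3dTFTgivesPLmonoid} to Theorem \ref{thm:Z(S2)isPLmonoid} by restriction along the inclusion $\iota: \operatorname{Cob}(3)_{S^2} \hookrightarrow \operatorname{Cob}(3)$. By Definition \ref{defn:Cob(3)_S^2}, $\operatorname{Cob}(3)_{S^2}$ is the subcategory of $\operatorname{Cob}(3)$ on disjoint unions of $S^2$, with the inherited symmetric monoidal structure. The first step is to check that $\iota$ is a strict symmetric monoidal functor. Disjoint unions of spheres are closed under $\sqcup$, and the empty manifold $\emptyset = \sqcup^0 S^2$ is an object, so the monoidal unit lies in the subcategory. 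The unitors, associators and braidings of $\operatorname{Cob}(3)$ evaluated at such objects are mapping cylinders between disjoint unions of spheres, so they are morphisms of $\operatorname{Cob}(3)_{S^2}$. Hence the coherence data restrict, and $\iota$ preserves them on the nose.

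The second step is to set $Z' = Z \circ \iota : \operatorname{Cob}(3)_{S^2} \to \mathcal{C}$. As a composite of symmetric monoidal functors, $Z'$ is symmetric monoidal, with monoidal structure maps those of $Z$ restricted to the subcategory. Since $Z'(S^2) = Z(S^2)$, Theorem \ref{thm:Z(S2)isPLmonoid} then gives that $Z(S^2)$ is a $P/L$-monoid. Concretely, the commutative Frobenius structure is $(Z(1), Z(m), Z(tr), Z(m^\vee))$ via Proposition \ref{prop:CFcommFrob}, and the prime units are $1_p = Z([p\setminus \operatorname{int}(D^3)])$. The prime endomorphisms are $e_p = Z(p^{\times\times})$, which satisfy the legs relations by Proposition \ref{prop:legsandwaistrelations}; this is consistent with the equivalence in Definition \ref{defn:P/L monoids}.

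The third step handles $\mathcal{C} = \operatorname{Vect}_k$. Here the $P/L$-monoid is a commutative Frobenius algebra with elements $1_p = Z(p^\times)(1) \in Z(S^2)$. By the proposition and remark on $\textbf{Sets}$ and $\operatorname{Vect}_k$ following Lemma \ref{lemma:PLbijection}, the legs relations together with the unit axiom force $e_p(a) = a \cdot e_p(1_A) = a \cdot 1_p$. That is exactly Definition \ref{defn:P/L algebras}.

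The only step needing care is the first one, that the coherence isomorphisms of $\operatorname{Cob}(3)$ land in $\operatorname{Cob}(3)_{S^2}$. If the ambient convention does not make the objects literally unions of unit spheres, I would first transport along the mapping-cylinder isomorphisms of the proposition preceding Notations \ref{notations:CF}. Beyond that, the argument is formal.
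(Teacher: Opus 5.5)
Your proposal is correct and follows the paper's proof: the paper also restricts $Z$ along the symmetric monoidal inclusion $\operatorname{Cob}(3)_{S^2}\hookrightarrow\operatorname{Cob}(3)$ and invokes Theorem~\ref{thm:Z(S2)isPLmonoid}. Your additional checks make explicit what the paper leaves implicit. These are that the coherence data restrict to the subcategory, and that in $\operatorname{Vect}_k$ the legs relation on $a\otimes 1_A$, together with $p^{\times\times}\circ 1=p^\times$, gives $e_p(a)=a\cdot 1_p$.
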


\begin{proof}
This is a corollary to the previous theorem \ref{thm:Z(S2)isPLmonoid}: every $3$-dimensional TFT induces a symmetric monoidal functor \[
\operatorname{Cob}(3)_{S^2} \hookrightarrow \operatorname{Cob}(3) \overset{Z}{\to } \mathcal{C}
.\]
sending $S^2$ to $Z(S^2)$. Therefore the image $Z(S^2)$ is a $P/L$-monoid/algebra.
\end{proof}

\begin{mycom}
What is surprising about $P/L$ algebras and their existence through theorem \ref{thm:Z(S2)isPLmonoid} and \ref{thm:3dTFTgivesPLmonoid} is their simplicity. The only novel structures on an $L$ algebra, besides the commutative Frobenius structures, are the existence of distinguished elements $1_p$ labelled by diffeomorphism classes of oriented irreducible prime 3 manifolds. There are no relations among them. This is a feature imposed by geometry, not a bug. The only novel structures on a $P$ algebra besides the commutative Frobenius structures are the distinguished endomorphisms $e_p$. Geometry forces these to act by multiplication by $1_p$, and there are no additional relations. Hence definition \ref{defn:P/L algebras} and theorem \ref{thm:3dTFTgivesPLmonoid} together can be thought of as a statement of the ``structural collapse'' on the algebraic structures of $Z(S^2)$.
\end{mycom}

\begin{rmk}
We make the following observations. Let $Z: \operatorname{Cob}(3) \to \mathcal{C}$ be an oriented 3-dimensional topological field theory. Denote by $e_M : Z(S^2) \to Z(S^2)$ the image $Z([M \setminus (D^3 \sqcup D^3)])$ where $M$ is a closed oriented 3 manifold. Then \[
e_{M \# N} = e_M \circ e_N = e_N \circ e_M
.\]
In particular, if $P, Q$ were oriented prime 3 manifolds, we recover the commutativity of prime endomorphisms \[
e_{P \# Q} = e_P \circ e_Q = e_Q \circ e_P
.\]
This means that the assignment $M \mapsto e_M$ is a monoid map \[
(\left\{ \text{Closed oriented 3 manifolds} \right\} / \cong , \#) \to \operatorname{End}_{\mathcal{C}}(Z(S^2)) 
.\]
In particular, this map sends $S^3$ to the identity $id_{Z(S^2)}$.
\end{rmk}

\begin{eg}
For $Z: \operatorname{Cob}(3) \to \text{Vect}_k$, where $k$ is algebraically closed, since the prime endomorphisms $\left\{ e_p \right\} $ are pairwise commuting, there exists a basis on $Z(S^2)$ such that all the $e_p$ are simultaneously upper-triangular. Equivalently, there exists a flag  \[
0 = V_0 \subset  V_1 \subset  \dots \subset V_n = Z(S^2) ,\quad \dim V_i = i
.\]
such that every $e_M$ preserves every $V_i$. Since each quotient $V_i / V_{i-1}$ is one-dimensional, $e_M$ acts on the quotient by scalar multiplications. Call this scalar $\chi_i(M) \in k$. Since $e_{M\# N} = e_M \circ e_N$, their actions on the quotient $V_i / V_{i-1}$ must satisfy \[
\chi_i(M \# N) = \chi_i(M) \chi_i (N)
.\]
Therefore this ``character theory'' of closed oriented 3 manifolds is completely determined by the character theory of oriented irreducible prime 3 manifolds. 
\end{eg}

\begin{eg}
In fact, we can say more in some special cases. Consider the same setup as above. Let $\mathcal{M}$ be the set of diffeomorphism classes of oriented irreducible prime 3 manifolds. Consider  \[
A = k \left[ \left\{ e_p | p\in \mathcal{M} \right\}  \right] 
.\]
This is a commutative subalgebra of $\operatorname{End}_{\mathcal{C}}(Z(S^2)) $. Since $Z(S^2)$ is finite dimensional, so is $A$. 

If $Z(S^2)$ happens to be semisimple as an $A$-module, then $Z(S^2)$ is a direct sum of simple $A$-modules. But $A$ is commutative, so all its simple modules are one-dimensional. Therefore \[
Z(S^2) \cong \bigoplus_\lambda k_\lambda
.\]

Over any line $k_\lambda$,  every $a\in A$ acts by some scalar. Denote this scalar as $\lambda(a)$ or, if $a=e_M$ for some closed 3 manifold $M$, by $\chi_\lambda (M)$. Then we still have \[
\chi_\lambda(M \# N) = \chi_\lambda(M) \chi_\lambda(N) 
.\]

The map $a \mapsto \lambda (a)$ defines $\lambda: A \to k$ for each $\lambda$. We may group the same\footnote{We consider two lines $k_{\lambda_1 }, k_{\lambda_2 } $ the same of $\lambda_1 = \lambda_2$ as maps $A \to k$. } $k_\lambda$'s together and obtain the direct sum decomposition of $Z(S^2)$ into spaces with the same character \[
Z(S^2) = \bigoplus_ \lambda V_\lambda 
.\]
Then each $e_M$ acts by  \[
e_M |_{V_\lambda } = \chi_\lambda (M) id_{V_\lambda}
.\]
or equivalently \begin{equation} \label{eqn:e_Mact}
e_M = \sum_{\lambda}^{ } \chi_{\lambda} (M) \pi_{\lambda}
.\end{equation}
where $\pi _{\lambda}: Z(S^2) \to V_\lambda$ are the projection maps.

The prime endomorphisms are those $e_M$ where $M$ is an oriented irreducible prime 3 manifold $P$. So these act by 
\begin{equation} \label{eqn:e_Pact}
e_P = \sum_{\lambda}^{ } \chi_{\lambda} (P) \pi_{\lambda}
.\end{equation}
\end{eg}

\begin{eg}
The example above provides a simple way of computing the invariants $Z(M)$ for any closed 3 manifold $M$. Let $Z : \operatorname{Cob}(3) \to \text{Vect}_k$ where $k$ is algebraically closed. If $Z(S^2)$ is semisimple as an $A$-module where $A$ is the commutative subalgebra $A = k[\left\{ e_p | p \in \mathcal{M} \right\} ] \subset  \operatorname{End}_{\text{Vect}_k}(Z(S^2)) $, then the example above shows that $e_P$ acts by equation \ref{eqn:e_Pact}. If $1: k \to Z(S^2)$ is the unit map and $tr: Z(S^2) \to k$ the trace map, then for any closed 3 manifold $M\cong P_1 \# \dots \# P_m$, we have \[
Z(M) = tr \circ e_M \circ 1 = \sum_{\lambda}^{ } (tr \circ \pi_{\lambda} \circ 1 ) \chi_\lambda(M) =  \sum_{\lambda}^{ } (tr \circ \pi_{\lambda} \circ 1 ) \prod_{i=1}^m \chi_\lambda(P_i)
.\]
Note however that the simplicity of this formula comes at the cost of needing to compute the characters $\chi_\lambda(P_i)$ which may be equally challenging as computing $Z(M)$ via other methods.
\end{eg}

\section{Relations to J-Algebras} \label{sec:Jalg}

\begin{defn} \label{defn:Trace of Surgery}
(Trace of Surgery) Let $M$ be an oriented $n$-manifold and $0\leq k\leq n$. Let $\phi : S^{k-1} \times D^{n-k} \hookrightarrow M$ be an embedding along which a $k$-handle is attached. The resulting manifold is \[
M' = M \cup_{\phi } (D^k \times D^{n-k})
.\]
The \textit{trace} of this handle attachment is the $(n+1)$-dimensional cobordism $W$ from $M$ to $M'$ obtained by attaching a $k$-handle to $M \times [0,1]$ along $M\times \left\{ 1 \right\} $: \[
W = (M \times [0,1]) \cup_{\phi } (D^k \times D^{n-k+1})
.\]
Its incoming boundary is $M \times \left\{ 0 \right\} $ and the outgoing boundary is the manifold $M'$.
\end{defn}

Recall \cite{juhasz2018defining}'s assignment of a $J$-algebra associated to a 3-dimensional oriented TFT $Z: \operatorname{Cob}(3) \to \text{Vect}_k$ \footnote{Note Juhasz's original paper the unusual convention is to denote by $\operatorname{Cob}(n)$ the category of $n$-manifolds and bordisms. We will not adopt this convention.} is denoted $J(Z)$ which consists of the following data 

\begin{enumerate}
\item $A_g = Z(\Sigma_g)$ is the vector space associated to the genus $g$ surface $\Sigma_g$ under $Z$, for all $g\geq 0$
\item linear maps $\alpha_g: A_g \to A_{g-1} $, which we call ``genus reduction maps.'' \\
This is the $Z$-image of trace of the attachment of a 2-handle around a hole, which eliminates it.
\item linear maps $\delta _{j, g-j}: A_g \to A_j \otimes A_{g-j}$, which we call ``splitting maps.'' \\
This is the $Z$-image of the trace of the splitting of $\Sigma_g $ into $\Sigma_j \amalg \Sigma_{g-j}$.
\item linear maps  $\mu _{i,j} : A_i \otimes A_j \to A_{i+j}$, which we call ``merging maps.''\\
This is the $Z$-image of the trace of the merging of $\Sigma_i \amalg \Sigma_j $ into $\Sigma_{i+j}$.
\item linear maps $\omega_g : A_g \to A_{g+1}$, which we call ``genus creation maps.'' \\
This is the $Z$-image of the trace of the attachment of a $1$-handle.
\item The linear map $tr: A_0 \to k$, which we call the ``trace map.'' \\
This is $Z(D^3)$, where $D^3 : S^2 \to \emptyset$.
\item The linear map $1: k \to A_0$, which we call the ``unit map.''\\
This is $Z(D^3)$, where $D^3 : \emptyset \to S^2 $
\item involutions $*_g: A_g \to A_g$
\item representations $\rho _g : MCG(\Sigma_g) \to \operatorname{Aut}(A_g) $.
\end{enumerate}

8 and 9 come from the following construction: First note that $MCG(\Sigma_g)$ here means $\operatorname{Diff}^+(\Sigma_g ) / \operatorname{Diff}^+_0(\Sigma_g )  $, namely isotopy classes of orientation preserving diffeomorphisms. The representations send a particular orientation-preserving diffeomorphism $d\in \operatorname{Diff}^+(\Sigma_g ) $ to $Z(Cyl(d))$ where $Cyl(d)$ is the mapping cylinder. In particular, there is a specific order $2$ diffeomorphism giving rise to the involutions $*_g$-- this is defined to be the diffeomorphism which sends the $\Sigma_g$ in its standard embedding in $\mathbb{R}^3$ to itself after performing a $\pi $-rotation about the $z$-axis.

The genus creation/reduction maps, and the splitting/merging maps are the images of the trace of elementary surgeries on surfaces. 

These data, along with appropriate compatibility conditions, fit into an algebraic structure named ``J-Algebras.'' A theorem by \cite{juhasz2018defining} is that these data completely determine the 3-dimensional TFT $Z$. In fact, the statement is stronger:

\begin{thm} \label{thm:Jalg=3dTFT}
[Juhasz] The assignment $Z \mapsto J(Z)$ is an equivalence between the symmetric monoidal category of 3-dimensional TFTs and $\textbf{J-Alg}$.
\end{thm}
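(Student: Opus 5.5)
The plan is to deduce the equivalence from a generators-and-relations presentation of $\operatorname{Cob}(3)$, in the same spirit as the $G_1$ and $G_2$ presentations of $\operatorname{Cob}(3)_{S^2}$ above, but now allowing arbitrary closed surfaces as objects. The first step reduces to connected objects. Every closed oriented surface is a disjoint union of standard $\Sigma_g$'s. By the proposition that diffeomorphic objects have isomorphic images (via mapping cylinders), a symmetric monoidal functor out of $\operatorname{Cob}(3)$ is determined up to isomorphism by its values on the skeleton $\{\sqcup_i \Sigma_{g_i}\}$. The second step establishes generators. Choose an admissible excellent Morse function on a bordism $W$, as in the proof of Proposition \ref{generate}. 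Then $W$ factors as mapping cylinders $Cyl(d)$ of diffeomorphisms of standard surfaces, interleaved with traces of surgeries as in Definition \ref{defn:Trace of Surgery}. In dimension $3$ these are exactly the bordisms whose images are $\alpha_g,\omega_g,\delta_{j,g-j},\mu_{i,j},1,tr$, up to precomposing and postcomposing with diffeomorphisms. Hence $J(Z)$ recovers $Z$ on all morphisms, and $Z\mapsto J(Z)$ is faithful on natural transformations, since a monoidal natural transformation is determined by its components at the $\Sigma_g$.

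The third step checks that $J(Z)$ is a J-algebra, i.e.\ that the compatibility axioms hold. Each axiom is an equality of bordisms that I would verify geometrically, by exhibiting diffeomorphisms rel boundary in the way Proposition \ref{prop:legsandwaistrelations} was proved. These include: $\rho_g$ is a representation, since $Cyl(d)\circ Cyl(d')=Cyl(d\circ d')$ and isotopic maps give diffeomorphic cylinders; surgery traces are equivariant under diffeomorphisms carrying one framed attaching sphere to another; surgeries along disjoint spheres commute; and $1$-handle and $2$-handle traces cancel in a pair ($(1,2)$ birth--death) up to a specified mapping class.

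The fourth step proves fullness and essential surjectivity by constructing an inverse. Given a J-algebra, define $Z$ on each Morse decomposition by composing the corresponding structure maps, with the $\rho_g$ inserted for the identifications. Then show independence of the decomposition. By Cerf theory, any two excellent Morse functions together with their gradient-like data are joined by a path whose only failures are birth--death singularities, critical value exchanges, and isotopies or handle slides of attaching spheres in the level surfaces. Each such event has to be matched with a J-algebra axiom.

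I expect this last step to be the main obstacle. Unlike the spherical case, level sets are now arbitrary $\Sigma_g$, so the attaching data of a $k$-handle is an isotopy class of framed sphere in $\Sigma_g$. Moving it changes the bordism by a mapping class, and this must be tracked through the representation $\rho_g$. I would first reduce every attaching circle to a standard one using transitivity of $MCG(\Sigma_g)$ on nonseparating curves and on separating curves of fixed type. This makes handle slides and sphere isotopies absorbed into conjugation by $\rho_g$. The remaining work is to verify that the stabilizer of the standard curve acts compatibly with $\alpha_g$ and $\omega_g$, which is precisely what the equivariance axioms of a J-algebra are designed to encode.
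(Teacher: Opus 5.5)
The paper does not prove this statement; it quotes it from Juh\'asz. Your outline is essentially his strategy: present the bordism category by diffeomorphisms and surgeries along framed spheres using Cerf theory with gradient-like vector fields, then rewrite everything in standard genus-$g$ coordinates. As a proof, however, it stops exactly where the content lies. Step 4 is the theorem, and you never state the J-algebra axioms; the paper only says ``appropriate compatibility conditions.'' So ``each such event has to be matched with a J-algebra axiom'' and ``which is precisely what the equivariance axioms of a J-algebra are designed to encode'' assert the conclusion rather than derive it.

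Even with the axioms in hand, the reduction you sketch is too coarse in specific places.

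\emph{Identifications and 1-handles.} Independence of the chosen identification of a level surface with the standard one requires compatibility with $\pi_0$ of the group of diffeomorphisms preserving the framed attaching sphere. This is not the same as the stabilizer of that sphere in $MCG(\Sigma_g)$, and for $1$-handles the two differ. Dragging one foot around a loop, or rotating a foot by $2\pi$, is isotopic to the identity of $\Sigma_g$. Yet it induces on $\Sigma_{g+1}$ a handle-drag map, respectively the Dehn twist about the belt circle, which is in general nontrivial. So you need identities $\rho_{g+1}(\tau)\circ\omega_g=\omega_g$ for these $\tau$, and analogues for $\mu_{i,j}$. Without them, the value you assign depends on how the feet were moved into standard position.

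\emph{Critical-value exchanges.} These require a commutation identity for each diffeomorphism type of pair of disjoint attaching spheres: two circles in their various relative positions, a circle and a pair of feet, two pairs of feet. Single-sphere equivariance does not supply these.

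\emph{Framing reversal.} Reversing the framing of a separating circle of type $(j,g-j)$ with $j\neq g-j$ is realized by no diffeomorphism preserving the circle. It is therefore an independent (co)commutativity-type relation.

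\emph{Cancellation.} Cancellation must cover the $(0,1)$ and $(2,3)$ pairs as well as $(1,2)$, each with an explicitly identified diffeomorphism. For $(1,2)$ this is an identity of the form $\alpha_{g+1}\circ\rho_{g+1}(h)\circ\omega_g=\rho_g(\varphi)$ for specific $h,\varphi$.

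Juh\'asz isolates relations of exactly these kinds (naturality, commutation of disjoint surgeries, framing reversal, handle cancellation) in a dimension-independent presentation before specializing. Deriving each of them from the J-algebra axioms in genus-$g$ coordinates is what your proposal still has to supply.
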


Here the category $\textbf{J-Alg}$ is the category whose objects are $J$-algebras and whose morphisms are \textit{J-algebra homomorphisms}. We refer the reader to the original article for the precise definition of such morphisms.

Our goal for this section is to briefly analyse where $P$ and $L$ algebras fit inside a $J$-algebra.

The following data of the $J$-algebra associated with a TFT $Z$ that directly involves the genus $0$ piece:

\begin{enumerate}
\item $A_0 = Z(S^2)$ is the vector space associated to the two-sphere.
\item linear maps $\alpha_1: A_1 \to A_{0} $,
\item linear maps $\delta _{0, g}: A_g \to A_0 \otimes A_{g}$,
\item linear maps  $\mu _{g,0} : A_g \otimes A_0 \to A_{g}$, 
\item linear maps $\omega_0 : A_0 \to A_{1}$, 
\item The linear map $tr: A_0 \to k$, 
\item The linear map $1: k \to A_0$, 
\item involutions $*_0: A_0 \to A_0$
\item representations $\rho _0 : MCG(S^2) \to \operatorname{Aut}(A_0) $.
\end{enumerate}

Note that 8 and 9 are actually both trivial for the following reason: $MCG$ here means isotopy classes of orientation preserving diffeomorphisms, so $MCG(S^2)= \left\{ [id_{S^2}] \right\} $ is the group of one element. Therefore $\rho_0$ maps $[id_{S^2}]$ trivially to the identity of $A_0$. The diffeomorphism of $S^2$ defined by rotation about the $z$-axis by $\pi $ is therefore also isotopic to the identity, hence passing through $Z$ gives the identity on $A_0$. By definition this is the involution $*_g$. Therefore we see that 8 and 9 contains no information.

Furthermore, we see that 6 and 7 agree with the unit and trace maps of $A_0$ as a commutative Frobenius algebra. In the case $g=0$, $\delta_{0,g} = m^\vee $ and $\mu_{g,0} = m$ recovers the co/multiplication maps.

Therefore, the non-trivial information associated to the commutative Frobenius algebra $A_0$ is the following:
\begin{enumerate}
\setcounter{enumi}{1}
\item the genus reduction map $\alpha_1: A_1 \to A_0 $ 
\item the splitting maps $\delta_{0,g}: A_g \to A_0 \otimes A_g$, $g>0$
\item the merging maps $\mu_{g,0}: A_g \otimes A_0 \to A_g$, $g>0$
\item the genus creation map $\omega_0 : A_0 \to A_1$.
\end{enumerate}


Even though the four classes of maps above are the only maps in the data of $J$-algebras that directly involves $A_0 $, they do not generate all relevant maps. For example, they do not generate prime endomorphisms $e_p : A_0 \to A_0$ where $p$ is irreducible.

\begin{thm} \label{thm:JgivesPL}
The degree $0$ part of every $J$-algebra is a $P / L$-algebra. Namely, every $J$-algebra induces a $P/L$-algebra. 
\end{thm}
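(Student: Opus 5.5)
The plan is to route everything through Juhász's equivalence (theorem \ref{thm:Jalg=3dTFT}) and the results of the previous section, rather than manipulating the $J$-algebra axioms directly. Given a $J$-algebra $\mathcal{A}=(\{A_g\},\alpha,\delta,\mu,\omega,tr,1,*,\rho)$, I would first use essential surjectivity of $Z\mapsto J(Z)$ to choose a $3$-dimensional oriented TFT $Z:\operatorname{Cob}(3)\to \operatorname{Vect}_k$ with $J(Z)\cong \mathcal{A}$. Since a $J$-algebra isomorphism restricts in degree $0$ to a linear isomorphism $A_0\cong Z(S^2)$ compatible with the degree-$0$ data ($1$, $tr$, $\mu_{0,0}$, $\delta_{0,0}$), any structure on $Z(S^2)$ can be transported to $A_0$. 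So it suffices to put a $P/L$-algebra structure on $Z(S^2)$ whose commutative Frobenius part is the one recorded by $J(Z)$ in degree $0$.

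Next I would invoke theorem \ref{thm:3dTFTgivesPLmonoid}. Restricting $Z$ to $\operatorname{Cob}(3)_{S^2}$ makes $Z(S^2)$ a commutative Frobenius algebra with structure maps $Z(1)$, $Z(tr)$, $Z(m)$, $Z(m^\vee)$, by proposition \ref{prop:CFcommFrob}. The prime units are $1_p=Z(p^\times)$ for each oriented irreducible $p$. By definition \ref{defn:P/L algebras} no further relations are required on the $1_p$, because the legs relations for $L$ follow from commutativity (proposition \ref{prop:LlegsfromCF}). The associated prime endomorphisms $e_p=m\circ(1_p\otimes id)$ coincide with $Z(p^{\times\times})$ by comment \ref{criticalrelations}.

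The step I expect to be the real obstacle is the compatibility check. I must confirm that the commutative Frobenius structure on $Z(S^2)$ is the one the $J$-algebra itself assigns to $A_0$. That is, $\mu_{0,0}=Z(m)$ and $\delta_{0,0}=Z(m^\vee)$, where $\mu_{0,0}$ and $\delta_{0,0}$ are defined as traces of $0$-genus merging and splitting surgeries, and $1$, $tr$ are $Z(D^3)$.

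To carry this out, I would identify the trace of the $1$-handle surgery merging $S^2\sqcup S^2$ into $S^2$. It is $(S^2\sqcup S^2)\times I$ with a $1$-handle attached, which is diffeomorphic rel boundary to $S^3\setminus\operatorname{int}(\sqcup^3D^3)$, the representative of $m$. Dually, the splitting trace ($2$-handle along an equator) is $m^\vee$. I would also check that the sum of the $1$-handle merge and the $2$-handle split over the same spheres recovers the pants picture, using the local models of section \ref{sec:localmodels}.

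If one prefers not to use essential surjectivity, the fallback is to define $1_p$ intrinsically. One would take a genus-$g$ Heegaard splitting of $p$ minus a ball and compose $\omega$'s, mapping class group elements $\rho_g$, and $\alpha$'s from $A_0$ to $A_0$. That route, however, requires independence of the Heegaard splitting (Reidemeister–Singer), which is exactly what Juhász's theorem packages. So I would use the equivalence and defer the explicit formula to proposition \ref{prop:JtoLmaps}.
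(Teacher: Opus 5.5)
Your proposal is correct and follows the same route as the paper: obtain a TFT $Z$ with $J(Z)\cong\mathcal{A}$ from Juh\'asz's equivalence (theorem \ref{thm:Jalg=3dTFT}), then apply theorem \ref{thm:3dTFTgivesPLmonoid}. Your added compatibility check, that $1$, $tr$, $\mu_{0,0}$ and $\delta_{0,0}$ are the $Z$-images of the unit, trace, $m$ and $m^\vee$, is what the paper asserts without proof in the discussion just before the theorem, and your Heegaard-splitting fallback is essentially proposition \ref{prop:JtoLmaps}.
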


\begin{proof}
By theorem \ref{thm:Jalg=3dTFT}, for any $J$-algebra $J$ there exists a 3-dimensional oriented TFT $Z: \operatorname{Cob}(3)\to \operatorname{Vect}_k$ such that $J(Z) = J$, where $J(Z)$ consists of the data 1-9 along with compatibility conditions. By theorem \ref{thm:3dTFTgivesPLmonoid}, every 3-dimensional TFT $Z$ determines a $P /L$ algebra $Z(S^2)$.
\end{proof}

This is an extremely unsatisfying proof. We will present a more geometric construction that illustrates precisely \textit{how} the data of a $J$-algebra determines, for example, the prime units. For this, we will need Heegaard Splitting.

\begin{thm} \label{thm:HeegaardSplitting}
[Heegaard] Let $M$ be a connected closed oriented $3 $-manifold. Then there exists two handle bodies $M_1 , M_2 $ of genus $g$ and an orientation-reversing diffeomorphism $f: \partial M_1 \to \partial M_2 $ such that \[
M \cong M_1 \cup_f M_2 
.\]
\end{thm}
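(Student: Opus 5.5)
This is Heegaard's classical theorem, and I will prove it by Morse theory rather than relying on anything specific to the spherical bordism machinery. I will not use a triangulation argument. The plan is to produce a self-indexing Morse function on $M$ and cut $M$ along its middle level.

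First, choose a Morse function $f: M \to \mathbb{R}$ on the closed connected oriented $M$. By the standard rearrangement and cancellation results (Smale's rearrangement lemma, as in Milnor's h-cobordism lectures), isotope $f$ so that it is self-indexing, meaning $f(x)=k$ at every critical point of index $k$. Then use cancellation of $0$--$1$ pairs and $2$--$3$ pairs, which is available because $M$ is connected. This lets us assume $f$ has exactly one index-$0$ critical point and exactly one index-$3$ critical point. The cancellation is the birth--death move already used in Proposition \ref{prop:CFcommFrob}, run in reverse.

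Next, set $M_1 = f^{-1}((-\infty, 3/2])$ and $M_2 = f^{-1}([3/2,\infty))$. Then $M_1$ is a $0$-handle with $g$ one-handles attached. Orientability of $M$ forces every one-handle to be attached orientably, so $M_1$ is a genus-$g$ handlebody and $\partial M_1 = f^{-1}(3/2)$ is a closed orientable surface $\Sigma_g$. Applying the same reasoning to $-f$ shows that $M_2$ is a $3$-handle together with some one-handles (the $2$-handles of $f$ seen upside down), so it too is a handlebody. Since $\partial M_2 = f^{-1}(3/2) = \partial M_1$, the two handlebodies have the same genus. This also follows from an Euler characteristic count, $0=\chi(M)=1-v_1+v_2-1$, which gives $v_1 = v_2 = g$.

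Finally, $M \cong M_1 \cup_f M_2$ with gluing map the identity of $f^{-1}(3/2)$, viewed as a map $\partial M_1 \to \partial M_2$. Using the outward-normal convention, $M_1$ and $M_2$ induce opposite orientations on the common surface, so as a map between the oriented boundaries this identification is orientation-reversing. If one wants $M_2$ to be a standard handlebody, compose with a diffeomorphism to the standard model; the gluing map stays orientation-reversing.

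The main obstacle is the first step, reducing to a single minimum and a single maximum. It needs the existence of a cancelling gradient flow line between an index-$0$ and an index-$1$ critical point, which in turn uses connectedness of the sublevel sets. I would cite Milnor's cancellation theorem directly rather than reprove it. An alternative that avoids cancellation: take a triangulation, let $M_1$ be a regular neighbourhood of the $1$-skeleton and $M_2$ a regular neighbourhood of the dual $1$-skeleton.
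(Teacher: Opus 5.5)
The paper does not prove this statement. It quotes it as Heegaard's classical theorem and uses it as a black box, so there is no internal argument to match. Your Morse-theoretic proof is the standard one, and it is correct.

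\begin{itemize}
\item Connectedness of $M$ forces $f^{-1}((-\infty,3/2])$ to be connected, because handles of index $\geq 2$ cannot join components. So some $1$-handle runs between two distinct $0$-handles, and this supplies the cancelling flow line.
\item The orientability of the $1$-handle attachments is correct.
\item The equality of genera is correct, via the common boundary or via $0=\chi(M)=1-v_1+v_2-1$.
\item The identity gluing is orientation-reversing under the outward-normal convention, as you say.
\end{itemize}

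Only cosmetic points remain:
\begin{itemize}
\item Rearrangement \emph{replaces} $f$ by a new Morse function; it does not isotope it.
\item It is cleaner to cancel first and rearrange afterwards. The cancellation step isolates the pair in a slab, and that does not preserve self-indexing.
\item For closed $M$, Milnor's elimination theorem needs $H_0(W,V)=0$. So you apply it to $M$ minus a small ball around the minimum (resp.\ maximum) you keep.
\end{itemize}

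Compared with a bare citation, or with the triangulation proof you mention as an alternative, your route buys the finer structure the paper actually uses later. You get one critical point of index $0$, $g$ of index $1$, $g$ of index $2$ and one of index $3$, with all index-$1$ values below all index-$2$ values. This is exactly how a Heegaard splitting appears in Remark \ref{S2xS1PantsvsHeegaard}, and it is the handle-by-handle description in the proof of Proposition \ref{prop:JtoLmaps}.

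Your argument also adapts verbatim to the boundary version, Theorem \ref{thm:Heegaardvarpar}, which the paper states without proof:
\begin{enumerate}
\item On $P\setminus\operatorname{int}(D^3)$, take an admissible Morse function.
\item Cancel all index-$3$ points (relative to the outgoing sphere) and all but one index-$0$ point.
\item Cut at level $3/2$. Then $\chi(P\setminus\operatorname{int}(D^3))=1$ gives $v_1=v_2=g$.
\end{enumerate}

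The triangulation proof avoids cancellation theory altogether. It yields the two handlebodies, but without this ordered handle decomposition.
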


\begin{rmk}
The Heegaard genus $g(M)$ is defined as the minimal genus of all possible Heegaard splitting of $M$. In fact \cite{martelli2016introduction}, only $S^3$ (and those $M$ diffeomorphic to it) has $g=0$, and only lens spaces (other than $S^3$) or $S^2 \times S^1$ has $g=1$. All other 3-manifolds have $g\geq 2$.
\end{rmk}

Since $M_1 $ is evidently diffeomorphic to $M_1 \cup_{\partial M_1 } \partial M_1 \times [0,1]$, we may write \[
M \cong M_1 \cup_{\partial M_1 } \partial M_1 \times [0,1] \cup_f M_2 
.\]
and after factoring through diffeomorphisms $\phi_{1,2}: \Sigma_g \to \partial M_{1,2} $, \[
M \cong M_1 \cup_{\phi_1 ^{-1}} Cyl(h) \cup_{\phi_2 } M_2 
.\]
where $h =\phi_2 ^{-1} \circ f\circ \phi_1 : \Sigma_g \to \Sigma_g $ is orientation preserving.

Therefore we have the following variant of the Heegaard splitting:

\begin{thm}
Let $M$ be a connected closed oriented $3 $-manifold. Then there exists two handle bodies $M_1 , M_2 $ of genus $g$ and $h \in \operatorname{Diff}^+(\Sigma_g ) $ such that \[
M \cong M_1 \cup_{\phi_1 ^{-1}} Cyl(h) \cup_{\phi_2 } M_2 
.\]
where $Cyl(h) = (\Sigma_g \times [0,1] \amalg \Sigma_g) / (x,1) \sim h(x)$ is the mapping cylinder of $h$, $\phi_{1}: \Sigma_g \to \partial M_{1}$ is an orientation-preserving diffeomorphism, and $\phi_{2}: \Sigma_g \to \partial M_{2}$ is an orientation-reversing diffeomorphism.
\end{thm}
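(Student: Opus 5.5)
The statement follows directly from the classical Heegaard splitting (Theorem \ref{thm:HeegaardSplitting}); the rest is an insertion of a collar and some bookkeeping with orientations. I would start from a splitting $M \cong M_1 \cup_f M_2$ with $M_1, M_2$ genus $g$ handlebodies and $f:\partial M_1 \to \partial M_2$ orientation-reversing. Here $\partial M_i$ carries the boundary orientation induced from $M_i$.

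Next I would fix a reference surface $\Sigma_g$ with a chosen orientation. Since $\partial M_1$ and $\partial M_2$ are closed connected oriented genus $g$ surfaces, the classification of surfaces gives diffeomorphisms $\phi_1:\Sigma_g\to\partial M_1$ preserving orientation and $\phi_2:\Sigma_g\to\partial M_2$ reversing orientation. If some $\phi$ has the wrong orientation, I would precompose it with a fixed orientation-reversing involution of $\Sigma_g$, such as a reflection of the standard embedding.

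Then I would set $h=\phi_2^{-1}\circ f\circ\phi_1$. This is a composition of an orientation-preserving map, an orientation-reversing map and an orientation-reversing map, so $h\in\operatorname{Diff}^+(\Sigma_g)$, as the discussion before the statement observes.

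The main step is the identification
\[
M \cong M_1\cup_{\phi_1^{-1}} Cyl(h)\cup_{\phi_2} M_2 .
\]
\begin{enumerate}
\item First use a collar of $\partial M_1$ inside $M_1$ (the collar neighbourhood theorem, as in \cite{hirsch_differential_1976}). This gives $M_1\cong M_1\cup_{\partial M_1}(\partial M_1\times[0,1])$, and hence $M\cong M_1\cup(\partial M_1\times[0,1])\cup_f M_2$.
\item Transport the collar through $\phi_1$. This identifies $\partial M_1\times[0,1]$ with $\Sigma_g\times[0,1]$, with the end $\Sigma_g\times\{0\}$ glued to $\partial M_1$ by $\phi_1$, that is, attached via $\phi_1^{-1}$ from the side of $M_1$.
\item The remaining gluing of $\Sigma_g\times\{1\}$ to $\partial M_2$ is by $f\circ\phi_1=\phi_2\circ h$. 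Gluing along $\phi_2\circ h$ is the same as first passing through the mapping cylinder end $(x,1)\sim h(x)$ and then gluing the copy of $\Sigma_g$ to $M_2$ by $\phi_2$.
\end{enumerate}

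Finally, I would note that gluings along diffeomorphisms are well defined up to diffeomorphism, again by \cite{hirsch_differential_1976}, so smoothness at the seams is not a problem.

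The only real obstacle I expect is keeping the orientation conventions consistent. I would check that $Cyl(h)$, with its product orientation, has incoming boundary $\Sigma_g$ with the reversed orientation and outgoing boundary with the given orientation. This is exactly what makes $\phi_1$ orientation-preserving and $\phi_2$ orientation-reversing the compatible choices, so that the glued manifold inherits the orientation of $M$. I would verify this by a local computation near one seam, and then handle the other seam symmetrically.
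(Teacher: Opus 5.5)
Your proposal is correct and follows the paper's own argument: start from the classical splitting $M\cong M_1\cup_f M_2$, insert a collar $\partial M_1\times[0,1]$, factor through the reference identifications $\phi_1,\phi_2$, and set $h=\phi_2^{-1}\circ f\circ\phi_1$, which is orientation-preserving. Your extra checks fill in details the paper leaves implicit: the existence of $\phi_2$ via a reflection, the identity $f\circ\phi_1=\phi_2\circ h$, and the boundary orientations of $Cyl(h)$ that make both gluings orientation-compatible.
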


and the boundary variant:

\begin{thm} \label{thm:Heegaardvarpar}
Let $M$ be a connected closed oriented $3 $-manifold with boundary $\partial M \cong \amalg^{a+b} S^2$. Then there exists two connected oriented 3-manifolds $M_1 , M_2 $ with boundaries $\partial M_1 = \amalg^a S^2 \amalg \Sigma^1$ and $\partial M_2 = \amalg^b S^2 \amalg \Sigma^2$, where $\Sigma^{1,2}$ are of of genus $g$, and $h \in \operatorname{Diff}^+(\Sigma_g ) $ such that \[
M \cong M_1 \cup_{\phi_1 ^{-1}} Cyl(h) \cup_{\phi_2 } M_2 
.\]
where $Cyl(h) = (\Sigma_g \times [0,1] \amalg \Sigma_g) / (x,1) \sim h(x)$ is the mapping cylinder of $h$, $\phi_{1}: \Sigma_g \to \Sigma^1$ is an orientation-preserving diffeomorphism, and $\phi_{2}: \Sigma_g \to \Sigma^2$ is an orientation-reversing diffeomorphism.
\end{thm}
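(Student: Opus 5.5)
The plan is to reduce the statement to the closed variant of Heegaard splitting proved just above, by capping off the spherical boundary components, splitting, and then removing the caps again in a controlled way.

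\textbf{Step 1: cap off and split.} Let $\widehat{M} = M \cup_{\partial M} \big(\sqcup^{a+b} D^3\big)$. This closed connected oriented $3$-manifold is well defined up to diffeomorphism, as used repeatedly earlier (e.g.\ in Proposition \ref{generate}). Let $\iota_0 : \sqcup^{a+b} D^3 \hookrightarrow \widehat{M}$ be the inclusion of the caps, so that $M \cong \widehat{M} \setminus \operatorname{int}(\operatorname{im} \iota_0)$. Apply Theorem \ref{thm:HeegaardSplitting} to write $\widehat{M} \cong H_1 \cup_f H_2$ with genus $g$ handlebodies $H_1, H_2$.

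\textbf{Step 2: reposition the balls.} Choose a new orientation-preserving embedding $\iota : \big(\sqcup^{a} D^3\big) \sqcup \big(\sqcup^{b} D^3\big) \hookrightarrow \widehat{M}$. The first $a$ balls go into $\operatorname{int}(H_1)$ and the last $b$ balls into $\operatorname{int}(H_2)$; this is possible since both interiors are nonempty open $3$-manifolds. By the homogeneity result for disjoint ball embeddings in a connected manifold already invoked in the proof of Theorem \ref{thm:Cob2Cob3CFIso} (\cite[Ch8 Thm 3.2]{hirsch_differential_1976}), there is an orientation-preserving diffeomorphism $\psi$ of $\widehat{M}$ with $\iota = \psi \circ \iota_0$. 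It restricts to a diffeomorphism $M \cong \widehat{M} \setminus \operatorname{int}(\operatorname{im}\iota)$ preserving boundary components. The ordering of the balls should be matched with the prescribed $a$ and $b$ spheres, using that the theorem permits arbitrary permutations of disjoint balls.

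\textbf{Step 3: define the pieces.} Set $M_1 = H_1 \setminus \operatorname{int}(\iota(\sqcup^a D^3))$ and $M_2 = H_2 \setminus \operatorname{int}(\iota(\sqcup^b D^3))$. Removing interior balls does not disconnect a connected $3$-manifold, so both are connected oriented, with $\partial M_1 = \sqcup^a S^2 \sqcup \partial H_1$ and $\partial M_2 = \sqcup^b S^2 \sqcup \partial H_2$, where $\Sigma^i = \partial H_i$ has genus $g$. Then $M \cong M_1 \cup_f M_2$.

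\textbf{Step 4: insert the mapping cylinder.} Repeat verbatim the argument preceding the previous theorem. Insert a collar, $M_1 \cong M_1 \cup_{\Sigma^1} \Sigma^1 \times [0,1]$, and choose parametrizations $\phi_1 : \Sigma_g \to \Sigma^1$ (orientation-preserving) and $\phi_2 : \Sigma_g \to \Sigma^2$ (orientation-reversing). Setting $h = \phi_2^{-1} \circ f \circ \phi_1$, which is orientation-preserving since $f$ reverses orientation, the collar becomes $Cyl(h)$. This gives $M \cong M_1 \cup_{\phi_1^{-1}} Cyl(h) \cup_{\phi_2} M_2$.

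\textbf{Main obstacle.} The main obstacle is Step 2: one must ensure that moving the balls into the prescribed handlebodies does not change the diffeomorphism type of $M$ relative to its labelled boundary. I would handle this entirely through the ball-homogeneity theorem. The orientation conventions on the boundary spheres, incoming versus outgoing, need only be checked so that the boundary spheres of $M_1$ and $M_2$ carry the orientations induced from $M$. The degenerate cases $a = 0$ or $b = 0$ are included, since then no balls are removed from that handlebody.
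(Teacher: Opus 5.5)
Your argument is correct and follows the route the paper intends. The paper derives the closed mapping-cylinder variant from Theorem \ref{thm:HeegaardSplitting} by inserting a collar and setting $h=\phi_2^{-1}\circ f\circ\phi_1$, then simply asserts this boundary variant without proof; your Steps 1--3 (cap off to $\widehat{M}$, split it, and use the multi-disk theorem \cite[Ch8 Thm 3.2]{hirsch_differential_1976} to move the $a$ and $b$ balls into the interiors of the two handlebodies) supply exactly the reduction the paper leaves implicit, and your Step 4 is the paper's collar argument verbatim, with ``closed'' in the statement correctly read as ``compact''.
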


\begin{prop} \label{prop:JtoLmaps}
Given any $A= \oplus_{i\geq 0 } A_i$ an $J$-algebra over $k$, and any irreducible oriented $3$-manifold $P$, let $1_P: k \to A_0$ and $e_P : A_0  \to A_0$ be a prime unit and a prime endomorphism of the $L$-algebra $A_0 $. Then there exists $h \in MCG(\Sigma_g)$ for some $g$ such that \[
1_P = \alpha_1 \circ \alpha_2 \dots \circ \alpha _g \circ \rho_g(h) \circ \omega_{g-1} \circ \omega _{g-2} \circ \dots \circ \omega_0 \circ 1
.\]
and that \[
e_P = \alpha_1 \circ \alpha_2 \dots \circ \alpha _g \circ \rho_g(h) \circ \omega_{g-1} \circ \omega _{g-2} \circ \dots \circ \omega_0
.\]
Namely, $1_P: k \to A_0$ is the composition \[
k \overset{1}{\to } A_0 \overset{\omega _0}{\to } A_1 \overset{\omega _1}{\to } \dots \overset{\omega _{g-1}}{\to }A_g \overset{\rho _g(h)}{\to } A_g \overset{\alpha _g}{\to } A_{g-1} \overset{\alpha_{g-1}}{\to } \dots \overset{\alpha _2}{\to } A_1 \overset{\alpha_1 }{\to }A_0 
.\]
and $e_P: A_0 \to A_0 $ is the composition \[
A_0 \overset{\omega _0}{\to } A_1 \overset{\omega _1}{\to } \dots \overset{\omega _{g-1}}{\to }A_g \overset{\rho _g(h)}{\to } A_g \overset{\alpha _g}{\to } A_{g-1} \overset{\alpha_{g-1}}{\to } \dots \overset{\alpha _2}{\to } A_1 \overset{\alpha_1 }{\to }A_0 
.\]
\end{prop}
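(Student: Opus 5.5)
We will work through Juhász's equivalence (Theorem \ref{thm:Jalg=3dTFT}): choose a 3-dimensional TFT $Z$ with $J(Z)\cong A$, so that $A_g = Z(\Sigma_g)$. Under this identification the prime unit is $1_P = Z([P\setminus \operatorname{int}(D^3)])$ (Theorem \ref{thm:Z(S2)isPLmonoid}) and the prime endomorphism is $e_P = Z([P\setminus \operatorname{int}(D^3\sqcup D^3)])$. The plan is to factor these bordisms, as bordisms in $\operatorname{Cob}(3)$, into the traces of elementary surgeries and a mapping cylinder, and then apply $Z$. We treat $e_P$ first and obtain $1_P$ by precomposing with $1 = Z(D^3)$, using $[P\setminus \operatorname{int}(D^3)] = [P\setminus \operatorname{int}(D^3\sqcup D^3)]\circ 1$ from comment \ref{criticalrelations}.

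\textbf{Step 1 (Heegaard factorization).} Fix a genus $g$ Heegaard splitting of $P$ (Theorem \ref{thm:HeegaardSplitting}) with handlebodies $H_1, H_2$. Remove one ball from the interior of each handlebody, giving $P\setminus\operatorname{int}(D^3\sqcup D^3)\cong (H_1\setminus \operatorname{int} D^3)\cup \operatorname{Cyl}(h)\cup (H_2\setminus\operatorname{int} D^3)$; this is the boundary variant, Theorem \ref{thm:Heegaardvarpar}, with $a=b=1$. Here $\phi_1,\phi_2$ are fixed identifications with the standard $\Sigma_g$, and $h=\phi_2^{-1}\circ f\circ\phi_1\in\operatorname{Diff}^+(\Sigma_g)$.

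\textbf{Step 2 (handle decompositions).} We show that $H_1\setminus\operatorname{int}D^3$, viewed as a bordism $S^2\to\Sigma_g$, is diffeomorphic rel boundary to the composite of the traces of $g$ successive 1-handle attachments, starting from $S^2$ and each raising the genus by one. A genus $g$ handlebody is a ball with $g$ 1-handles, and removing an interior ball gives $S^2\times I$ with $g$ 1-handles attached on the outer side. The standard identifications $\phi_1$ can be chosen so that the outgoing boundary matches the standard $\Sigma_g$ used to define the $\omega_i$. Hence $Z(H_1\setminus\operatorname{int}D^3)=\omega_{g-1}\circ\cdots\circ\omega_0$. Turning $H_2\setminus\operatorname{int}D^3$ upside down, it is a sequence of $g$ traces of 2-handle attachments along the standard meridians, so $Z$ of it is $\alpha_1\circ\cdots\circ\alpha_g$. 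Finally $Z(\operatorname{Cyl}(h))=\rho_g(h)$ by definition of $\rho_g$.

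\textbf{Step 3 (assemble).} Functoriality of $Z$ gives the formula for $e_P$, and precomposing with $1$ gives the formula for $1_P$. The class of $h$ in $MCG(\Sigma_g)$ suffices, since isotopic gluing maps give diffeomorphic bordisms.

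\textbf{Main obstacle.} The delicate point is Step 2: matching conventions. The surgery traces defining $\omega_i$ and $\alpha_i$ in $J(Z)$ attach handles along specific standard curves and embeddings. We must verify that the handlebody pieces decompose using exactly these standard attaching data, with any discrepancy absorbed into $h$. We would absorb it by adjusting $\phi_1$ and $\phi_2$: any two genus $g$ handlebody structures on a standard surface differ by a diffeomorphism of $\Sigma_g$, and that diffeomorphism can be composed into $h$ via the mapping cylinder relation $\operatorname{Cyl}(a)\circ\operatorname{Cyl}(b)=\operatorname{Cyl}(a\circ b)$. A second check is orientation: $\phi_2$ is orientation-reversing, and we must confirm that $h$ is indeed orientation preserving, which the paper's variant theorem already asserts.
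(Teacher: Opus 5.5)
Your proposal is correct and follows the paper's argument. Both take a Heegaard splitting of $P$ with balls removed, read it as $g$ traces of $1$-handle attachments, a mapping cylinder $\operatorname{Cyl}(h)$, and $g$ traces of $2$-handle attachments, and then push this through the TFT $Z$ with $J(Z)\cong A$. You differ only in two small improvements: you obtain $1_P=e_P\circ 1$ from the single $(a,b)=(1,1)$ splitting, so one $h$ serves both formulas (the paper applies the boundary variant separately for $(a,b)=(0,1)$ and $(1,1)$), and you explicitly absorb the mismatch with the standard attaching data into $h$ via the choice of $\phi_1,\phi_2$, which the paper leaves implicit.
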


\begin{proof}
Applying theorem \ref{thm:Heegaardvarpar} to $P\setminus D^3$ in the case $(a,b)=(0,1)$, there exists a genus $g$ handle body $M_1 $ and a connected closed oriented $3$-manifold $M_2 $ with boundary $S^2 \amalg \Sigma_g$ and  $h\in \operatorname{Diff}^+(\Sigma_g ) $ such that \[
P\setminus D^3 \cong M_1 \cup Cyl(h) \cup M_2 
.\]
Recall that any handle body of genus $g$ can be obtained, by definition, by attaching $g$-many 1-handles to a $0$-handle. Hence in the case $(a,b)=(0,1)$, we learn that $P$ is built by first attaching $g$-many 1-handles to a $0$-handle, which results in $M_1 $.  Then we glue on a mapping cylinder $Cyl(h)$. Then we attach $g$-many $2$-handles, which form $M_2 $. We stop short of attaching the final $3$-handle, which would had result in a closed manifold.

Passing through the TFT $Z$ corresponding to the $J$-algebra $A$, and noting that $\omega _i$ is the image of the trace of $1$-handle attachments while $\alpha_i$ is the image of the trace of $2$-handle attachments, we obtain the desired composition for $1_P$.

Doing the same to $P\setminus (D^3 \sqcup D^3)$ in the case $(a,b)=(1,1)$, we obtain the desired composition for $e_P$.
\end{proof}

%
%
%

\section{The $\infty$-operad $\mathbb{L}^\otimes $} \label{sec:infoperad}
We now shift gears to the $(\infty,1)$-categorical setting. In this exploratory section, we aim to define an $\infty$-operad $\mathbb{L}^\otimes $ and conjecture it as equivalent to the connected part of the $\infty$-endomorphism operad $\operatorname{End}_{\Omega^2 \text{Bord}_3^{fr}}^\otimes (S^2) $. There is also an oriented version of the proposal. 

We will do this in steps: first we will briefly review the constructions of the $\infty$-little $n$-cube operad $\mathbb{E}_n^\otimes $, then we will define the $\infty$-operad $\mathbb{L}^\otimes $ and $\operatorname{End}_{\Omega^2 \text{Bord}_3}^\otimes (S^2) $, and lastly we will give two precise conjectures which state that there exists a map of $\infty$-operads which is an equivalence.

\begin{mycom}
When we say $\mathcal{C}$ is an $\infty$-category we mean either $\mathcal{C}$ is a complete Segal space or a quasi-category. Although it is usually clear from context which model we are referring, we will default to using quasi-categories unless specified otherwise.

Many influential work on the bordism categories were done in the model of ($n$-fold) complete Segal spaces such as\cite{lurie_classification_2008, Calaque_2019, scheimbauer2014factorization}, while $\infty$-operads were explored by \cite{LurieHigherAlgebra} in the model of quasi-categories. 

Both complete Segal spaces and quasi categories are valid models of $(\infty,1)$-categories, due to the fact that the homotopy theory of complete Segal spaces is the same as the homotopy theory of Quasi-categories, hence one loses nothing by choosing a model. In fact, one may move from one model to the other via \cite{joyal2007quasi} style constructions, such as theorem \ref{thm:JoyalTierney} in the appendix.
\end{mycom}

The following category will show up repeatedly in the section

\begin{defn} \label{defn:Fin*}
($\mathcal{F}in_*$) Let $\langle n \rangle = \left\{ *, 1, \dots ,n \right\} $. The Segal category of finite pointed set $\mathcal{F}in_*$ is defined to have 
\begin{enumerate}
\item objects are $\langle n \rangle $ for each $n\geq 1$.
\item a morphism $\alpha : \langle m \rangle \to \langle n \rangle  $ is a map such that  $\alpha (*)=*$.
\end{enumerate}

A morphism $\alpha : \langle m \rangle \to \langle n \rangle $ is \textit{inert} if for each $i \in 1, \dots ,n$, the inverse $\alpha ^{-1}(i)$ has only one element; it is \textit{active} if $\alpha ^{-1}(*) = \left\{ * \right\} $.
\end{defn} 

Recall that an $\infty$-operad as defined in \cite{lurie2017higheralgebra} is a morphism of $\infty$-categories \[
p: \mathcal{O}^\otimes \to N(\mathcal{F}in_*)
.\]
such that every inert morphism has $p$-coCartesian lifts, the inert morphisms $\rho ^i \langle n \rangle \to \langle 1 \rangle $ sending $i$ to $1$ and everything else to $*$ induce ``projection maps,'' and there is equivalence of $\infty$-categories $\mathcal{O}^\otimes _{\langle n \rangle } \to \mathcal{O}^n$ where $\mathcal{O}^\otimes _{\langle n \rangle } \equiv p ^{-1}(\langle n \rangle )$, $\mathcal{O} \equiv \mathcal{O}^{\otimes }_{\langle 1 \rangle }$. A precise definition is recorded in \ref{defn:infoperad}.


First let's recall the standard construction of the $\infty$-little k-cube operad $\mathbb{E}_k^ \otimes $. 

Let $\Box^k = (-1,1)^k $ be the $k$-dimensional open cube. We say an embedding $f: \Box^k \to \Box^k$ is rectilinear if it is given by \[
f(x_1 , \dots ,x_k) = (a_1 x_1 +b_1 , \dots , a_k x_k+b_k)
.\]
for some constants $a_i>0$ and $b_i$. For any finite set $S$, we say an open embedding $\Box^k \times S \to \Box^k$ is rectilinear if it is rectilinear on each connected component. Denote all such rectilinear embeddings as $\operatorname{Rect}(\Box^k \times S, \Box^k)$ and equip it with the subspace topology of $\mathbb{R}^{2k \mid S \mid }$. Note that the images of the $|S|$-many $\Box^k$ are disjoint by definition.

\begin{defn} \label{defn:Ek}
Let $^t \mathbb{E}_k^{\otimes }$ be the topological category defined as follows 
\begin{enumerate}
\item The objects are $\langle n \rangle $, the same as those of $\mathcal{F}in_*$ 
\item An arrow $f:\langle m \rangle \to \langle n \rangle $ consists of the following data:
\begin{enumerate}
\item a morphism $\alpha : \langle m \rangle \to \langle n \rangle $ in $\mathcal{F}in_*$.
\item for each $1\leq j\leq n$, a rectilinear embedding in \[
\operatorname{Rect}(\Box^k \times \alpha ^{-1}(j), \Box^k)
.\]
\end{enumerate}
\item The topology on the mapping spaces $\operatorname{Map}_{^t \mathbb{E}_3^\otimes }(\langle m \rangle , \langle n \rangle )$ is induced by the presentation \[
\operatorname{Map}_{^t \mathbb{E}_3^\otimes }(\langle m \rangle , \langle n \rangle ) = \coprod_{\alpha : \langle m \rangle \to \langle n \rangle } \prod_{1\leq j\leq n} \operatorname{Rect}(\Box^k \times \alpha ^{-1}(j), \Box^k)
.\]
\item The composition \[
\operatorname{Map}_{^t \mathbb{E}_3^\otimes }(\langle n \rangle , \langle n' \rangle ) \times \operatorname{Map}_{^t \mathbb{E}_3^\otimes }(\langle n' \rangle , \langle n'' \rangle ) \to \operatorname{Map}_{^t \mathbb{E}_3^\otimes }(\langle n \rangle , \langle n'' \rangle )
.\]
sends $\alpha : \langle n \rangle , \langle n' \rangle , \beta : \langle n' \rangle \to \langle n'' \rangle $ to $\beta \circ \alpha : \langle n \rangle \to \langle n'' \rangle $, and the rectilinear embeddings \[
f_j : \Box^k \times \alpha ^{-1}(j) \to \Box^k ,\quad f'_{j'}: \Box^k \times \beta ^{-1}(j') \to \Box^k
.\]
are sent to \[
f'_{j'} \circ (\coprod_{j \in \alpha ^{-1}(\beta ^{-1}(j'))} f_j) : \Box^k \times (\beta \circ \alpha ) ^{-1} (j') \to \Box^k
.\]
\end{enumerate}

We call $^t \mathbb{E}_k^{\otimes }$ the topological little $k$-cube operad.

Define the $\infty$-category $\mathbb{E}_k^\otimes $ as the topological nerve of $^t \mathbb{E}_k^\otimes $.
\end{defn}

\begin{prop}
$\mathbb{E}_k^\otimes $ is an $\infty$-operad.
\end{prop}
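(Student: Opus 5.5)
Since the nerve of a fibrant simplicial category is a quasi-category, the plan is to verify Lurie's definition of an $\infty$-operad (recorded in \ref{defn:infoperad}) directly on the topological category $^t\mathbb{E}_k^\otimes$. I will then transport the conclusions through the topological nerve, following the standard argument in \cite{LurieHigherAlgebra}.

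The first step is to construct the structure map. Forgetting the rectilinear embeddings defines a continuous functor $^t\mathbb{E}_k^\otimes \to \mathcal{F}in_*$, where $\mathcal{F}in_*$ carries the discrete topology. This functor is compatible with composition because composition in $^t\mathbb{E}_k^\otimes$ composes the underlying maps $\beta\circ\alpha$. Applying the topological nerve gives $p:\mathbb{E}_k^\otimes \to N(\mathcal{F}in_*)$. Each mapping space is a disjoint union of products of spaces of the form $\operatorname{Rect}(\Box^k\times S,\Box^k)$. These are open subsets of Euclidean space, hence Kan after applying $\operatorname{Sing}$. So $\mathbb{E}_k^\otimes$ is a quasi-category.

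The second step constructs the $p$-coCartesian lifts of inert morphisms. Let $\alpha:\langle m\rangle\to\langle n\rangle$ be inert. Each fibre $\alpha^{-1}(j)$ is then a single point, and I lift $\alpha$ to the morphism whose embeddings are all $\mathrm{id}_{\Box^k}$. To show this lift $\tilde\alpha$ is $p$-coCartesian, it suffices, for topological categories, to check the following homotopy pullback condition for every object $\langle l\rangle$:
\[
\operatorname{Map}(\langle n\rangle,\langle l\rangle)\to \operatorname{Map}(\langle m\rangle,\langle l\rangle)\times_{\operatorname{Hom}(\langle m\rangle,\langle l\rangle)}\operatorname{Hom}(\langle n\rangle,\langle l\rangle).
\]
I will show this map is in fact a homeomorphism. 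Fix $\beta:\langle n\rangle\to\langle l\rangle$. Precomposing with $\tilde\alpha$ restricts the embeddings to the components indexed by $\alpha^{-1}(\beta^{-1}(j'))$. Since $\alpha$ is a bijection away from $*$, this restriction is an identification of the relevant products of $\operatorname{Rect}$ spaces.

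The third step is the Segal condition. The fibre over $\langle n\rangle$ has morphisms lying over $\mathrm{id}_{\langle n\rangle}$ given by $n$-tuples of self-embeddings in $\operatorname{Rect}(\Box^k,\Box^k)$. This space is contractible, since it is a product of intervals of parameters $a_i>0$ and $b_i$. Hence $\mathbb{E}_k^\otimes{}_{\langle n\rangle}$ and $(\mathbb{E}_k^\otimes{}_{\langle 1\rangle})^n$ are both contractible Kan complexes with a single object up to equivalence. The inert projections $\rho^i$ then induce an equivalence between them.

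The last requirement is the condition on mapping spaces. For $\langle m\rangle\to\langle n\rangle$ over $\alpha$, it asks that $\operatorname{Map}^\alpha$ split as $\prod_j \operatorname{Map}^{\rho^j\circ\alpha}$. This follows immediately because the presentation of mapping spaces is literally a product over $j$. I expect the main obstacle to be the coCartesian-lift step, specifically making the translation between coCartesian edges in the topological nerve and homotopy pullbacks of mapping spaces precise. For that step I would cite the relevant proposition of \cite{LurieHigherAlgebra} rather than reprove it.
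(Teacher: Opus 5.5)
Your argument is correct, but it takes a different route from the paper. The paper states this proposition without proof. For the analogous claim about $\mathbb{L}^\otimes$ it argues via the operadic nerve, which is also the standard argument for $\mathbb{E}_k^\otimes$:
\begin{itemize}
\item one recognises $\operatorname{Sing}(^t\mathbb{E}_k^\otimes)$ as the simplicial category $\mathcal{O}^\otimes$ of the one-coloured simplicial operad with $\operatorname{Mul}_{\mathcal{O}}(\{\Box^k\}_{S},\Box^k)=\operatorname{Sing}\operatorname{Rect}(\Box^k\times S,\Box^k)$;
\item this identification holds because each mapping space over $\alpha$ is literally a product over $j$ of multimorphism spaces indexed by $\alpha^{-1}(j)$, with composition given by operadic substitution;
\item one then invokes Proposition 2.1.1.27 of Lurie's \emph{Higher Algebra}, with fibrancy automatic since $\operatorname{Sing}$ of any space is Kan.
\end{itemize}
You instead verify the three axioms by hand, which amounts to reproving 2.1.1.27 in this special case. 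The identity-embedding lifts of inert maps are coCartesian because precomposition with them is a fibrewise homeomorphism $\prod_{j'}\operatorname{Rect}(\Box^k\times\beta^{-1}(j'),\Box^k)\cong\prod_{j'}\operatorname{Rect}(\Box^k\times\alpha^{-1}\beta^{-1}(j'),\Box^k)$. Axiom (2) holds because postcomposition with the identity-embedding lifts of the $\rho^i$ is the identity of $\prod_i\operatorname{Rect}(\Box^k\times\alpha^{-1}(i),\Box^k)$. The operadic-nerve route takes two lines and hides the coCartesian bookkeeping inside Lurie's theorem. Yours is self-contained and makes the mechanism visible. The cost is that you need the comparison between mapping spaces in the nerve and $\operatorname{Sing}$ of the topological mapping spaces, together with the mapping-space criterion for coCartesian edges; these are exactly the inputs Lurie's proof uses.

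There are two small inaccuracies, neither of which affects the argument.
\begin{itemize}
\item $\operatorname{Rect}(\Box^k\times S,\Box^k)$ is not open in Euclidean space: image boxes may touch $\partial\Box^k$, and disjointness of the images is a closed condition. Since $\operatorname{Sing}$ of any space is Kan, this does not matter.
\item $\operatorname{Rect}(\Box^k,\Box^k)$ is a product of the convex regions $\{0<a_i\le 1-|b_i|\}$, not a product of intervals. It is still contractible.
\end{itemize}
Your third step can also be shortened. The fibre over $\langle 1\rangle$ has a single object, so axiom (3) as recorded in the paper follows directly from your second step, taking $C=\langle n\rangle$ and the identity-embedding lifts of the $\rho^i$. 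The contractibility detour is correct but unnecessary.
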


%
%

\begin{defn}
We call $\mathbb{E}_k^\otimes $ the $\infty$-little $k$-cube operad. 
\end{defn}

\begin{rmk}
There are many variations to definition \ref{defn:Ek} by replacing the spaces $\operatorname{Rect}(\Box^k \times \alpha ^{-1}(j), \Box^k)$ by some other ones. There are two important variants:
\begin{enumerate}
\item One may replace $\Box^k$ with $D^k$, and define a rectilinear embedding  $D^k \to D^k$ as one which extends to $\mathbb{R}^k \to \mathbb{R}^k$ as $v \mapsto \lambda v + v_0 $ for some $\lambda > 0$ and $v_0 \in \mathbb{R}^k$. That is, a uniform radial scaling and a translation. The resulting topological category is referred to as the ``little $k$-disk operad'' and its topological nerve is once again an $\infty$-operad, rightfully called the ``$\infty$-little $k$-disk operad.'' 

By \cite{may1997operads} and \cite{Lambrechts_Volić_2014}, for any finite set $S$ there is a canonical homotopy equivalence between $\operatorname{Rect}(\Box^k \times S, \Box^k)$, $\operatorname{Rect}(D^k \times S, D^k)$ and $\operatorname{Conf}_{|S|}(\mathbb{R}^k)$ the configuration space of $|S|$-many distinct points in $\mathbb{R}^k$. Hence the topological little $k$-cube operad is homotopy equivalent to the topological little $k$-disk operad. Passing through the homotopy coherent nerve gives an equivalence between the $\infty$-little $k$-cube operad and the $\infty$-little $k$-disk operad. 

\item One may replace $\Box^k$ with $D^k$, and define a ``projective isometry'' $f: D^k \to D^k$ as one which extends to $\mathbb{R}^k \to \mathbb{R}^k$ as $v \mapsto \lambda Rv + v_0 $ for some $R\in SO(k)$, $\lambda > 0$ and $v_0 \in \mathbb{R}^k$. That is, a rotation followed by a uniform radial scaling, then a translation. The resulting topological category $^t \mathbb{E}_{SO(k)}^\otimes $ is referred to as the ``framed little $k$-disk operad'' and its topological nerve $\mathbb{E}_{SO(k)}^\otimes $ is once again an $\infty$-operad, rightfully called the ``$\infty$-framed little $k$-disk operad.''

This operad (both the topological and the $\infty$-version) is not equivalent to the ordinary little $k$-disk/cube operad for $k\geq 2$. Heuristically speaking, the $SO(k)$ rotational information assigned to the embedded disks is the classifying space $BSO(k)$. When $k\geq 2$ this space is no longer contractible, hence the homotopy theory of projective isometries is different from the homotopy theory of rectilinear embeddings.

\item Similar to variation 2, one may further allow $R \in O(k)$ to include orientation reversal. The resulting topological category is denoted $^t \mathbb{E}_{O(k)}^{\otimes }$ and the topological nerve $\mathbb{E}^\otimes _{O(k)}$ is an $\infty$-operad.
\end{enumerate}
\end{rmk}

\begin{eg}
It is widely understood that $S^{k-1}$ is an $\mathbb{E}_k^\otimes $-algebra object in $\operatorname{Bord}_k^{fr}$. Precisely, this means that there is a map of $\infty$-operads \[
\mathbb{E}_k^{\otimes } \to \Omega^{k-1} \operatorname{Bord}_k^{fr, \otimes }
.\]
sending $\langle 1 \rangle $ to the object $S^2$. 

Additionally, $S^{k-1}$ is understood to be an $\mathbb{E}_{SO(k)}^\otimes $-algebra object in $\Omega^{k-1}\operatorname{Bord}_k^{SO(k), \otimes }$, and an $\mathbb{E}_{O(k)}^{\otimes}$-algebra object in $\Omega^{k-1}\operatorname{Bord}_k^{O(k), \otimes }$. \footnote{Note the infamous trap in naming conventions: associated to the \textit{framed} little $k$-disk operad is $S^{k-1}$ as an \textit{oriented} manifold.} Though, at the time of this writing, we are not aware of a thorough treatment in existing literature.
\end{eg}

Recall \cite{hatcher2008diffeomorphism}'s construction of the space of connect sum configurations:
\begin{notations}
Let $G$ be a graph and $v$ be a vertex. Let $HE(v) = DE(v) \cup E(v)$ be the set of half edges of $v$, written as a union of unpaired half edges $DE(v)$ and the paired half edges $E(v)$. We say $v$ has valency $|E(v)|$ and deficiency $|DE(v)|$.
\end{notations}

\begin{defn} \label{defn:C(M)}
($C(M)$) Let $M= P_1 \# \dots \#P_m \#(S^2\times S^1)^g$ be a closed oriented 3-manifold, where $P_i$ are oriented irreducible 3-manifolds. Let $G$ be a finite connected graph with $m$ of its vertices labelled $1, \dots , m$ and such that all unlabelled vertices have valence $\geq 3$ and the fundamental group is free of rank $g$. Let
\[
C_G(M) = \left\{ (r_e, l_e, D^3_e \hookrightarrow P_i, D_e^3 \hookrightarrow S^3_v ) \right\}
\]
over all edges $e$ and all unlabelled vertices $v$ of $G$, where $r_e>0, l_e>0 $ and embeddings $D^3_e \hookrightarrow P_i$ and $D^3_e \hookrightarrow S^3_v$, the latter being an isometry on its boundary. Let $\overline{C_G}(M)$ be the same as $C_G(M)$ except now allow $l_e$ to go to $0$. Let $G/e$ denote the graph by contracting the edge $e$. Then define \[
C(M) = \coprod_{G} \overline{C_G}(M) / \sim
.\]
where a configuration with $l_e=0$ is identified with the same one after contracting the edge $e$.
\end{defn}

The space $C(M)$ is best thought of as all the possible ways to ``build '' $M$ from $P_i$ factors. To incorporate spherical boundaries, we need to introduce unpaired half edges corresponding to $S^2$ boundaries, one of which is distinguished and corresponds to the outgoing boundary.

\begin{defn} \label{defn:C(M)par}
($C^{\partial, n}_{SO}(M)$) Let $M= P_1 \# \dots \#P_m \#(S^2\times S^1)^g$ be a closed oriented 3-manifold, where $P_i$ are oriented irreducible 3-manifolds. Let $G$ be a finite connected graph with $m$ of its vertices labelled $1, \dots , m$ and such that all unlabelled vertices have valence $\geq 3$ and the fundamental group is free of rank $g$, and such that it has $n+1$-many unpaired half edges, one of which is distinguished. Let
\[
C_{SO,G}^{\partial ,n}(M) = \left\{ (r_e, l_e, D^3_e \hookrightarrow P_i, D^3 \hookrightarrow S^3_v, D^3_\epsilon \hookrightarrow P_i, D^3_\epsilon \hookrightarrow S^3_v ) \right\}
\]
over all edges $e$ and all unpaired half edges $\epsilon$ and all unlabelled vertices $v$ of $G$, where $r_e>0, l_e>0 $ and embeddings $D^3_e \hookrightarrow P_i$ and $D^3_e \hookrightarrow S^3_v$, the latter being an isometry on its boundary. The half-edge embeddings $D^3_\epsilon \hookrightarrow P_i, D^3_\epsilon \hookrightarrow S^3_v $ are orientation-preserving and have disjoint images. Let $\overline{C_{SO,G}^{\partial ,n}}(M)$ be the same as $C_{SO,G}^{\partial ,n}(M)$ except now allow $l_e$ to go to $0$. Let $G/e$ denote the graph by contracting the edge $e$. Then define \[
C_{SO}^{\partial ,n}(M) = \coprod_{G} \overline{C_{SO}^{\partial ,n}}(M) / \sim
.\]
\end{defn}

The space $C^{\partial ,n}_{SO} (M)$ is $C(M)$ but now one remembers incoming and outgoing spherical boundary data. This is the oriented version-- all the irreducible factors are oriented, and all edge embeddings $D^3_e \hookrightarrow S^3_v, D^3_e \hookrightarrow P_i$ oriented, and the boundary embeddings $D^3_\epsilon \hookrightarrow S^3_v, D^3_\epsilon \hookrightarrow P_i$ (those corresponding to half edges) are also oriented. 

\noindent \textbf{Variation}
There exists an obvious variation, one where we upgrade the orientation to the more restrictive framed setting. Call the space $C^{\partial ,n}_{fr}(M)$, where $M$ is now a framed closed $3$-manifold, and the $P_i$ factors are also framed. All embeddings are framing-preserving embeddings. \footnote{We will be intentionally vague about this, since this is only a sketch definition.} 

\begin{rmk}
The definition for the configuration space of the un-oriented case is less straightforward, so we won't attempt to present it here. However, we can make some suggestions. Call this configuration space $C^{\partial ,n}_{O}(M)$, where $M= P_1 \# \dots \# P_m \#(S^2\times S^1)^g \# (S^2 \overset{\sim}{\times}S^1)^{g'}$. In the unoriented case there exists a reducible prime $S^2 \widetilde{\times} S^1$ which is a $S^2$ bundle over $S^1$, defined as \[
S^2 \widetilde{\times} S^1 = S^2 \times [0,1] / (x,0) \sim (r(x),1)
.\]
where $r:S^2 \to S^2$ is the antipodal map, in particular orientation reversing.\footnote{Unlike $S^2\times S^1$ where the identification can be made as $(x,0)\sim (x,1)$.} Hence we need the underlying graphs to have fundamental groups free of rank $g+g'$.

We would need the attaching tubes $D^3_e \times I_{e}$ as well as the orientation-reversing version of these tubes, defined as \[
\left( D^3_e \times I_e  \right) \coprod D^3 / (x,1) \sim r(x)
.\]
where $r:D^3 \times D^3$ is again the antipodal map.
\end{rmk}

Setting aside the issue of the unoriented configuration space, we define an operad which captures the information in the oriented configuration space:

\begin{defn} \label{defn:tL}
($^t \mathbb{L}^\otimes $) is a topological category defined as follows
\begin{enumerate}
\item The objects are $\langle n \rangle $, the same as $\mathcal{F}in_*$.
\item A morphism from $\langle m \rangle $ to $\langle n \rangle $ consists of the following data:
\begin{enumerate}
\item a morphism $\alpha : \langle m \rangle \to \langle n \rangle $ in $\mathcal{F}in_*$
\item for every $j = 1, \dots , n$, a non-negative integer $g_j \geq 0$ and an element $h_j \in \operatorname{BDiff}(\coprod_{i \in I_j} P_i)$, where $P_i$ are framed irreducible 3-manifolds.
\item for every $j=1, \dots ,n$, a framed connect sum configuration $f_j \in C^{\partial , \mid \alpha ^{-1}(j) \mid } _{fr}(M)$ where $M=\left( \#_{i\in I_j} P_i \right) \# (S^2\times S^1)^{g_j}$.
\end{enumerate}
\item Composition is given by
\begin{enumerate}
\item send $\alpha : \langle m \rangle \to \langle n \rangle $ and $\beta : \langle n \rangle \to \langle n' \rangle $ to the composition $\beta \circ \alpha : \langle m \rangle \to \langle n' \rangle $.
\item If $g_j\geq 0$, $h_j \in \operatorname{BDiff}(\coprod_{i \in I_j}P_i)$ for $j = 1, \dots , n'$ and $g_k\geq 0$, $h_k \in \operatorname{BDiff}(\coprod_{i \in I_k}P_{i})$ for $k = 1, \dots , n''$ are the data to be composed, their composition is given by $\overline{g}_k = g_k + \sum_{j \in \alpha ^{-1}(k)}^{ } g_j $ and $\overline{h}_k  $ is the image of $(h_j, h_k)_{j\in \alpha ^{-1}(k)}$ under the natural map \[
\left( \prod_{j \in \alpha ^{-1}(k)} \operatorname{BDiff}\left(\coprod_{i \in I_j} P_i \right) \right)  \times \operatorname{BDiff}\left(\coprod_{i \in I_k} P_i \right) \to \operatorname{BDiff}(\coprod_{i \in \cup_{j \in \alpha ^{-1}(k)} I_j \cup I_k}P_i ) 
.\]
\item If $f_j $ for $j = 1, \dots , n'$ and $f_k$ for $k=1, \dots , n''$ are the framed connect sum configurations over the graphs $G_j$ and $G_k$ respectively, their composition $\overline{f}_k$ is a configuration over the graph $\overline{G}_k$ obtained by connecting the incoming half-edges of $G_k$ with the outgoing half-edges of $\left\{ G_j, j\in \alpha ^{-1}(k) \right\}$. The configuration $\overline{f}_k$ itself is the union of the configurations $\left\{ f_j \right\}_{j\in \alpha ^{-1}(k)}$ and $f_k$.
\end{enumerate}
\end{enumerate}
\end{defn}

\begin{rmk}
The natural map in item 3.b requires some explaining. There is a natural map \[
\operatorname{BDiff}(X ) \times \operatorname{BDiff}(Y ) \to \operatorname{BDiff}(X \coprod Y ) 
.\]
which is induced by the natural map $\operatorname{Diff}(X ) \times \operatorname{Diff}(Y ) \to \operatorname{Diff}(X\coprod Y ) $. If $X$ is not diffeomorphic to $Y$, the target is just $\operatorname{Diff}(X ) \times \operatorname{Diff}(Y ) $ and this map is the identity. If $X$ is diffeomorphic to $Y$, however, the target now must include the diffeomorphism swapping the two, which is a separate path component from the identity component.

In our model of $\operatorname{BDiff}(X) = \operatorname{Emb}(X, \mathbb{R}^\infty) / \operatorname{Diff}(X ) $, we can give a geometric construction. Fix two embeddings \[
i_0, i_1 : \mathbb{R}^\infty \to \mathbb{R}^\infty
.\]
with disjoint images. Then send $[e_X] \in \operatorname{BDiff}(X ) = \operatorname{Emb}(X, \mathbb{R}^\infty) / \operatorname{Diff}(X ) $ and $[e_Y] \in \operatorname{BDiff}(Y )= \operatorname{Emb}(Y, \mathbb{R}^\infty) / \operatorname{Diff}(Y )  $ to the class of embedding $X \coprod Y \to \mathbb{R}^\infty$ given by \[
x \mapsto i_0 (e_X(x)) ,\quad y \mapsto i_1 (e_Y(y))
.\]
This is well defined because precomposing $e_X$ and $e_Y$ by diffeomorphisms corresponds to precomposing the union embedding by an element of $\operatorname{Diff}(X\coprod Y) $.
\end{rmk}

\textbf{Variation} Define $^t \mathbb{L}^\otimes _{SO}$ similarly, except now 2.b the $P_i$ are oriented irreducible 3-manifolds, and 2.c the configuration $f_j \in C^{\partial , \mid \alpha ^{-1}(j) \mid } _{SO}(M)$ is an oriented connect sum configuration.

\begin{defn} \label{defn:L}
Denote by $\mathbb{L}^\otimes $ the topological nerve of $^t \mathbb{L}^\otimes $, and by $\mathbb{L}^\otimes _{SO}$ the topological nerve of $^t \mathbb{L}^\otimes _{SO}$.
\end{defn}

\begin{prop}
The $\infty$-categories $\mathbb{L}^\otimes $ and $\mathbb{L}^\otimes _{SO}$ are $\infty$-operads.
\end{prop}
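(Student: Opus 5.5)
The plan is to check Lurie's definition of an $\infty$-operad (recorded in \ref{defn:infoperad}) directly for the topological nerve $\mathbb{L}^\otimes = N(^t\mathbb{L}^\otimes)$, following the same template that shows $\mathbb{E}_k^\otimes$ is an $\infty$-operad. First I will confirm that $^t\mathbb{L}^\otimes$ is a genuine topological category. Composition must be associative and unital, and continuous on mapping spaces. Associativity of graph gluing is clear, since the glued graph $\overline{G}_k$ does not depend on the order in which half-edges are paired. The genus labels add, and the $\operatorname{BDiff}$ maps are associative because the fixed embeddings $i_0,i_1$ can be chosen coherently. This last point may only hold up to homotopy; if so I would replace $\operatorname{BDiff}$ by a strictly associative model, for instance embeddings into $\mathbb{R}^\infty\times\mathbb{N}$. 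Identities are the cylinder configurations: $g=0$, no prime factors, and a single unlabelled $S^3$ vertex with one incoming and one outgoing half edge. I also need every mapping space to be Kan-fibrant after $\operatorname{Sing}$, which is automatic for topological spaces, so that the topological nerve is an $\infty$-category.

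Next I build the projection $p\colon \mathbb{L}^\otimes\to N(\mathcal{F}in_*)$ by remembering only the underlying map $\alpha$. By construction every mapping space decomposes as
\[
\operatorname{Map}(\langle m\rangle,\langle n\rangle)=\coprod_{\alpha}\prod_{j=1}^n \mathcal{L}(\alpha^{-1}(j)),
\]
where $\mathcal{L}(S)$ is the space of triples $(g,h,f)$ with $|S|$ inputs and one output. This product decomposition is the key structural fact, exactly parallel to the formula in definition \ref{defn:Ek}. For an inert $\alpha$, each fibre $\alpha^{-1}(j)$ is a singleton, so I lift $\alpha$ by putting the identity cylinder in each $\mathcal{L}(\{i\})$. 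I then check that this lift is $p$-coCartesian. By Lurie's criterion for nerves of fibrant simplicial categories, this amounts to showing that postcomposition with the lift gives a homotopy pullback square of mapping spaces over $\operatorname{Hom}_{\mathcal{F}in_*}$. That holds because precomposing with an inert morphism just reindexes the product factors.

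The Segal conditions follow from the same decomposition. The maps $\rho^i$ induce an equivalence $\mathbb{L}^\otimes_{\langle n\rangle}\simeq(\mathbb{L}^\otimes_{\langle 1\rangle})^n$. Moreover, $\operatorname{Map}^\alpha(X,Y)\to\prod_j\operatorname{Map}^{\rho^j\alpha}(X,Y_j)$ is a homeomorphism, hence a weak equivalence, since both sides are $\prod_j\mathcal{L}(\alpha^{-1}(j))$. The oriented case $\mathbb{L}^\otimes_{SO}$ is identical with $C^{\partial,n}_{SO}$ in place of $C^{\partial,n}_{fr}$.

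The main obstacle is the first step. The spaces $C^{\partial,n}(M)$ are only sketched, and composition of configurations can create unlabelled vertices of valence $2$, which the definition forbids. It can also create edges of length zero, which get identified under contraction. I would handle this by contracting the unstable bivalent vertices, using the $l_e\to 0$ identification, and then verifying that this contraction is continuous and strictly associative. If strictness fails, I would work instead with a simplicial category in which such vertices are allowed, and show that it is Dwyer--Kan equivalent to the original; the $\infty$-operad property transfers along such an equivalence.
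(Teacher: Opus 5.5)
Your argument is correct, at least to the same standard of rigor as the paper, but it is a hands-on version of what the paper obtains by citation.

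The paper never checks Lurie's axioms directly. It observes that $\operatorname{Sing}({}^t\mathbb{L}^\otimes)$ is canonically isomorphic to the simplicial category $\mathcal{O}^\otimes$ of Higher Algebra, Notation 2.1.1.22, built from the one-coloured simplicial operad $\mathcal{O}$ whose $n$-ary operations are $\operatorname{Sing}\bigl(\operatorname{Map}_{{}^t\mathbb{L}^\otimes}(\langle n\rangle,\langle 1\rangle)^a\bigr)$. It notes that $\mathcal{O}$ is fibrant because singular complexes are Kan. It then concludes from Higher Algebra, Proposition 2.1.1.27, that the operadic nerve $N(\mathcal{O}^\otimes)=\mathbb{L}^\otimes$ is an $\infty$-operad.

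Your three checks are exactly what that proposition verifies in general: the identity-cylinder lifts of inert maps, the homotopy-pullback test for coCartesian edges, and the Segal product condition. Both routes rest on the same input, the decomposition $\operatorname{Map}(\langle m\rangle,\langle n\rangle)=\coprod_\alpha\prod_j\mathcal{L}(\alpha^{-1}(j))$. This decomposition, together with $\operatorname{Sing}$ commuting with products and coproducts, is what makes the paper's ``canonical isomorphism'' hold.

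The citation buys brevity. Your version makes explicit that nothing is used beyond this decomposition and strict associativity and unitality of composition. The paper needs that last requirement just as much, for $\mathcal{O}$ to be a simplicial coloured operad at all, but takes it as ``by construction.'' You are right to flag the non-strict $\operatorname{BDiff}$ gluing via $i_0,i_1$ and the bivalent-vertex and zero-length-edge issues. Your fallback to a strict model or a Dwyer--Kan equivalent simplicial category addresses something the paper leaves implicit, so it is not a gap of your own.

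One slip: for $p$-coCartesian edges the criterion uses \emph{pre}composition with the lift, $\operatorname{Map}(Y,Z)\to\operatorname{Map}(X,Z)$, not postcomposition, which is the Cartesian test. Your following sentence does use precomposition, so the argument itself is unaffected.
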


\begin{proof}
We show that $\operatorname{Sing}(^t \mathbb{L}^\otimes )$ is canonically isomorphic to the simplicial category $\mathcal{O}^\otimes $, constructed via \cite[Notation 2.1.1.22]{lurie2017higheralgebra}, of a fibrant simplicial coloured operad $\mathcal{O}$. Therefore $\mathbb{L}^\otimes = N(\operatorname{Sing}(^t \mathbb{L}^\otimes )) = N(\mathcal{O}^\otimes ) = N^\otimes (\mathcal{O})$ is the operadic nerve of $\mathcal{O}$, which by \cite[Proposition 2.1.1.27]{lurie2017higheralgebra} is an $\infty$-operad.

Let $\mathcal{O}$ be the simplicial coloured operad with a single object $D^3$, with  \[
\operatorname{Mul}_{\mathcal{O}}(\left\{ D^3 \right\} _{i\in I}, D^3) = \operatorname{Sing}(\operatorname{Map}_{^t \mathbb{L}^\otimes } (\langle n \rangle , \langle 1 \rangle )^a)
\]
where $a: \langle n \rangle \to \langle 1 \rangle $ is the unique active map, sending $*$ to $*$ and everything else to $1$, and $\text{Map}_{^t \mathbb{L}^\otimes }(\langle n \rangle , \langle 1 \rangle )^a$ is the subspace of morphisms with data $a$ as the map in $\mathcal{F}in_*$.
The singular complex of any topological space is Kan. So $\mathcal{O}$ is a fibrant simplicial one-coloured operad. By construction, there is a canonical isomorphism between $\operatorname{Sing}(^t \mathbb{L}^\otimes )$ and $\mathcal{O}^\otimes $. Hence $\mathbb{L}^\otimes $ is an $\infty$-operad.

The proof for the oriented version is analogous.
\end{proof}

The $\infty$-operad $\mathbb{L}^\otimes $ has a few notable sub-operads, induced by sub-categories of $^t \mathbb{L}^\otimes $. Same goes for the oriented version $\mathbb{L}_{SO}^\otimes $. We shall introduce them below and show that one of them is equivalent to $\mathbb{E}_3^{\otimes }$.

\begin{defn}
($^t \mathbb{L}^{*, \otimes }$) Consider the subcategory $^t \mathbb{L}^{*, \otimes }$ of the topological category $^t \mathbb{L}^{\otimes }$ defined by 
\begin{enumerate}
\item The objects are $\langle n \rangle $, the same as before and as those of $\mathcal{F}in_*$.
\item A morphism from $\langle m \rangle $ to $\langle n \rangle $ consists of the following data:
\begin{enumerate}
\item a morphism $\alpha : \langle m \rangle \to \langle n \rangle  $ in $\mathcal{F}in_*$.
\item for every $j=1, \dots ,n$, a framed connect sum configuration $f_j \in C^\partial _{fr, G}(S^3) \subset C^\partial _{fr}(S^3)$, where $G$ is the graph with a single vertex $v$ and $d\geq 1$ unpaired half-edges.
\end{enumerate}
\item The composition is the same as those of $^t \mathbb{L}^\otimes $.
\end{enumerate}
\end{defn}

\begin{rmk}
This is a subcategory of $^t \mathbb{L}^\otimes $ because we have chosen all $g_j$ to be $0$, and all $h_j \in \operatorname{BDiff}(\emptyset ) $ to be the unique point.\footnotemark{} Then $M$ ,which is supposed to appear in the configuration $C^\partial _{fr}(M)$, is now the unit under connect sum, which is $S^3$. 

\footnotetext{There is a unique function  $f: \emptyset \to \emptyset$ which is vacuously smooth and vacuously its own inverse. Hence $\operatorname{Diff}(\emptyset  ) = \left\{ f \right\}  $. Recall that our model for the classifying spaces is $\operatorname{BDiff}(M) \cong \operatorname{Emb}(M, \mathbb{R}^\infty) / \operatorname{Diff}(M ) $. There is only one map from the empty set to any (possibly empty) set: $e: \emptyset \to S$, which is vacuously a smooth embedding when $S = \mathbb{R}^\infty$. Hence $\operatorname{BDiff}((\emptyset) ) \cong * $ the one-point space in our model. } 
\end{rmk}

\begin{defn}
Let $\mathbb{L}^{*, \otimes }$ be the topological nerve of $^t \mathbb{L}^{*, \otimes }$. It is an $\infty$-operad. Moreover, it is an $\infty$-sub-operad of $\mathbb{L}^\otimes $.
\end{defn}

\begin{prop}
There is a canonical isomorphism between $^t \mathbb{L}^{*, \otimes }$ and $^t \mathbb{E}_3^\otimes $. 
\end{prop}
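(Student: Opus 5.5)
The plan is to build an explicit functor $\Phi: {}^t\mathbb{L}^{*,\otimes} \to {}^t\mathbb{E}_3^\otimes$ that is the identity on objects $\langle n\rangle$ and on the underlying $\mathcal{F}in_*$-component, and then show that it is a homeomorphism on each mapping space and respects composition. For $\alpha:\langle m\rangle\to\langle n\rangle$, both mapping spaces split as products over $j=1,\dots,n$. On the $\mathbb{L}^{*}$ side the $j$-th factor is the configuration space $C^\partial_{fr,G}(S^3)$, where $G$ has a single unlabelled vertex $v$ and $|\alpha^{-1}(j)|+1$ unpaired half edges, one of them distinguished. On the $\mathbb{E}_3$ side the $j$-th factor is $\operatorname{Rect}(\Box^3\times\alpha^{-1}(j),\Box^3)$. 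So it suffices to produce a natural homeomorphism between these factors.

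First I unwind the configuration data. Since $G$ has no edges, no $r_e, l_e$ appear and no contraction identifications apply, so $\overline{C_G}=C_G$. A point of $C^\partial_{fr,G}(S^3)$ is then a family of framed embeddings $D^3_\epsilon\hookrightarrow S^3_v$ with disjoint images, one for each half edge $\epsilon$. I will use the distinguished (outgoing) half edge to fix coordinates. Its complement $S^3_v\setminus\operatorname{int}(D^3_{\epsilon_0})$ is a ball, which I identify canonically with the target cube $\Box^3$. I will make the definition rigid, taking framed embeddings to mean exactly the rectilinear ones in the sense of Definition \ref{defn:Ek}. This is needed because the paper calls the definition of $C^{\partial,n}_{fr}$ intentionally vague, and without this normalization the two spaces would only be homotopy equivalent, not isomorphic. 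The remaining half edges indexed by $\alpha^{-1}(j)$ then give exactly a point of $\operatorname{Rect}(\Box^3\times\alpha^{-1}(j),\Box^3)$. The inverse map is evident, and continuity in both directions comes from the subspace topologies in $\mathbb{R}^{6|\alpha^{-1}(j)|}$.

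Next I check functoriality. On $\mathcal{F}in_*$ components composition is $\beta\circ\alpha$ on both sides. On configurations, $\mathbb{L}^\otimes$-composition glues the outgoing half edge of each $G_j$ to the corresponding incoming half edge of $G_k$. The result is a graph with $|\alpha^{-1}(k)|$ edges, and I contract those edges, setting $l_e=0$, to get back to a single-vertex graph. Geometrically this contraction inserts the ball of $f_j$ into the slot of $f_k$, which is precisely $f'_{k}\circ\coprod_j f_j$ in ${}^t\mathbb{E}_3^\otimes$. Identities, meaning the single-vertex graph with two half edges and the standard embedding, go to $\mathrm{id}_{\Box^3}$.

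I expect the main obstacle to be the composition step. The composite configuration in $\mathbb{L}^\otimes$ lives over a graph with edges, and I have to show that the $\sim$-identification, together with the gluing of $S^3_v$'s through the tubes of parameters $r_e, l_e$, reproduces the rectilinear composite on the nose rather than only up to isotopy. My first attempt would be to choose the tube parametrization so that at $l_e=0$ the glued ball is literally the rescaled image. If that fails, I would settle for a canonical isomorphism after passing to topological nerves, which is all that later use in the paper requires.
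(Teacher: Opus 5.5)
\textbf{Two gaps: the outgoing ball and composition.} Your identification of hom-spaces is exactly the paper's proof. The paper deletes the distinguished ball, identifies the complement with $D^3$, reads off $\operatorname{Rect}(D^3\times\alpha^{-1}(j),D^3)$, and stops, asserting that agreement on morphisms suffices. It never checks composition. As a proof, however, your proposal has two concrete gaps.

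First, ``the inverse map is evident'' is false for the configuration spaces as defined. A point of $C^\partial_{fr,G}(S^3)$ contains the outgoing embedding $D^3_{\epsilon_0}\hookrightarrow S^3_v$ as free data. Deleting that ball and reading off the remaining embeddings forgets this datum, so your map is not injective: its fibres are the space of admissible outgoing embeddings. For arbitrary orientation-preserving embeddings this space is $\operatorname{Emb}^+(D^3,S^3)\simeq SO(4)$. So already for $\alpha^{-1}(j)=\emptyset$, where $\operatorname{Rect}$ is a point, the homotopy types do not even agree. Replacing ``framed'' by ``rectilinear'' does not fix this, and ``rectilinear into $S^3_v$'' has no meaning until an affine chart is chosen. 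What is needed is to rigidify the outgoing ball itself:
\begin{itemize}
\item fix once and for all $S^3_v=\Box^3\cup(\text{a fixed complementary ball})$;
\item let the distinguished half edge always be that fixed ball;
\item require the remaining embeddings to be rectilinear into $\Box^3$.
\end{itemize}
Only then are the hom-spaces literally equal.

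Second, the composition step you flag is not merely delicate; inside ${}^t\mathbb{L}^{*,\otimes}$ it is not yet defined. The ${}^t\mathbb{L}^\otimes$-composite lives over a graph $\overline{G}_k$ with one new edge for each $G_j$ glued into $G_k$. The composition rule never specifies the parameters $r_e,l_e$ or the edge embeddings of these new edges. For $l_e>0$ the result is not identified with any single-vertex configuration, so ${}^t\mathbb{L}^{*,\otimes}$ is not closed under composition as written.

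You have to \emph{declare} that the new edges have $l_e=0$ and are contracted. The merged vertex is then obtained by gluing the $\Box^3$ of $f_j$ into the slot $\epsilon$ of $f_k$ via the rectilinear map $f_k|_\epsilon$ itself. With the rigid outgoing ball this gives $f_k\circ\coprod_j f_j$, the ${}^t\mathbb{E}_3^\otimes$ composite, by construction, and functoriality becomes a tautology. But that is a definition you impose, not something derivable from the paper's composition rule.

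Your fallback does not rescue the statement. If composition fails to match on the nose, there is no functor ${}^t\mathbb{L}^{*,\otimes}\to{}^t\mathbb{E}_3^\otimes$ at all, hence no map of topological nerves to be an isomorphism. A homotopy-coherent comparison would give at most an equivalence of $\infty$-operads, which is weaker than this proposition and than the corollary drawn from it.
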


\begin{proof}
The two topological categories have the same objects. So we only need to show that they agree on morphisms. Fix a $j = 1, \dots , n$. A framed connect sum configuration $f_j \in C^{\partial }_{fr, G}(S^3)$ where $G$ is the single vertex $v$ graph with $d\geq 1$ unpaired half edges is a tuple \[
f_j = (D^3_\epsilon \hookrightarrow S^3)_{\epsilon \in DE(v)}
.\]
where the embeddings are rectilinear and whose images are disjoint. Here $ DE(v)$ is the set of half-edges at the only vertex $v$. We may delete the interior of the image of the rectilinear embedding $D^3_{\epsilon^!} \hookrightarrow S^ 3$ corresponding to the one distinguished half-edge $\epsilon^! \in DE(v)$. After which we are left with a collection of rectilinear disjoint embeddings \[
D^3_\epsilon \hookrightarrow (S^3 \setminus \operatorname{int}(im(D^3_{\epsilon^!})) )\cong D^3 ,\quad  \epsilon \in DE(v) - \epsilon^!
.\]
We may identify this with the morphisms of $^t\mathbb{E}_3^\otimes $\[
\operatorname{Rect}(D^3 \times \alpha ^{-1}(j), D^3)
.\]
where $d = |\alpha ^{-1}(j)| +1$.
\end{proof}

\begin{cor}
There is a canonical isomorphism of $\infty$-operads between $\mathbb{L}^{*, \otimes }$ and $\mathbb{E}_3^\otimes $. Moreover, $\mathbb{E}_3^\otimes $ is an $\infty$-sub-operad of $\mathbb{L}^\otimes $.
\end{cor}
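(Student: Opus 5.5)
The corollary has two parts: the isomorphism $\mathbb{L}^{*,\otimes}\cong\mathbb{E}_3^\otimes$, and the claim that $\mathbb{E}_3^\otimes$ sits inside $\mathbb{L}^\otimes$ as an $\infty$-sub-operad. I would prove both by applying the topological nerve to the isomorphism of topological categories $^t\mathbb{L}^{*,\otimes}\cong{}^t\mathbb{E}_3^\otimes$ from the preceding proposition. Both $\mathbb{L}^{*,\otimes}$ and $\mathbb{E}_3^\otimes$ are defined as topological nerves, and the nerve is a functor. So an isomorphism of topological categories, meaning a bijection on objects together with homeomorphisms on mapping spaces compatible with composition, goes to an isomorphism of simplicial sets.

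The key steps, in order, are as follows.

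First, upgrade the bijection on morphism sets from the previous proof to an isomorphism of topological categories. The morphisms of $^t\mathbb{E}_3^\otimes$ are rectilinear embeddings of disks, as in the little-disk variant of Definition \ref{defn:Ek}. I would check three things.
\begin{enumerate}
\item The identification $f_j\mapsto (D^3_\epsilon\hookrightarrow S^3\setminus \operatorname{int}(D^3_{\epsilon^!}))$ is a homeomorphism for the subspace topologies.
\item It commutes with composition. Composition in $^t\mathbb{L}^\otimes$ glues outgoing half-edges to incoming half-edges and then contracts the resulting edges with $l_e=0$. Under deletion of the distinguished ball, this is exactly insertion of little disks, $f'_{j'}\circ\coprod f_j$.
\item It covers the identity of $\mathcal{F}in_*$.
\end{enumerate}

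Second, apply the topological nerve. This yields an isomorphism of simplicial sets $\mathbb{L}^{*,\otimes}\cong\mathbb{E}_3^\otimes$ commuting with the projections to $N(\mathcal{F}in_*)$, hence an isomorphism of $\infty$-operads.

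Third, establish the sub-operad claim. The inclusion $^t\mathbb{L}^{*,\otimes}\hookrightarrow{}^t\mathbb{L}^\otimes$ is a subcategory inclusion, and it is full on objects with subspace mapping spaces. Its nerve is therefore a monomorphism of simplicial sets over $N(\mathcal{F}in_*)$. It preserves inert morphisms, because inert morphisms in both categories are those lying over inert maps in $\mathcal{F}in_*$ with trivial configuration data. It also preserves their coCartesian lifts, since these are the projections, which already lie in the subcategory. Composing with the isomorphism from the second step exhibits $\mathbb{E}_3^\otimes$ as an $\infty$-sub-operad of $\mathbb{L}^\otimes$.

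I expect the main obstacle to be the compatibility-with-composition check in the first step. The earlier proposition matched mapping spaces but said nothing about composition. Composition in $^t\mathbb{L}^\otimes$ is phrased via graph gluing modulo edge contraction, so I must verify that gluing a single-vertex configuration into another, and then contracting the new edge, really lands back in the single-vertex stratum $C^\partial_{fr,G}(S^3)$. I would also need the resulting embeddings to be exactly the composite rectilinear embeddings. If the equality holds only up to homotopy, I would fall back to a zigzag of weak equivalences of topological categories and conclude an equivalence, rather than an isomorphism, of $\infty$-operads.
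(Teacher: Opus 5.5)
Your proposal is correct and follows the paper's route: the paper gives no separate argument, since the corollary is just the topological nerve applied to the isomorphism $^t\mathbb{L}^{*,\otimes}\cong{}^t\mathbb{E}_3^\otimes$ from the preceding proposition, together with the fact that $\mathbb{L}^{*,\otimes}$ was introduced as a sub-operad of $\mathbb{L}^\otimes$. The paper leaves your compatibility-with-composition check implicit, and you are right that it holds on the nose only if composition in $^t\mathbb{L}^\otimes$ is read as gluing with the new edges at $l_e=0$ and then contracting them.
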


Another sub-operad of interest is the ``spike ball'' operad, defined below.

\begin{defn}
(Spike Ball Operad) The ``spike ball'' operad $^t \mathbb{L}^{\bigstar, \otimes }$ is the subcategory of the topological category $^t \mathbb{L}^\otimes $ defined by 
\begin{enumerate}
\item The objects are $\langle n \rangle $, the same as before and as those of $\mathcal{F}in_*$.
\item A morphism from $\langle m \rangle $ to $\langle n \rangle $ consists of the following data:	
\begin{enumerate}
\item a morphism $\alpha : \langle m \rangle \to \langle n \rangle $ in $\mathcal{F}in_*$.
\item for every $j = 1, \dots , n$, an element $h_j \in \operatorname{BDiff}(\coprod_{i \in I_j} P_i ) $, where $P_i$ are framed irreducible $3$-manifolds.
\item for every $j=1, \dots , n$, a framed connect sum configuration $f_j \in C^{\partial , \mid \alpha ^{-1}(j) \mid }_{fr, G}(M) \subset C^{\partial, \mid \alpha ^{-1}(j) \mid } _{fr}(M)$, where $M = \#_{i \in I_j} P_i $, and $G$ is a graph with an unlabelled vertex $v$ and $\mid \alpha ^{-1}(j) \mid $ unpaired half edges at $v$, and connected to $v$ are $I_j$-labelled vertices. So $G$ is the star graph with one central vertex $v$ and $|I_j|$-many leaves, where $v$ also has $\mid \alpha ^{-1}(j) \mid $ unpaired half-edges.
\end{enumerate}
\item The composition is the same as those of  $^t \mathbb{L}^\otimes $.
\end{enumerate}

An example of the type of star graph $G$ can be found in figure \ref{fig:spikeball}-(a). 
Let $\mathbb{L}^{\bigstar, \otimes }$ be the topological nerve of $^t \mathbb{L}^{\bigstar, \otimes }$. It is an $\infty$-category and moreover an $\infty$-operad. Call it the ``$\infty$-spike-ball operad.''
\end{defn}

\begin{figure}[ht]
    \centering
\def\svgwidth{1\columnwidth} 
\begingroup%
  \makeatletter%
  \providecommand\color[2][]{%
    \errmessage{(Inkscape) Color is used for the text in Inkscape, but the package 'color.sty' is not loaded}%
    \renewcommand\color[2][]{}%
  }%
  \providecommand\transparent[1]{%
    \errmessage{(Inkscape) Transparency is used (non-zero) for the text in Inkscape, but the package 'transparent.sty' is not loaded}%
    \renewcommand\transparent[1]{}%
  }%
  \providecommand\rotatebox[2]{#2}%
  \newcommand*\fsize{\dimexpr\f@size pt\relax}%
  \newcommand*\lineheight[1]{\fontsize{\fsize}{#1\fsize}\selectfont}%
  \ifx\svgwidth\undefined%
    \setlength{\unitlength}{680.31496063bp}%
    \ifx\svgscale\undefined%
      \relax%
    \else%
      \setlength{\unitlength}{\unitlength * \real{\svgscale}}%
    \fi%
  \else%
    \setlength{\unitlength}{\svgwidth}%
  \fi%
  \global\let\svgwidth\undefined%
  \global\let\svgscale\undefined%
  \makeatother%
  \begin{picture}(1,0.33333333)%
    \lineheight{1}%
    \setlength\tabcolsep{0pt}%
    \put(0,0){\includegraphics[width=\unitlength,page=1]{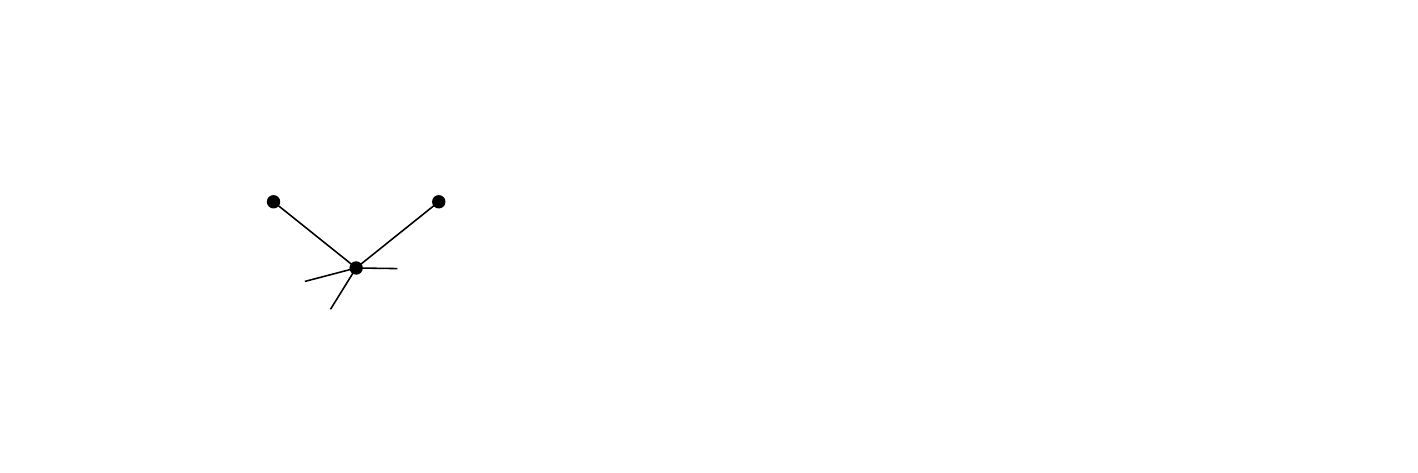}}%
    \put(0.1880864,0.20644983){\makebox(0,0)[lt]{\lineheight{1.25}\smash{\begin{tabular}[t]{l}$1$\end{tabular}}}}%
    \put(0.30607866,0.20644983){\makebox(0,0)[lt]{\lineheight{1.25}\smash{\begin{tabular}[t]{l}$2$\end{tabular}}}}%
    \put(0,0){\includegraphics[width=\unitlength,page=2]{spikeball.pdf}}%
    \put(0.49190156,0.26511154){\makebox(0,0)[lt]{\lineheight{1.25}\smash{\begin{tabular}[t]{l}$P_1$\end{tabular}}}}%
    \put(0.77482694,0.26077296){\makebox(0,0)[lt]{\lineheight{1.25}\smash{\begin{tabular}[t]{l}$P_2$\end{tabular}}}}%
    \put(0.24160784,0.02153772){\makebox(0,0)[lt]{\lineheight{1.25}\smash{\begin{tabular}[t]{l}(a)\end{tabular}}}}%
    \put(0.72752304,0.01073958){\makebox(0,0)[lt]{\lineheight{1.25}\smash{\begin{tabular}[t]{l}(b)\end{tabular}}}}%
  \end{picture}%
\endgroup%

    \caption[Spike ball operad example]{(a) An example of the star graph $G$ in the spike ball operad. In this particular case, $G$ has $\mid \alpha ^{-1}(j) \mid + 1 =3$ half edges, and $I_j=\left\{ 1,2 \right\} $ labelled vertices. (b) illustrate the particular bordism $[M] \in \operatorname{BDiff}( M) $represented by $G$ and the connect sum configuration $f$. The two dashed spheres are considered incoming, and the distinguished half edge is the big sphere, considered outgoing.}
    \label{fig:spikeball}
\end{figure}

\begin{rmk}
This is a subcategory of $^t \mathbb{L}^\otimes $ because we have chosen all $g_j$ to be $0$. Also note that $^t \mathbb{L}^{*, \otimes }$ is a subcategory of $^t \mathbb{L}^{\bigstar, \otimes }$, where for any $j = 1, \dots , n$, $I_j = \emptyset$ so there are no irreducible factors in $\operatorname{BDiff}(\coprod_{i \in I_j}P_i) $.
\end{rmk}

\begin{cor}
$\mathbb{E}_3^\otimes $ is an $\infty$-sub-operad of $\mathbb{L}^{\bigstar, \otimes }$.
\end{cor}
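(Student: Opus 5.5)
The plan is to deduce the corollary from the preceding results by transitivity of sub-operad inclusions. First I will record that $^t\mathbb{L}^{*,\otimes}$ is a subcategory of $^t\mathbb{L}^{\bigstar,\otimes}$. This is the remark just before the statement. A morphism of $^t\mathbb{L}^{*,\otimes}$ is the special case of spike ball data in which every $I_j=\emptyset$. Then $h_j$ is the unique point of $\operatorname{BDiff}(\emptyset)$, and the star graph degenerates to the single unlabelled vertex $v$ carrying its unpaired half-edges, with $M=S^3$.

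I will check that this inclusion is closed under composition. Composition in both categories is inherited from $^t\mathbb{L}^\otimes$. Gluing two single-vertex configurations along half-edges gives a graph whose unlabelled vertices can be contracted, via the identification $l_e\to 0$ in $C^{\partial}_{fr}$. This returns a single-vertex configuration with no labelled leaves, so the inclusion is a topological subcategory, and it is an inclusion of mapping spaces as subspaces.

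Next I will pass to topological nerves. Because $\operatorname{Sing}$ and the homotopy coherent nerve are functorial and preserve inclusions of subcategories, I get an inclusion $\mathbb{L}^{*,\otimes}\hookrightarrow\mathbb{L}^{\bigstar,\otimes}$ of $\infty$-categories over $N(\mathcal{F}in_*)$. To see that this is a map of $\infty$-operads, I will argue as in the proof that $\mathbb{L}^\otimes$ is an $\infty$-operad. Both sides are operadic nerves of fibrant simplicial one-coloured operads $\mathcal{O}^*\subset\mathcal{O}^\bigstar$. The inclusion is induced by an inclusion of multimorphism complexes, so it preserves the inert coCartesian lifts, which are built from identities and forgetting components. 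Hence it is an $\infty$-sub-operad inclusion.

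Finally I will compose with the canonical isomorphism $\mathbb{E}_3^\otimes\cong\mathbb{L}^{*,\otimes}$ from the previous corollary. The one point needing care is that the spike ball operad is genuinely closed under composition, in particular that gluing star configurations stays within star configurations after edge contraction. This holds because composition is only claimed to be inherited, and I will take it as given from the definition. Everything else is formal.
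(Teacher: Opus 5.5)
Your proposal is correct and follows the paper's route: the corollary comes from the remark that ${}^t\mathbb{L}^{*,\otimes}$ is the $I_j=\emptyset$ part of ${}^t\mathbb{L}^{\bigstar,\otimes}$ (so $h_j=\ast$, $M=S^3$, single-vertex graph), combined with the canonical isomorphism $\mathbb{E}_3^\otimes\cong\mathbb{L}^{*,\otimes}$ and passage to topological nerves. Your separate closure-under-composition check is unnecessary, since the paper already declares ${}^t\mathbb{L}^{*,\otimes}$ a subcategory of ${}^t\mathbb{L}^{\otimes}$; as phrased it is also slightly loose, because the identification in $C(M)$ contracts an edge only at $l_e=0$ exactly, not through a limit $l_e\to 0$.
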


Lastly, we need to define the $\infty$-endomorphism operad of $S^2$. This is most easily done using complete Segal spaces as there is previous work by Calaque and Scheimbauer. 

\begin{defn} 
Let $V$ be a finite dimensional real vector space. Define $\left( P\Omega^{n-1}Bord_n^V \right)_k$ to be the set of tuples \[
(M, t_0, \dots , t_k)
.\]
such that
\begin{enumerate}
\item $t_0 \leq t_1 \leq \dots \leq t_k$ an ordered tuple of $k+1$ real numbers.
\item $M$ is a closed $n$-dimensional submanifold of $V \times \mathbb{R}$ and the composition $\pi :M \hookrightarrow V\times \mathbb{R} \twoheadrightarrow \mathbb{R}$ is proper.
\item For each $0\leq i\leq k$, and each $x\in \pi ^{-1}(t_i)$, the map $\pi $ is submersive at $x$.
\end{enumerate}
Viewing $\left( P\Omega ^{n-1}Bord_n^V \right)_k$ as a subspace of \[
\operatorname{Sub}(V\times \mathbb{R})\times \mathbb{R}^{k+1} ,\quad \operatorname{Sub}(V\times \mathbb{R}) = \coprod_{[M]} \operatorname{Emb}(M, V\times \mathbb{R}) / \operatorname{Diff}(M)
.\]
we equip it with the subspace topology.

Note that $\left( P\Omega^{n-1}Bord_n^V \right)_\bullet$ is a simplicial space. The simplicial face maps are given by forgetting a $t$, and the simplicial degeneracy maps are given by duplicating a $t$.
\end{defn}

Let $P\Omega^{n-1}Bord_n = \varinjlim_{V \subseteq \mathbb{R}^\infty} P\Omega^{n-1}Bord_n^V$. At the $k$-th level, we may think of it to consist of tuples \[
(M, t_0, \dots ,t_k)
.\]
where $M$ is now a closed $n$-dimensional submanifold of $\mathbb{R}^\infty$ (technically $\mathbb{R}^\infty \times \mathbb{R}$).

\begin{claim}
$P\Omega^{n-1}Bord_n$ is a Segal space.
\end{claim}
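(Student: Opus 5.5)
To prove that $P\Omega^{n-1}Bord_n$ is a Segal space, I would follow the template of Lurie's sketch and its correction by Calaque--Scheimbauer. Two things need checking. First, $P\Omega^{n-1}Bord_n$ is a simplicial space, meaning the face and degeneracy maps are continuous. Second, for every $k\geq 2$ the Segal map
\[
\left(P\Omega^{n-1}Bord_n\right)_k \longrightarrow \left(P\Omega^{n-1}Bord_n\right)_1 \times^h_{\left(P\Omega^{n-1}Bord_n\right)_0} \cdots \times^h_{\left(P\Omega^{n-1}Bord_n\right)_0} \left(P\Omega^{n-1}Bord_n\right)_1
\]
is a weak homotopy equivalence.

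Continuity of the structure maps is immediate. Forgetting or duplicating a $t_i$ is the restriction of a coordinate projection or diagonal on $\mathbb{R}^{k+1}$. The submersivity condition at the remaining or duplicated levels is inherited. Passing to the colimit over $V\subseteq\mathbb{R}^\infty$ preserves this, so I would treat the colimit as a filtered colimit along closed inclusions and argue levelwise for finite $V$.

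For the Segal condition I would proceed as follows.

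\textbf{Step 1.} Replace the homotopy fibre product by a strict fibre product. I would show that the map $d_0\times d_1\colon(P\Omega^{n-1}Bord_n)_1\to(P\Omega^{n-1}Bord_n)_0\times(P\Omega^{n-1}Bord_n)_0$, restricted to the source/target maps, is a Serre fibration, or at least that the maps $(P\Omega^{n-1}Bord_n)_1\to(P\Omega^{n-1}Bord_n)_0$ are. To do this I would lift paths of slices $M\cap\pi^{-1}(t)$ by isotopy extension in a collar, using that $\pi$ is submersive near $\pi^{-1}(t)$ together with the Ehresmann fibration lemma for the proper submersion $\pi$ near $t$.

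\textbf{Step 2.} Identify $(P\Omega^{n-1}Bord_n)_k$ with the strict iterated fibre product. A $k$-simplex $(M,t_0,\dots,t_k)$ determines the $1$-simplices $(M,t_{i-1},t_i)$, and these agree on the $0$-simplices $(M,t_i)$. The subtlety is that in this model every simplex carries the whole manifold $M$, not just $\pi^{-1}[t_{i-1},t_i]$. So the strict Segal map is not a bijection: two $1$-simplices agreeing at $(M,t_i)$ must literally share $M$. The fix I would attempt is to show that the Segal map factors through the homotopy-equivalent subspace of ``cut-off'' data. Concretely, I would build a deformation retraction that uses a fibrewise reparametrisation of $\mathbb{R}$ to stretch the regions $\pi^{-1}(-\infty,t_0)$ and $\pi^{-1}(t_k,\infty)$ to be cylindrical. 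This is the same trick used for Calaque--Scheimbauer's $\mathrm{PBord}$, where the submersivity at the $t_i$ gives cylindrical collars.

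\textbf{Step 3.} With collars, glue. Given $1$-simplices $(M_i,t_{i-1},t_i)$ whose restrictions to neighbourhoods of $t_i$ agree up to a contractible choice of collar, I would form $\bigcup_i M_i\cap\pi^{-1}[t_{i-1}-\epsilon,t_i+\epsilon]$. This produces a homotopy inverse to the Segal map, and the homotopies are given by shrinking $\epsilon$.

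I expect Step 2 to be the main obstacle. Because the paper's definition does not truncate $M$ outside $[t_0,t_k]$ and uses non-strict inequalities $t_0\leq\cdots\leq t_k$, the naive Segal map is far from injective. The argument therefore needs a careful homotopy showing that the data of $M$ away from the marked levels is contractible, relative to the collared germs at the $t_i$. I would try to do this first by a straight-line interpolation of reparametrisations $\mathbb{R}\to\mathbb{R}$ that are the identity near each $t_i$. The claim should follow along the lines of Calaque--Scheimbauer's proof for their $\mathrm{PBord}_n$.
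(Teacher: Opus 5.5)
The paper states this claim without proof (it defers to Lurie, Calaque--Scheimbauer and Jordan), so there is nothing to compare with. Judged on its own, your proposal has a genuine gap, and Steps 1 and 2 both misdescribe this model.

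Write $X_k=(P\Omega^{n-1}Bord_n)_k$. Every simplex carries the \emph{whole} submanifold $M$, so the strict Segal map $(M,t_0,\dots,t_k)\mapsto\bigl((M,t_{i-1},t_i)\bigr)_{1\le i\le k}$ is a homeomorphism onto the strict iterated fibre product. Indeed, the condition $d_0(M,t_0,t_1)=d_1(M',t_1',t_2)$ in $X_0$ forces $M=M'$ as subsets and $t_1=t_1'$. So there is no failure of injectivity to repair in Step 2. The entire content of the claim is that this strict pullback is a homotopy pullback, which is what Step 1 was supposed to provide.

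But the face maps $X_1\to X_0$ are not Serre fibrations here. Take a path $(M_s,t_0)$ in $X_0$ along which two critical points of $\pi|_{M_s}$ lying above $t_0$ cancel in a death (for $n=1$, straighten an S-shaped wiggle of a curve). Start with $t_1$ strictly between the two critical values. A lift $t_1(s)$ of $d_1$ can never cross a critical value and stay regular, so it is squeezed onto the degenerate critical value at the death time and cannot be continued. The map $d_0\times d_1$ into $X_0\times X_0$ fails too: its image consists of pairs with a common $M$, which is not a union of path components. Your isotopy-extension argument lifts paths of \emph{slices} $M\cap\pi^{-1}(t)$. That is the right argument for a collared, cut-off model whose $0$-simplices are closed $(n-1)$-manifolds. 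Here a $0$-simplex is all of $M$ together with a level, and the lifting problem concerns the extra level.

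The proposed repair does not work either. With $\operatorname{Sub}(V\times\mathbb{R})=\coprod_{[M]}\operatorname{Emb}(M,V\times\mathbb{R})/\operatorname{Diff}(M)$, the diffeomorphism type of $M$ is locally constant. So no deformation within $X_k$ can make $\pi^{-1}(-\infty,t_0)$ cylindrical when it carries topology, e.g.\ a closed component below $t_0$. Reparametrising $\mathbb{R}$ by a diffeomorphism never removes critical points. Pushing regions off to infinity needs a Galatius--Randal-Williams-type topology in which pieces of $M$ may escape, as in Calaque--Scheimbauer, and that is not the topology used here.

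Step 3's gluing assumes $1$-simplices whose manifolds agree near $t_i$. A point of $X_1\times^h_{X_0}X_1$ only supplies a path in $X_0$ from $(M,t_1)$ to $(M',t_1')$, so no homotopy inverse is actually constructed.

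The missing idea is a proof that the strict square is homotopy cartesian even though the face maps are not fibrations. One possible route uses the fact that critical values can never cross a continuously moving regular level $t_1(s)$. A path in $X_0$ then splits into modifications below $t_1$, which only obstruct transporting the lower level, and modifications above it, which only obstruct transporting the upper one. These would need to be separated up to homotopy. Alternatively, compare $X_\bullet$, through a zigzag of levelwise equivalences compatible with the face maps, to a collared model in which restriction to a boundary slice is a fibration.
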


\begin{defn} 
We define the $(\infty,1)$ category of $n-1$ manifolds and bordisms as the completion of the Segal space $P\Omega^{n-1}Bord_n$. Loosely speaking this category consists of the following data:
\begin{itemize}
\item The objects are closed $n-1$ manifolds, corresponding to the $n-1$ dimensional preimage $\pi ^{-1}(t_0)\subset M$ where $M$ is an $n$-manifold.
\item The 1-morphisms are compact bordisms, corresponding to the $n$-dimensional preimage $\pi ^{-1}([t_0 ,t_1 ])\subset  M$.
\item The 2-morphisms are diffeomorphisms between bordisms.
\item The 3-morphisms are isotopes of diffeomorphisms
\item  $\dots $
\end{itemize}
\end{defn}

\textbf{Variations} By demanding the manifolds of $P\Omega^{n-1}\operatorname{Bord}_n$ be framed or oriented, we obtain the standard framed or oriented variations of $\Omega^{n-1}\operatorname{Bord}_n$, denoted as $\Omega^{n-1}\operatorname{Bord}_n^{fr}$  or $\Omega^{n-1}\operatorname{Bord}_n^{SO}$.

\begin{mycom}
There are a few versions of $PBord^V$ floating in the literature. They differ mostly on condition 3. The original sketch was proposed by \cite{lurie_classification_2008}, which was later corrected by \cite{Calaque_2019} . Our definition most closely resembles that of \cite{jordan2016topological}, which was adapted from \cite{Calaque_2019}.
\end{mycom}

\begin{rmk}
If $\mathcal{C}$ is a symmetric monoidal $(\infty,n)$-category with unit $\mathbf{1}$, then $\Omega \mathcal{C} = \operatorname{End}_{\mathcal{C}}(\mathbf{1})= \operatorname{Hom}_{\mathcal{C}}(\mathbf{1}, \mathbf{1}) $ is a symmetric monoidal $(\infty,n-1)$-category. As the notation suggests, $\Omega^{k-1} Bord_k$ has an alternative construction as the $k-1$ looping of the $(\infty,k)$-category $Bord_k$. In fact, this construction is more precise than the one we presented above-- see \cite{Calaque_2019} if our presentation is insufficient.
\end{rmk}

\begin{rmk}
The $(\infty,1)$-category $\Omega^{n-1}Bord_n$ inherits a symmetric monoidal structure from $Bord_n$. We shall be intentionally vague about what we mean by ``symmetric monoidal structure.'' Beyond the heuristics of disjoint union, a precise definition can be found in [Scheimbauer]. The symmetric monoidal category $\Omega^{n-1}Bord_n$ determines 
\footnote{More precisely, one extracts a quasi-category via [Joyal Tierney]. This quasi-category is the $\infty$-category which is symmetric monoidal in the sense of Lurie HA, and it is the underlying $\infty$-category of the $\infty$-operad $\Omega^{n-1}Bord_n^\otimes$.} 
an $\infty$-operad $p: \Omega^{n-1}Bord_n^\otimes \to N(\mathcal{F}in_*)$. Roughly speaking, the fibre of $p$ over $\langle m \rangle $ consists of $m$-tuples of $n-1$ dimensional closed manifolds, and a morphism over $\alpha : \langle m \rangle \to \langle m' \rangle $ is a bordism. 
\end{rmk}

\begin{defn} \label{defn:Endomorphism operad}
(Endomorphism operad) Given $p: \mathcal{C}^\otimes \to N(\mathcal{F}in_*)$ be an $\infty$-operad, and let $X \in \mathcal{C}$ be an object. By definition there is an equivalence $\mathcal{C}^\otimes _{\langle n \rangle } \cong \mathcal{C}^n$ induced by the inert maps $\rho^i : \langle n \rangle \to \langle 1 \rangle $. Let $X^n$ denote the object of $\mathcal{C}^\otimes_{\langle n \rangle }$ corresponding to the $n$-tuple $(X, \dots , X) \in \mathcal{C}^n$. The $\infty$-endomorphism operad of $X$ is defined as the full $\infty$-suboperad \[
\operatorname{End}_{\mathcal{C}}^\otimes (X) \subseteq \mathcal{C}^\otimes 
.\]
spanned by the objects $X^n$ in each fiber $\mathcal{C}^\otimes_{\langle n \rangle }$.
\end{defn}

\begin{notations}
Let $\operatorname{Mul}_{\mathcal{C}}(\left\{ X \right\}_{1\leq i\leq n},Y)$ denote the union of the components of $\operatorname{Map}_{\mathcal{C}^\otimes }(X^n,X)$ over the unique (active) map $\langle n \rangle \to \langle 1 \rangle $ sending $*$ to $*$ and everything else to $1$.
\end{notations}

Heuristically, the $\infty$-endomorphism operad of $X$ is the one-coloured $\infty$-operad whose multimorphisms are given by those of $\mathcal{C}$: \[
\operatorname{Map}_{\operatorname{End}_{\mathcal{C}}^\otimes (X) }(X^n, X) = \operatorname{Mul}_{\mathcal{C}}(\{X, \dots , X\}, X)
.\]

\begin{defn} 
Let $\Omega^{n-1}Bord_n$ be the symmetric monoidal $\infty$-category of $n$-dimensional bordisms. The $\infty$-endomorphism operad of $S^{n-1}$ is $\operatorname{End}_{\Omega^{n-1}Bord_n}^{\otimes }(S^{n-1}) $. The connected $\infty$-endomorphism operad of $S^{n-1}$ is the sub-operad
\[
\operatorname{End}_{\Omega^{n-1}Bord_n}^{\otimes , conn}(S^{n-1}) \hookrightarrow  \operatorname{End}_{\Omega^{n-1}Bord_n}^{\otimes }(S^{n-1})
\]
consisting of connected bordisms.

Similarly, we define the framed $\infty$-endomorphism operad $\operatorname{End}_{\Omega^{n-1}Bord_n^{fr}}^{\otimes }(S^{n-1}) $, its connected sub-operad $\operatorname{End}_{\Omega^{n-1}Bord_n^{fr}}^{\otimes, conn }(S^{n-1}) $ and the oriented $\infty$-endomorphism operad $\operatorname{End}_{\Omega^{n-1}Bord_n^{SO}}^{\otimes }(S^{n-1}) $, along with its connected sub-operad $\operatorname{End}_{\Omega^{n-1}Bord_n^{SO}}^{\otimes, conn }(S^{n-1}) $.
\end{defn}

%
%
%
%

\begin{conj} \label{conj:LisEndfr}
There is a map of $\infty$-operads $f: \mathbb{L}^\otimes \to \operatorname{End}_{\Omega^2 \operatorname{Bord}_3^{fr}}^\otimes (S^2) $, whose image is $\operatorname{End}_{\Omega^2 \operatorname{Bord}_3^{fr}}^{\otimes , conn}(S^2)$. Furthermore, $f$ is an equivalence of $\infty$-operads $\mathbb{L}^\otimes \overset{\sim}{\to} \operatorname{End}_{\Omega^2 \operatorname{Bord}_3^{fr}}^{\otimes , conn}(S^2)$.
\end{conj}

\begin{conj} \label{conj:LisEndSO}
There is a map of $\infty$-operads $f_{SO}: \mathbb{L}_{SO}^\otimes \to \operatorname{End}_{\Omega^2 \operatorname{Bord}_3^{SO}}^\otimes (S^2) $, whose image is $\operatorname{End}_{\Omega^2 \operatorname{Bord}_3^{SO}}^{\otimes , conn}(S^2)$. Furthermore, $f_{SO}$ is an equivalence of $\infty$-operads $\mathbb{L}_{SO}^\otimes \overset{\sim}{\to} \operatorname{End}_{\Omega^2 \operatorname{Bord}_3^{SO}}^{\otimes , conn}(S^2)$.
\end{conj}

\begin{rmk}
The reason behind the proposal is this: by Hatcher's unfinished draft \cite{hatcher2008diffeomorphism}, which was proven by \cite{boyd2026primedecompositionfibresequence}, for every oriented 3 manifold $M$ whose irreducible factors are $P_1 , \dots , P_m$, with possibly finitely many irreducible prime factors $S^2 \times S^1$, then there exists a homotopy fibre sequence \[
C(M) \to \operatorname{BDiff}(M ) \to \operatorname{BDiff}(\coprod_{1\leq i\leq m}P_i ) 
.\]
where $C(M)$ is the space of connect sum configurations, and the second map is roughly the splitting of $M \hookrightarrow \mathbb{R}^\infty$ along splitting spheres, resulting in a collection of irreducible factors with $D^3$ removed and embedded in $\mathbb{R}^\infty$. We then cap the missing balls, which give the boundary-less irreducible factors $\coprod_{i} P_i \hookrightarrow \mathbb{R}^\infty$.

A multimorphism in any of the above $\infty$-endomorphism operads is a bordism from copies of  $S^2$ to a single outgoing $S^2$. Such a bordism is an element $(M, t_0 ,t_1 )$ of \[
\operatorname{BDiff}(M ) \times \mathbb{R}^2 \cong \operatorname{Emb}(M, \mathbb{R}^\infty) / \operatorname{Diff}(M ) \times \mathbb{R}^2
.\]

Hence in order to model the multimorphisms by an operad, we need the operad to essentially have $\operatorname{BDiff}(M ) $ built in, as well as ways to specify its boundaries. The space of connect sum configurations $C(M)$ in the homotopy fibration, over a base point, is our intended carrier of this information. To specify the boundaries, we modified he space of connect sum configurations to include embeddings $B^3 \hookrightarrow S^3, P_i$ corresponding to newly introduced half edges. Hence the operad $\mathbb{L}^\otimes $ and the oriented cousin $\mathbb{L}_{SO}^\otimes $ has exactly enough information to start from a collection of irreducible 3-manifolds $\coprod_i P_i \hookrightarrow \mathbb{R}^\infty$ and form a connect sum through a connect sum configuration $f \in C_{fr}^\partial (M)$ or $C_{SO}^\partial (M)$. The configuration $f$ also specifies where along the bounding 3-balls are, so one could remove their interior.
\end{rmk}


\begin{eg}
Recall that $\mathbb{L}^\otimes $ has an $\infty$-suboperad $\mathbb{L}^{\bigstar, \otimes }$, the ``spike-ball'' $\infty$-operad. Consider a subset of morphism of $\langle 1 \rangle \to \langle 1 \rangle $ given by the following data
\begin{enumerate}
\item the morphism $\alpha : \langle 1 \rangle \to \langle 1 \rangle $ is the identity in $\mathcal{F}in_*$.
\item an element $h \in \operatorname{BDiff}(P )  $ where $P$ is any fixed framed irreducible 3-manifold.
\item A configuration $f \in C^{\partial, 1}_{fr, G}(P)$, where $G$ is the graph with a single unlabelled vertex $v$ connected via the edge $e$ to a single labelled vertex $p$, where there are two additional unpaired half edges $\left\{ \epsilon_1 , \epsilon_2  \right\} $ at the unlabelled vertex $v$: figure \ref{fig:simplespikeball}

\begin{figure}[ht]
    \centering
\def\svgwidth{1\columnwidth} 
\begingroup%
  \makeatletter%
  \providecommand\color[2][]{%
    \errmessage{(Inkscape) Color is used for the text in Inkscape, but the package 'color.sty' is not loaded}%
    \renewcommand\color[2][]{}%
  }%
  \providecommand\transparent[1]{%
    \errmessage{(Inkscape) Transparency is used (non-zero) for the text in Inkscape, but the package 'transparent.sty' is not loaded}%
    \renewcommand\transparent[1]{}%
  }%
  \providecommand\rotatebox[2]{#2}%
  \newcommand*\fsize{\dimexpr\f@size pt\relax}%
  \newcommand*\lineheight[1]{\fontsize{\fsize}{#1\fsize}\selectfont}%
  \ifx\svgwidth\undefined%
    \setlength{\unitlength}{680.31496063bp}%
    \ifx\svgscale\undefined%
      \relax%
    \else%
      \setlength{\unitlength}{\unitlength * \real{\svgscale}}%
    \fi%
  \else%
    \setlength{\unitlength}{\svgwidth}%
  \fi%
  \global\let\svgwidth\undefined%
  \global\let\svgscale\undefined%
  \makeatother%
  \begin{picture}(1,0.33333333)%
    \lineheight{1}%
    \setlength\tabcolsep{0pt}%
    \put(0,0){\includegraphics[width=\unitlength,page=1]{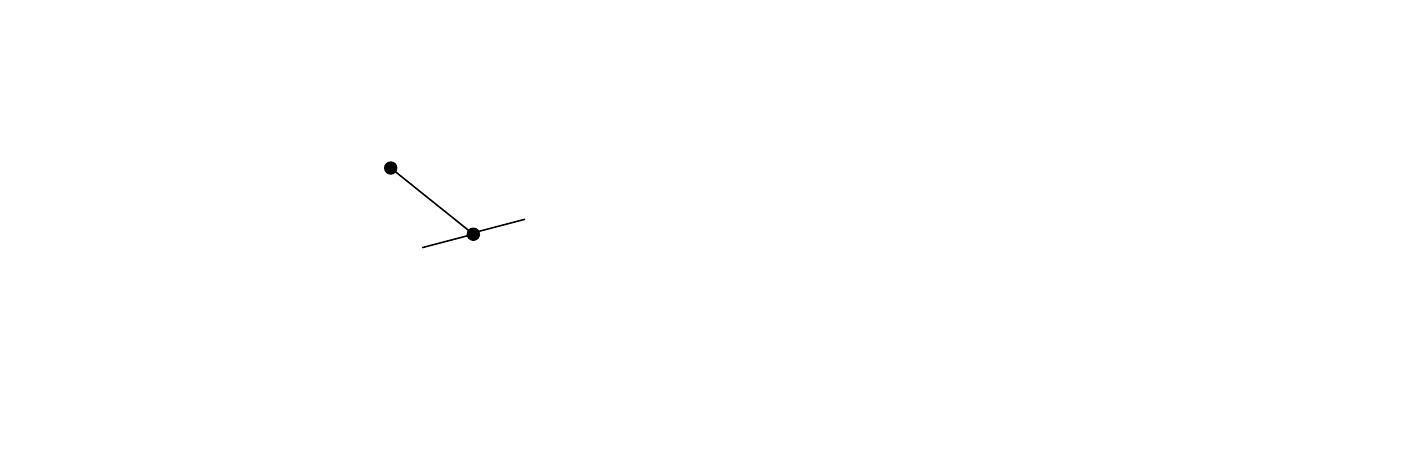}}%
    \put(0.27079601,0.23026384){\makebox(0,0)[lt]{\lineheight{1.25}\smash{\begin{tabular}[t]{l}$1$\end{tabular}}}}%
    \put(0,0){\includegraphics[width=\unitlength,page=2]{simplespikeball.pdf}}%
    \put(0.56497154,0.26750412){\makebox(0,0)[lt]{\lineheight{1.25}\smash{\begin{tabular}[t]{l}$P_1$\end{tabular}}}}%
    \put(0.32431746,0.04535166){\makebox(0,0)[lt]{\lineheight{1.25}\smash{\begin{tabular}[t]{l}(a)\end{tabular}}}}%
    \put(0.80059306,0.0131321){\makebox(0,0)[lt]{\lineheight{1.25}\smash{\begin{tabular}[t]{l}(b)\end{tabular}}}}%
  \end{picture}%
\endgroup%

    \caption[A simple spike ball example]{A simple spike ball example. (a) gives the graph $G$. (b) provides a visualization for the bordism represented by $G$ and a connect sum configuration $f$.}
    \label{fig:simplespikeball}
\end{figure}
\end{enumerate}

The configuration $f$ is equal to the following data \[
(r_e, l_e, D^3_e \hookrightarrow S^3_v, D^3_e \hookrightarrow P, D^3_{\epsilon_1 }\hookrightarrow S^3_v, D^3_{\epsilon_2 } \hookrightarrow S^3_v)
.\]
where all the embeddings in $S^3_v$ are rectilinear and have disjoint images. We may delete the interior of the image of one of the half-edges, say $D^3_{\epsilon_2 }$, so what remains of $S^3_v$ is diffeomorphic to a ball $D^3$. Then the configuration $f$ contains the following data \[
(r_e, l_e, D^3_e \hookrightarrow D^3, D^3_e \hookrightarrow P, D^3_{\epsilon_1 }\hookrightarrow D^3 )
.\]

Note that there exists a map \[
\operatorname{Conf}_2(D^3) \times \operatorname{BDiff}(P) \to \mathbb{L}^{\bigstar, \otimes }
.\]
Sending a configuration of $2$ points in $D^3$ to the centre of the embeddings $D^3_e, D^3_{\epsilon_1 } \hookrightarrow D^3$ and an element $h \in \operatorname{BDiff}(P ) $ to the chosen element as part of the data of $\langle 1 \rangle \to \langle 1 \rangle $. Taking the inclusion into the operad $\mathbb{L}^{\otimes }$, passing through the endomorphism operad, and finally through a TFT $Z$, we get a map \[
\operatorname{Conf}_2(D^3) \times \operatorname{BDiff}(P) \to \mathbb{L}^{\bigstar, \otimes } \hookrightarrow \mathbb{L}^\otimes \to \operatorname{End}_{\Omega \operatorname{Bord}_3^{fr}}^\otimes (S^2) \to \operatorname{End}_{\mathcal{C}}^\otimes (Z(S^2))
.\]
whose image is over the $1$-ary operations. Therefore there is a map \[
\operatorname{Conf}_2(D^3) \times \operatorname{BDiff}(P) \to \operatorname{Map}_{\mathcal{C}}(Z(S^2), Z(S^2))
.\]
for any TFT $Z: \Omega^2\operatorname{Bord}_3^{fr} \to \mathcal{C}$ and for any framed irreducible 3-manifold $P$.
\end{eg}

\appendix
\chapter{Appendix: Definitions and Common Results}

\section{Presentations of a Category} \label{appendix:presentation}
\begin{defn} \label{defn:Free Category}
(Free Category) Let $G$ be a directed graph. The \textbf{free category} $F(G)$ has objects the vertices of $G$ and morphisms all composable arrows. The empty path is the identity of a vertex.
\end{defn}

\begin{defn} \label{defn:Binary Relation}
(Binary Relation) Let $\mathcal{C}$ be a category and $a,b \in \operatorname{obj}(\mathcal{C})$. A \textbf{binary relation} $R_{a,b}$ on $\operatorname{Hom}_{\mathcal{C}}(a,b) $ is a subset $R_{a,b} \subset  \operatorname{Hom}_{\mathcal{C}}(a,b)\times \operatorname{Hom}_{\mathcal{C}}(a,b) $. We write $f R_{a,b} g$ if $(f,g) \in R_{a,b}$. We say $R$ is a \textbf{binary relation on homs} of $\mathcal{C}$ if $R = \left\{ R_{a,b} \mid a,b \in \operatorname{obj} (\mathcal{C}) \right\} $ is a collection of binary relations on all pairs of objects of $\mathcal{C}$. If a pair of morphisms $(f,f')$ is in $R_{a,b}$, we write $f R_{a,b} f'$, or $f=f'$ if the context is clear. 
\end{defn}

\begin{defn} \label{defn:Congruence}
(Congruence) A \textbf{congruence} $R$ on a category $\mathcal{C}$ is an assignment to each pair of objects $a,b \in \operatorname{obj}(\mathcal{C})$ an equivalence relation $R_{a,b}$ on $\operatorname{Hom}_{\mathcal{C}}(a,b) $ that is compatible with compositions: if $f R_{a,b} f'$, meaning if $(f,f')$ are equivalent, then for all $g: a' \to a$ and all $h: b \to b'$ one has $(hfg) R_{a',b'} (hf'g)$; if $e R_{b,c} e'$, then $(ef) R_{a,c} (e'f')$. We sometimes write $f \sim_R f'$ instead to denote the fact that $R$ is a congruence rather than a binary relation.
\end{defn}

\begin{thm}
(First isomorphism theorem of category theory) Let $H: \mathcal{C} \to \mathcal{D}$ be a functor. Then $H$ defines a congruence $\equiv_H$ on $\mathcal{C}$ by $f \equiv_H f'$ if and only if $H(f) = H(f')$. Moreover, $H$ factors uniquely through the quotient: $H = \overline{H} \circ Q$ where $Q: \mathcal{C} \to \mathcal{C} / \equiv_H$ and $\overline{H}: \mathcal{C} / \equiv_H \to \mathcal{D}$ where $\overline{H}(a) = H(a)$ and $\overline{H}([f]) = H(f)$.
\end{thm}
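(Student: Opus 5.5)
The plan is to verify the two claims directly from the definitions of congruence (Definition \ref{defn:Congruence}) and of the quotient category by a congruence, using only functoriality of $H$. First, fix objects $a,b$ of $\mathcal{C}$ and define $f \equiv_H f'$ for $f,f' \in \operatorname{Hom}_{\mathcal{C}}(a,b)$ exactly when $H(f)=H(f')$. This relation is the kernel of the function $H_{a,b}:\operatorname{Hom}_{\mathcal{C}}(a,b)\to \operatorname{Hom}_{\mathcal{D}}(H(a),H(b))$. The kernel of any function is an equivalence relation, so reflexivity, symmetry and transitivity come for free.

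Next I will check compatibility with composition. Suppose $f\equiv_H f'$, and let $g:a'\to a$ and $h:b\to b'$. By functoriality, $H(hfg)=H(h)H(f)H(g)=H(h)H(f')H(g)=H(hf'g)$. The two-sided version is handled the same way: if additionally $e\equiv_H e'$ in $\operatorname{Hom}(b,c)$, then $H(ef)=H(e)H(f)=H(e')H(f')=H(e'f')$. Hence $\equiv_H$ is a congruence.

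Then I will form $\mathcal{C}/\!\equiv_H$. It has the same objects as $\mathcal{C}$, its hom-sets are the equivalence classes $[f]$, and composition is $[e]\circ[f]=[ef]$. This composition is well defined precisely by the congruence property just proved. Associativity and identities are inherited from $\mathcal{C}$. The assignment $Q$, which is the identity on objects and sends $f\mapsto [f]$, is then a functor by construction.

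For the factorization, set $\overline{H}(a)=H(a)$ and $\overline{H}([f])=H(f)$. This is independent of the representative by the very definition of $\equiv_H$. It is a functor because $\overline{H}([e][f])=H(ef)=H(e)H(f)$ and $\overline{H}([\mathrm{id}_a])=\mathrm{id}_{H(a)}$. Finally, $\overline{H}\circ Q=H$ holds on both objects and morphisms.

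For uniqueness, suppose $G:\mathcal{C}/\!\equiv_H\to\mathcal{D}$ is any functor with $G\circ Q=H$. Because $Q$ is bijective on objects and surjective on every hom-set, $G$ is forced on every object and on every class $[f]$, namely $G([f])=G(Q(f))=H(f)=\overline{H}([f])$. So $G=\overline{H}$.

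There is no real obstacle in this argument. The only point needing care is the well-definedness of composition in the quotient, which is exactly where the congruence property is used; I expect that to be the step that needs stating explicitly rather than a genuine difficulty.
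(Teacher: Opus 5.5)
Your proposal is correct and follows the same route as the paper: $\equiv_H$ is an equivalence relation because it is a kernel, functoriality of $H$ gives compatibility with composition, $\overline{H}$ is well defined by construction, and uniqueness is forced because $Q$ is the identity on objects and surjective on hom-sets. The only difference is that you write out details the paper leaves implicit, namely the well-definedness of composition in $\mathcal{C}/\!\equiv_H$ and the functoriality of $\overline{H}$.
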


\begin{proof}
$\equiv_H$ is obviously symmetric, reflexive, and transitive. Functoriality of $H$ guarantees that $\equiv_H$ is compatible with compositions. The functor $\overline{H}$ is well defined by definition, and $\overline{H}$ is unique since any functor $H'$ with $H = H' \circ Q$ must satisfy $ H'([f])= H'(Q(f))= H(f) $ for any morphism in $\mathcal{C}/ \equiv_H$. Since $Q$ is identity on objects, $H'$ must coincide with $H$ (and hence $\overline{H}$) on objects also. This completely determines $H'$ as the same as $\overline{H}$.
\end{proof}

\begin{lemma}
(Least Congruence) Given any binary relations $R$ on homs of $\mathcal{C}$, there is a least congruence $R'$ on $\mathcal{C}$ with $R \subset  R'$.
\end{lemma}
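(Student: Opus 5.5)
We prove the least-congruence lemma by intersection, which is the standard construction. The congruences containing $R$ form a nonempty family. It is nonempty because the total relation $T$ qualifies: set $T_{a,b} = \operatorname{Hom}_{\mathcal{C}}(a,b)\times \operatorname{Hom}_{\mathcal{C}}(a,b)$ for every pair of objects. Each $T_{a,b}$ is trivially an equivalence relation and trivially compatible with composition, and $R \subset T$. We then define $R'$ hom-set by hom-set:
\[
R'_{a,b} = \bigcap \{\, S_{a,b} \mid S \text{ a congruence on } \mathcal{C} \text{ with } R \subset S \,\}.
\]

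The first step is to show that $R'$ is again a congruence. An intersection of equivalence relations on a fixed set $\operatorname{Hom}_{\mathcal{C}}(a,b)$ is an equivalence relation. Reflexivity, symmetry and transitivity each hold in every $S_{a,b}$, and therefore they hold in the intersection. Compatibility with composition goes the same way. Suppose $f R'_{a,b} f'$, and let $g : a' \to a$ and $h : b \to b'$. Then $f S_{a,b} f'$ for every admissible $S$, so $hfg\, S_{a',b'}\, hf'g$ for every such $S$, and hence $hfg\, R'_{a',b'}\, hf'g$. The two-sided condition, that $f R' f'$ and $e R' e'$ imply $ef\, R'\, e'f'$, follows in the same way from the corresponding property of each $S$.

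The second step checks containment and minimality. We have $R \subset R'$ because $R_{a,b} \subset S_{a,b}$ for every $S$ in the family. Any congruence $S$ containing $R$ is one of the sets being intersected, so $R' \subset S$. Therefore $R'$ is the least congruence containing $R$.

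There is no real obstacle here. The only point needing care is size: the family of congruences is a set, since it sits inside a product of power sets of hom-sets. This is clear if $\mathcal{C}$ is locally small; otherwise we work hom-set by hom-set, which is legitimate because the definition of $R'_{a,b}$ only involves the component $S_{a,b}$ of each $S$.

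For the later uses in the paper it may also help to record an explicit description of $R'$, as in the $F(\mathrm{Gra}_2)/CF$ argument of Proposition~\ref{Plegsnecessary}. First close $R$ under whiskering, taking pairs $(hfg, hf'g)$ with $(f,f')\in R$. Then take the reflexive, symmetric, transitive closure. Two-sided compatibility follows from $ef \sim e'f \sim e'f'$.
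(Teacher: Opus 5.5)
Your proof is correct. The paper states this lemma without proof and defers to the standard treatment of quotient categories (Mac Lane, cited for the adjacent universal property), and intersecting all congruences containing $R$ is exactly the standard argument.

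There is one small slip in your size remark. The family of congruences sits inside $\prod_{a,b}\mathcal{P}\bigl(\operatorname{Hom}_{\mathcal{C}}(a,b)\times\operatorname{Hom}_{\mathcal{C}}(a,b)\bigr)$, which is a set when $\mathcal{C}$ is \emph{small}, not merely locally small. Your explicit description (whisker-closure followed by equivalence closure, which stays whisker-closed because $h(h_0fg_0)g=(hh_0)f(g_0g)$) avoids size questions entirely. In any case, the categories in the paper have objects indexed by $\mathbb{Z}_{\geq 0}$.
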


\begin{defn} \label{defn:Quotient Category}
(Quotient Category) Let $\mathcal{C}$ be a category and $R$ a congruence on $\mathcal{C}$. Then the \textbf{quotient category }$\mathcal{C}/R$ has the same objects as $\mathcal{C}$ and the morphisms $\operatorname{Hom}_{\mathcal{C}/R}(a,b) = \operatorname{Hom}_{\mathcal{C}}(a,b) / R_{a,b}$ are those of $\mathcal{C}$ modulo the equivalence relation $R_{a,b}$.

If $R$ is a binary relation, the quotient category $\mathcal{C} / R$ is defined as  $\mathcal{C} / R'$ where $R'$ is the least congruence containing $R$.
\end{defn}

\begin{prop}
(Universal Property of Quotient Category) \cite{mac_lane_categories_1994} Let $\mathcal{C}$ be a category with $R$ a binary relation on the homs. Let $\mathcal{C}/ R$ be the quotient category. Then the quotient functor $Q=Q_R : \mathcal{C} \to \mathcal{C}/R$, which is identity on objects and sends a morphism to its equivalence class,\footnote{Hence $Q$ is surjective on hom sets, and is therefore a full functor.} satisfy the following universal property: If $H: \mathcal{C} \to \mathcal{D}$ is any functor such that  $f R_{a,b} f'$ implies $Hf = Hf'$ for all $f$ and $f'$, then there exists a unique functor $H' : \mathcal{C}/R \to \mathcal{D}$ with $H' \circ Q_R = H$.
\end{prop}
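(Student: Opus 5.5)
We need to prove the last statement: the Universal Property of the Quotient Category. Let $R'$ be the least congruence containing $R$, so that $\mathcal{C}/R$ means $\mathcal{C}/R'$ by the definition of the quotient category.

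The plan is to reduce to the first isomorphism theorem stated just above. The key steps, in order, are as follows.

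First, given $H:\mathcal{C}\to\mathcal{D}$ with $f R_{a,b} f' \Rightarrow Hf=Hf'$, form the congruence $\equiv_H$ from the first isomorphism theorem. The hypothesis says exactly that $R\subseteq \equiv_H$. Since $\equiv_H$ is a congruence containing $R$, and $R'$ is the least such congruence, we get $R'\subseteq \equiv_H$.

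Second, define $H'$ on the quotient. On objects set $H'(a)=H(a)$, which makes sense because $Q$ is the identity on objects. On morphisms set $H'([f]_{R'})=H(f)$. This is well defined because if $f\sim_{R'} f'$ then $f\equiv_H f'$, so $H(f)=H(f')$.

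Third, check that $H'$ is a functor. Composition in $\mathcal{C}/R'$ is $[g]\circ[f]=[g\circ f]$, which is well defined precisely because $R'$ is compatible with composition. It follows that $H'([g][f])=H(gf)=H(g)H(f)$, and likewise identities are preserved. By construction $H'\circ Q=H$.

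Fourth, prove uniqueness, using that $Q$ is surjective on objects and on hom-sets. Any $K$ with $K\circ Q=H$ must satisfy $K(a)=K(Qa)=H(a)$ and $K([f])=K(Q f)=H(f)=H'([f])$. Hence $K=H'$.

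There is no real obstacle beyond the point that the hypothesis is stated only for the generating relation $R$, not for $R'$. This is handled entirely by the minimality of $R'$ in the first step.

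The existence of $R'$ is the Least Congruence lemma. If one wants it explicitly, take the intersection of all congruences containing $R$. The intersection is nonempty because the total relation is such a congruence, and an intersection of equivalence relations compatible with composition is again one.
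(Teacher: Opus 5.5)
Your argument is correct and complete. The inclusion $R \subseteq {\equiv_H}$, combined with minimality of the least congruence $R'$, gives well-definedness of $H'$, and the fact that $Q$ is the identity on objects and surjective on hom-sets gives uniqueness. The paper gives no proof of its own here and simply cites Mac Lane; your argument is the standard one found there, and it mirrors the paper's proof of the first isomorphism theorem just above.
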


\begin{defn} \label{defn:Generators and Relations}
(Generators and Relations) Let $G$ be a directed graph and $R$ a binary relation on $F(G)$, the free category of $G$. Then we say $F(G)/R $ has \textbf{generators }$G$ and \textbf{relations} $R$. An isomorphism $F(G) /R \to \mathcal{C}$ and $\langle G\mid R \rangle $ form a \textbf{presentation }of $\mathcal{C}$. We say the morphisms of $\mathcal{C}$ are \textbf{generated} by the edges of $G$ and the objects by the vertices, and that the binary relation $R$ is \textbf{sufficient}.
\end{defn}

\begin{rmk}
Sufficiency of relations is equivalent to the following more convenient description: Let $G$ be a directed graph and $R$ a binary relation on $F(G)$. Let $\overline{H} : F(G)/R \to \mathcal{C}$ be a functor, where $F(G) / R $ denotes the quotient category $F(G) / \sim_R$ for $\sim_R$ the least congruence containing $R$. Consider the functor $H : F(G) \to \mathcal{C}$ which is the composition $H = \overline{H} \circ Q$. Then $R$ being sufficient (equivalently $\overline{H}$ being an isomorphism) is the same as $\overline{H}$ being bijective on objects and on homs. But since $Q$ is bijective on objects, the former is equivalent to $H$ being bijective on objects. The injectivity part of the later says $\overline{H}([f]) = \overline{H}	([f'])$ if and only if $[f] = [f']$, i.e. $f \sim_R f'$. Notice that the congruence ``kernel'' $\equiv_H$ defined by $f \equiv_H f' \Leftrightarrow H(f) = H(f')$ can be equivalently defined as $f \equiv_H f' \Leftrightarrow \overline{H}([f]) = \overline{H}([f'])$. So $\overline{H}$ being fully faithful implies that $\sim_R = \equiv_H$. 

Translating the abstract nonsense into English, a relation $R$ is sufficient if every morphism in $\mathcal{C}$ is induced by at least one morphism in $F(G)$ and every equality of morphisms in $\mathcal{C}$ is due to equivalence induced by $R$ in $F(G)$.

Note that this does not speak of whether $R$ is the most efficient way of achieving sufficiency. 
\end{rmk}

\begin{defn} \label{defn:Smaller Relations}
(Smaller Relations) Let $R,R'$ be binary relations on homs of $F(G)$. We say $R'$ is smaller than $R$, denoted $R' \subsetneq R$, if $R'_{a,b} \subseteq R_{a,b}$ for all pairs of objects $a,b$ and there exists at least one pair of objects $X,Y$ such that $R'_{X,Y} \subsetneq R_{X,Y}$. In English, $R'$ is smaller than $R$ if it can be obtained from $R$ by removing some binary relations.
\end{defn}

\begin{defn} \label{defn:Necessary Relations}
(Necessary Relations) Let $\overline{H}, G, R$ be a presentation of $\mathcal{C}$. Let $H:F(G) \to \mathcal{C}$ be the composition $\overline{H} \circ Q_R$. We say $R$ is necessary if for any $R'$ smaller than $R$, the unique functor $\overline{H}' : F(G) / \sim_{R'} \to \mathcal{C}$, through which $H$ factors as $H = \overline{H}' \circ Q_{R'}$ is no longer an isomorphism. 
\end{defn}

\begin{rmk}
In this case, $\overline{H}'$ will fail faithfulness while still remaining full and bijective on objects. A pair of morphisms $(f,f') \in R_{X,Y} $ will be mapped to two distinct equivalence classes in $F(G) / \sim_{R'}$, but sent to the same morphism in $\mathcal{C}$ under $H$. Hence the two distinct equivalence classes will be sent to the same morphism by $\overline{H}'$.

Heuristically, the least congruence containing $R'$ will be too small, resulting in a quotient $F(G) /\sim_{R'}$ being too big. 
\end{rmk}

This paper is concerned with constructing presentations of $\operatorname{Cob}(3)_{S^2}$. There are two presentations-- both having non-negative integers (or equivalently $\sqcup^{n\geq 0} S^2$ )as vertices, but one graph has $G_1 $ (and all tensor products) as edges, while the other graph has  $G_2 $ (and all tensor products) as edges. By abuse of notation we shall refer to the two graphs as $G_1 $ and $G_2 $. Since their edges are morphisms, the free category is just $\operatorname{Cob}(3)_{S^2} $ except one forgets if two compositions are equal. The two binary relations on homs are commutative Frobenius relations for the $G_1$ case, and commutative Frobenius relations $+$ legs relations for the $G_2 $ case.

The first section is largely concerned with showing that the functors $F(G_{1,2})\to \operatorname{Cob}(3)_{S^2}$ are bijective on objects and full, so whatever relations we plan on choosing has a chance to result in presentations. The second section explicitly constructs two binary relations $R_{1,2}$ on $F(G_{1,2})$. We then show that these relations are sufficient, namely they give rise to presentations of $\operatorname{Cob}(3)_{S^2}$. Then we show that these relations can be thinned down to be necessary. Hence we obtain two minimal presentations of the category $\operatorname{Cob}(3)_{S^2} $.

\section{Towards $\infty$}

\begin{defn} \label{defn:infcat}
($(\infty,1)$-category) An $(\infty,1)$-category is a complete Segal space $X_\bullet$. That is, a simplicial object in topological spaces satisfying completeness and the Segal condition.
\end{defn}

See \cite{lurie_classification_2008} for details about completeness and Segal condition. See \cite[section 2]{zhao2013extended} for a comparison between the different settings of complete Segal spaces. Here we specialize to the setting of topological spaces and retreat to compactly generated Hausdorff spaces or CW spaces if pathologies arise.

\begin{defn} \label{defn:Quasi-Category}
(Quasi-Category) A quasi-category is a simplicial set $\mathcal{C}$ such that every map of simplicial sets $f_0: \Lambda^n_i \to \mathcal{C}$ can be extended to an $n$-simplex $f: \Delta^n \to \mathcal{C}$ for $0<i<n$. We also call it an  $\infty$-category.
\end{defn}

\begin{thm} \label{thm:JoyalTierney}
(Joyal Tierney) There is a Quillen equivalence between the model catgegory for quasi-categories and the model category for complete Segal spaces. In particular, to each complete Segal space $X_\bullet$ there is a quasi-category $Q_\bullet$ given by $Q_n = \operatorname{Sing}(X_n)_0$.
\end{thm}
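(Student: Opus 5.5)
The plan is to establish the Quillen equivalence through an explicit adjunction between simplicial sets and simplicial spaces, presenting simplicial spaces as bisimplicial sets by replacing each space $X_n$ with $\operatorname{Sing}(X_n)$. Let $p_1 \colon \Delta \times \Delta \to \Delta$ be the first projection. The left adjoint $p_1^*$ sends a simplicial set $K$ to the bisimplicial set $(p_1^*K)_{n,m} = K_n$, constant in the space direction. Its right adjoint $i_1^*$ restricts a bisimplicial set $W$ to its zeroth row, so $(i_1^*W)_n = W_{n,0}$. Applied to $W_{n,\bullet} = \operatorname{Sing}(X_n)$, this gives exactly $Q_n = \operatorname{Sing}(X_n)_0$, the formula in the statement. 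We equip simplicial sets with the Joyal model structure, whose fibrant objects are quasi-categories. We equip bisimplicial sets with Rezk's complete Segal space structure: the left Bousfield localization of the Reedy structure at the spine inclusions $G(n) \hookrightarrow \Delta^n$ and at $\Delta^0 \to E$, where $E$ is the nerve of the walking isomorphism.

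First, we check that $(p_1^*, i_1^*)$ is a Quillen pair. The functor $p_1^*$ preserves monomorphisms, hence Joyal cofibrations. For weak equivalences it suffices, by standard localization arguments, to check that $p_1^*$ sends inner horn inclusions $\Lambda^n_k \hookrightarrow \Delta^n$ and the map $\Delta^0 \to J$ to complete Segal equivalences. The inner horns are generated from spine inclusions by pushouts and retracts, and these are inverted by construction. For $J$, the nerve of the free-living isomorphism, we have $p_1^*J = E$, which is inverted by completeness. Next, we show that $i_1^*$ sends complete Segal spaces to quasi-categories. The lifting problem of $\Lambda^n_k$ against $i_1^*X$ transposes to a lifting problem of $p_1^*\Lambda^n_k$ against $X$. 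This is solvable because $X_n \to X_1 \times_{X_0} \cdots \times_{X_0} X_1$ is a trivial Kan fibration and the inner horn contains the spine.

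Second, we prove that the derived unit and counit are equivalences. It is convenient to use the other adjoint pair $(t_!, t^!)$, where $t^!(K)_{n,m} = \operatorname{Hom}(\Delta^n \times J^m, K)$ is Rezk's classifying diagram, with $J^m$ the nerve of the contractible groupoid on $m+1$ objects. One shows that $i_1^* t^! K = K$, and that $t^!$ sends a quasi-category to a complete Segal space. The latter follows because $J^m \to \Delta^0$ is a Joyal equivalence and the Joyal structure is cartesian. The key comparison is then this: for a complete Segal space $X$, the canonical map $X \to t^!(i_1^*X)$ is a levelwise weak equivalence of spaces. Concretely, for each $n$ we must identify $X_n$ with the nerve of the maximal Kan subcomplex of $\operatorname{Fun}(\Delta^n, i_1^*X)$, that is, the core of the functor quasi-category.

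This last step is the main obstacle. The plan is to first reduce to $n = 0$ and $n = 1$ using the Segal condition on both sides. For $n = 0$, we must show that $X_0 \simeq i_1^*X^{\simeq}$. This is precisely where completeness enters: $X_0 \to X_{\mathrm{hoequiv}}$ is an equivalence, and the right-hand side is computed from $E$-shaped diagrams of $0$-simplices. For $n = 1$, one identifies the mapping spaces of $i_1^*X$ with the fibres of $X_1 \to X_0 \times X_0$. We will first try to avoid direct homotopy-group computations by appealing to Dugger–Spivak's description of mapping spaces in quasi-categories via necklaces. Once this is done, a Quillen pair whose right adjoint reflects weak equivalences between fibrant objects and whose derived counit is an equivalence is a Quillen equivalence. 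This yields the theorem.
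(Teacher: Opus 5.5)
The paper gives no proof of this statement; it is quoted from Joyal--Tierney as background, so your outline has to stand on its own. The first half does, and it follows the Joyal--Tierney route. $p_1^*$ preserves monomorphisms and carries spines to the localizing maps $G(n)\to F(n)$ and $J$ to $E$. Their criterion (a mono-preserving left adjoint out of the Joyal structure is left Quillen iff it inverts inner horns and $J\to\Delta^0$) then finishes the job. Your identity $i_1^*t^!=\mathrm{id}$ is also correct, and $t^!$ does carry quasi-categories to complete Segal spaces (it is right Quillen). The second half has two genuine problems.

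\textbf{There is no canonical map $X\to t^!(i_1^*X)$.} By Yoneda, a natural map $X_{m,n}\to t^!(i_1^*X)_{m,n}=\operatorname{Hom}(\Delta^m\times J^n,i_1^*X)$ amounts to maps $\Delta^m\times J^n\to i_1^*(\Delta^m\boxtimes\Delta^n)=\coprod_{j\in[n]}\Delta^m$, natural in $[m]$ and $[n]$. Since $\Delta^m\times J^n$ is connected, each such map lands in one summand. Naturality along the vertex inclusions $[0]\to[n]$ then forces that summand to be the $j$-th one for every $j$ at once. (The same argument rules out a map in the other direction; the natural map that does exist is $p_1^*K\to t^!K$.)

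So your key ``levelwise equivalence'' has no map to be an equivalence, and the reduction to $n=0,1$ via the Segal condition has nothing to act on. The $n=1$ comparison of mapping spaces, which you rightly call the main obstacle and which is the real content of the theorem, is left as a plan. Even at $n=0$ the core of $i_1^*X$ has as $n$-simplices the \emph{vertices} of $\operatorname{Map}(p_1^*J^n,X)$. Comparing it with the space $X_0$ therefore requires trading the horizontal direction for the vertical one, which your sketch never does.

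The missing idea is a diagonal argument. For fixed $m$, put $B_{n,k}=\operatorname{Hom}\big((\Delta^m\times J^n)\boxtimes\Delta^k,X\big)$. Its column $n=0$ is $X_{m,\bullet}$ and its row $k=0$ is $t^!(i_1^*X)_{m,\bullet}$.
\begin{itemize}
\item Columns are homotopically constant: $p_1^*(\Delta^m\times J^n)\to p_1^*\Delta^m$ is a weak equivalence between cofibrant objects and $X$ is fibrant. This is exactly where completeness enters, since $p_1^*J=E$.
\item Rows are homotopically constant: row $k$ is $t^!(i_1^*X^{\Delta^k})_{m,\bullet}$, the map $X^{\Delta^k}\to X$ is a weak equivalence of complete Segal spaces, and $i_1^*$ and $t^!$ preserve weak equivalences between fibrant objects.
\end{itemize}
The realization lemma then gives a natural zigzag $X_{m,\bullet}\xrightarrow{\sim}\operatorname{diag}B\xleftarrow{\sim}t^!(i_1^*X)_{m,\bullet}$. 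This handles all $m$ simultaneously and needs no Dugger--Spivak.

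\textbf{Your closing criterion is false.} If the derived counit is invertible, then $\mathbf{R}i_1^*$ is fully faithful and hence already conservative, so your two hypotheses collapse into one. Both hold for the identity Quillen pair from the Reedy structure to its complete Segal localization, which is not a Quillen equivalence. What is missing is the derived unit, i.e.\ essential surjectivity.

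Your ingredients do give the right conclusion once the comparison is repaired. The identity $i_1^*t^!=\mathrm{id}$ yields $\mathbf{R}i_1^*\circ\mathbf{R}t^!\cong\mathrm{id}$, and the zigzag yields $\mathbf{R}t^!\circ\mathbf{R}i_1^*\cong\mathrm{id}$. So $\mathbf{R}i_1^*$ is an equivalence of homotopy categories, which is exactly a Quillen equivalence. If you prefer the ``unit plus reflection'' criterion, you would also need that $p_1^*K\to t^!K$ is a weak equivalence for every quasi-category $K$, which your outline never addresses.
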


\begin{notations}
Let $\text{Cat}_\Delta$ be the category of simplicial categories, the objects are simplicial categories and the morphisms are simplicial maps (i.e. maps of simplices that are compatible with face and degeneracy maps). Let $\text{sSet}$ be the category of simplicial sets and simplicial maps. Let $[n]=\left\{ 0<1<\dots <n \right\} $ be the category whose objects are $0,1,\dots n$ and whose maps are orderings $i<j$. 
\end{notations}

\begin{defn} \label{defn:Ordinary Nerve}
(Ordinary Nerve) The ordinary nerve of an ordinary category $\mathcal{C}$ is a simplicial set defined as follows: \[
N(\mathcal{C})_n = \operatorname{Hom}_{Cat}([n], \mathcal{C}) 
.\]
Specifically, $N(\mathcal{C})_n$ is the set of $n$-many composable arrows in $\mathcal{C}$: \[
X_0 \overset{f_0}{\to } X_1 \dots \overset{f_{n-1}}{\to }X_n
.\]
with the face map given by

\begin{itemize}
\item $d_0 $ forgets the first object and the first arrow.
\item $d_n$ forgets the last object and the last arrow.
\item  $d_i$ for $0<i<n$ forgets the object $X_i$ and replaces $f_{i-1}$ and $f_{i}$ with their composition.
\end{itemize}
and the degeneracy maps given by inserting the identity morphisms.
\end{defn}

To work in the setting of $\infty$-categories, one needs to define appropriate notions of nerves for simplicial categories and for topological categories. We must introduce the following notion

\begin{defn} \label{defn:Thickened[n]}
($\mathfrak{C}[\Delta^n]$) is the simplicial category defined as 
\begin{enumerate}
\item The objects are $0, 1, \dots ,n$ 
\item The mapping simplicial sets are given by \[
\operatorname{Map}_{\mathfrak{C}[\Delta^n]}(i,j) = 
\begin{cases}
\emptyset , &\text{if }j<i\\
N(P_{i,j}) ,&\text{if } i\leq j
\end{cases}
.\]
where $P_{i,j}$ is the partially ordered set \[
P_{i,j} = \left\{ I \subseteq \left\{ 0,1, \dots ,n \right\} | (i,j \in I) \quad\text{and}\quad \forall k  \in I, i\leq j\leq k \right\} 
.\]
\item For $i<j,k$, the composition \[
\operatorname{Map}_{\mathfrak{C}[\Delta^n]}(i,j) \times \operatorname{Map}_{\mathfrak{C}[\Delta^n]}(j,k) \to \operatorname{Map}_{\mathfrak{C}[\Delta^n]}(i,k) 
.\]
is induced by the map 
\begin{align*}
P_{i,j}\times P_{j,k} &\to P_{i,k} \\
(I_1, I_2) &\mapsto I_1 \cup I_2
\end{align*}
\end{enumerate}
\end{defn}

\begin{eg}
Recall that the simplicial nerve $N(\mathcal{C})$ of a simplicial category $\mathcal{C}$ is a simplicial set $N(\mathcal{C})\in \text{sSet}$, whose $n$-simplices are given by \[
N(\mathcal{C})_n = \operatorname{Hom}_{\text{Cat}_\Delta}(\mathfrak{C}[\Delta^n],\mathcal{C}) 
.\]
\end{eg}

\begin{prop}
\cite{lurie_higher_2009} Let $\mathcal{C}$ be a simplicial category such that $\operatorname{Map}_{\mathcal{C}}(X,Y)$ is a Kan complex for any $X,Y\in \mathcal{C}$. Then its simplicial nerve $N(\mathcal{C})$ is an $\infty$-category.
\end{prop}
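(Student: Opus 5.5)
The statement is the last proposition of the appendix: if $\mathcal{C}$ is a simplicial category whose mapping simplicial sets $\operatorname{Map}_{\mathcal{C}}(X,Y)$ are all Kan complexes, then the simplicial nerve $N(\mathcal{C})$ is an $\infty$-category, i.e.\ a quasi-category in the sense of Definition~\ref{defn:Quasi-Category}. The plan is to check the inner horn extension condition directly through the adjunction $\mathfrak{C} \dashv N$. By definition $N(\mathcal{C})_n = \operatorname{Hom}_{\text{Cat}_\Delta}(\mathfrak{C}[\Delta^n],\mathcal{C})$. Extending $\mathfrak{C}$ to all simplicial sets by left Kan extension along the Yoneda embedding, a map of simplicial sets $\Lambda^n_i \to N(\mathcal{C})$ is the same as a simplicial functor $\mathfrak{C}[\Lambda^n_i] \to \mathcal{C}$. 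An extension to $\Delta^n$ is then the same as extending this functor along $\mathfrak{C}[\Lambda^n_i] \to \mathfrak{C}[\Delta^n]$.

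The first step is to compute $\mathfrak{C}[\Lambda^n_i]$ as a colimit of the $\mathfrak{C}[\Delta^{n-1}]$ corresponding to the faces $d_j$ with $j \neq i$.
\begin{itemize}
\item It has the same objects $0,\dots,n$ as $\mathfrak{C}[\Delta^n]$.
\item For every pair $(j,k) \neq (0,n)$, its mapping spaces coincide with those of $\mathfrak{C}[\Delta^n]$, because each such $P_{j,k}$ already lives in some face other than $d_i$.
\item The only change is in $\operatorname{Map}(0,n) = N(P_{0,n})$. This nerve is the cube $(\Delta^1)^{n-1}$, with one coordinate for each of the interior vertices $1,\dots,n-1$. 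In $\mathfrak{C}[\Lambda^n_i]$ it is replaced by the subcomplex obtained by deleting the interior of the cube together with the single facet where the coordinate for $i$ records omission of $i$.
\end{itemize}
The inclusion of this subcomplex into the cube is an open-box inclusion, which is anodyne, and this is exactly where $0<i<n$ enters. For an outer horn the missing facet would be one containing the endpoint $0$ or $n$, which never varies, so the argument breaks down there.

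The second step is to extend the given functor $F$. Since $\operatorname{Map}_{\mathcal{C}}(F0,Fn)$ is Kan, the map of the open box into this mapping space extends over the whole cube, because Kan complexes have the extension property against anodyne maps. This defines $F$ on $\operatorname{Map}(0,n)$ in $\mathfrak{C}[\Delta^n]$. No other generating data appear in that mapping space, so composition remains compatible, since $0 \to n$ is not a factor of any other composite. The result is a simplicial functor $\mathfrak{C}[\Delta^n] \to \mathcal{C}$, that is, the required $n$-simplex.

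I expect the main obstacle to be the combinatorial identification in the first step. One has to verify that the colimit over faces yields precisely the open box inside $N(P_{0,n})$, including compatibility of compositions. This is Lurie's computation in Higher Topos Theory, Proposition 1.1.5.10, and I would follow it, or simply cite it.
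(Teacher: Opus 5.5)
Your proposal is correct and is exactly Lurie's proof of HTT Proposition~1.1.5.10, which the paper cites in place of giving its own argument. One correction to a side remark, which does not affect the proof: the hypothesis $0<i<n$ is really used in your second bullet (only $\operatorname{Map}(0,n)$ changes), not in the open-box shape, since for $\Lambda^n_0$ the $(0,n)$-cube is still an open box (missing the facet $x_1=1$), while $\operatorname{Map}(1,n)$ shrinks to the boundary of its cube, and filling that boundary is not an anodyne extension.
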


\begin{notations}
Let $\mathcal{C}$ be a topological category. Then there is an associated simplicial category $\text{Sing}(\mathcal{C})$ whose objects are those of $C$ and whose homs are simplicial spaces given by \[
\operatorname{Map}_{\operatorname{Sing}(\mathcal{C})}(x,y) = \operatorname{Sing}(\operatorname{Map}_{\mathcal{C}}(x,y))
.\]
\end{notations}

\begin{rmk}
A standard result is that $\operatorname{Sing}: tCat \to Cat_\Delta $ is the adjoint of the geometric realization functor $|.|: Cat_\Delta \to tCat$, which sends a simplicial category $\mathcal{C}$ to the topological category $|\mathcal{C}|$ whose objects are the same as those of $\mathcal{C}$ and whose mapping spaces are given by \[
\operatorname{Map}_{|\mathcal{C}|}(x,y) = |\operatorname{Map}_{\mathcal{C}}(x,y)|
.\]
\end{rmk}

\begin{prop} \label{prop:tnerveinfcat}
The topological nerve $N(\mathcal{C})$ of any topological category $\mathcal{C}$ is defined as the simplicial nerve of $\text{Sing}(\mathcal{C})$. It is an $\infty$-category.
\end{prop}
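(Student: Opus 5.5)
The plan is to build the topological nerve as the composite of two operations: first $\operatorname{Sing}$ applied to the topological category $\mathcal{C}$, then the simplicial nerve. Then I invoke the preceding proposition (Lurie), which says that the simplicial nerve of a simplicial category whose mapping objects are Kan complexes is an $\infty$-category. That reduces the statement to one check: that $\operatorname{Sing}(\mathcal{C})$ really is a simplicial category whose mapping simplicial sets are Kan complexes.

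\textbf{Step 1.} I first verify that $\operatorname{Sing}(\mathcal{C})$ is a simplicial category.
\begin{itemize}
\item Its objects are the objects of $\mathcal{C}$.
\item Its mapping objects are $\operatorname{Sing}(\operatorname{Map}_{\mathcal{C}}(x,y))$.
\item Composition is the composite
\[
\operatorname{Sing}(\operatorname{Map}(x,y))\times \operatorname{Sing}(\operatorname{Map}(y,z)) \cong \operatorname{Sing}(\operatorname{Map}(x,y)\times \operatorname{Map}(y,z)) \to \operatorname{Sing}(\operatorname{Map}(x,z)).
\]
The first map uses that $\operatorname{Sing}$ preserves finite products, since it is a right adjoint. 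The second map is $\operatorname{Sing}$ applied to the continuous composition map of $\mathcal{C}$.
\item Identities are the vertices corresponding to $id_x$.
\end{itemize}
Associativity and unitality then follow from functoriality of $\operatorname{Sing}$ and naturality of the product isomorphism. If $\mathcal{C}$ is enriched in compactly generated spaces, the product here should be the $k$-ified product. $\operatorname{Sing}$ still preserves it, because maps from $\Delta^n$, which is compact Hausdorff, into a $k$-space product agree with maps into the ordinary product.

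\textbf{Step 2.} Next I show that $\operatorname{Sing}(X)$ is a Kan complex for every space $X$. Take a horn $\Lambda^n_i \to \operatorname{Sing}(X)$. By adjunction it corresponds to a continuous map $|\Lambda^n_i| \to X$. Since $|\Lambda^n_i|$ is a retract of $|\Delta^n|$, this map extends along a retraction $|\Delta^n| \to |\Lambda^n_i|$. Transposing back gives the required filler $\Delta^n \to \operatorname{Sing}(X)$. This argument works for all $0 \le i \le n$, not only the inner horns.

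\textbf{Step 3.} Finally, I apply the previous proposition to $\operatorname{Sing}(\mathcal{C})$. This gives that $N(\operatorname{Sing}(\mathcal{C}))$ is an $\infty$-category, that is, a quasi-category. By definition this is the topological nerve $N(\mathcal{C})$.

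The only possible obstacle is point-set hygiene: making sure products of mapping spaces are taken in a convenient category of spaces so that composition is continuous and $\operatorname{Sing}$ commutes with products. Once that is fixed, the remaining steps are formal.
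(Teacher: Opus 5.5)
Your proof is correct and follows the route the paper intends. The paper gives no written proof, but it places this statement directly after Lurie's criterion that a simplicial category with Kan mapping complexes has a quasi-categorical nerve. The only thing left to check is then that each $\operatorname{Sing}(\operatorname{Map}_{\mathcal{C}}(x,y))$ is Kan, and your retraction argument $|\Lambda^n_i| \hookrightarrow |\Delta^n| \twoheadrightarrow |\Lambda^n_i|$ establishes exactly that.
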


\begin{rmk}
The First few levels contain the following data:
\end{rmk}

\begin{rmk}
Note that if $\mathcal{C}$ were an ordinary category, one may define a simplicial category $\mathcal{C}_s$ and a topological category $\mathcal{C}_t$. Their simplicial nerve and topological nerve agrees with the ordinary nerve.
\end{rmk}

\begin{defn} \label{defn:infoperad}
($\infty$-operad) \cite{LurieHigherAlgebra} An $\infty$-operad is a functor $p: \mathcal{O}^{\otimes } \to N(\mathcal{F}in_*)$ between $\infty$-categories satisfying
\begin{enumerate}
\item for every inert morphism $f: \langle m \rangle \to \langle n \rangle $ in $N(\mathcal{F}in_*)$ and every object $C \in \mathcal{O}^\otimes _{\langle m \rangle } = p ^{-1}(\langle m \rangle )$, there exists a $p$-coCartesian morphism $\overline{f}: C \to C'$ in $\mathcal{O}^\otimes$ lifting $f$. In particular, $f$ induces a functor $f_! \mathcal{O}^\otimes _{\langle m \rangle } \to \mathcal{O}^\otimes _{\langle n \rangle } $.
\item Let $C \in \mathcal{O}^\otimes _{\langle m \rangle }, C' \in \mathcal{O}^\otimes _{\langle n \rangle } $, and $f: \langle m \rangle \to \langle n \rangle $ a morphism in $\mathcal{F}in_*$. Let $\operatorname{Map}^f_{\mathcal{O}^\otimes }(C,C')$ be the union of the connected components of $\operatorname{Map}_{\mathcal{O}^\otimes }(C,C')$ over $f$. Choose $p$-coCartesian morphisms $C' \to C'_i$ over the inert morphisms $\rho ^i: \langle n \rangle \to \langle 1 \rangle $ for $1\leq i\leq n$. Then the induced map \[
\operatorname{Map}_{\mathcal{O}^\otimes }^f(C, C') \to \prod_{1\leq i\leq n} \operatorname{Map}^{\rho ^i \circ f}_{\mathcal{O}^\otimes }(C, C'_i)
\]
is a homotopy equivalence.
\item For every collection of objects $C_1 , \dots , C_n \in \mathcal{O}^{\otimes }_\langle 1 \rangle $, there exists an object $C \in \mathcal{O}^\otimes _{\langle n \rangle } $ and a collection of $p$-coCartesian morphisms $C \to C_i$ covering $\rho ^i: \langle n \rangle \to \langle 1 \rangle $.
\end{enumerate}
We denote the fibre $\mathcal{O}^\otimes_{\langle 1 \rangle } = p ^{-1}(\langle 1 \rangle )$ as $\mathcal{O}$ and refer it as the ``underlying $\infty$-category.''
\end{defn}

\begin{rmk}
If condition $(1)$ and $(2)$ are satisfied, then condition $(3)$ is equivalent to
\begin{enumerate}
\setcounter{enumi}{2}
\item There is an equivalence of $\infty$-categories \[
\phi : \mathcal{O}^\otimes _{\langle n \rangle } \to \mathcal{O}
.\]
for all $n\geq 0$ determined by the functors $\rho ^i_!: \mathcal{O}^\otimes _{\langle n \rangle } \to \mathcal{O}$.
\end{enumerate}
\end{rmk}

\begin{defn} \label{defn:Algebra Object}
(Algebra Object) Let $\mathcal{O}^\otimes , \mathcal{O}'^\otimes $ be $\infty$-operads. An $\infty$-operad map from $\mathcal{O}^\otimes $ to $\mathcal{O}'^\otimes $ is a map of simplicial sets $f: \mathcal{O}^\otimes \to \mathcal{O}'^\otimes $such that 
\begin{enumerate}
\item The following digram commutes
\[\begin{tikzcd}
{\mathcal{O}^\otimes} && {\mathcal{O}'^\otimes} \\
& {N(\mathcal{F}in_*)}
\arrow["f", from=1-1, to=1-3]
\arrow[from=1-1, to=2-2]
\arrow[from=1-3, to=2-2]
\end{tikzcd}\]
\item $f$ carries inert morphisms to inert morphisms.
\end{enumerate}
Denote by $\operatorname{Alg}_{\mathcal{O}}(\mathcal{O}'^\otimes )$ the full subcategory of $\operatorname{Fun}(\mathcal{O}^\otimes , \mathcal{O}'^\otimes )$ spanned by $\infty$-operad maps.
\end{defn}

\textbf{Acknowledgements} The author would like to thank Ian Agol, Richard Bamler, Ansuman Bardalai, Jacob Erlikman, Michael Hutchings, Theo Johnson-Freyd, David Nadler, Qiuyu Ren, Luuk Stehouwer, and Jan Steinebrunner for the helpful discussions.

\printbibliography

\end{document}